       \titleformat{\chapter}[display]
             {\normalfont\Large\bfseries}{\thechapter}{11pt}{\Large}
       \titlespacing*{\chapter}{0pt}{0pt}{15pt} 
       \titlespacing*{\section}{0pt}{3.5ex plus 1ex minus .2ex}{2.3ex plus .2ex}
\newcommand{\pqed}{\hfill\qedsymbol\\}
\newcommand{\Hom}{\mathrm{Hom}}
\newcommand{\Spec}{\mathrm{Spec}}
\newcommand{\Hhom}{\mathscr{Hom}om}
\DeclareMathOperator{\Ext}{Ext}
\DeclareMathOperator{\ext}{\mathscr{E}xt}
\DeclareMathOperator{\bext}{\mathbb{E}xt}
\newtheorem{theorem}{Theorem}[section]
\newtheorem{theorem/definition}{Theorem/Definition}[section]
\newtheorem{proposition}[theorem]{Proposition}
\newtheorem{lemma}[theorem]{Lemma}
\newtheorem{corollary}[theorem]{Corollary}
\newtheorem{conjecture}[theorem]{Conjecture}
\theoremstyle{remark}
\newtheorem{remark}[theorem]{Remark}
\theoremstyle{definition}
\newtheorem{definition}[theorem]{Definition}
\newtheorem{construction}[theorem]{Construction}
\newtheorem{notations}[theorem]{Notations}
\newtheorem{question}[theorem]{Question}
\begin{document}

\title
{\normalsize{\textbf{DEFORMATION OF EXCEPTIONAL COLLECTIONS}}}
\author{\normalsize Xiaowen Hu}
\date{}
\maketitle

\begin{abstract}
We show that in a smooth family of  complete varieties, the existence of full exceptional collection on a fiber preserves for the fibers in a neighborhood.
Then we show that the noncommutative deformations of a strong exceptional collection of vector bundles induce the same
map on the second Hochschild cohomology  as the canonical isomorphism induced by the derived equivalence to the corresponding endomorphism algebra. 
\end{abstract}

\section{Introduction}

For a smooth complex projective variety $X$, Dubrovin's conjecture (\cite{Dub98}, \cite{Bay04}) says that $\mathrm{D}^{\mathrm{b}}(X)$ has a full exceptional collection if and only if the Frobenius manifold $\mathcal{M}_X$ corresponding to the quantum cohomology of $X$ is generically semisimple, and moreover, the Gram matrix of a certain full exceptional collection of $\mathrm{D}^{\mathrm{b}}(X)$ is equal to the so-called Stokes data of  $\mathcal{M}_X$. 
The quantum cohomology is deformation invariant. Thus the above conjecture suggests that the existence of full exceptional collections is preserved in a smooth family. In this paper we show the existence in an open neighborhood.
\begin{theorem}\label{def-FEC0}\emph{($=$ corollary \ref{def-FEC2})}
Let  $S$ be a  locally noetherian scheme, $X$ a smooth proper scheme over $S$ with geometrically connected fibers, $s_0$ a point (not necessarily closed)  of $S$. 
 If $\mathrm{D}^{\mathrm{b}}(X_{s_0})$ has a  full  exceptional collection (resp., a strongly full exceptional collection), then there exists an open subset $V$ containing $s_0$ such that for any geometric point $s$ of $V$,  $\mathrm{D}^{\mathrm{b}}(X_{s})$ has a  full  exceptional collection (resp., a strongly full exceptional collection).
\end{theorem}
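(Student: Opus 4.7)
The plan is to treat the theorem in three steps: deform each exceptional object individually, transfer exceptionality and semi-orthogonality to neighboring fibers by semi-continuity, and then verify fullness on the neighbors, which is where the real work lies.

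\emph{Deforming the objects.} Each exceptional $E_i\in\mathrm{D}^{\mathrm{b}}(X_{s_0})$ satisfies $\Ext^k(E_i,E_i)=0$ for $k\neq 0$; in particular $\Ext^1=0$ (rigidity) and $\Ext^2=0$ (no obstructions). General deformation theory of complexes --- for instance via Lieblich's representability of the stack of universally gluable perfect complexes on the smooth proper family $X\to S$, or equivalently formal deformation along Artin rings followed by Artin approximation --- lifts each $E_i$ uniquely to a perfect complex $\tilde E_i$ on $X_U$ for some open neighborhood $U\ni s_0$, with $\tilde E_i|_{X_{s_0}}\cong E_i$.

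\emph{Exceptionality and semi-orthogonality on nearby fibers.} The defining relations
\[
\Ext^k_{X_s}(\tilde E_j|_{X_s},\tilde E_i|_{X_s})=0\ (j>i,\ \text{all }k),\qquad \Ext^k(\tilde E_i|_{X_s},\tilde E_i|_{X_s})=\begin{cases}\kappa(s),&k=0,\\ 0,&k\neq 0,\end{cases}
\]
are controlled by the relative $\ext$-sheaves on the flat proper family $X_U\to U$, which are upper semi-continuous in $s$. After shrinking $U$ all of these vanishings persist, so $(\tilde E_1|_{X_s},\dots,\tilde E_n|_{X_s})$ is an exceptional collection on every geometric fiber $X_s$ for $s\in U$, and it is strong at $s$ whenever the original collection was strong at $s_0$ (since the extra vanishings $\Ext^{>0}(\tilde E_i|_{X_s},\tilde E_j|_{X_s})=0$ for $i<j$ are likewise upper semi-continuous).

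\emph{Fullness --- the main obstacle.} Let $\mathcal{A}\subset\mathrm{D}^{\mathrm{b}}(X_U)$ denote the $U$-linear right orthogonal of $\langle\tilde E_1,\dots,\tilde E_n\rangle$; base change to a fiber recovers the right complement of the restricted collection in $\mathrm{D}^{\mathrm{b}}(X_s)$, with $\mathcal{A}_{s_0}=0$ by hypothesis. The cleanest route to propagate this vanishing is to also deform the left-dual exceptional collection $(F_i)$ on $X_{s_0}$ --- the same Ext-vanishing argument applies to the $F_i$ --- to $\tilde F_i$ on $X_U$, and to assemble from $(\tilde E_i,\tilde F_i)$ the relative Fourier--Mukai kernel $\tilde K\in\mathrm{D}^{\mathrm{b}}(X_U\times_U X_U)$ representing the projection onto $\langle\tilde E_i\rangle$. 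There is a canonical morphism $\tilde K\to\mathcal{O}_{\Delta_{X_U/U}}$ whose restriction to $X_{s_0}\times X_{s_0}$ is an isomorphism (precisely fullness at $s_0$); cohomology and base change then force its cone to vanish on an open neighborhood $V\ni s_0$, so on every geometric fiber over $V$ the projection is the identity and the restricted collection is full, establishing the theorem.
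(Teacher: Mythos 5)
Your first two steps (deforming the objects via Lieblich and propagating exceptionality/semi-orthogonality by semicontinuity) match the paper's Theorem~\ref{def-EC} and its proof; the one small inaccuracy is that the deformation a priori lives over an \'etale neighborhood of $s_0$, not a Zariski-open one, though this is harmless for a statement about geometric fibers of an open $V$.

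For fullness you take a genuinely different route from the paper, and this is where the substantive issues lie. The paper's mechanism is the helix criterion: it constructs, by the relative mutations of Lemma~\ref{rep1}, a concrete map $L^{n-1}E_i[n-1]\to E_i\otimes^{\mathbf{L}}\omega_{X/U}[d]$ between $S$-perfect complexes, observes that fullness at $s_0$ makes this an isomorphism there, applies semicontinuity to the cone to get an isomorphism on an open $W$, and then uses Theorem~\ref{helix4} (Grothendieck duality plus indecomposability of $\mathrm{D}^{\mathrm{p}}(X_s)$ from \cite{COS13}) to convert the helix property fiberwise back into fullness. Your proposal instead encodes fullness by the counit $\tilde K\to\mathcal{O}_{\Delta_{X_U/U}}$ of a relative projection kernel assembled from $(\tilde E_i,\tilde F_i)$. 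The final semicontinuity step is fine, but the middle is asserted rather than argued, and it is exactly the hard part: (a) you claim that base change to a fiber of $\mathcal{A}=\langle\tilde E_1,\dots,\tilde E_n\rangle^\perp$ recovers the orthogonal in $\mathrm{D}^{\mathrm{b}}(X_s)$ --- this is base change for semiorthogonal decompositions, a nontrivial theorem rather than a definition; (b) the kernel of the projection onto $\langle\tilde E_i\rangle$ is not $\bigoplus_i\tilde F_i^\vee\boxtimes\tilde E_i$ but a convolution of a Postnikov system built from these, and the existence, canonicity, and base-change compatibility of such a convolution in the $U$-relative setting need to be established; (c) the morphism $\tilde K\to\mathcal{O}_{\Delta}$ itself has to be constructed relatively. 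All of this is essentially the content of Kuznetsov's base change machinery (\cite{Kuz11}), so your route is plausible if one invokes it wholesale, but as written there is a genuine gap where the relative FM kernel is ``assembled.'' What the paper's helix detour buys is precisely that it sidesteps the projection kernel entirely: the helix condition is an isomorphism between two explicitly constructed objects, so no Postnikov convolution or base-change-for-SOD theorem is needed, only semicontinuity for a single perfect complex.
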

The deformability of an exceptional collection should have been well-known to the experts. To show the fullness, we use the notion of \emph{helix} of \cite{Bon89}.
To make the arguments work we need to generalize the definition of exceptional collections, their mutations, and helices, to a relative version.

Another motivation of theorem \ref{def-FEC0} is to understand the non-fullness of certain exceptional collections on surfaces of general type. It can be applied to simplify the argument of \cite{BGKS15}, see the remark after proposition \ref{prop-FEC4}. \\

Assume furthermore that the full exceptional collection $(E_i)_{1\leq i\leq n}$ is strong, and denote $A=\mathrm{End}_{\mathcal{O}_X}(\bigoplus_{i=1}^n E_i)$. In section 4, we study the deformations of finite dimensional algebras associated to acyclic quivers. Thus the deformations of $X_{s_0}$ induce a  map
\begin{equation}\label{eq0-1}
T_{s_0} S\rightarrow HH^2(A),
\end{equation}
where the second Hochschild cohomology $HH^2(A)$ parametrizes the deformations of $A$. On the other hand, when the characteristic of $\Bbbk$ is 0, there is a natural decomposition (see \cite{Swan96}, \cite{Yeku02}, \cite{Cal05})
\begin{equation}\label{eq0-2}
HH^i(X)= \bigoplus_{p=0}^{i}H^p(X,\wedge^{i-p}T_X),
\end{equation}
and there is a natural isomorphism (see e.g. \cite{BH13})
\begin{equation}\label{eq0-3}
HH^i(X)\cong HH^i(A).
\end{equation} 
Therefore it is natural to expect the map $H^1(X,T_X)\rightarrow HH^2(A)$ induced by (\ref{eq0-1}) coincides with the map induced by (\ref{eq0-2}) and (\ref{eq0-3}). It is natural to extend this statement to the \emph{noncommutative deformation} of Toda \cite{Toda05}, such that the image is the whole $HH^2(A)$. 
Our second main theorem confirms this under the restriction that the strong full exceptional collection $(E_i)_{1\leq i\leq n}$ \emph{consists of vector bundles}. 
Let us state it more precisely. First note that the existence of full exceptional collection implies that $H^2(X,\mathcal{O}_X)=0$. Then for $\beta\in H^1(X, T_X)$ and $\gamma\in H^0(X, \wedge^2 T_X)$, denote $u(0,\beta,\gamma)\in HH^2(A)$  the deformation of $A$ arising from the deformation of $X$. Denote $\Phi^i: \bigoplus_{p=0}^{i}H^p(X,\wedge^{i-p}T_X)\rightarrow HH^i(A)$ the composition of the isomorphisms (\ref{eq0-2}) and (\ref{eq0-3}). 
\begin{theorem}\label{theorem0-II}( $=$ theorem \ref{comparison0})
Let  $\Bbbk$ be a field of the characteristic zero, and $X$ a smooth proper scheme over $\Bbbk$, with a strong full exceptional collection of vector bundles $(E_i)_{1\leq i\leq n}$. Let $A=\mathrm{End}(\bigoplus_{i=1}^n E_i)$. Then in the  notations explained above we have
\begin{equation}\label{eq0-4}
\Phi^2(0,\beta,\gamma)=u(0,\beta,\gamma).
\end{equation}
\end{theorem}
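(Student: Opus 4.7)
The plan is to compute both sides of \eqref{eq0-4} as explicit Hochschild $2$-cocycles of $A$ in a common Čech model and match them term by term. Fix an affine open cover $\mathfrak{U}=\{U_\alpha\}$ of $X$ on which every $E_i$ trivializes, and choose transition matrices $g_{i,\alpha\beta}\in\mathrm{GL}(r_i,\mathcal{O}(U_{\alpha\beta}))$. Represent $\beta$ by a Čech $1$-cocycle $(\beta_{\alpha\beta})\in Z^1(\mathfrak{U},T_X)$ and $\gamma$ by its restrictions $\gamma_\alpha\in\Gamma(U_\alpha,\wedge^2 T_X)$. Since the collection is strong and full, $\mathrm{Ext}^k(E_i,E_j)=0$ for every $k\geq 1$ and every pair $(i,j)$; hence each $E_i$ lifts uniquely (up to isomorphism) to any first-order noncommutative thickening of $X$, and the obstruction $\mathrm{Ext}^2(\bigoplus E_i,\bigoplus E_i)$ vanishes.

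First I would write down $u(0,\beta,\gamma)$ explicitly. Following the noncommutative deformation formalism of Toda used in section~4, the thickening has local sections $\mathcal{O}_{U_\alpha}[\epsilon]$ with product twisted to first order by $\gamma_\alpha$ and transition functions on $U_{\alpha\beta}$ twisted by $\beta_{\alpha\beta}$; lifting each $g_{i,\alpha\beta}$ to $\tilde g_{i,\alpha\beta}=g_{i,\alpha\beta}+\epsilon h_{i,\alpha\beta}$ and reading off the failure of composition of lifted local endomorphisms to remain compatible with the twisted gluings produces an explicit Hochschild $2$-cocycle $c_u$ of $A$. Next I would identify $\Phi^2(0,\beta,\gamma)$ in the same model. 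Using the Atiyah-class description of the HKR map, the image of $\beta$ is the Yoneda contraction $\iota_\beta(\mathrm{At}_{\Delta_*\mathcal{O}_X})$, and the image of $\gamma$ is the analogous contraction with the wedge-square of the Atiyah class. Under the derived equivalence $\mathrm{RHom}(\bigoplus E_i,-)\colon\mathrm{D}^{\mathrm{b}}(X)\xrightarrow{\sim}\mathrm{D}^{\mathrm{b}}(A)$, the Atiyah class of $\Delta_*\mathcal{O}_X$ transports to the Atiyah class of $A$ as an $A$-bimodule, which in the Čech presentation is recorded by the logarithmic derivatives $\{g_{i,\alpha\beta}^{-1}dg_{i,\alpha\beta}\}$. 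Contracting with $\beta$ and $\gamma$ then yields a Hochschild $2$-cocycle $c_\Phi$ built from exactly the same Čech data as $c_u$.

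The main obstacle will be matching $c_u$ and $c_\Phi$ with correct signs, orderings and the shuffle factors inherent in the HKR map: $c_u$ is assembled from a Maurer--Cartan element of the polyvector DGLA transported through the gluings, while $c_\Phi$ arises from the exponentiated Atiyah class interpreted in the bar complex of $A$. The reconciliation reduces to the naturality of the HKR isomorphism under the derived equivalence in the form used in \cite{Cal05} and \cite{BH13}, together with the tautological fact that the Atiyah class of each $E_i$ measures precisely the obstruction to extending its local trivializations across the twisted gluings. Once this identification is carried out, the equality $\Phi^2(0,\beta,\gamma)=u(0,\beta,\gamma)$ holds at the cocycle level, and hence in $HH^2(A)$.
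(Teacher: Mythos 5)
Your overall strategy — compute both $u(0,\beta,\gamma)$ and $\Phi^2(0,\beta,\gamma)$ as explicit Hochschild $2$-cocycles of $A$ in a common Čech model and compare — is the same as the paper's. However, there is a genuine gap at the step where you assert that "the reconciliation reduces to the naturality of the HKR isomorphism under the derived equivalence." This is precisely the hard part that the paper does \emph{not} take for granted, and for good reason.

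The paper's definition of $\Phi^2$ routes through Swan's Hodge-type decomposition $\Upsilon^n$, which is not an a priori obvious map but is built out of the $\lambda$-decomposition of the Hochschild complex. To produce an explicit $2$-cocycle of $A$ representing $\Phi^2(0,\beta,\gamma)$, one has to (i) build a bar-type resolution of $\mathcal{O}_\Delta$ adapted to $E$ (lemma 6.15), (ii) relate the resulting Čech–Hochschild double complex to the HKR and Morita isomorphisms via a commuting system of $E_2$-spectral-sequence identifications (lemma 6.22 and its corollary, diagram (6.36)), and (iii) run a zigzag argument in that double complex (construction 6.25, theorem 6.26) whose applicability hinges on verifying that the chosen representative lands in the correct $\lambda$-component $\mathcal{Z}^2_{(p)}$. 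This last point — checking anti-symmetry of the candidate cocycle (e.g. via lemma 6.14 and the explicit computation in (6.40)) — is exactly what the shuffle factor encoded in $\epsilon_n$ demands, and the paper emphasizes that it is "the most technical part of the proof." Your proposal waves past it with a phrase, but nothing in \cite{Cal05} or \cite{BH13} hands you the compatibility of the $\lambda$-decomposition with the derived equivalence and the Čech model for free: that compatibility is what diagrams (6.31)–(6.36) are painstakingly establishing.

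There is also a more minor divergence of route: you frame $\Phi^2$ via the Atiyah class of $\mathcal{O}_\Delta$ and its transport across the equivalence, with logarithmic derivatives of transition matrices as the local data, whereas the paper works with a cotrace map $\mathrm{cotr}(\theta)(a_1\otimes\cdots\otimes a_l)=\theta\llcorner(\nabla_I(a_1)\circ\cdots\circ\nabla_I(a_l))$ built from \emph{connections} $\nabla_I$ on $E$, and then matches it against the solutions $(g_{ij},c_i(a))$ of the explicit system (5.18) that encodes $u_{\alpha,\beta,\gamma}$. These are morally cousins (a connection trivializes the Atiyah class locally), but the construction of $u(0,\beta,\gamma)$ in construction 5.13 is not merely a lift of transition matrices: it requires simultaneously solving the constraint on the $g_{ij}$ and the constraint $c_i r - r c_i = \gamma(r,\cdot)\circ\nabla_i\circ a - a\circ\gamma(r,\cdot)\circ\nabla_i$ per $a\in A$, and the resulting cocycle $u(a',a)=-c(a')a-a'c(a)+c(a'a)$ is what the zigzag in the paper actually identifies with the cotrace representative. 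To complete your proposal you would need to (a) make the Atiyah-class cocycle concrete in the Čech–Hochschild double complex, (b) show it lies in the correct $\lambda$-piece (not just that it maps to the correct class), and (c) carry out the comparison with $u$ on the nose. Steps (b) and (c) are the content of theorem 6.26 and the two-case computation at the end of the proof; they are not formal consequences of naturality.
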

The keeping of the redundant zero component in the above notations is made to be consistent with the notations in the maintext. In section 5 and 6, we work over characteristic zero, but one can easily check that the theorem \ref{theorem0-II} and the intermediate statements concerning only Hochschild cohomology of degree $\leq 2$ remain valid in characteristic $>3$.

I cannot find a direct conceptual proof of this theorem. Our proof is a bit involved;  part of the reason is that in the definition of noncommutative deformations the Hodge-type decomposition (\ref{eq0-2}) is used. In fact the reader will see that dealing with the $\lambda$-decomposition is the most technical part of the proof. 
 Our basic strategy  is to find  explicit Hochschild cochains of $A$ that represent both sides of (\ref{eq0-4}). In the process we also need to study the deformation of exceptional collections on the first order deformations of $A$ and the first order noncommutative deformations of $X$. Notice that in this paper by an \emph{explicit construction}, we mean explicit in the sense modulo the not-really-explicit construction of inverse images of  \v{C}ech coboundaries.

I do not the pursue  to remove the assumptions in theorem \ref{theorem0-II} that the full exceptional collection is strong and consists of vector bundles in this paper. The construction in the proof of \cite[prop. 6.1]{Toda05} may be helpful for the general case. Finally I remark that many existence results of infinitesimal deformations in this paper can also be deduced from the main theorems of \cite{Low05}.\\

We organize the paper as follows. In section 2 we introduce the notion of relative exceptional collections and helices. In section 3 we first show the existence of  deformations of exceptional collections, then prove  theorem \ref{def-FEC0}.
In section 4 we show the existence of full strong exceptional collection on a first order deformation of the finite dimensional algebra associated to an acyclic quiver modulo an admissible ideal of paths, and more relevant to the proof of  theorem \ref{theorem0-II}, we obtain the formula 
(\ref{eq4-0}). In section 5 we recall Toda's definition of noncommutative deformations associated to $(\alpha,\beta,\gamma)\in H^2(X,\mathcal{O}_X)\oplus H^1(X,T_X)\oplus H^0(X,\wedge^2 T_X)$, and constructed in an  explicit way the deformation of a exceptional collection of vector bundles on such a noncommutative deformation, and by comparing with (\ref{eq4-0}), we obtain an explicit expression of $u(\alpha,\beta,\gamma)$. In section 6, we recall three crucial properties of Hochschild cohomology: the HKR isomorphisms, the Morita equivalence and the $\lambda$-decomposition (also called Hodge-type decomposition). Then we construct a bar-type resolution of the diagonal $\Delta_X$ induced by a strong full exceptional collection,  and finally obtain an explicit representation of $\Phi^2(0,\beta,\gamma)$;  theorem \ref{theorem0-II} follows by a direct comparison of constructions \ref{constr-1} and \ref{constr-2}. In section 7 we propose some open problems.\\

\noindent\textbf{Acknowledgement}\quad
I am indebted to Huazheng Ke, many discussions with whom inspired the question, and to Zhan Li for  introducing \cite{AT08} to my scope and for discussions, and to Shizhuo Zhang, who shared to me a lot of knowledge and his interesting ideas in this field. I am also grateful to Yifei Chen, Chenyang Xu, Yuri Prokhorov, Feng Qu, Qizheng Yin, Qingyuan Jiang, Ying Xie, Lei Zhang, Jinxing Xu, Yining Zhang, Zhiyu Tian, Lei Song and Asher Auel for discussions on various problems related to this paper.  

In the previous version of this paper, I assumed the ampleness of $\omega_{X/S}$ or $\omega_{X/S}^{-1}$ outside of $s_0$, which is needed in the first proof of theorem \ref{helix4}. During my visit to CUHK invited by Yu Qiu, Ying Xie told me how to remove this assumption.  I am very grateful to Xie for  permission to present his argument in this version.

This work is supported by  NSFC 34000-31610265, 34000-41030364 and 34000-41030338.\\

\noindent\textbf{Notations}\quad
In this paper, unless otherwise stated, $\delta$ will denotes the differentials of a \v{C}ech complex $\check{C}(\cdot)$,  $Z^i(\cdot)$, $B^i(\cdot)$ the corresponding $i$-th group of cocycles and coboundaries,  $\mathfrak{b}$ the differentials of a Hochschild cochain complex, and $\mathfrak{b}'$ the differentials of a bar  complex. The symbol $\llcorner$ will always denotes the contraction of sections of tangent sheaves and cotangent sheaves, or the contraction of sections of their wedge products. The symbols $\mathbf{R}$ and $\mathbf{L}$ indicate the derived functors. The symbol $\epsilon$ will always denotes a square zero element, e.g., $\Bbbk[\epsilon]=\Bbbk[\epsilon]/(\epsilon^2)$. A geometric point means the spectrum of a separably closed field.

\section{Relative exceptional collections and helices}
In this section we collect some definitions and standard facts on admissible subcategories and semiorthogonal decompositions of a triangulated category. In passing we define the relative exceptional collection and helices. All triangulated subcategories are assumed full.
\subsection{Relative exceptional collections}

\begin{definition}
Let $\mathscr{A}$ be a triangulated category. A triangulated subcategory $\mathscr{B}$ is called \emph{right admissible} (resp., \emph{left admissible}) if the inclusion $\mathscr{B}\hookrightarrow \mathscr{A}$ has a right adjoint (resp., has a left adjoint). If $\mathscr{B}$ is both right and left admissible, it is called an \emph{admissible} subcategory.
\end{definition}

We will need the following lemma on admissible subcategories.

\begin{lemma}\label{adm-1}
\emph{\cite[3.1]{Bon89}}
Let $\mathscr{A}$ be a triangulated category, $\mathscr{B}$ a triangulated subcategory, $\mathscr{B}^{\perp}$ (resp., $\leftidx{^\perp}{\mathscr{B}}$) the right orthogonal (resp., the left orthogonal) to $\mathscr{B}$. Then the following are equivalent:
\begin{enumerate}
	\item $\mathscr{B}$ is right admissible (resp., left admissible).
	\item For every $X\in \mathscr{A}$ there is a distinguished triangle $Y\rightarrow X\rightarrow Z$ with $Y\in \mathscr{B}$ and $Z\in \mathscr{B}^\perp$ (resp., $Y\in \leftidx{^\perp}{\mathscr{B}}$ and $Z\in \mathscr{B}$).
	\item $\mathscr{B}$ and $\mathscr{B}^\perp$ (resp., $\leftidx{^\perp}{\mathscr{B}}$ and $\mathscr{B}$) generate $\mathscr{A}$.
	\item The inclusion $\mathscr{B}^{\perp}\hookrightarrow \mathscr{A}$ has a left adjoint (resp., the inclusion $\leftidx{^\perp}{\mathscr{B}}\hookrightarrow \mathscr{A}$ has a right adjoint).
\end{enumerate}	
\end{lemma}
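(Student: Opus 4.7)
The plan is to establish the equivalences via the cycle $(1)\Rightarrow(2)\Rightarrow(3)\Rightarrow(2)$, and then to deduce $(1)$ and $(4)$ from $(2)$ by a single functoriality argument; the left-admissible variant follows formally by passing to the opposite category $\mathscr{A}^{\mathrm{op}}$, which swaps left and right adjoints and interchanges $\mathscr{B}^\perp$ with $\leftidx{^\perp}{\mathscr{B}}$.

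For $(1)\Rightarrow(2)$, I would let $i^{!}$ denote the right adjoint of $i\colon\mathscr{B}\hookrightarrow\mathscr{A}$ and complete the counit $ii^{!}X\to X$ to a distinguished triangle $ii^{!}X\to X\to Z$. For $B\in\mathscr{B}$, the long exact sequence of $\Hom(B,-)$ combined with the adjunction isomorphism $\Hom(B,ii^{!}X)\cong\Hom(B,X)$ forces $\Hom(B,Z)=0$; since $\mathscr{B}$ is closed under shifts the same vanishing holds for every $\Hom(B,Z[n])$, so $Z\in\mathscr{B}^\perp$. The implication $(2)\Rightarrow(3)$ is immediate from the triangle exhibited. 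For $(3)\Rightarrow(2)$, I would consider the full subcategory $\mathscr{D}\subset\mathscr{A}$ of objects admitting such a triangle; it contains $\mathscr{B}$ (take $Z=0$) and $\mathscr{B}^\perp$ (take $Y=0$), is stable under shifts by rotating the triangle, and is stable under cones by the $3\times 3$ (octahedral) construction, which produces a triangle with middle term $\mathrm{cone}(f)$ whose outer terms lie in $\mathscr{B}$ and in $\mathscr{B}^\perp$ respectively, using that both are triangulated subcategories. The generation hypothesis then gives $\mathscr{D}=\mathscr{A}$.

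The remaining, and most substantive, step is to promote the pointwise triangle of $(2)$ to a functorial construction, which delivers $(1)$ and $(4)$ simultaneously. Given $f\colon X\to X'$ with triangles $Y\to X\to Z$ and $Y'\to X'\to Z'$, the composition $Y\to X\to X'\to Z'$ lies in $\Hom_{\mathscr{A}}(\mathscr{B},\mathscr{B}^\perp)=0$, so $f$ lifts to a morphism $Y\to Y'$; uniqueness of the lift follows from $\Hom(Y,Z'[-1])=0$, which holds by the shift-stability of $\mathscr{B}^\perp$. This makes $X\mapsto Y$ a functor right adjoint to $i$, yielding $(1)$, while dually $X\mapsto Z$ is left adjoint to $\mathscr{B}^\perp\hookrightarrow\mathscr{A}$, yielding $(4)$. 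The one genuinely delicate point, and where I expect to spend the most care, is this functoriality step together with the octahedral bookkeeping in $(3)\Rightarrow(2)$; everything else is a formal manipulation of exact triangles and Hom long exact sequences.
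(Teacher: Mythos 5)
The paper gives no proof of this lemma: it is quoted verbatim from \cite[3.1]{Bon89}, so there is no in-text argument to compare against. That said, your proof is correct and is in substance the standard argument from Bondal's paper. One small organizational remark: the functoriality/uniqueness observation you present last is not merely needed to pass from $(2)$ to $(1)$ and $(4)$ --- it is already required inside your $(3)\Rightarrow(2)$ step. To apply the $3\times 3$ (octahedral) lemma to the triangle $X_1\to X_2\to X_3$ you must first produce a commutative square linking the decomposing triangles of $X_1$ and $X_2$, i.e.\ a map $Y_1\to Y_2$ over $X_1\to X_2$, and that map is exactly what the ``composite $Y\to X\to X'\to Z'$ vanishes, hence lifts'' argument supplies (and its uniqueness is what makes the square commute on the nose). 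Since that argument only uses the existence of decompositions for the two objects at hand, there is no circularity, but it should logically be stated before the cone-closure step rather than after. With that reordering, the chain $(1)\Rightarrow(2)\Rightarrow(3)\Rightarrow(2)\Rightarrow(1),(4)$, together with the passage to $\mathscr{A}^{\mathrm{op}}$ for the left-admissible variant, is exactly the expected proof.
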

For a scheme $S$, denote by $\mathrm{D}^{\mathrm{p}}(S)$ the triangulated category of perfect complexes on $S$. For a scheme $X$ over $S$, denote by $\mathrm{D}^{\mathrm{p}}(X/S)$  the category of $S$-perfect complexes on $X$ \cite{Lieb06}. If $X$ is smooth over $S$, $\mathrm{D}^{\mathrm{p}}(X/S)$ is equivalent to $\mathrm{D}^{\mathrm{p}}(X)$.

\begin{definition}(\cite{Kuz11})
An \emph{$S$-linear subcategory} of $\mathrm{D}^{\mathrm{p}}(X/S)$ is a triangulated subcategory which is closed under the operations of the form $\mathbf{L}\pi^* M\otimes^{\mathbf{L}}$ where $M\in \mathrm{D}^{\mathrm{p}}(S)$, where $\pi:X\rightarrow S$ is the structure morphism.
\end{definition}

Recall that we say that a locally noetherian scheme $X$ satisfies the resolution property if for every quasi-coherent sheaf $\mathcal{F}$ there is an epimorphism $\mathcal{E}\rightarrow \mathcal{F}$ where $\mathcal{E}$ is a direct sum of locally free coherent sheaves. We refer the reader to \cite[example 5.9]{Gro17} for examples of schemes satisfying the resolution property.

The following is a variant of \cite[2.7]{Kuz11}, and we reproduce his argument.
\begin{lemma}\label{ortho-1}
Let $S$ be a locally noetherian scheme satisfying the resolution property, $X$ a locally noetherian scheme, and $\pi:X\rightarrow S$ a quasi-compact and quasi-separated morphism of schemes. Then a pair of $S$-linear subcategories $\mathcal{A},\mathcal{B}$ of $\mathrm{D}^{\mathrm{p}}(X/S)$ is semiorthogonal if and only if $\mathbf{R}\pi_* \mathbf{R}\Hhom(B,A)=0$ for any $A\in \mathcal{A}$, $B\in \mathcal{B}$.
\end{lemma}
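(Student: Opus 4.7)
The plan is to establish the two implications separately. The ``if'' direction is immediate: from $\mathbf{R}\pi_*\mathbf{R}\mathcal{H}om(B,A)=0$, applying $\mathbf{R}\Gamma(S,-)$ yields $\mathbf{R}\mathrm{Hom}_X(B,A)=\mathbf{R}\Gamma(S,\mathbf{R}\pi_*\mathbf{R}\mathcal{H}om(B,A))=0$, which in particular forces $\mathrm{Hom}_X(B,A)=0$ and hence semiorthogonality of the pair.

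For the converse, the strategy is to exploit the $S$-linearity of $\mathcal{A}$ to produce enough vanishings of $\mathrm{Hom}$'s on $X$ to force $F:=\mathbf{R}\pi_*\mathbf{R}\mathcal{H}om(B,A)$ itself to vanish in $\mathrm{D}(S)$. For any $M\in\mathrm{D}^{\mathrm{p}}(S)$, the object $A\otimes^{\mathbf{L}}\mathbf{L}\pi^*M$ still lies in $\mathcal{A}$, so the semiorthogonality hypothesis yields $\mathbf{R}\mathrm{Hom}_X(B,A\otimes^{\mathbf{L}}\mathbf{L}\pi^*M)=0$. Since $B$ is $S$-perfect and $\mathbf{L}\pi^*M$ is perfect on $X$, the projection formula together with the self-duality $M\cong(M^{\vee})^{\vee}$ for perfect $M$ rewrites this as
\[
0\;=\;\mathbf{R}\mathrm{Hom}_X\bigl(B,A\otimes^{\mathbf{L}}\mathbf{L}\pi^*M\bigr)\;=\;\mathbf{R}\Gamma\bigl(S,F\otimes^{\mathbf{L}}M\bigr)\;=\;\mathbf{R}\mathrm{Hom}_S(M^{\vee},F).
\]
Letting $M^{\vee}$ range over all perfect complexes on $S$, I deduce $\mathbf{R}\mathrm{Hom}_S(N,F)=0$ for every $N\in\mathrm{D}^{\mathrm{p}}(S)$.

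It remains to conclude $F=0$, and this is where the resolution property of $S$ is invoked. Every locally free coherent sheaf $\mathcal{E}$ on $S$ is perfect, so $\mathbf{R}\mathrm{Hom}_S(\mathcal{E},F)=0$ for all such $\mathcal{E}$. For an arbitrary quasi-coherent $\mathcal{G}$, the resolution property produces a (possibly unbounded) locally free resolution $\mathcal{E}_\bullet\to\mathcal{G}$, and the hyper-Ext spectral sequence converging to $\mathrm{Ext}^*_S(\mathcal{G},F)$ has vanishing $E_1$-page, so $\mathbf{R}\mathrm{Hom}_S(\mathcal{G},F)=0$. Specializing to $\mathcal{G}=H^n(F)$ with $n$ the bottom cohomological degree (where $\tau_{\leq n}F\cong H^n(F)[-n]$ and the canonical map to $F$ realizes the identity on $H^n(F)$) forces $H^n(F)=0$ for each $n$ in turn, hence $F=0$.

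The main obstacle is this last step: the passage from the hypothesis to vanishing against all perfect complexes on $S$ is a mechanical consequence of the projection formula and adjunction, but to actually conclude $F=0$ one needs a sufficiently rich supply of test objects on $S$. The resolution property is precisely the device that makes locally free sheaves (together with their resolutions) enough to detect triviality of an arbitrary quasi-coherent complex; without it one could only conclude that the global sections of $F$ vanish, not $F$ itself.
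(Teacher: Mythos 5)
Your proof is essentially the same as the paper's: the paper argues contrapositively, picking a nonzero map $P\to F$ from a perfect complex (available by the resolution property) and transposing it by projection formula plus adjunction into a nonzero morphism from $B\otimes^{\mathbf{L}}\mathbf{L}\pi^*P\in\mathcal{B}$ to $A\in\mathcal{A}$; you run the same chain of isomorphisms in the other direction (and use the $S$-linearity of $\mathcal{A}$ rather than of $\mathcal{B}$, which is symmetric). One small technical point in your last step: the resolution property only gives you an epimorphism from a \emph{direct sum} of locally free coherent sheaves, and the terms $\mathcal{E}_p$ of your resolution are therefore generally infinite direct sums, hence not perfect; the asserted vanishing of the $E_1$-page then needs the extra (easy) observation that $\mathbf{R}\mathrm{Hom}_S(-,F)$ carries coproducts to products, and the convergence of the hyper-Ext spectral sequence uses that $F=\mathbf{R}\pi_*\mathbf{R}\mathcal{H}om(B,A)$ is bounded below, which you should note. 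In fact the resolution and spectral sequence are avoidable: once you know $\mathbf{R}\mathrm{Hom}_S(N,F)=0$ for all perfect $N$, take $n$ minimal with $H^n(F)\neq 0$, choose an epimorphism $\bigoplus_i\mathcal{E}_i\twoheadrightarrow H^n(F)$ from a direct sum of locally free coherent sheaves, compose with $H^n(F)[-n]\to F$ to get a nonzero map $\bigoplus_i\mathcal{E}_i[-n]\to F$, and observe some summand $\mathcal{E}_i[-n]\to F$ is nonzero, contradicting $\mathrm{Ext}^n_S(\mathcal{E}_i,F)=0$.
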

\begin{proof} Let $A\in \mathcal{A}$, $B\in \mathcal{B}$. Since $X$ is locally noetherian, $\mathbf{R}\Hhom(B,A)$ lies in $\mathrm{D}_{\mathrm{qc}}(X)$, the derived category of complexes of $\mathcal{O}_X$-modules with quasi-coherent cohomologies (\cite[II.3.3]{Har66}). Then since $\pi$ is quasi-compact and quasi-separated, $\mathbf{R}\pi_{*}\mathbf{R}\Hhom(B,A)\in \mathrm{D}_{\mathrm{qc}}(S)$ (see e.g. \cite[3.9.2]{Lip09}).  If $\mathbf{R}\pi_{*}\mathbf{R}\Hhom(B,A)\in \mathrm{D}_{\mathrm{qc}}(S)$ is nonzero, since $S$ satisfies the resolution property, there is a nonzero homomorphism $P\rightarrow \mathbf{R}\pi_{*}\mathbf{R}\Hhom(B,A)\in \mathrm{D}_{\mathrm{qc}}(S)$. By \cite[theorem A]{Spa88} or \cite[2.6.1, 3.2.1]{Lip09}, $\mathbf{R}\Hom(P,\mathbf{R}\pi_{*}\mathbf{R}\Hhom(B,A))\cong\mathbf{R}\Hom(\mathbf{L}f^* P,\mathbf{R}\Hhom(B,A))\cong  \mathbf{R}\Hom(B\otimes^{\mathbf{L}}\mathbf{L}f^* P, A)$, contradicting that $\mathcal{B}$ is left orthogonal to $\mathcal{A}$. The converse is obvious. 
\end{proof}

For a scheme $X$ and a perfect complex $M\in \mathrm{D}^{\mathrm{p}}(X)$, denote by $M^{\vee}$  the derived dual of $M$, i.e., $M^{\vee}=\mathbf{R}\Hhom (M,\mathcal{O}_Y)$. In the following of this section we assume $\pi:X\rightarrow S$ to be smooth and proper.

\begin{corollary}\label{adm-2}
Let $\mathcal{A}$ be an $S$-linear triangulated subcategory of $\mathrm{D}^{\mathrm{p}}(X/S)$. Then $\mathcal{A}^{\perp}$ and $\leftidx{^\perp}{\mathcal{A}}$ are also $S$-linear triangulated subcategories.
\end{corollary}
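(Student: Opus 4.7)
The plan is to check $S$-linearity of each orthogonal directly from the tensor-Hom adjunction applied to pullbacks of perfect complexes, using only that $\mathcal{A}$ itself is $S$-linear. Lemma \ref{ortho-1} is not strictly needed for this particular argument; the real ingredient is merely that $\mathbf{L}\pi^{*}M$ is perfect on $X$ whenever $M\in\mathrm{D}^{\mathrm{p}}(S)$, so that $\mathbf{L}\pi^{*}M$ is dualizable with dual $\mathbf{L}\pi^{*}(M^{\vee})$.

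First I would observe that $\mathcal{A}^{\perp}$ and $\leftidx{^\perp}{\mathcal{A}}$ are automatically triangulated subcategories, since the right (resp. left) orthogonal to any collection of objects is closed under shifts, cones, and direct summands. So the content of the corollary reduces to showing closure under the operation $B\mapsto B\otimes^{\mathbf{L}}\mathbf{L}\pi^{*}M$ for $M\in\mathrm{D}^{\mathrm{p}}(S)$. For $\mathcal{A}^{\perp}$, I would fix $B\in\mathcal{A}^{\perp}$, $M\in\mathrm{D}^{\mathrm{p}}(S)$, and $A\in\mathcal{A}$, and use the adjunction
\[
\Hom_{X}\bigl(A,\,B\otimes^{\mathbf{L}}\mathbf{L}\pi^{*}M[i]\bigr)\;\cong\;\Hom_{X}\bigl(A\otimes^{\mathbf{L}}\mathbf{L}\pi^{*}(M^{\vee}),\,B[i]\bigr).
\]
Since $M^{\vee}\in\mathrm{D}^{\mathrm{p}}(S)$ and $\mathcal{A}$ is $S$-linear, the object $A\otimes^{\mathbf{L}}\mathbf{L}\pi^{*}(M^{\vee})$ lies in $\mathcal{A}$, and the right side vanishes because $B\in\mathcal{A}^{\perp}$. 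The case of $\leftidx{^\perp}{\mathcal{A}}$ is completely symmetric: one writes $\Hom_{X}(B\otimes^{\mathbf{L}}\mathbf{L}\pi^{*}M,\,A[i])\cong\Hom_{X}(B,\,\mathbf{L}\pi^{*}(M^{\vee})\otimes^{\mathbf{L}}A[i])$ and uses that $\mathbf{L}\pi^{*}(M^{\vee})\otimes^{\mathbf{L}}A\in\mathcal{A}$.

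There is no serious obstacle here. The only point that needs to be invoked carefully is the identification $(\mathbf{L}\pi^{*}M)^{\vee}\cong\mathbf{L}\pi^{*}(M^{\vee})$ together with the attendant Hom-tensor adjunction, both of which are standard consequences of $M$ being perfect; neither the smoothness nor the properness of $\pi$ is essentially used for this particular corollary (they are part of the running assumptions of the subsection, which will be exploited elsewhere).
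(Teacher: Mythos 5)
Your argument is correct, but it takes a genuinely different route from the paper's. The paper proves the corollary by passing through the relative object $\mathbf{R}\pi_*\mathbf{R}\Hhom(\,\cdot\,,\cdot\,)$: it records the two projection-formula identities
\[
\mathbf{R}\pi_*\mathbf{R}\Hhom(\mathbf{L}\pi^*M\otimes^{\mathbf{L}}B,A)\cong\mathbf{R}\Hhom(M,\mathbf{R}\pi_*\mathbf{R}\Hhom(B,A)),\qquad
\mathbf{R}\pi_*\mathbf{R}\Hhom(B,\mathbf{L}\pi^*M\otimes^{\mathbf{L}}A)\cong\mathbf{R}\Hhom(M^\vee,\mathbf{R}\pi_*\mathbf{R}\Hhom(B,A)),
\]
and then invokes Lemma~\ref{ortho-1}, which converts between the vanishing of ordinary $\Hom$'s and the vanishing of $\mathbf{R}\pi_*\mathbf{R}\Hhom$ for $S$-linear subcategories. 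This makes the corollary depend on the hypotheses of Lemma~\ref{ortho-1} (in particular the resolution property of $S$ and the quasi-compactness/quasi-separatedness of $\pi$). Your proof sidesteps all of that: you never push forward to $S$, but instead use only that $\mathbf{L}\pi^*M$ is a perfect, hence dualizable, complex on $X$ with dual $\mathbf{L}\pi^*(M^\vee)$, so that $-\otimes^{\mathbf{L}}\mathbf{L}\pi^*M$ and $-\otimes^{\mathbf{L}}\mathbf{L}\pi^*(M^\vee)$ are adjoint on $\mathrm{D}^{\mathrm{p}}(X)$, and then the $S$-linearity of $\mathcal{A}$ moves the perfect factor across the $\Hom$ and into $\mathcal{A}$. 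This is strictly more elementary, works in $\Hom_X$ directly, and (as you note) requires none of the auxiliary hypotheses on $S$ or $\pi$ beyond what is needed to make $\mathrm{D}^{\mathrm{p}}(X/S)$ and $\mathbf{L}\pi^*$ of a perfect complex meaningful. The only ingredients you are responsible for are the standard facts that $\mathbf{L}\pi^*$ preserves perfection and commutes with derived duals on perfect complexes, both of which are routine. What the paper's route buys in exchange is uniformity: it builds everything on the single comparison of Lemma~\ref{ortho-1}, which is reused later for fiberwise criteria, whereas your proof is self-contained but does not feed into that later machinery.
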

\begin{proof} For $M\in \mathrm{D}^{\mathrm{p}}(S)$ and $A, B\in \mathrm{D}^{\mathrm{p}}(X/S)$, we have
\[
\mathbf{R}\pi_* \mathbf{R}\Hhom(\mathbf{L}\pi^* M\otimes^{\mathbf{L}}B,A)=\mathbf{R}\Hhom(M,\mathbf{R}\pi_* \mathbf{R}\Hhom(B,A))
\]
and 
\[
\mathbf{R}\pi_* \mathbf{R}\Hhom(B,\mathbf{L}\pi^* M\otimes^{\mathbf{L}}A)=\mathbf{R}\Hhom(M^\vee,\mathbf{R}\pi_* \mathbf{R}\Hhom(B,A)).
\]
Thus the conclusion follows from lemma \ref{ortho-1}.
\end{proof}

\begin{definition}\label{definition-ec}
An ordered set of objects $(E_1,\cdots, E_n)$ of  $\mathrm{D}^{\mathrm{p}}(X/S)$ is called an \emph{exceptional collection of $X/S$ of length $n$} if  for $i>j$,
\begin{equation}\label{exceptional1}
\mathbf{R}\pi_*\mathbf{R}\Hhom(E_i,E_j)=0,
\end{equation}
and the canonical map
\begin{equation}\label{exceptional2}
\mathcal{O}_S\rightarrow \mathbf{R}\pi_*\mathbf{R}\Hhom(E_i,E_i)
\end{equation}
is an isomorphism for  $1\leq i\leq n$. It is called a \emph{strongly exceptional collection} if moreover the cohomology sheaves $\mathscr{H}^k\big(\mathbf{R}\pi_*\mathbf{R}\Hhom(E_i,E_j)\big)$ vanish for $k\neq 0$ and all pairs $i,j$. An exceptional collection of length 2 is called an \emph{exceptional pair}.
\end{definition}

The following fiberwise criterion for exceptionality is immediate from the definition.
\begin{lemma}\label{fiber1}
Let $X$ be a smooth proper scheme over $S$, $E=(E_1,\cdots,E_n)$ a sequence of objects of $\mathrm{D}^{\mathrm{p}}(X/S)$. Then $E$ is an exceptional collection of $\mathrm{D}^{\mathrm{p}}(X/S)$ if and only if $\mathbf{L}s^* E$ is an exceptional collection of $\mathrm{D}^{\mathrm{b}}(X_\xi)$ for every geometric point $s:\xi\rightarrow S$.
\end{lemma}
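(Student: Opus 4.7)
The plan is to reduce both directions to the base change formula applied along a geometric point $s:\xi\to S$. Write $\mathscr{E}_{ij}:=\mathbf{R}\pi_{*}\mathbf{R}\Hhom(E_i,E_j)$. Since $\pi$ is smooth (in particular flat) and proper, and each $E_i$ is $S$-perfect, the object $\mathbf{R}\Hhom(E_i,E_j)\cong E_i^{\vee}\otimes^{\mathbf{L}}E_j$ is again $S$-perfect on $X$, and its pushforward $\mathscr{E}_{ij}$ is a perfect complex on $S$. Flat base change gives, for every geometric point $s:\xi\to S$,
\begin{equation*}
\mathbf{L}s^{*}\mathscr{E}_{ij}\;\cong\;\mathbf{R}\pi_{\xi,*}\mathbf{R}\Hhom(\mathbf{L}s_{X}^{*}E_i,\mathbf{L}s_{X}^{*}E_j),
\end{equation*}
where $s_X:X_\xi\to X$ is the base change of $s$, and moreover the canonical evaluation map $\mathcal{O}_S\to\mathscr{E}_{ii}$ pulls back to the corresponding canonical map on $X_\xi$.

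The forward direction is then immediate: the vanishing in (\ref{exceptional1}) and the isomorphism in (\ref{exceptional2}) are both preserved by $\mathbf{L}s^{*}$.

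For the converse, the idea is to apply, to $\mathscr{E}_{ij}$ (for $i>j$) and to the cone of $\mathcal{O}_S\to\mathscr{E}_{ii}$, the following standard fact: a perfect complex $\mathscr{F}$ on $S$ is zero iff $\mathbf{L}s^{*}\mathscr{F}=0$ for every geometric point $s$ of $S$. Indeed, if $\mathscr{F}\neq 0$, pick the largest $i$ with $\mathscr{H}^{i}(\mathscr{F})\neq 0$ and a point $x\in S$ in its support; by Nakayama $\mathscr{H}^{i}(\mathscr{F})\otimes_{\mathcal{O}_{S,x}}\kappa(x)\neq 0$, and by the degeneration of the top row of the base change spectral sequence this group equals $\mathscr{H}^{i}(\mathbf{L}s^{*}\mathscr{F})$ for any geometric point $s$ lying over $x$, contradicting the hypothesis.

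The only non-trivial item to verify — which is presumably what the author means by calling the lemma \emph{immediate from the definition} — is the book-keeping for the base change isomorphism above and the $S$-perfectness of $\mathscr{E}_{ij}$. The smoothness and properness of $\pi$, together with $S$-perfectness of the $E_i$, make both routine; once they are in hand no further argument is required.
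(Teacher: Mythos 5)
The paper states this lemma with no written proof, dismissing it as ``immediate from the definition''; your argument correctly supplies the intended justification — base change for $\mathbf{R}\pi_*$ of an $S$-perfect complex along a flat proper morphism, plus the standard Nakayama argument that a perfect complex on $S$ with vanishing derived fibers at every geometric point is zero (applied to $\mathscr{E}_{ij}$ for $i>j$ and to the cone of $\mathcal{O}_S\to\mathscr{E}_{ii}$). This is exactly the content the author takes for granted, and I see no gap in your reasoning.
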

\pqed

\begin{definition}
Let $(E,F)$ be an exceptional pair. The \emph{left mutation} $L_E F$ and the \emph{right mutation} $R_F E$ are defined by the distinguished triangles
\begin{eqnarray*}
L_E F\rightarrow \mathbf{L}\pi^*\mathbf{R}\pi_*\mathbf{R}\Hhom (E,F)\otimes^{\mathbf{L}} E\rightarrow F\rightarrow (L_E F)[1],\\
E\rightarrow \big(\mathbf{L}\pi^*\mathbf{R}\pi_*\mathbf{R}\Hhom(E,F)\big)^\vee\otimes^{\mathbf{L}} F\rightarrow R_F E\rightarrow E[1],
\end{eqnarray*}
where both second arrows are induced by adjointness.
\end{definition}

For an exceptional collection $\sigma=(E_1,\cdots,E_n)$ we  define the $i$-th right and left mutations
\begin{eqnarray*}
R_i\sigma=(E_1,\cdots,E_{i-1}, E_{i+1}, R_{E_{i+1}}E_i,E_{i+2},\cdots,E_n),\\
L_i\sigma=(E_1,\cdots,E_{i-1}, L_{E_{i}}E_{i+1},E_i,E_{i+2},\cdots,E_n).
\end{eqnarray*}

\begin{definition}
For a set of object $G_1,\cdots, G_n$ of $\mathrm{D}^{\mathrm{p}}(X/S)$, denote by $\langle G_1,\cdots, G_n\rangle$ the smallest  $S$-linear triangulated  subcategory satisfying: (i) it containing $G_1,\cdots, G_n$, (ii) it is isomorphism closed, i.e., if $F\in \langle G_1,\cdots, G_n\rangle$ and $F'\cong F$ then $F'\in \langle G_1,\cdots, G_n\rangle$. 
\end{definition}

\begin{lemma}\label{lem-adm}
Let $\pi:X\rightarrow S$ be a smooth and proper morphism, and $(E_1,...,E_n)$ an exceptional collection of $\mathrm{D}^{\mathrm{p}}(X/S)$. Then $\langle E_1,...,E_n\rangle$ is an admissible subcategory of $\mathrm{D}^{\mathrm{p}}(X/S)$.
\end{lemma}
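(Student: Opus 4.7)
My approach is induction on $n$, verifying criterion (2) of Lemma \ref{adm-1} at each step.

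For the base case $n = 1$, to show right admissibility of $\langle E_1\rangle$, I proceed as follows. Given any $F \in \mathrm{D}^{\mathrm{p}}(X/S)$, set $G := \mathbf{R}\pi_*\mathbf{R}\Hhom(E_1, F)$; because $\pi$ is smooth and proper and $E_1$, $F$ are $S$-perfect (so that $E_1^{\vee}\otimes^{\mathbf{L}} F$ is perfect), $G$ lies in $\mathrm{D}^{\mathrm{p}}(S)$. Via the $(\mathbf{L}\pi^*, \mathbf{R}\pi_*)$-adjunction and an evaluation map I produce a natural morphism
\[
\mathrm{ev}:\mathbf{L}\pi^* G \otimes^{\mathbf{L}} E_1 \longrightarrow F,
\]
which I complete to a distinguished triangle $Y \to F \to Z \to Y[1]$ with $Y = \mathbf{L}\pi^* G\otimes^{\mathbf{L}} E_1 \in \langle E_1\rangle$ (by $S$-linearity). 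Applying $\mathbf{R}\pi_*\mathbf{R}\Hhom(E_1, -)$ and using the projection formula together with the exceptional identity $\mathbf{R}\pi_*\mathbf{R}\Hhom(E_1, E_1) \cong \mathcal{O}_S$, one sees that $\mathbf{R}\pi_*\mathbf{R}\Hhom(E_1, \mathrm{ev})$ is an isomorphism, so $\mathbf{R}\pi_*\mathbf{R}\Hhom(E_1, Z) = 0$ and Lemma \ref{ortho-1} yields $Z \in \langle E_1\rangle^{\perp}$. Left admissibility follows from a dual coevaluation construction (or, since $\pi$ is smooth and proper, from the existence of a Serre functor on $\mathrm{D}^{\mathrm{p}}(X/S)$).

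For the inductive step, set $\mathcal{C} := \langle E_1, \ldots, E_{n-1}\rangle$; by induction $\mathcal{C}$ is admissible, and $\langle E_n\rangle$ is admissible by the base case. The exceptional vanishings $\mathbf{R}\pi_*\mathbf{R}\Hhom(E_n, E_j) = 0$ for $j < n$, combined with the fact that $\langle E_n\rangle^{\perp}$ is an $S$-linear triangulated subcategory (Corollary \ref{adm-2}) containing each $E_j$, $j<n$, force $\mathcal{C} \subset \langle E_n\rangle^{\perp}$. Given $F$, I first decompose using admissibility of $\mathcal{C}$:
\[
Y' \to F \to Z' \to Y'[1], \quad Y' \in \mathcal{C},\ Z' \in \mathcal{C}^{\perp},
\]
and then decompose $Z'$ using admissibility of $\langle E_n\rangle$:
\[
Y_n \to Z' \to Z \to Y_n[1], \quad Y_n \in \langle E_n\rangle,\ Z \in \langle E_n\rangle^{\perp}.
\]
Because $\langle E_n\rangle \subset \mathcal{C}^{\perp}$, the term $Y_n$ already lies in $\mathcal{C}^{\perp}$, and triangulatedness of $\mathcal{C}^{\perp}$ then places $Z$ in $\mathcal{C}^{\perp}\cap\langle E_n\rangle^{\perp} = \langle E_1, \ldots, E_n\rangle^{\perp}$. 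The octahedral axiom applied to the composition $F\to Z' \to Z$ produces a distinguished triangle $Y \to F \to Z$ together with a triangle $Y' \to Y \to Y_n$, so $Y \in \langle E_1, \ldots, E_n\rangle$. This confirms criterion (2) of Lemma \ref{adm-1}, and left admissibility is obtained by the symmetric procedure (decomposing in the reverse order and using the dual coevaluation triangles).

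The main technical point I expect to dwell on is the base-case identification that $\mathbf{R}\pi_*\mathbf{R}\Hhom(E_1, \mathrm{ev})$ becomes the identity; this unfolds through the projection formula
\[
\mathbf{R}\pi_*\mathbf{R}\Hhom(E_1, \mathbf{L}\pi^* G \otimes^{\mathbf{L}} E_1) \;\cong\; G \otimes^{\mathbf{L}} \mathbf{R}\pi_*\mathbf{R}\Hhom(E_1, E_1) \;\cong\; G,
\]
together with a compatibility of the $(\mathbf{L}\pi^*, \mathbf{R}\pi_*)$-counit with $\mathbf{R}\Hhom(E_1,-)$. The other routine check — that $\mathbf{R}\pi_*\mathbf{R}\Hhom(E, F)$ is perfect on $S$ whenever $E, F$ are $S$-perfect on $X$ — is standard for smooth and proper $\pi$, via perfection of $E^{\vee}\otimes^{\mathbf{L}} F$ on $X$ and preservation of perfection by $\mathbf{R}\pi_*$ for proper morphisms.
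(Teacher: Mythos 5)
The base case and the overall inductive strategy are sound, but the inductive step contains an orientation error that makes the argument fail. You correctly prove $\mathcal{C}\subset\langle E_n\rangle^{\perp}$ from the vanishings $\mathbf{R}\pi_*\mathbf{R}\Hhom(E_n,E_j)=0$ for $j<n$, but you then invoke the reversed inclusion $\langle E_n\rangle\subset\mathcal{C}^{\perp}$, which is a different statement and is false in general: it would require $\mathbf{R}\pi_*\mathbf{R}\Hhom(E_j,E_n)=0$ for $j<n$, which is exactly the direction \emph{not} controlled by exceptionality. As a concrete sanity check, take $S=\Spec\Bbbk$, $X=\mathbb{P}^1$, $(E_1,E_2)=(\mathcal{O},\mathcal{O}(1))$, and $F=\mathcal{O}(1)$. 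Your first decomposition gives $Y'=\mathcal{O}^2$, $Z'=\mathcal{O}(-1)[1]$; the second gives $Y_n=\mathcal{O}(1)$ and $Z=\mathcal{O}^2[1]$. But $\mathbf{R}\Hom(\mathcal{O},\mathcal{O}^2[1])\neq 0$, so $Z\notin\mathcal{C}^\perp$, and since the collection is full, $\langle E_1,E_2\rangle^\perp=0$ while $Z\neq 0$; the produced triangle does not verify criterion (2) of Lemma \ref{adm-1}.

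The fix is to reverse the order of the two projections: first decompose $F$ with respect to $\langle E_n\rangle$ to get $Y_n\to F\to Z'$ with $Y_n\in\langle E_n\rangle$, $Z'\in\langle E_n\rangle^\perp$, then decompose $Z'$ with respect to $\mathcal{C}$ to get $Y'\to Z'\to Z$ with $Y'\in\mathcal{C}$, $Z\in\mathcal{C}^\perp$. Now the inclusion you actually established, $\mathcal{C}\subset\langle E_n\rangle^\perp$, is the one you need: it puts $Y'\in\langle E_n\rangle^\perp$, so triangulatedness gives $Z\in\langle E_n\rangle^\perp\cap\mathcal{C}^\perp=\langle E_1,\ldots,E_n\rangle^\perp$, and the octahedron finishes as you describe. (For left admissibility, the roles swap and you project onto $\langle E_1\rangle$ first.) The paper avoids this pitfall by constructing the adjoint directly: it forms $L^0 F=F$ and iteratively $L^{k+1}F=\mathrm{fib}\bigl(\mathbf{L}\pi^*\mathbf{R}\pi_*\mathbf{R}\Hhom(E_{n-k},L^k F)\otimes^{\mathbf{L}}E_{n-k}\to L^k F\bigr)$ — i.e., it peels off $E_n$ first, then $E_{n-1}$, etc. — and checks inductively that $L^k F\in\langle E_{n-k+1},\ldots,E_n\rangle^\perp$; this is the same decomposition in the correct order, packaged as an explicit formula for the adjoint rather than as an induction on the length of the collection.
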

\begin{proof} For an object $A\in \mathrm{D}^{\mathrm{p}}(X/S)$, define $L^{i}A$ inductively by
\[
L^{i+1}A\rightarrow \mathbf{L}\pi^*\mathbf{R}\pi_*\mathbf{R}\Hhom (E_{n-i},L^i A)\otimes^{\mathbf{L}} E_{n-i}\rightarrow L^{i}A\rightarrow L^{i+1}A[1],
\]
and similarly we define $R^{i}A$. Then it is straightforward to see that $L^{n+1}[n+1]$ and $R^{n+1}[-n-1]$ are left and right adjoints of the inclusion $\langle E_1,...,E_n\rangle^{\perp}\hookrightarrow \mathrm{D}^{\mathrm{p}}(X/S)$ and we apply lemma \ref{adm-1}. 
\end{proof}


\begin{lemma}
\begin{enumerate}
	\item $\langle \sigma\rangle=\langle L_i\sigma\rangle=\langle R_i\sigma\rangle$.
	\item There are relations
\begin{eqnarray*}
R_i L_i\cong L_i R_i\cong 1,& R_i R_{i+1} R_i\cong R_{i+1}R_i R_{i+1}, & L_{i} L_{i+1} L_i\cong L_{i+1}L_i L_{i+1},\\
R_i R_j=R_j R_i, & L_i L_j=L_j L_i, & |i-j|\geq 2.
\end{eqnarray*}
\end{enumerate}
\end{lemma}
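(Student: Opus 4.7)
The plan is to reduce every claim to a local statement about an exceptional pair or triple, since $R_i$ and $L_i$ alter only the entries at positions $i$ and $i+1$. The commutativity $R_i R_j = R_j R_i$ and $L_i L_j = L_j L_i$ for $|i-j|\geq 2$ is then immediate: the two mutations modify disjoint pairs of entries, so they commute on the nose, with no interaction between the defining triangles.

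For part (1), I would apply $\langle -\rangle$ to the defining triangle
\[
L_E F \to \mathbf{L}\pi^* \mathbf{R}\pi_* \mathbf{R}\Hhom(E, F) \otimes^{\mathbf{L}} E \to F \to L_E F[1].
\]
The middle term lies in $\langle E\rangle$ by $S$-linearity, which gives $L_E F \in \langle E, F\rangle$ and so $\langle L_E F, E\rangle \subseteq \langle E, F\rangle$. Conversely, the same triangle exhibits $F$ as an extension of objects in $\langle L_E F, E\rangle$, yielding the reverse inclusion. The case of $R_i$ is symmetric, and applying this pairwise statement at positions $(i,i+1)$ yields $\langle \sigma\rangle = \langle L_i \sigma\rangle = \langle R_i \sigma\rangle$.

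For $R_i L_i \cong 1$, starting from $\sigma = (\ldots, E_i, E_{i+1}, \ldots)$ and $L_i \sigma = (\ldots, L_{E_i} E_{i+1}, E_i, \ldots)$, I would show that right-mutating at position $i$ recovers $E_{i+1}$. Applying $\mathbf{R}\pi_* \mathbf{R}\Hhom(-, E_i)$ to the triangle defining $L_{E_i} E_{i+1}$ and using $\mathbf{R}\pi_* \mathbf{R}\Hhom(E_{i+1}, E_i) = 0$ together with $\mathbf{R}\pi_* \mathbf{R}\Hhom(E_i, E_i) = \mathcal{O}_S$ identifies
\[
\mathbf{R}\pi_* \mathbf{R}\Hhom(L_{E_i} E_{i+1}, E_i)^\vee \cong \mathbf{R}\pi_* \mathbf{R}\Hhom(E_i, E_{i+1}).
\]
Under this identification, the defining triangle for $R_{E_i} L_{E_i} E_{i+1}$ becomes the same triangle as the one defining $L_{E_i} E_{i+1}$, with the second and third vertices swapped, forcing $R_{E_i} L_{E_i} E_{i+1} \cong E_{i+1}$. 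The symmetric argument gives $L_i R_i \cong 1$.

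The main obstacle is the braid relation $R_i R_{i+1} R_i \cong R_{i+1} R_i R_{i+1}$ (and its mirror for $L$). Both sides modify $(E_i, E_{i+1}, E_{i+2})$ to triples spanning the same subcategory $\langle E_i, E_{i+1}, E_{i+2}\rangle$ by part (1), but one must identify the resulting objects. The natural tool is the octahedral axiom applied to the composite
\[
E_i \to \bigl(\mathbf{L}\pi^* \mathbf{R}\pi_* \mathbf{R}\Hhom(E_i, E_{i+1})\bigr)^\vee \otimes^{\mathbf{L}} E_{i+1} \to \bigl(\mathbf{L}\pi^* \mathbf{R}\pi_* \mathbf{R}\Hhom(E_i, E_{i+2})\bigr)^\vee \otimes^{\mathbf{L}} E_{i+2}
\]
and its $L$-variants, identifying the middle entry on each side as the cone of a canonical map. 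In the absolute case this is classical (see \cite{Bon89}); in the relative setting the projection formula and base-change compatibilities of $\mathbf{R}\pi_*$ allow the argument to go through verbatim, with $\mathbf{R}\pi_* \mathbf{R}\Hhom$ replacing $\mathbf{R}\Hom$. The mathematical content is the same; only the bookkeeping is tedious.
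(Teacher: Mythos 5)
Your proposal is correct and matches the paper's approach, which is simply to transport Bondal's absolute-case argument to the relative setting (the paper's proof is literally "The proof goes verbatim as the absolute case"). You flesh out the mechanics — the $S$-linearity needed for the middle term of the defining triangle to live in $\langle E\rangle$, and the projection-formula step that makes $\mathbf{R}\pi_*\mathbf{R}\Hhom$ behave like $\mathbf{R}\Hom$ in the identifications — but the underlying strategy is identical.
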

\begin{proof} The proof goes verbatim as the absolute case.
\end{proof}
\begin{lemma}
Let $(E_1,\cdots, E_n)$ be an exceptional collection of  $\mathrm{D}^{\mathrm{p}}(X/S)$. For an object $F$ of $\mathrm{D}^{\mathrm{p}}(X/S)$, we define inductively $L^i F$ and $R^i F$ by $L^0 F=R^0 F=F$, and the distinguished triangles
\begin{eqnarray*}
L^{k+1}F \rightarrow \mathbf{L}\pi^*\mathbf{R}\pi_*\mathbf{R}\Hhom (E_{n-k},L^k F)\otimes^{\mathbf{L}} E_{n-k}\rightarrow L^k F\rightarrow (L^{k+1} F)[1],\\
R^k F\rightarrow \big(\mathbf{L}\pi^*\mathbf{R}\pi_*\mathbf{R}\Hhom(R^k F,E_{k+1})\big)^\vee\otimes^{\mathbf{L}} E_{k+1}\rightarrow R^{k+1} F\rightarrow R^k F[1].
\end{eqnarray*}
Then $L^k F\in \langle E_{n-k+1},\cdots, E_{n}\rangle^{\perp}$  and 
$R^k F\in \leftidx{^\perp}{\langle E_1,\cdots,E_{k}\rangle}$ for $1\leq k\leq n$. Moreover, $\langle E_1,\cdots, E_n\rangle$ is an admissible subcategory of $\mathrm{D}^{\mathrm{p}}(X/S)$.
\end{lemma}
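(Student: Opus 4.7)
The plan is to prove the two orthogonality statements by induction on $k$, and then deduce admissibility by concatenating the resulting tower of triangles.

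For the statement $L^k F\in\langle E_{n-k+1},\dots,E_n\rangle^{\perp}$, I would induct on $k$. For the base case $k=1$, apply $\mathbf{R}\pi_*\mathbf{R}\Hhom(E_n,-)$ to the defining triangle for $L^1 F$. By the projection formula together with the isomorphism (\ref{exceptional2}), the middle term becomes $\mathbf{R}\pi_*\mathbf{R}\Hhom(E_n,F)\otimes^{\mathbf{L}}\mathcal{O}_S$, and the arrow to $\mathbf{R}\pi_*\mathbf{R}\Hhom(E_n,F)$ is the identity, since it comes from the adjunction counit. Hence $\mathbf{R}\pi_*\mathbf{R}\Hhom(E_n,L^1 F)=0$. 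For the inductive step, suppose $L^k F\in\langle E_{n-k+1},\dots,E_n\rangle^{\perp}$. To show $L^{k+1}F$ is right-orthogonal to $E_j$ for every $j$ with $n-k\le j\le n$, apply $\mathbf{R}\pi_*\mathbf{R}\Hhom(E_j,-)$ to the triangle defining $L^{k+1}F$. Setting $M=\mathbf{R}\pi_*\mathbf{R}\Hhom(E_{n-k},L^k F)$, the projection formula gives
\[
\mathbf{R}\pi_*\mathbf{R}\Hhom\bigl(E_j,\mathbf{L}\pi^*M\otimes^{\mathbf{L}}E_{n-k}\bigr)\;\cong\;M\otimes^{\mathbf{L}}\mathbf{R}\pi_*\mathbf{R}\Hhom(E_j,E_{n-k}).
\]
For $j>n-k$ this vanishes by (\ref{exceptional1}) and the right term vanishes by the induction hypothesis, so the left term vanishes too. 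For $j=n-k$ it equals $M$, and the map to $\mathbf{R}\pi_*\mathbf{R}\Hhom(E_{n-k},L^k F)=M$ is again the identity (the adjunction counit), whence the cone vanishes. This closes the induction; the argument for $R^k F\in\leftidx{^\perp}{\langle E_1,\dots,E_k\rangle}$ is entirely dual.

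For the admissibility assertion, I would apply lemma \ref{adm-1}. Concatenating the triangles $L^{k+1}F\to\mathbf{L}\pi^*M_k\otimes^{\mathbf{L}}E_{n-k}\to L^k F$ for $k=0,\dots,n-1$ (via successive application of the octahedral axiom), one obtains a distinguished triangle $L^n F\to F\to C_F$ in which $C_F$ is built by iterated extensions from objects of the form $\mathbf{L}\pi^*M_k\otimes^{\mathbf{L}}E_{n-k}$, each of which lies in $\langle E_{n-k}\rangle$ because $\langle E_1,\dots,E_n\rangle$ is $S$-linear. Hence $C_F\in\langle E_1,\dots,E_n\rangle$, while the case $k=n$ of what we just proved says $L^n F\in\langle E_1,\dots,E_n\rangle^{\perp}$. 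This verifies condition (2) of lemma \ref{adm-1} for right admissibility; the dual construction using $R^k F$ yields left admissibility, establishing that $\langle E_1,\dots,E_n\rangle$ is admissible.

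The main obstacle, such as it is, is the bookkeeping in the inductive step: one must handle the case $j=n-k$ (where the adjunction counit makes the connecting map an identity) separately from $j>n-k$ (where semiorthogonality kills the middle term outright). One also needs to be careful that all the functors used preserve the category $\mathrm{D}^{\mathrm{p}}(X/S)$, which follows from smoothness and properness of $\pi$, and that the relevant projection formula holds in this relative setting, so that the computation $\mathbf{R}\pi_*\mathbf{R}\Hhom(E_j,\mathbf{L}\pi^*M\otimes^{\mathbf{L}}E_{n-k})\cong M\otimes^{\mathbf{L}}\mathbf{R}\pi_*\mathbf{R}\Hhom(E_j,E_{n-k})$ goes through as in the absolute case.
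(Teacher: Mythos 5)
Your proof is correct and follows essentially the same route as the paper's: induction on $k$, applying $\mathbf{R}\pi_*\mathbf{R}\Hhom(E_j,-)$ to the defining triangle, using the projection formula and the definition of exceptional collection to kill the middle term for $j>n-k$ and to identify the connecting map with the identity via the adjunction counit for $j=n-k$, then concatenating the tower of triangles with the octahedral axiom and lemma \ref{adm-1} to get admissibility. The paper phrases the base case as $k=0$ (vacuously true) rather than $k=1$, but otherwise your argument matches the paper's.
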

\begin{proof} We prove the first assertion by induction on $k$. The claim for $k=0$ is empty. Suppose $L^k F\in \langle E_{n-k+1},\cdots, E_{n}\rangle^{\perp}$, then for $j>0$, we have 
\[
\Hom(E_{n-k+j}, L^k F)=0.
\]
and for $j\geq 0$
\begin{eqnarray*}
&&\mathbf{R}\pi_* \mathbf{R}\Hhom(E_{n-k+j},\mathbf{L}\pi^*\mathbf{R}\pi_*\mathbf{R}\Hhom (E_{n-k},L^k F)\otimes^{\mathbf{L}} E_{n-k})\\
&=& \mathbf{R}\pi_*\big(\mathbf{L}\pi^*\mathbf{R}\pi_*\mathbf{R}\Hhom (E_{n-k},L^k F)\otimes^{\mathbf{L}} E_{n-k}\otimes^{\mathbf{L}} E_{n-k+j}^{\vee} \big)\\
&=& \mathbf{R}\pi_*\mathbf{R}\Hhom (E_{n-k},L^k F)\otimes^{\mathbf{L}} \mathbf{R}\pi_*(E_{n-k}\otimes^{\mathbf{L}} E_{n-k+j}^{\vee})\\
&=& \mathbf{R}\pi_*\mathbf{R}\Hhom (E_{n-k},L^k F)\otimes^{\mathbf{L}} \mathbf{R}\pi_*\mathbf{R}\Hhom(E_{n-k+j},E_{n-k})
\end{eqnarray*}
which vanishes if $j>0$ by the definition of exceptional collections, and is isomorphic to $\mathbf{R}\pi_*\mathbf{R}\Hhom (E_{n-k},L^k F)$ if $j=0$, and in this case the canonical map to  $\mathbf{R}\pi_*\mathbf{R}\Hhom (E_{n-k},L^k F)$ induced by the map
\[
\mathbf{L}\pi^*\mathbf{R}\pi_*\mathbf{R}\Hhom (E_{n-k},L^k F)\otimes^{\mathbf{L}} E_{n-k}\rightarrow L^k F
\]
is the identity. Thus $\mathbf{R}\pi_*\mathbf{R}\Hhom (E_{n-k},L^{k+1} F)=0$ and therefore $L^{k+1}F\in \langle E_{n-k},\cdots,E_n\rangle^\perp$. The proof of the conclusion for $R^k F$ is similar. The second assertion follows from the first one by the octahedral axiom.
\end{proof}

\subsection{Helices}
For an exceptional collection $\sigma=(E_1,\cdots,E_n)$ of $\mathrm{D}^{\mathrm{p}}(X/S)$, define inductively
\begin{equation}\label{generate1}
E_{n+i}=R^{n-1} E_i,\quad E_{n-i}=L^{n-1} E_i.
\end{equation}
\begin{definition}
Let $S$ be a locally noetherian scheme.
Suppose $X/S$ is smooth and proper of pure relative dimension $d$.
We call the sequence $\{E_i\}_{-\infty\leq i\leq \infty}$ a \emph{helix of period $n$}  if $E_i\cong E_{n+i}\otimes^{\mathbf{L}}\omega_{X/S}[d-n+1]$ for all $i$. We call an exceptional collection $\sigma=(E_1,\cdots,E_n)$ \emph{a thread of a helix} if the infinite sequence \eqref{generate1}  generated by $\sigma$ is a helix of period $n$.
\end{definition}

\begin{lemma}\label{rep1}
Let $(E_1,\cdots, E_n)$ be an exceptional collection of  $\mathrm{D}^{\mathrm{p}}(X/S)$. Then there is a canonical isomorphism
\[
\mathbf{R}\pi_* \mathbf{R}\Hhom(E_n, F)^\vee\xrightarrow{\sim} \mathbf{R}\pi_* \mathbf{R}\Hhom(F, L^{n-1} E_n[n-1])
\]
for $F\in \langle E_1,\cdots,E_n\rangle$.
\end{lemma}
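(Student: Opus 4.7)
The plan is to interpret $G:=L^{n-1}E_n[n-1]$ as a left dual of $E_n$ inside the admissible subcategory $\langle E_1,\cdots,E_n\rangle$, and to exploit the semiorthogonal decomposition
\[
\langle E_1,\cdots,E_n\rangle = \langle \langle E_1,\cdots,E_{n-1}\rangle,\langle E_n\rangle\rangle,
\]
which is available because $E_n$ is left-orthogonal to $\langle E_1,\cdots,E_{n-1}\rangle$ by exceptionality, and because both pieces are $S$-linear and admissible by lemma \ref{lem-adm} and corollary \ref{adm-2}. Given $F\in \langle E_1,\cdots,E_n\rangle$, take the associated semiorthogonal triangle $F'\to F\to F_n\to F'[1]$ with $F'\in\langle E_1,\cdots,E_{n-1}\rangle$ and $F_n\in\langle E_n\rangle$. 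Exceptionality kills $\mathbf{R}\pi_*\mathbf{R}\Hhom(E_n,F')$, so setting $M:=\mathbf{R}\pi_*\mathbf{R}\Hhom(E_n,F)$ one has $F_n\cong \mathbf{L}\pi^*M\otimes^{\mathbf{L}}E_n$, and the left-hand side of the target isomorphism becomes canonically $M^\vee$.

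The technical heart is to establish two properties of $G$: (a) $\mathbf{R}\pi_*\mathbf{R}\Hhom(E_j,G)=0$ for $j=1,\cdots,n-1$, and (b) $\mathbf{R}\pi_*\mathbf{R}\Hhom(E_n,G)\cong \mathcal{O}_S$ canonically. Property (a) will be deduced by combining the orthogonality $L^{n-1}E_n\in\langle E_2,\cdots,E_n\rangle^\perp$ supplied by the preceding lemma with one further step tracing the final mutation triangle to pick up the orthogonality to $E_1$. Property (b) is obtained by iterating the mutation triangles defining $L^kE_n$: applying $\mathbf{R}\pi_*\mathbf{R}\Hhom(E_n,-)$ and using the exceptionality vanishings $\mathbf{R}\pi_*\mathbf{R}\Hhom(E_n,E_j)=0$ for $j<n$ to kill the middle terms, one obtains a recursion $\mathbf{R}\pi_*\mathbf{R}\Hhom(E_n,L^{k+1}E_n)\cong \mathbf{R}\pi_*\mathbf{R}\Hhom(E_n,L^kE_n)[-1]$ that starts from $\mathcal{O}_S$ and produces $\mathcal{O}_S[-(n-1)]$ after $n-1$ steps, absorbed by the shift $[n-1]$ in $G$.

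Granting (a) and (b), applying $\mathbf{R}\pi_*\mathbf{R}\Hhom(-,G)$ to the semiorthogonal triangle and extending (a) from the generators $E_1,\cdots,E_{n-1}$ to all of $\langle E_1,\cdots,E_{n-1}\rangle$ by $S$-linearity and triangulated induction forces $\mathbf{R}\pi_*\mathbf{R}\Hhom(F',G)=0$. The projection formula together with (b) then gives $\mathbf{R}\pi_*\mathbf{R}\Hhom(F,G)\cong \mathbf{R}\pi_*\mathbf{R}\Hhom(F_n,G)\cong M^\vee$, matching the left-hand side. Canonicity and naturality in $F$ follow from the naturality of the semiorthogonal projection. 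The main obstacle is the inductive calculation for (b), where the $[-1]$ shifts from each mutation triangle must accumulate precisely to $[-(n-1)]$ in the $S$-relative setting, together with the extra step for (a) at $j=1$ which reaches slightly beyond the preceding lemma.
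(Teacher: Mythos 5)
Your proof takes a genuinely different route from the paper's. The paper is terse: it invokes the proof of Bondal's absolute statement [Bon89, 4.2] to produce a natural map $\mathbf{R}\pi_*\mathbf{R}\Hhom(E_n,F)^\vee\to\mathbf{R}\pi_*\mathbf{R}\Hhom(F,L^{n-1}E_n[n-1])$, then uses the fiberwise criterion of lemma \ref{fiber1} and Bondal's result on each geometric fiber, plus the fact that a map of perfect complexes which is a quasi-isomorphism fiberwise is a quasi-isomorphism. Your argument, by contrast, works directly in the relative setting by setting up the $S$-linear semiorthogonal decomposition, computing $\mathbf{R}\pi_*\mathbf{R}\Hhom(E_j,G)$ for all $j$, and assembling the answer via the projection formula. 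This is more self-contained (no citation to the absolute case) and gives a cleaner conceptual picture of $L^{n-1}E_n[n-1]$ as a left dual; the paper's route is shorter and robust against bookkeeping errors precisely because it outsources the computation to Bondal.

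Two details in your writeup need repair, both tied to a convention inherited from the paper's text. First, if $L^k$ is read literally as defined in the lemma preceding rep1 — i.e., $L^1 = L_{E_n}$, $L^2 = L_{E_{n-1}}L_{E_n}$, etc. — then $L^1 E_n = L_{E_n}E_n = 0$ (the evaluation map for an exceptional object is an isomorphism), so $L^{n-1}E_n = 0$ and the lemma degenerates. The intended reading, forced by the helix identity $E_0 \cong E_n\otimes\omega_{X/S}[\cdot]$ and by the $k=0$ step of your own recursion for (b), is $L^{n-1}E_n = L_{E_1}L_{E_2}\cdots L_{E_{n-1}}E_n$, i.e., $L^{n-1}$ applied with respect to the sub-collection $(E_1,\ldots,E_{n-1})$. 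With that reading your recursion for (b) is correct: each middle term vanishes because $\mathbf{R}\pi_*\mathbf{R}\Hhom(E_n,E_{n-k})=0$ for $1\le k\le n-1$, and one accumulates exactly $[-(n-1)]$. But then your claim (a) is \emph{directly} the statement $L^{n-1}E_n \in \langle E_1,\ldots,E_{n-1}\rangle^\perp$ coming from the preceding lemma applied to the shorter collection; the phrase ``$L^{n-1}E_n\in\langle E_2,\ldots,E_n\rangle^\perp$ plus one further step to pick up $E_1$'' mixes the two conventions (it is what the \emph{literal} reading would give, and the ``further step'' cannot produce orthogonality to $E_1$ from that starting point). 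Second, the semiorthogonal projection triangle is $F_n\to F\to F'\to F_n[1]$ with $F_n\in\langle E_n\rangle$ and $F'\in\langle E_1,\ldots,E_{n-1}\rangle = E_n^\perp$, not $F'\to F\to F_n$ as you wrote; this is harmless for the long-exact-sequence vanishings you use, but should be stated correctly. Once these two conventions are fixed, your argument goes through and gives a legitimate alternative proof.
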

\begin{proof} By the proof of \cite[4.2]{Bon89} there is a natural homomorphism
\[
\mathbf{R}\pi_* \mathbf{R}\Hhom(E_n, F)^\vee\rightarrow \mathbf{R}\pi_* \mathbf{R}\Hhom(F, L^{n-1} E_n[n-1])
\] 
and lemma \ref{fiber1} reduces the conclusion to the absolute case, then use the conclusion of \cite[4.2]{Bon89}. 
\end{proof}

\begin{lemma}\label{helix3}
Let $E_1,\cdots, E_n$ be an exceptional sequence of length $n$. It is a thread of a helix of period $n$ if and only if $E_i\cong R^{n-1} E_{i}\otimes \omega_{X/S}[d-n+1] $ for $i=1,\cdots,n$.
\end{lemma}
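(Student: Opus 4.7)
The ``only if'' direction is immediate from the definition of a helix of period $n$, so the content lies in the converse. My plan is to prove the sufficiency by a strong induction on the index $j$, propagating the single-window congruence $E_{n+i}\cong E_i\otimes^{\mathbf{L}} M$ (with $M:=\omega_{X/S}^{-1}[n-1-d]$) from the window $1\leq i\leq n$ given by the hypothesis to every $j\in\mathbb{Z}$.

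The principal lemma I will first record is that mutations of exceptional pairs commute with tensoring by an invertible object: if $(F,G)$ is an exceptional pair in $\mathrm{D}^{\mathrm{p}}(X/S)$ and $M$ is an invertible object, then
\[
\mathbf{R}\pi_*\mathbf{R}\Hhom(F\otimes^{\mathbf{L}} M,\,G\otimes^{\mathbf{L}} M)\cong \mathbf{R}\pi_*\mathbf{R}\Hhom(F,G),
\]
and tensoring the defining triangle of $R_G F$ (respectively $L_F G$) by $M$ produces the defining triangle of the right (respectively left) mutation of the $M$-twisted pair. Iterating, the right and left mutations of an exceptional collection twisted uniformly by $M$ are the original mutations twisted by $M$.

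With this in hand, I argue by strong induction on $j\geq 1$ that $E_{n+j}\cong E_j\otimes^{\mathbf{L}} M$. The base cases $1\leq j\leq n$ are the hypothesis. For the inductive step $j\geq n+1$, the formula $E_{n+j}=R^{n-1}E_j=R_{E_{j+n-1}}\cdots R_{E_{j+1}}E_j$ from (\ref{generate1}), combined with the inductive hypothesis applied to the indices $j-n,\dots,j-1$ (all lying in $[1,j-1]$ since $j\geq n+1$), identifies the tuple $(E_j,E_{j+1},\dots,E_{j+n-1})$ with $(E_{j-n}\otimes^{\mathbf{L}} M,\dots,E_{j-1}\otimes^{\mathbf{L}} M)$. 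The commuting principle then yields $E_{n+j}\cong R^{n-1}E_{j-n}\otimes^{\mathbf{L}} M$, and the tautology $R^{n-1}E_{j-n}=E_j$ coming from the very definition (\ref{generate1}) of the extended sequence closes the induction. The induction for $j\leq 0$ is the mirror argument using $L^{n-1}$ and $M^{-1}$.

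I expect the only real obstacle to be the clean verification of the commuting principle in the relative $S$-linear setting; once that is in place, the remainder is a bookkeeping induction that merely propagates the hypothesis from one window to the next via the uniform $M$-twist.
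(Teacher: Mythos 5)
Your proof is correct and hinges on the same observation the paper's proof uses, namely that mutations of an exceptional collection commute with a uniform twist by the invertible object $\omega_{X/S}[d-n+1]$. The paper packages the window-to-window propagation as a single telescoping computation applying $(L_1\cdots L_{n-1})^n$ to the twisted collection and cancelling it against $(R_{n-1}\cdots R_1)^n$ via the inverse relations, whereas you unroll the same mechanism into an explicit strong induction on the index; the underlying idea is identical.
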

\begin{proof} We compute
\begin{eqnarray*}
(E_{-n},\cdots,E_{-1})&=&(L_1 L_2\cdots L_{n})^{n}(E_1,\cdots,E_n)\\
&=& (L_1 L_2\cdots L_{n})^{n}\big((E_1,\cdots,E_n)\otimes \omega_{X/S}^{-1}[-(d-n+1)]\big)\otimes \omega_{X/S}[d-n+1]\\{}
&=& \big((L_1 L_2\cdots L_{n})^{n}(R_n \cdots R_1)^n(E_1,\cdots,E_n)\big)\otimes \omega_{X/S}[d-n+1]\\
&=& (E_1,\cdots,E_n)\otimes \omega_{X/S}[d-n+1].
\end{eqnarray*}
\end{proof}

\subsection{Observations on ranks}
Since $\pi:X\rightarrow S$ is smooth, an $S$-perfect complex $E\in \mathrm{D}^{\mathrm{p}}(X/S)$ is in fact $\mathcal{O}_X$-perfect. Taking a local representative of $E$ as a bounded complex of locally free sheaves of finite ranks
\[
\cdots\rightarrow E^{i-1}\rightarrow E^i\rightarrow E^{i+1}\rightarrow\cdots
\]
and define the rank of $E$ to be 
\[
\mathrm{rank}(E)=\sum_{i=-\infty}^{\infty}(-1)^{i}\mathrm{rank}(E^i).
\]
Then $\mathrm{rank}(E)$ is a well-defined locally constant function on $X$. 
\begin{lemma}\label{rank1}
The following composition of natural maps
\begin{eqnarray*}\label{rank1.5}
\mathcal{O}_X\rightarrow E^\vee\otimes^{\mathbf{L}} E\rightarrow \mathcal{O}_X
\end{eqnarray*}
is the multiplication by $\mathrm{rank}(E)$.
\end{lemma}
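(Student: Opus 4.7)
The claim is local on $X$, both in the source and target sheaf $\mathcal{O}_X$ and in the given composition, so I can restrict to an open where $E$ is represented by a bounded complex $E^\bullet = (\cdots \to E^{i-1}\to E^i\to E^{i+1}\to\cdots)$ of finite free $\mathcal{O}_X$-modules. On such an open, $\mathrm{rank}(E)$ is the constant $\sum_i (-1)^i \mathrm{rank}(E^i)$. My strategy is to fix this local representative and then reduce to the two elementary building blocks, namely a finite free module in a single degree and a shift operation.

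First I would establish the base case: if $E = \mathcal{O}_X^{\oplus n}$ sits in degree $0$, then $E^\vee \otimes^{\mathbf{L}} E = E^\vee \otimes E$ is the endomorphism sheaf of $E$; the coevaluation $\mathcal{O}_X\to E^\vee\otimes E$ sends $1$ to the identity section $\sum_i e_i^*\otimes e_i$, and the evaluation $E^\vee\otimes E\to \mathcal{O}_X$ is the usual trace pairing, so their composition sends $1$ to $\sum_{i=1}^{n} 1 = n = \mathrm{rank}(E)$. Second, I would check compatibility with shifts: for the shift $E[1]$ one has $(E[1])^\vee = E^\vee[-1]$, and the Koszul sign rule interchanging the two factors in $E^\vee[-1]\otimes^{\mathbf{L}}E[1]$ multiplies the pairing by $-1$. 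Hence the composition for $E[1]$ equals $-1$ times that for $E$, which matches $\mathrm{rank}(E[1]) = -\mathrm{rank}(E)$.

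Third, I would argue additivity over distinguished triangles. Given a triangle $A\to B\to C\to A[1]$ in $\mathrm{D}^{\mathrm{p}}(X)$, the coevaluation and evaluation maps are natural, and the rank satisfies $\mathrm{rank}(B) = \mathrm{rank}(A) + \mathrm{rank}(C)$. The key technical point is that the pairing
\[
\mathcal{O}_X\to B^\vee\otimes^{\mathbf{L}} B\to \mathcal{O}_X
\]
splits, via the triangle, as the sum of the corresponding pairings for $A$ and for $C$; this is a standard property of the categorical trace in a symmetric monoidal triangulated category with duals, and can be verified directly by choosing a quasi-isomorphic representative of $B$ built from $A$ and $C$.

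Finally I would run an induction on the amplitude of the complex $E^\bullet$: writing $E$ as an extension of a finite free module $E^k[-k]$ (for an extremal $k$) by the truncation, additivity plus the base case plus the shift compatibility yield
\[
\mathcal{O}_X\to E^\vee\otimes^{\mathbf{L}} E\to \mathcal{O}_X\;=\;\sum_i (-1)^i \mathrm{rank}(E^i)\;=\;\mathrm{rank}(E).
\]
The main obstacle I anticipate is purely bookkeeping: keeping track of the Koszul signs under shifts, and verifying carefully that the categorical trace is additive over triangles in the form that one needs here (rather than merely after applying $H^0$); once that naturality statement is in hand, everything else is immediate from the base case.
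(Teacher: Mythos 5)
Your proof is correct, but it takes a genuinely different and more indirect route than the paper's. The paper simply writes out a local representative $E^\bullet$ of $E$ as a bounded complex of locally free sheaves, observes that the coevaluation $\mathcal{O}_X\to E^\vee\otimes^{\mathbf{L}}E$ sends a local section $f$ to the termwise multiplication $(m_f:E^i\to E^i)_i$, and that the evaluation sends this to $\sum_i(-1)^i\mathrm{tr}(m_f|_{E^i})=\sum_i(-1)^i\mathrm{rank}(E^i)\,f$ — a one-shot computation. You instead run a d\'{e}vissage: base case a free module in one degree, a sign check under shift, and then induction on amplitude via additivity of the categorical trace over triangles. Both work, but the inductive route quietly relies on a nontrivial point — additivity of categorical traces over distinguished triangles is \emph{not} automatic in a symmetric monoidal triangulated category (this is precisely the content of May's ``additivity of traces'' theorem, which needs extra compatibility axioms). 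Your escape hatch, ``verify directly by choosing a representative of $B$ built from $A$ and $C$,'' is what saves the argument: the triangles you actually use (naive truncations of $E^\bullet$) are termwise-split short exact sequences of complexes on which the identity endomorphism is filtered, and for such triangles the additivity of termwise traces is elementary. It would strengthen the write-up to say this explicitly rather than appeal to a ``standard property'' in general. Net comparison: the paper's direct trace computation is shorter, self-contained, and avoids the additivity subtlety entirely; your argument is more conceptual and would generalize more readily, but must be careful about exactly which triangles the additivity is invoked for.
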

\begin{proof} Write $E=E^\bullet$ as a bounded complex of locally free coherent sheaves. Let $f$ be a local section of $\mathcal{O}_X$ and denote by $m_f$ the multiplication by $f$. The first map sends $f$ to $(f_i=m_f)_{i\in \mathbb{Z}}$ where $f_i: E^i\rightarrow E^i$. The second map sends $(f_i)_{i\in \mathbb{Z}}$ which represents an element of $\mathscr{H}^0( E^\vee\otimes^{\mathbf{L}} E)$, to $\sum_i (-1)^i \mathrm{tr}(f_i)$. Composing the two maps we obtain (\ref{rank1.5}).
\end{proof}

\begin{lemma}\label{rank2}
Let $S$ be a field, $(E_1,\cdots, E_n)$ be a full exceptional collection of  $\mathrm{D}^{\mathrm{p}}(X/S)=\mathrm{D}^{\mathrm{b}}(X)$. Then $\mathrm{gcd}(\mathrm{rank}(E_1),\cdots,\mathrm{rank}(E_n))=1$.
\end{lemma}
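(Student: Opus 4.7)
The plan is to pass to $K$-theory. Since $\pi:X\to \Spec S$ is smooth and proper and $(E_1,\dots,E_n)$ is a full exceptional collection of $\mathrm{D}^{\mathrm{b}}(X)$, I first want to show that the classes $[E_1],\dots,[E_n]$ generate $K_0(X)$ as an abelian group. Fullness means $\langle E_1,\dots,E_n\rangle=\mathrm{D}^{\mathrm{b}}(X)$, and by lemma \ref{lem-adm} together with lemma \ref{adm-1}, the semiorthogonal decomposition $\mathrm{D}^{\mathrm{b}}(X)=\langle E_1,\dots,E_n\rangle$ yields for each object $F$ a finite filtration whose subquotients lie in the subcategories $\langle E_i\rangle$ generated by a single exceptional object. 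Since an exceptional object $E_i$ has $\mathrm{End}^*(E_i)=S$ concentrated in degree $0$, every object of $\langle E_i\rangle$ is a finite direct sum of shifts of $E_i$, and hence its class in $K_0$ is an integer multiple of $[E_i]$. Iterating the triangles coming from the filtration, we get $[F]\in \sum_{i=1}^n \mathbb{Z}\,[E_i]$.

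Next, the rank function defined before lemma \ref{rank1} is additive on distinguished triangles (one can compute it locally on a bounded complex of free modules), so it descends to a group homomorphism
\[
\mathrm{rank}:K_0(X)\longrightarrow \mathbb{Z},\qquad [E_i]\mapsto \mathrm{rank}(E_i).
\]
Here I use that $X$ is connected (which is implicit in the setting of theorem \ref{def-FEC0}, via geometrically connected fibres) so the locally constant function $\mathrm{rank}$ has a single integer value. The image of this homomorphism is therefore the subgroup of $\mathbb{Z}$ generated by $\mathrm{rank}(E_1),\dots,\mathrm{rank}(E_n)$, which equals $d\mathbb{Z}$ for $d=\gcd(\mathrm{rank}(E_1),\dots,\mathrm{rank}(E_n))$.

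Finally, $\mathcal{O}_X\in \mathrm{D}^{\mathrm{b}}(X)$ and $\mathrm{rank}(\mathcal{O}_X)=1$, so $1$ lies in the image of $\mathrm{rank}$. Hence $d\mid 1$, giving $d=1$. The only step that requires care is the generation statement for $K_0$; the subtlety is making sure the filtration coming from the semiorthogonal decomposition produces integer, rather than merely $\mathbb{Z}$-linear combinations that might involve classes of arbitrary objects of the $\langle E_i\rangle$—which is handled by the observation that over a field an exceptional object generates a copy of $\mathrm{D}^{\mathrm{b}}(\Spec S)$, so its $K_0$ is $\mathbb{Z}\cdot[E_i]$.
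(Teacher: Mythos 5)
Your argument is correct and is essentially the same as the paper's: the paper simply observes that the $[E_i]$ form a basis of $K_0(X)$ and leaves the rest implicit, whereas you spell out the rank homomorphism $K_0(X)\to\mathbb{Z}$ and the fact that $[\mathcal{O}_X]$ witnesses $1$ in its image. No gap; yours is just a fully expanded version of the same one-line proof.
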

\begin{proof} Since $(E_1,\cdots, E_n)$ is a full exceptional collection, the classes $[E_i]$ form a basis of $K_0(X)$, thus the conclusion follows.
\end{proof}

\section{Deformation of full exceptional collections}
\subsection{Existence of deformations}

We need to recall Lieblich's theorem of the representability of the moduli of objects in $\mathrm{D}^{\mathrm{p}}(X/S)$ \cite{Lieb06}. 

Let $\pi:X\rightarrow S$ be a  flat morphism  between schemes. An $S$-perfect complex $E$ is called \emph{gluable} if $\mathbf{R}\pi_* \mathbf{R}\Hhom(E,E)\in \mathrm{D}^{\geq 0}(S)$, and \emph{universally gluable} if this remains true for arbitrary base change $T\rightarrow S$. Let $\mathscr{D}_{\mathrm{pug}}^\mathrm{b}(X/S)$ be the following groupoid fibered over the  category of $S$-schemes,
\[
T\mapsto \{\mbox{universally gluable}\ T\mbox{-perfect complexes on}\ X_T\}.
\]
\begin{theorem}\label{Lieb1}\emph{\cite[4.2.1]{Lieb06}}
Let $\pi: X\rightarrow S$ be a proper flat morphism of finite presentation. Then $\mathscr{D}_{\mathrm{pug}}^\mathrm{b}(X/S)$ is an Artin stack locally of finite presentation, locally quasi-separated with separated diagonal, over $S$.
\end{theorem}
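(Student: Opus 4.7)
The plan is to verify Artin's representability criterion for the stack $\mathscr{D}_{\mathrm{pug}}^{\mathrm{b}}(X/S)$. This reduces to checking: (a) it is a stack for the fppf topology with representable diagonal, (b) it is limit preserving (locally of finite presentation), (c) it admits a deformation-obstruction theory controlled by a coherent cohomology, (d) formal deformations are effective, and (e) openness of versality. The three structural ingredients I would assemble beforehand are: the formation of $\mathbf{R}\pi_{*}\mathbf{R}\mathscr{H}\!om(E,F)$ for $S$-perfect $E,F$ and its compatibility with arbitrary base change on $S$; a derived Grothendieck existence theorem for $\pi$ proper and flat; and the cotangent-complex description of infinitesimal deformations of a perfect complex on $X_{T}$.

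First I would verify limit-preservation. A universally gluable $T$-perfect complex is, Zariski-locally, a bounded complex of flat finitely presented modules; standard finite-presentation arguments (writing $T$ as a filtered limit of finite type affines) propagate both perfectness and the condition $\mathbf{R}\pi_{*}\mathbf{R}\mathscr{H}\!om(E,E)\in \mathrm{D}^{\geq 0}$, using that the universal gluability survives pullback and that cohomology commutes with filtered colimits.

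Second, I would set up the deformation-obstruction theory. For a square-zero extension $T'\supset T$ with ideal $I$, the obstruction to deforming $E\in \mathrm{D}^{\mathrm{p}}(X_{T}/T)$ to $X_{T'}$ lies in $\mathrm{Ext}^{2}_{X_{T}}(E,E\otimes^{\mathbf{L}}\pi_{T}^{*}I)$, the torsor of deformations is a principal homogeneous space under $\mathrm{Ext}^{1}$, and the automorphisms form $\mathrm{Ext}^{0}$. The universal gluability condition $\mathbf{R}\pi_{*}\mathbf{R}\mathscr{H}\!om(E,E)\in \mathrm{D}^{\geq 0}(S)$ is exactly what makes these groups finite and well-behaved after base change, so the obstruction theory is coherent in the sense of Artin. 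For openness of versality I would invoke Artin's criterion (or the Artin–Rydh reformulation), granted this coherent obstruction theory.

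The main obstacle, as usual, is effectivity of formal deformations: given a system $\{E_{n}\}$ on $X_{S_{n}}$ compatible under restriction, where $S_{n}=\mathrm{Spec}(A/\mathfrak{m}^{n+1})$ and $A$ is a complete Noetherian local ring, I need to produce a single $E\in\mathrm{D}^{\mathrm{p}}(X_{\mathrm{Spec}(A)})$ whose restrictions recover the $E_{n}$. This is a derived version of Grothendieck's existence theorem, and the argument I would follow is to first use the gluability condition (together with $\pi$ proper) to deduce that $\mathbf{R}\pi_{*}\mathbf{R}\mathscr{H}\!om$-groups satisfy Mittag-Leffler and the Theorem on Formal Functions, then to lift a locally free resolution compatibly in the system by obstruction vanishing arguments, and finally to algebraize using formal GAGA for coherent sheaves applied degree by degree to the complex. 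The representability and separatedness of the diagonal follow along the same lines: the sheaf $\underline{\mathrm{Isom}}(E,F)$ sits as an open subscheme of the Hom scheme, and the Hom functor is representable by an affine $S$-scheme of finite presentation via the standard arguments of Grothendieck's \emph{quot-hom} construction adapted to $S$-perfect complexes, using $\mathbf{R}\pi_{*}\mathbf{R}\mathscr{H}\!om(E,F)\in \mathrm{D}^{\geq 0}(S)$.
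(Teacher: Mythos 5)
This statement is a quoted theorem from Lieblich's paper, cited as \cite[4.2.1]{Lieb06}; the paper under review gives no proof of it at all, so there is nothing internal to compare your argument against. That said, your outline does track the broad strategy of Lieblich's proof: it is an application of Artin's representability criterion, and the key structural inputs you name (base-change for $\mathbf{R}\pi_{*}\mathbf{R}\mathscr{H}\!om$, a derived Grothendieck existence theorem, cotangent-complex deformation theory) are the right ones.

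Two points deserve sharpening. First, you attribute finiteness and good base-change behavior of the deformation groups to the universal gluability condition, but that condition (\emph{i.e.}, $\mathbf{R}\pi_{*}\mathbf{R}\mathscr{H}\!om(E,E)\in\mathrm{D}^{\geq 0}$) is really what makes $\mathscr{D}_{\mathrm{pug}}^{\mathrm{b}}$ a \emph{stack} — it kills the negative self-Exts that would otherwise obstruct gluing of local descent data and make the automorphism sheaf misbehave. The finiteness and coherence of the $\mathrm{Ext}^{i}$ over $T$ come from $\pi$ being proper, flat, and finitely presented together with $E$ being $T$-perfect, not from gluability. Second, and more seriously, the effectivity step is not a matter of "lifting a locally free resolution compatibly and algebraizing by formal GAGA degree by degree." Along an infinitesimal tower $S_{n}=\mathrm{Spec}(A/\mathfrak{m}^{n+1})$, the individual terms of a bounded complex of locally frees may not form a compatible inverse system, and even if they did, the differentials do not automatically algebraize compatibly; a complex is more data than its terms. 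Lieblich's actual argument is a genuine derived Grothendieck existence theorem (his §3.6), which uses properness, perfectness, and the $\mathrm{D}^{\geq 0}$ condition in an essential way to descend a pro-object in derived categories to an honest perfect complex on $X_{\mathrm{Spec}\,A}$. Your sketch would need to be replaced by (or reduced to) that derived existence theorem to close the gap.
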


We also need the following theorem on the deformation and obstruction theory of the perfect complexes, see \cite[3.1.1]{Lieb06}, \cite{Low05} and \cite{HT10}.

\begin{theorem}
Let $I\rightarrow A\rightarrow A_0\rightarrow 0$ be a square zero extension of rings, and $X$ a scheme flat quasi-separated and of finite presentation and over $A$, $E_0\in \mathrm{D}^\mathrm{b}(X_{A_0})$. 
\begin{itemize}
   \item[(1)] There is an element $\omega(E_0)$ in $\Ext^2_{X_{A_0}}(E_0,E_0\otimes^{\mathbf{L}}_{A_0}I)$ which vanishes if and only if there exits  $E\in \mathrm{D}^\mathrm{b}(X_{A})$ such that $\mathbf{L}\iota^* E\cong E_0$, where $\iota:\Spec(A_0)\hookrightarrow \Spec(A)$ is the embedding.
   \item[(2)] If the deformation $E$ exists, the set of deformations of $E_0$ is a torsor under 
   \[
      \Ext^1_{X_{A_0}}(E_0,E_0\otimes^{\mathbf{L}}_{A_0}I).
   \]
 \end{itemize} 
\end{theorem}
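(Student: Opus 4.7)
The plan is to combine a local strict-model construction with a globalization step, in the spirit of Illusie, Lieblich and Huybrechts--Thomas. The guiding principle is that the deformation theory of a single object in a derived category is governed by its derived endomorphism complex: the $\Ext$-groups in the statement are really the cohomology of $\mathbf{R}\mathrm{End}_{X_{A_0}}(E_0)\otimes^{\mathbf{L}}_{A_0}I$, so one expects $\Ext^2$ for obstructions and $\Ext^1$ for the torsor of lifts, by the standard DGLA template.

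First I would reduce to strict complexes. Choose an affine open cover $\{U_\alpha\}$ of $X_A$ such that on each $U_\alpha$ the restriction of $E_0$ is represented by a bounded complex $K_\alpha^\bullet$ of finitely generated projective $\mathcal{O}_{U_\alpha\cap X_{A_0}}$-modules. Each $K_\alpha^i$ lifts (noncanonically) to a projective $\mathcal{O}_{U_\alpha}$-module $\tilde K_\alpha^i$, because projectivity is detected by idempotents and idempotents lift across square-zero thickenings. Pick arbitrary lifts $\tilde d^i\colon \tilde K_\alpha^i\to \tilde K_\alpha^{i+1}$ of the differentials. The composition $\tilde d^{i+1}\tilde d^i$ vanishes modulo $I$, hence factors as a morphism $\omega_\alpha^i\colon K_\alpha^i\to K_\alpha^{i+2}\otimes I$. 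A direct check using that $\tilde d^3=0$ modulo $I^2$ shows $\omega_\alpha$ is a $2$-cocycle in $\mathrm{Hom}^\bullet(K_\alpha^\bullet,K_\alpha^\bullet\otimes I)$, whose class is independent of the choices; its vanishing is equivalent to the existence of a strict lift on $U_\alpha$.

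The main obstacle is globalization: the local $\omega_\alpha$ must be assembled into a single element of $\Ext^2_{X_{A_0}}(E_0,E_0\otimes^{\mathbf{L}}_{A_0}I)$, and the intermediate gluing data (isomorphisms between two local lifts on overlaps) are only defined in the derived category. I would handle this in one of two ways. The first is Lieblich's route: invoke theorem \ref{Lieb1}, observe that the tangent complex of $\mathscr{D}^{\mathrm{b}}_{\mathrm{pug}}(X/A)$ at $E_0$ is $\mathbf{R}\mathrm{End}_{X_{A_0}}(E_0)[1]$, and deduce both (1) and (2) from the infinitesimal lifting criterion for Artin stacks. The second, more hands-on, is the truncated-Atiyah-class argument of Huybrechts--Thomas: one pairs the Kodaira--Spencer class $\kappa\in \Ext^1_{X_{A_0}}(L_{X_{A_0}/A_0},I\otimes_{A_0}\mathcal{O}_{X_{A_0}})$ of the deformation $X/A$ with the Atiyah class $\mathrm{At}(E_0)\in \Ext^1_{X_{A_0}}(E_0,E_0\otimes^{\mathbf{L}}L_{X_{A_0}/A_0})$ via composition, producing a global class in $\Ext^2_{X_{A_0}}(E_0,E_0\otimes^{\mathbf{L}}_{A_0}I)$; a local computation on $U_\alpha$ identifies it with $\omega_\alpha$.

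Part (2) then follows by the same local-to-global reasoning. Given two lifts $E,E'$ of $E_0$, locally both are realized as strict lifts of the same $K_\alpha^\bullet$, and the difference of their lifted differentials is a $1$-cochain $\eta_\alpha$ in $\mathrm{Hom}^\bullet(K_\alpha^\bullet,K_\alpha^\bullet\otimes I)$, which one checks is a cocycle; its class in $\Ext^1$ is independent of the strict models chosen, patches globally, and measures the torsor shift. Conversely, modifying the lifted differentials of a fixed $E$ by a cocycle representative produces a new lift, and the resulting action of $\Ext^1_{X_{A_0}}(E_0,E_0\otimes^{\mathbf{L}}_{A_0}I)$ is free and transitive, giving the claimed torsor structure.
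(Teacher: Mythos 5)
The paper does not prove this statement; it quotes it as known, citing \cite[3.1.1]{Lieb06}, \cite{Low05} and \cite{HT10}. So there is no in-paper argument to compare against, and your proposal has to stand on its own.

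As an outline it is morally correct: local strict models on an affine cover, lifting of projectives via lifting of idempotents, the local obstruction cocycle $\omega_\alpha$ coming from $\tilde d^2$, and identification of the obstruction and torsor spaces with $\Ext^2$ and $\Ext^1$ of $\mathbf{R}\Hhom(E_0,E_0\otimes^{\mathbf{L}}_{A_0}I)$. But the part you flag as ``the main obstacle'' --- globalization --- is precisely the content of the theorem, and your two proposed resolutions both leave a genuine gap. The first (invoke theorem \ref{Lieb1} and read off the tangent complex of $\mathscr{D}^{\mathrm{b}}_{\mathrm{pug}}$) is circular: in Lieblich's paper the local deformation/obstruction theory 3.1.1 is an ingredient in the proof of the representability theorem 4.2.1, not a consequence of it, and the claim that the tangent complex ``is'' $\mathbf{R}\mathrm{End}(E_0)[1]$ is itself equivalent to what you are trying to prove. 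The second (pair the Atiyah class of $E_0$ with the Kodaira--Spencer class of $X/A$ via composition to get a class in $\Ext^2$) is the right construction, but the hard part of \cite{HT10} is exactly the verification that this product really is the obstruction, not merely an element of the right group; you state this identification as if it were a routine local check, whereas it rests on a delicate analysis of truncated cotangent complexes and compatibility of Atiyah classes with gluing.

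The other gap is homotopy-descent. When you say the local cocycles $\omega_\alpha$ and the difference $1$-cochains $\eta_\alpha$ ``patch globally,'' you are silently asserting that objects, morphisms and homotopies in $\mathrm{D}^{\mathrm{b}}$ can be glued along a cover. In general this fails for derived categories without a DG or simplicial enhancement; Lieblich handles it with hypercoverings and a filtration argument, Lowen with an abstract linear deformation theory for (DG-)categories, and Huybrechts--Thomas by staying entirely in the language of truncated Atiyah classes so as never to glue objects directly. Any complete proof must commit to one of these mechanisms. Relatedly, you pass from ``$E_0$ locally represented by a bounded complex of projectives'' to the obstruction living in a $\mathrm{Hom}$-complex; that requires either a perfectness hypothesis on $E_0$ (not stated) or a separate argument that coherent boundedness suffices over each affine. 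As it stands your proposal is a correct roadmap through the literature rather than a self-contained proof: the local computation is fine, but the gluing step --- the heart of the theorem --- is deferred to exactly the sources the paper already cites.
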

From these theorems we can deduce the existence of deformations of exceptional collections in some open neighborhood.
\begin{theorem}\label{def-EC}
Let $S$ be  a  scheme, $X$ a smooth proper scheme over $S$, $s_0$ a point of $S$. Suppose $\mathrm{D}^\mathrm{b}(X_{s_0})$ has an    exceptional collection (resp., a strongly  exceptional collection) $\sigma=(E_1,...,E_n)$, then there exists an  étale neighborhood $U$ of $s_0$ in $S$, such that there exists a unique exceptional collection  (resp., a strong  exceptional collection) $\tau=(\mathcal{E}_1,...,\mathcal{E}_n)$ of $\mathrm{D}^\mathrm{p}(X_U/U)$ whose derived restriction to $\mathrm{D}^\mathrm{b}(X_{s_0})$ is   $\sigma$.
\end{theorem}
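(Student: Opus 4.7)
The strategy is to deform each $E_i$ individually using Lieblich's representability theorem \ref{Lieb1} together with the deformation/obstruction theorem for perfect complexes recalled just before the statement, and then to verify exceptionality on an open neighborhood by semicontinuity for perfect complexes.

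First I would observe that each $E_i$ is universally gluable on $X_{s_0}$: exceptionality gives $\mathbf{R}\pi_*\mathbf{R}\Hhom(E_i,E_i)=\Bbbk(s_0)$ sitting in degree zero, which remains in $\mathrm{D}^{\geq 0}$ under arbitrary base change. Hence $E_i$ defines a $\Bbbk(s_0)$-point $[E_i]$ of $\mathscr{D}^{\mathrm{b}}_{\mathrm{pug}}(X/S)$. Exceptionality also yields $\Ext^k_{X_{s_0}}(E_i,E_i)=0$ for every $k\neq 0$; in particular the obstruction group $\Ext^2$ and the ambiguity group $\Ext^1$ both vanish. Consequently, for every square-zero thickening $\iota:\Spec A_0\hookrightarrow \Spec A$ equipped with a map to $S$ restricting to $s_0$ on $A_0$, the deformation theorem cited before Theorem \ref{def-EC} gives a lift of $E_i$ to $X_{A}$ which is unique up to unique isomorphism. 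This makes $\mathscr{D}^{\mathrm{b}}_{\mathrm{pug}}(X/S)\to S$ formally étale at $[E_i]$; combined with Theorem \ref{Lieb1} (local finite presentation), the standard criterion (Artin approximation/representability of étale stacks) produces an étale neighborhood $U_i\to S$ of $s_0$ together with $\mathcal{E}_i\in\mathrm{D}^{\mathrm{p}}(X_{U_i}/U_i)$ restricting to $E_i$, unique up to unique isomorphism.

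Next I would set $U'=U_1\times_S\cdots\times_S U_n$, an étale neighborhood of $s_0$, and pull back each $\mathcal{E}_i$ to $X_{U'}$. For every pair $(i,j)$, the complex $K_{ij}=\mathbf{R}\pi_*\mathbf{R}\Hhom(\mathcal{E}_i,\mathcal{E}_j)$ is perfect on $U'$ by smoothness and properness of $X/S$, and its derived fiber at $s_0$ is $\mathbf{R}\pi_*\mathbf{R}\Hhom(E_i,E_j)$, which vanishes for $i>j$ and equals $\Bbbk(s_0)$ for $i=j$. Semicontinuity for perfect complexes---a perfect complex whose derived fiber at a point is acyclic is quasi-isomorphic to zero in a neighborhood of that point---gives $K_{ij}\simeq 0$ near $s_0$ for $i>j$; similarly the canonical map $\mathcal{O}_{U'}\to K_{ii}$ is a fiberwise isomorphism at $s_0$, so it becomes an isomorphism on an open neighborhood. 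Intersecting these finitely many open neighborhoods and replacing $U'$ by the corresponding open subscheme, we obtain an étale $U\to S$ on which $\tau=(\mathcal{E}_1,\ldots,\mathcal{E}_n)$ satisfies the exceptionality conditions (\ref{exceptional1})--(\ref{exceptional2}). For the strong exceptional case, we further use that $\mathscr{H}^k K_{ij}=0$ at $s_0$ for $k\neq 0$ together with the coherence of $\mathscr{H}^k K_{ij}$ to propagate this vanishing to an open neighborhood of $s_0$.

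Uniqueness of $\tau$ up to unique isomorphism follows from the uniqueness of each $\mathcal{E}_i$ established in the formal étaleness step, applied one index at a time. The main obstacle is the passage from formal étaleness of $\mathscr{D}^{\mathrm{b}}_{\mathrm{pug}}(X/S)$ at $[E_i]$ to an actual étale extension $(U_i,\mathcal{E}_i)$ over $s_0$: this is precisely where Theorem \ref{Lieb1} (and, implicitly, Artin approximation to handle the case when $s_0$ is not closed) does the essential work, and the author's earlier careful recollection of Lieblich's stack is designed exactly to enable this step.
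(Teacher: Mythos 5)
Your proposal follows essentially the same route as the paper: use the vanishing of $\Ext^1$ and $\Ext^2$ from exceptionality to get unobstructed, unique infinitesimal deformations of each $E_i$, invoke Lieblich's Artin-stack representability (Theorem \ref{Lieb1}) to pass from formal deformations to an actual étale neighborhood, and then shrink using semicontinuity of the perfect complexes $\mathbf{R}\pi_*\mathbf{R}\Hhom(\mathcal{E}_i,\mathcal{E}_j)$ to propagate the (strong) exceptionality conditions. The packaging differs: you phrase the infinitesimal step abstractly as ``$\mathscr{D}^{\mathrm{b}}_{\mathrm{pug}}(X/S)\to S$ is formally étale at $[E_i]$'', whereas the paper runs an explicit induction through $A_k=\mathcal{O}_{S,s_0}/\mathfrak{m}^{k+1}$.

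One point is glossed over in your write-up: for a square-zero thickening $\Spec A_0\hookrightarrow\Spec A$ with $A_0$ an Artinian ring that is not the residue field, the obstruction group produced by the cited deformation theorem is $\Ext^2_{X_{A_0}}(E_{A_0},E_{A_0}\otimes^{\mathbf{L}}_{A_0}I)$, not $\Ext^2_{X_{s_0}}(E_0,E_0)$. Your ``consequently'' step silently assumes this still vanishes. The paper justifies exactly this via a chain of isomorphisms reducing $\Ext^i_{X_{A_k}}(E_k,E_k\otimes^{\mathbf{L}}_{A_k}\mathfrak{m}^{k+1}/\mathfrak{m}^{k+2})$ to $\Ext^i_{X_{A_0}}(E_0,E_0)^{\oplus\dim\mathfrak{m}^{k+1}/\mathfrak{m}^{k+2}}$ using $\mathfrak{m}^{k+1}/\mathfrak{m}^{k+2}\cong A_0^{\oplus\dim}$ and adjunction through $\iota_*\mathbf{L}\iota^*$. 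This computation is necessary to conclude unobstructedness and uniqueness at each order, and therefore to assert formal étaleness; without it the step is a claim rather than an argument. Once you add that reduction, your proof and the paper's proof are the same.
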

\begin{proof}  Let $A=\mathcal{O}_{S,s_0}$, $\mathfrak{m}$ the maximal ideal of $A$, and $A_k=A/\mathfrak{m}^{k+1}$. In particular $A_0$ is the residue field. Let $E_0$ be an exceptional object of $\mathrm{D}^\mathrm{b}(X_{s_0})$. We will show inductively that there exists uniquely an exceptional object $E_k$ of 
$\mathrm{D}^\mathrm{p}(X_{A_k}/A_k)$ such that $\mathbf{L}\iota_k^* E_k=E_0$, where $\iota_k:\Spec(A_0)\hookrightarrow \Spec(A_k)$ is the closed embedding. Since $E_0$ is exceptional, $\Ext^1_{X_{A_0}}(E_0, E_0)=\Ext^2_{X_{A_0}}(E_0, E_0)=0$, so there exists a unique deformation $E_1$ in $\mathrm{D}^\mathrm{p}(X_{A_1}/A_1)$. Suppose we have obtained $E_k$. Then for any $i$,
\begin{eqnarray*}
&&\Ext^i_{X_{A_k}}(E_k,E_k\otimes^{\mathbf{L}}_{A_k} \mathfrak{m}^{k+1}/\mathfrak{m}^{k+2})\\
&=&\Ext^i_{X_{A_k}}(E_k,E_k\otimes^{\mathbf{L}}_{A_k}A_0\otimes^{\mathbf{L}}_{A_0} \mathfrak{m}^{k+1}/\mathfrak{m}^{k+2})\\
&\cong&\Ext^i_{X_{A_k}}(E_k,E_k\otimes^{\mathbf{L}}_{A_k}A_0)^{\oplus\dim_{A_0} \mathfrak{m}^{k+1}/\mathfrak{m}^{k+2}}\\
&\cong&\Ext^i_{X_{A_k}}(E_k,\iota_* L \iota^* E_k)^{\oplus\dim_{A_0} \mathfrak{m}^{k+1}/\mathfrak{m}^{k+2}}\\
&\cong&\Ext^i_{X_{A_0}}(L\iota^*E_k, L \iota^* E_k)^{\oplus\dim_{A_0} \mathfrak{m}^{k+1}/\mathfrak{m}^{k+2}}\\
&\cong&\Ext^i_{X_{A_0}}(E_0, E_0)^{\oplus\dim_{A_0} \mathfrak{m}^{k+1}/\mathfrak{m}^{k+2}}.
\end{eqnarray*}
So there exists a unique deformation $E_{k+1}$ of $E_{k}$ in $\mathrm{D}^\mathrm{p}(X_{A_{k+1}}/A_{k+1})$, and by \cite[3.2.4]{Lieb06} $E_{k+1}$ remains perfect over $A_{k+1}$. 

By the definition of exceptional objects, $E_k$ is exceptional if and only if the cone of the map
\[
\mathcal{O}_{A_k}\rightarrow \mathbf{R}\pi_* \mathbf{R}\Hhom(E_k,E_k)
\]
is acyclic. It is perfect over $\Spec(A_k)$ and its restriction to $\Spec(A_0)$ is acyclic, therefore by the semicontinuity theorem (for perfect complexes, \cite[7.7.5]{EGAIII}), it is acyclic over $\Spec(A_k)$. The same argument deduces the existence and uniqueness of the deformation of the exceptional collection onto $X_{A_k}$. The existence of a formal deformation (i.e. a deformation of $E$ over $X_{\widehat{A}}$, where $\widehat{A}$ is the completion of $A$), and the algebraization (existence of a deformation over an open subset $U$ containing $s_0$), both follow from theorem \ref{Lieb1} on the representability of $\mathscr{D}_{\mathrm{pug}}^\mathrm{b}(X/S)$ as an Artin stack.

Finally, applying the semicontinuity theorem again, one sees that the resulting sequence of exceptional objects $(\mathcal{E}_1|_U,...,\mathcal{E}_n|_U)$ is an exceptional sequence, with $U$ shrunk if necessary. 
\end{proof}

\subsection{Fullness}

The following theorem is an enhancement of \cite[theorem 4.1]{Bon89}; notice that the word \emph{foliation} in the statement of the English version of loc. cit.  means \emph{bundle}, according to the russian version.

\begin{theorem}\label{helix4}
 Let $S$ be a connected locally noetherian scheme satisfying the resolution property, $\pi: X\rightarrow S$ a smooth proper morphism with connected fibers.
 Let $(E_1,\cdots,E_n)$ be an exceptional collection of $\mathrm{D}^{\mathrm{p}}(X/S)$. Consider the following two properties:
 \begin{itemize}
 	\item[1)] The exceptional collection $(E_1,\cdots,E_n)$ is full;
 	\item[2)] The collection $(E_1,\cdots,E_n)$ is a thread of a helix of period $n$.
 \end{itemize}
 \end{theorem}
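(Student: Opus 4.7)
The displayed statement is truncated in this excerpt but, being cast as an enhancement of \cite[theorem 4.1]{Bon89}, the intended conclusion is almost certainly the equivalence $1)\Leftrightarrow 2)$. I would argue the two implications separately, leveraging relative Grothendieck duality and the fiberwise criterion of Lemma \ref{fiber1}.

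For $1)\Rightarrow 2)$, the plan is to combine Lemma \ref{rep1} with relative Serre duality. Lemma \ref{rep1} produces a canonical isomorphism $\mathbf{R}\pi_*\mathbf{R}\Hhom(E_n,F)^\vee\cong\mathbf{R}\pi_*\mathbf{R}\Hhom(F,L^{n-1}E_n[n-1])$ for every $F\in\langle E_1,\cdots,E_n\rangle$, hence, by fullness, for every $F\in\mathrm{D}^{\mathrm{p}}(X/S)$. Relative Grothendieck duality for the smooth proper morphism $\pi$ of pure relative dimension $d$ supplies a second natural isomorphism $\mathbf{R}\pi_*\mathbf{R}\Hhom(E_n,F)^\vee\cong\mathbf{R}\pi_*\mathbf{R}\Hhom(F,E_n\otimes\omega_{X/S}[d])$. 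Taking $\mathbf{R}\Gamma(S,-)$ of both and invoking Yoneda in $\mathrm{D}^{\mathrm{p}}(X/S)$ yields $L^{n-1}E_n\cong E_n\otimes\omega_{X/S}[d-n+1]$. The analogous identity for each $E_i$, as required by Lemma \ref{helix3}, I would obtain by iterating the argument after mutating $E_i$ into the last position; since mutations preserve both fullness and the subcategory $\langle E_1,\cdots,E_n\rangle$, the hypotheses of the argument are maintained.

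For $2)\Rightarrow 1)$, I would pass to geometric fibers. If $(E_1,\cdots,E_n)$ is a helix thread over $S$, then for every geometric point $s:\xi\to S$ the derived restriction $\mathbf{L}s^*(E_1,\cdots,E_n)$ is again a helix thread in $\mathrm{D}^{\mathrm{b}}(X_\xi)$, and Lemma \ref{fiber1} confirms exceptionality on the fiber; the absolute case of Bondal's theorem then asserts fullness on $X_\xi$. To propagate this to $X/S$ it suffices, by Lemma \ref{adm-1}, to show $\mathcal{C}^\perp=0$ for $\mathcal{C}=\langle E_1,\cdots,E_n\rangle$. For $F\in\mathcal{C}^\perp$, flat base change of $\mathbf{R}\pi_*\mathbf{R}\Hhom(E_i,F)=0$ gives $F|_\xi\in\mathcal{C}_\xi^\perp=0$ for every $\xi$. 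Any geometric point $p\to X$ with image $\xi$ in $S$ factors through $X_\xi$, so the derived restriction $\mathbf{L}p^*F$ factors through $F|_{X_\xi}=0$, hence vanishes. Since $F$ is a perfect complex, a standard Nakayama/semicontinuity argument (of the type invoked in the proof of Theorem \ref{def-EC}) then forces $F=0$.

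The main obstacle is this last step of the second direction, lifting fullness from geometric fibers to the relative setting. The earlier version of this paper (cf.\ the acknowledgements) required ampleness of $\omega_{X/S}^{\pm 1}$ outside the base point to run a different propagation argument; the pointwise route sketched above trades that positivity for the perfectness of $F$ together with base change. A secondary bookkeeping issue in the first direction is that one must extract the helix identity for every $E_i$ rather than just $E_n$, which I would handle by cyclic mutation as indicated.
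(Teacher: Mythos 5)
Your treatment of $1)\Rightarrow 2)$ matches the paper's: both combine Lemma \ref{rep1} with relative Grothendieck duality to produce the map $L^{n-1}E_i[n-1]\to E_i\otimes^{\mathbf{L}}\omega_{X/S}[d]$ and observe it is an isomorphism when the collection is full. Your explicit remark that the identity must be extracted for each $E_i$ via cyclic mutation is a useful clarification of a step the paper leaves implicit.

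For $2)\Rightarrow 1)$ you take a genuinely different route from both of the paper's proofs, and this is where the argument is exposed. Your strategy is to reduce to geometric fibers, invoke ``the absolute case of Bondal's theorem'' there, and then propagate to the relative category via the closed-support property of the perfect complex $F$. The propagation steps (flat base change gives $F|_{X_\xi}\in\mathcal{C}_\xi^\perp$; derived restriction to any geometric point of $X$ factors through a fiber; a perfect complex whose derived fibers all vanish is zero) are correct and do supersede the ampleness hypothesis of the earlier version, so your diagnosis of where the risk lies is misplaced. The real gap is the citation: the absolute Bondal theorem as stated in \cite[theorem 4.1]{Bon89} is precisely what the present theorem is re-proving, and the paper's author explicitly treats it as not cleanly available. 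The paper's first proof of $2)\Rightarrow 1)$ is an ``improvement'' of Bondal's argument that needs to add the hypotheses $\gcd(\mathrm{rank}(E_i))=1$ and ampleness of $\omega_{X/S}^{\pm 1}$; the second proof (Xie's) avoids these by pairing Grothendieck duality with the indecomposability theorem \cite[corollary 4.6]{COS13} to show $F\in\mathcal{C}^\perp$ forces $F\in\leftidx{^\perp}{\mathcal{C}}$, so $\mathcal{C}\oplus\mathcal{C}^\perp$ would split $\mathrm{D}^{\mathrm{p}}(X/S)$. Also note the theorem here assumes $X/S$ smooth proper but not projective, so the geometric fibers need not be projective varieties — another reason Bondal's original statement cannot be cited directly. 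If you back-fill step (b) by running Xie's argument (Grothendieck duality plus COS13) on each fiber, your fiberwise proof becomes logically complete, but at that point you have reintroduced exactly the ingredients of the paper's Proof 2, which runs directly over $S$ and renders the fiberwise detour and the Nakayama-type propagation unnecessary.
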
  

\begin{proof} By the connectedness of fibers, $\pi$ is equidimensional, and we assume the relative dimension is $d$.

 1) $\Rightarrow$ 2): By the Grothendieck duality,
\begin{eqnarray*}
&&\mathbf{R}\pi_* \mathbf{R}\Hhom(E_i, F)^\vee\cong \mathbf{R}\pi_* \mathbf{R}\Hhom(\mathbf{R}\Hhom(E_i, F), \omega_{X/S}[d])\\
&\cong & \mathbf{R}\pi_* \mathbf{R}\Hhom(F, E_i\otimes^{\mathbf{L}}\omega_{X/S}[d]),
\end{eqnarray*}
which by lemma \ref{rep1} induces a map
\begin{equation}\label{rep2}
L^{n-1} E_i[n-1]\rightarrow E_i\otimes^{\mathbf{L}}\omega_{X/S}[d],
\end{equation}
which is an isomorphism if $(E_1,\cdots, E_n)$ is a full exceptional collection of  $\mathrm{D}^{\mathrm{p}}(X/S)$. \\
2) $\Rightarrow$ 1): By lemma \ref{lem-adm}, we need only to show  $\langle E_1,\cdots, E_n\rangle^{\perp}=0$.
 We give two proofs for this. The first one is an improvement of that of \cite[theorem 4.1]{Bon89}, and has to assume the ampleness of $\omega_{X/S}^{-1}$ and $\mathrm{gcd}(\mathrm{rank}(E_1),\cdots,\mathrm{rank}(E_n))=1$. The second one is provided by Ying Xie.

\emph{Proof 1}:  We assume $\mathrm{gcd}(\mathrm{rank}(E_1),\cdots,\mathrm{rank}(E_n))=1$ ($E_i$ has  a constant rank for $S$ is connected) and $\omega_{X/S}^{-1}$ relatively ample; the case $\omega_{X/S}$ relatively ample is similar. Then there exists $r>0$ such that $\omega_{X/S}^{-r}$ is relatively very ample, which induces an embedding $\iota:X\hookrightarrow\mathbb{P}_S^N$. Suppose $F\in \langle E_1,\cdots, E_n\rangle^{\perp}$. Since $E_1,...,E_n$ generates a helix, by lemma \ref{ortho-1} one has
\begin{equation}\label{full1}
\mathbf{R}\pi_* \mathbf{R}\Hhom(E_i\otimes \omega_{X/S}^{k}, F)=0,
\end{equation}
for $1\leq i\leq n$ and any integer $k$.
On the other hand, writing $E_i=E_i^\bullet, F=F^\bullet$ as complexes of locally free coherent sheaves on $X$. 
\begin{eqnarray*}
\mathbf{R}\pi_* \mathbf{R}\Hhom(E_i^\bullet\otimes \omega_{X/S}^{rk}, F^\bullet)
=\mathbf{R}\pi_* \iota_*(E_i^{\bullet\vee}\otimes F^\bullet\otimes\omega_{X/S}^{-rk}),
\end{eqnarray*}
and for $k\gg 0$, 
\begin{eqnarray*}
\mathbf{R}\pi_* \mathbf{R}\Hhom(E_i^p\otimes \omega_{X/S}^{rk}, F^q)
=\pi_* \iota_*(E_i^{p\vee}\otimes F^q\otimes\omega_{X/S}^{-rk}).
\end{eqnarray*}
Taking into account (\ref{full1}), by Serre's theorem \cite[3.4.3]{EGAII}, $\iota_*(E_i^{\bullet\vee}\otimes F^\bullet)=0$ in $\mathrm{D}^p(\mathbb{P}_S^N/S)$, and thus $E_i^{\bullet\vee}\otimes F^\bullet=0$ in $\mathrm{D}^{\mathrm{p}}(X/S)$. Thus the composition
\[
F\rightarrow E_i\otimes^{\mathbf{L}} E_i^\vee\otimes^{\mathbf{L}} F \rightarrow F
\]
is zero.  However, by lemma \ref{rank1}, this composition is the multiplication by $\mathrm{rank}(E_i)$. By assumption $\mathrm{gcd}(\mathrm{rank}(E_1),\cdots,\mathrm{rank}(E_n))=1$, thus $F=0$.

\emph{Proof 2 (after Ying Xie)}: Let $F\in \langle E_1,\cdots, E_n\rangle^{\perp}$. By Grothendieck duality,
\[
\mathbf{R}\pi_* \mathbf{R}\Hhom(F, E_i\otimes \omega_{X/S}[d])\cong \big(\mathbf{R}\pi_* \mathbf{R}\Hhom(E_i,F)
\big)^{\vee}.
\]
Then by (\ref{full1}) and the definition of helix, $F\in \leftidx{^{\perp}}\langle E_1,\cdots, E_n\rangle$. Therefore $\langle E_1,\cdots, E_n\rangle$ and $\langle E_1,\cdots, E_n\rangle^{\perp}$ form a decomposition of $\mathrm{D}^{\mathrm{p}}(X/S)$, which is equivalent to the category of perfect complexes on $X$, because $\pi$ is smooth. But by \cite[corollary 4.6]{COS13}, $\mathrm{D}^{\mathrm{p}}(X/S)$ is indecomposable. So $\langle E_1,\cdots, E_n\rangle^{\perp}=0$, and we are done. 
\end{proof}

\begin{theorem}\label{def-FEC1}
Let  $S$ be a locally noetherian    scheme, $\pi:X\rightarrow S$ a smooth proper morphism with geometrically connected fibers, $s_0$ a point (not necessarily closed)  of $S$. 
If $\mathrm{D}^\mathrm{b}(X_{s_0})$ has a  full  exceptional collection (resp., a strong full exceptional collection), then there exists an étale neighborhood $W$ of $s_0$  such that $\mathrm{D}^\mathrm{p}(X_{W}/{W})$ has a  full  exceptional collection (resp., a strong full exceptional collection).
\end{theorem}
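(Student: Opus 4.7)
The approach is to combine the existence of the deformation (theorem \ref{def-EC}) with the characterization of fullness via the helix condition (theorem \ref{helix4}). As a first step, theorem \ref{def-EC} applied to $\sigma$ produces an étale neighborhood $U$ of $s_0$ and a (strong) exceptional collection $\tau = (\mathcal{E}_1, \ldots, \mathcal{E}_n)$ of $\mathrm{D}^{\mathrm{p}}(X_U/U)$ that restricts to $\sigma$ on the fiber at $s_0$. By shrinking $U$, I may assume it is connected; since $\pi_U : X_U \to U$ is proper with geometrically connected fibers, the total space $X_U$ is then connected as well.

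The core of the argument is to show that $\tau$ is a thread of a helix over some open neighborhood $W \subset U$ of $s_0$. The fullness of $\sigma$, combined with theorem \ref{helix4} ($1)\Rightarrow 2)$), implies that certain natural morphisms on $X_{s_0}$---namely those built from relative Grothendieck duality and lemma \ref{rep1}, relating $L^{n-1}\mathcal{E}_i[n-1]$ to $\mathcal{E}_i \otimes^{\mathbf{L}} \omega_{X_U/U}[d]$, or equivalently encoding the Serre-twist invariance $\mathcal{E}_i \cong R^{n-1}\mathcal{E}_i \otimes^{\mathbf{L}} \omega_{X_U/U}[d-n+1]$ of lemma \ref{helix3}---are isomorphisms over the fiber. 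I would construct the corresponding morphisms $\phi_i$ on $X_U/U$. Because $\pi$ is smooth and proper, base change commutes with $\mathbf{R}\pi_*$, $\mathbf{R}\Hhom$, and mutations; hence $\mathbf{L}\iota^{*}\phi_i$ reproduces the absolute map on $X_{s_0}$, which is an isomorphism. Each cone $C_i = \mathrm{cone}(\phi_i)$ is then a perfect complex on $X_U$ whose derived fiber at $s_0$ is acyclic; its support is closed in $X_U$ and disjoint from $X_{s_0}$. Properness of $\pi_U$ makes $\pi_U(\mathrm{supp}(C_i))$ closed in $U$, and $s_0$ lies in the open complement. Setting
\[
W := U \setminus \bigcup_{i=1}^n \pi_U\bigl(\mathrm{supp}(C_i)\bigr),
\]
all $\phi_i|_{X_W}$ become isomorphisms, so $\tau|_W$ is a thread of a helix by lemma \ref{helix3}.

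To conclude fullness, I invoke theorem \ref{helix4} ($2)\Rightarrow 1)$) in Ying Xie's form: since $W$ is connected and $\pi_W$ has geometrically connected fibers, $X_W$ is connected, and because $\pi$ is smooth, $\mathrm{D}^{\mathrm{p}}(X_W/W) = \mathrm{D}^{\mathrm{p}}(X_W)$ is indecomposable by \cite[corollary 4.6]{COS13}. Xie's argument then forces $\langle \tau|_W\rangle^{\perp} = 0$, so $\tau|_W$ is full; in the strong case, strongness is already furnished by theorem \ref{def-EC}. The main obstacle is verifying carefully that the relative Grothendieck duality and relative mutations truly commute with the fiber base change $\iota : X_{s_0} \to X_U$, so that the fullness of $\sigma$ really forces $\mathbf{L}\iota^{*}\phi_i$ to be an isomorphism; once this compatibility is in place (it follows from the formal properties of smooth proper morphisms together with lemma \ref{rep1}), the remainder of the argument is the standard closedness-of-support argument for perfect complexes.
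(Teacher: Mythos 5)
Your proof is correct and follows essentially the same route as the paper's: lift the exceptional collection via theorem \ref{def-EC}, reduce fullness to the helix condition of lemma \ref{helix3} and theorem \ref{helix4}, check the helix condition on the fiber at $s_0$ using the map of lemma \ref{rep1}, show it propagates to an open neighborhood, and conclude with theorem \ref{helix4} (using Xie's argument, which needs no ampleness or gcd hypothesis). The only difference is in how openness is established: the paper invokes the semicontinuity theorem for perfect complexes, while you argue via closedness of $\mathrm{supp}(C_i)$ and properness of $\pi$ (using that a perfect complex with vanishing derived fiber at $s_0$ has support disjoint from $X_{s_0}$, by derived Nakayama); the two arguments are interchangeable here.
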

\begin{proof} Shrinking $S$ if necessary, we can assume that $S$ is affine noetherian and connected, thus satisfies the resolution property, and $X$ is of pure relative dimension $d$ over $S$. Let $\sigma_0$ be a full exceptional collection (resp., a strong full exceptional collection) of $\mathrm{D}^\mathrm{p}(X_{s_0})$. 
By theorem \ref{def-EC}, there exists an \'{e}tale neighborhood $U$ of $s_0$ and an exceptional collection (resp., a strong  exceptional collection) $\sigma=(E_1,\cdots,E_n)$ of  $\mathrm{D}^\mathrm{p}(X_U/U)$ extending  $\sigma_0$. 
 By lemma \ref{rep1}, there is a natural map
\begin{equation}\label{rep3}
L^{n-1} E_i[n-1]\rightarrow E_i\otimes^{\mathbf{L}}\omega_{X/U}[d],
\end{equation}
as in the proof of theorem \ref{helix4}. Since $\sigma_0$ is  full, (\ref{rep3}) is a quasi-isomorphism after restricting to $X_{s_0}$. By the semicontinuity theorem, there exists an open subset $W$ of $U$ containing $s_0$ such that the restriction of (\ref{rep3}) to $X_{W}$ is also a quasi-isomorphism, for $1\leq i\leq n$. By lemma \ref{helix3}, this means that $\sigma$ is a thread of a helix over $X_{W}$.  
Then by theorem \ref{helix4}, $\sigma|_{W}$ is a full exceptional collection of $\mathrm{D}^\mathrm{p}(X_{W}/{W})$.
\end{proof}

\begin{remark}
In the first version of this paper, we assume the relative ampleness of $\omega_{X/S}$ in theorem \ref{helix4}, and  the relative ampleness of $\omega_{X/S}$ or $\omega_{X/S}^{-1}$  over $S-\{s_0\}$ in  theorem \ref{def-FEC1}. The condition $\mathrm{gcd}(\mathrm{rank}((E_1)_{s_0}),\cdots,\mathrm{rank}((E_n)_{s_0}))=1$ is satisfied automatically because $((E_1)_{s_0},\cdots,(E_n)_{s_0})$ is full.
\end{remark}

\begin{corollary}\label{def-FEC2}
Let  $S$ be  a locally noetherian   scheme, $\pi:X\rightarrow S$ a smooth proper morphism with geometrically connected fibers, $s_0$ a point (not necessarily closed)  of $S$. 
If $\mathrm{D}^\mathrm{b}(X_{s_0})$ has a  full  exceptional collection (resp., a strong full exceptional collection), then there exists an open subset $V$ containing $s_0$ such that for any geometric point $s$ of $V$,  $\mathrm{D}^\mathrm{b}(X_{s})$ has a  full  exceptional collection (resp., a strong full exceptional collection).
\end{corollary}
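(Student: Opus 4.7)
The plan is to deduce the corollary from Theorem \ref{def-FEC1} by transferring the conclusion along a surjective étale morphism and then invoking fiberwise criteria for exceptionality and fullness.

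First I apply Theorem \ref{def-FEC1} to obtain an étale neighborhood $\varphi:W\to S$ of $s_0$ together with a (strong) full exceptional collection $\tau=(\mathcal{E}_1,\ldots,\mathcal{E}_n)$ on $\mathrm{D}^{\mathrm{p}}(X_W/W)$. Then I take $V:=\varphi(W)$; since étale morphisms are open, $V$ is an open subset of $S$ containing $s_0$, and this will be the open set claimed by the corollary.

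For an arbitrary geometric point $s:\Spec\kappa\to V$ (with $\kappa$ separably closed), the base change $W\times_V\Spec\kappa$ is étale over $\Spec\kappa$, hence a disjoint union of copies of $\Spec\kappa$; picking any component produces a lift $\tilde{s}:\Spec\kappa\to W$ of $s$. I then consider $\mathbf{L}\tilde{s}^*\tau$, a sequence in $\mathrm{D}^{\mathrm{b}}(X_\kappa)$. By Lemma \ref{fiber1} it is an exceptional collection, and if $\tau$ is strong, strongness survives pullback because for perfect complexes on a smooth proper family the higher cohomology sheaves of $\mathbf{R}\pi_*\mathbf{R}\Hhom(\mathcal{E}_i,\mathcal{E}_j)$ commute with base change in the appropriate sense, so their vanishing is inherited on the geometric fiber.

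The main obstacle—and the essential point—is to verify fullness of $\mathbf{L}\tilde{s}^*\tau$ on $\mathrm{D}^{\mathrm{b}}(X_\kappa)$. For this I use the helix framework of Section~2.2: the proof of Theorem \ref{def-FEC1} already exhibits $\tau$ as a thread of a helix of period $n$ over $W$, since it verifies that the natural map $L^{n-1}\mathcal{E}_i[n-1]\to \mathcal{E}_i\otimes^{\mathbf{L}}\omega_{X_W/W}[d]$ is a quasi-isomorphism. Because $\mathbf{L}\tilde{s}^*$ commutes with the formation of left mutations, derived duals, and tensoring with the relative canonical sheaf, the pulled-back collection $\mathbf{L}\tilde{s}^*\tau$ is again a thread of a helix of period $n$ on $X_\kappa/\Spec\kappa$, by Lemma \ref{helix3}. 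The hypotheses of the direction 2)$\Rightarrow$1) of Theorem \ref{helix4} are satisfied (the field $\kappa$ trivially has the resolution property, and $X_\kappa$ is smooth proper and connected because the fibers of $\pi$ are geometrically connected), so $\mathbf{L}\tilde{s}^*\tau$ is a full (resp.\ strong full) exceptional collection on $\mathrm{D}^{\mathrm{b}}(X_\kappa)$, completing the proof.
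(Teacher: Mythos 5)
Your proposal is correct and follows essentially the same route as the paper's (terse) proof: invoke Theorem \ref{def-FEC1} to produce an étale neighborhood $W\to S$ carrying a relative full exceptional collection that is (by the proof of \ref{def-FEC1}, or by the $1)\Rightarrow 2)$ direction of Theorem \ref{helix4}) a thread of a helix, observe that the helix property passes to geometric fibers, and conclude fullness fiberwise via the $2)\Rightarrow 1)$ direction of Theorem \ref{helix4}. The only point you fill in more explicitly than the paper is the bookkeeping of lifting a geometric point of $V=\varphi(W)$ to $W$ and identifying $X_s$ with $X_{\tilde s}$; that step is needed and handled correctly.
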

\begin{proof} Use theorem \ref{helix4} and \ref{def-FEC1}, and that the helicity of a relative exceptional collection can be checked fiberwise.
\end{proof}

\begin{remark}
Passing to an étale neighborhood or the geometric point is necessary. In fact, consider a family of Brauer-Severi varieties, then each geometric fiber has a full exceptional collection. But the generic fiber may not have a full exceptional collection, unless after a base change to a separable extension of the base function field such that the generic fiber becomes split, by \cite[theorem 3.1]{Rae16}.
\end{remark}

During  the proof of theorem \ref{def-FEC1} we have in fact shown the following.
\begin{proposition}\label{prop-FEC4}
Let  $S$ be a locally noetherian    scheme, $\pi:X\rightarrow S$ a smooth proper morphism with connected fibers, $s_0$ a point  of $S$. Suppose that $\sigma=(E_1,...,E_n)$ is an exceptional collection of $\mathrm{D}^\mathrm{p}(X/S)$. If $\sigma$ is full at $s_0$, then there exists an open subset $U$ containing $s_0$ such that $\sigma|_U$ is full.
\end{proposition}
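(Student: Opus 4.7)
The plan is to repeat, in an easier setting, the last portion of the proof of Theorem~\ref{def-FEC1}. Since the exceptional collection $\sigma$ is already given globally on $X/S$, no deformation or étale extension is needed: only the helix criterion (Theorem~\ref{helix4}) combined with a semicontinuity argument is required.

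First I would shrink $S$ around $s_0$ to an affine, noetherian, connected open subscheme (which automatically satisfies the resolution property) over which $X$ has pure relative dimension $d$; this is harmless since the statement is local on $S$ at $s_0$. Then by Lemma~\ref{rep1} there is, for each $1\leq i\leq n$, a natural morphism
\[
f_i\colon L^{n-1}E_i[n-1]\longrightarrow E_i\otimes^{\mathbf{L}}\omega_{X/S}[d]
\]
constructed exactly as in the derivation of \eqref{rep2} in the proof of Theorem~\ref{helix4}. Since $\sigma$ is full at $s_0$, the implication \emph{1)}$\Rightarrow$\emph{2)} of Theorem~\ref{helix4} together with Lemma~\ref{helix3} shows that the derived restriction of $f_i$ to $X_{s_0}$ is a quasi-isomorphism for every $i$. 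The cone $C_i$ of $f_i$ is a perfect complex on $X$ whose derived restriction to $X_{s_0}$ is therefore acyclic; by the semicontinuity theorem for perfect complexes \cite[7.7.5]{EGAIII} there exists an open $U\ni s_0$ over which $C_i|_{X_U}$ is acyclic for all $i$ simultaneously. Thus on $X_U/U$ each $f_i|_{X_U}$ is a quasi-isomorphism, so by Lemma~\ref{helix3} the collection $\sigma|_U$ is a thread of a helix of period $n$. Applying Theorem~\ref{helix4} \emph{2)}$\Rightarrow$\emph{1)} then yields that $\sigma|_U$ is full, as desired.

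The point where I expect to have to be careful is the base-change compatibility of $f_i$: the object $L^{n-1}E_i$ is defined by an iterated cone construction involving $\mathbf{L}\pi^*\mathbf{R}\pi_*\mathbf{R}\Hhom(E_{n-k},L^k E_i)\otimes^{\mathbf{L}}E_{n-k}$, and to compare the fibre over $s_0$ with the corresponding construction on $X_{s_0}$ one needs that each derived pushforward $\mathbf{R}\pi_*\mathbf{R}\Hhom(E_{n-k},L^k E_i)$ commutes with $\mathbf{L}s_0^*$. This should follow inductively because at each stage the object in question is perfect on $S$ (thanks to exceptionality together with smoothness and properness of $\pi$), so the iterated cones are preserved by derived pullback and the morphism arising from Grothendieck duality and adjunction is stable under base change. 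With this verified, the semicontinuity argument above goes through verbatim.
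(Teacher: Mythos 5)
Your argument is essentially identical to the paper's own justification: the author explicitly remarks that Proposition \ref{prop-FEC4} was "in fact shown" during the proof of Theorem \ref{def-FEC1}, namely by the final portion of that proof (construct the map of Lemma \ref{rep1}, note it becomes a quasi-isomorphism at $s_0$ by fullness, spread out via semicontinuity, then apply Lemma \ref{helix3} and Theorem \ref{helix4}) with the deformation/étale step omitted since $\sigma$ is already given on $X/S$. Your added caution about base-change compatibility of the mutation construction is reasonable but is already handled implicitly in the paper via Lemma \ref{fiber1} and the proof of Lemma \ref{rep1}, so there is no genuine gap.
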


In \cite{BGKS15} a family of exceptional collection $\sigma=(\mathcal{L}_1,...,\mathcal{L}_{11})$ is constructed on certain determinantal Barlow surfaces, and they are shown not full on general fibers $S_t$, by proving that their endomorphism algebras are not deformed. For the specific Barlow surface $S_0$ (constructed by Barlow), the non-fullness is shown by using Kuznetsov's method via \emph{(pseudo)heights} of exceptional collections \cite{Kuz15}. Now as an application of proposition \ref{prop-FEC4}, the non-fullness of $\sigma$ restricted to $S_0$  follows directly from the non-fullness of $\sigma$ for general determinantal Barlow surfaces.

\section{First order deformations of full exceptional collections of modules of finite dimensional algebras associated to acyclic quivers with relations}
Fix a base field $\Bbbk$. We follow the terminology on finite dimensional algebras and quivers of \cite{ASS}. For example, the algebra associated to the following quiver
\[
\xy <1cm,0cm>:
 (0,0)*{\bullet} ;
 (1.5,0)*{\bullet} **\dir{-} ?>* \dir{>};
  (3,0)*=0{\bullet}  **\dir{-} ?>* \dir{>};
 (0,0.3)*+{p_1} ; (1.5,0.3)*+{p_2}; (3,0.3)*+{p_3};
 (0.75,0.3)*+{a}; (2.25,0.3)*+{b}
\endxy
\]
is the $\Bbbk$-algebra generated by $W=\{p_1,p_2,p_3,a,b\}$ with the relations $p_i^2=p_i$ for $1\leq i\leq 3$, $p_1 a=a=ap_2$, $p_2 b=b=bp_3$, and  all the other products of two elements of $W$, except $ab$, are zero. In this section we need to quote several results of \cite{ARS}, the reader should notice that our convention of the products is different from that of loc. cit, because this is more convenient for considering right modules. Recall that a quiver is called \emph{acyclic} if it has no oriented nontrivial cycles of arrows; in \cite{Bon89} an acyclic quiver was called \emph{ordered}.

 Throughout this section, we assume that $A$ is a finite dimensional algebra of the form $\Bbbk \Delta/\mathcal{I}$, where $\Delta$ is an acyclic quiver and $\mathcal{I}$ is an admissible ideal, i.e., $R_{\Delta}^m\subset \mathcal{I}\subset R_{\Delta}^2$ for some integer $m\geq 2$, where $R_{\Delta}$ is the ideal of nontrivial arrows of $\Delta$.  Let $\{ p_1,...,p_n\}$ be the set of vertices of $\Delta$. Then $p_i A$, $1\leq i\leq n$ form a complete set of indecomposable projective right $A$-modules. Denote by $\mathrm{D}^{\mathrm{b}}(A)$  the derived category of finite dimensional right $A$-modules. We arrange the order of $p_1,..,p_n$ such that for $1\leq i<j\leq n$, there is no path  in $\Delta$ that starts from $p_i$ and ends at $p_j$. Thus $p_i x p_j=0$ for $i<j$ and all $x\in A$.  Then by \cite[section 5]{Bon89}, $(p_1 A,...,p_n A)$ is a strong full exceptional collection of $\mathrm{D}^{\mathrm{b}}(A)$, and 
\[
A\cong \bigoplus_{1\leq i,j\leq n}\Hom_{A}(p_i A,p_j A),
\]
where $\Hom_A$ is taken in the category of right $A$-modules.

Denote $\Bbbk[\epsilon]=\Bbbk[\epsilon]/(\epsilon^2)$, and $S=\Spec(\Bbbk[\epsilon])$.  A \emph{deformation} of $A$ over $S$ is a flat $\Bbbk[\epsilon]$-algebra $A^\dagger$ with an isomorphism $A^\dagger\otimes_{\Bbbk[\epsilon]}\Bbbk\cong A$. Denote by $HH^n(A)=HH^n(A,A)$ the Hochschild cohomology of the $\Bbbk$-algebra $A$. Then the deformation of $A$ over $S$ is parametrized by $HH^2(A)$ due to \cite{Ger64}. For later use let us recall this fact. 
Explicitly, a Hochschild  $n$-cochain is a $\Bbbk$-linear map $f:A^{\otimes_{\Bbbk} n}\rightarrow A$, and  the coboundary is  given by
\begin{multline}\label{hoch-diff1}
\mathfrak{b}(f)(a_1,\cdots,a_{n+1})=a_1 f(a_2,\cdots,a_{n+1})\\
+\sum_{1\leq i\leq n}(-1)^i f(a_1,\cdots,a_{i}a_{i+1},\cdots,a_{n+1})+(-1)^{n+1}f(a_1,\cdots,a_n)a_{n+1}.
\end{multline}
Given a 2-cocycle $u$, the corresponding deformation of $A_u$ over $S$ is given by the multiplication
\begin{equation}\label{deform-alg0}
(a_0+\epsilon a_1)\cdot_u (b_0+\epsilon b_1)=a_0b_0+\epsilon(a_1 b_0+a_0 b_1+u(a_0,b_0)).
\end{equation}
for $a_i,b_i\in A$, $i=0,1$. If $u$ and $u'$ differ by a coboundary $\mathfrak{b}(v)$, then there is an induced isomorphism $A_u\cong A_{u'}$ over $S$. It is straightforward to see that all deformations of $A$ over $S$ arise in this way. 

For a flat finite dimensional algebra $A^\dagger$ over $S$, denote by $\mathrm{D}^{\mathrm{p}}(A^\dagger/S)$ the full $S$-linear triangulated subcategory of $\mathrm{D}^{\mathrm{b}}(A)$ generated by the bounded complexes of projective right $A^\dagger$-modules. The following is the main theorem of this section.

\begin{theorem}\label{deform-alg1}
Let $u\in HH^2(A)$.
\begin{enumerate}
  \item[(i)] For $1\leq i\leq n$, there exists a unique projective right $A_u$-module $P_i$ that deforms $p_i A$. 
  \item[(iii)] There exists $\lambda_i\in \Bbbk$ and $a_i,b_i,c_i\in A$ such that $P_i=(p_i+\epsilon(-\lambda_i p_i +a_i p_i +p_i b_i+c_i))A_u$.
  \item[(iii)] The sequence $(P_1,...,P_n)$ is a  strong full  exceptional collection of the $S$-linear triangulated category $\mathrm{D}^{\mathrm{p}}(A_u/S)$, i.e., 
  \[
  \begin{cases}
  \Bbbk[\epsilon]\xrightarrow{\sim} \Hom_{A_u}(P_i,P_i)\ \mbox{for $1\leq i\leq n$},\\
  \Hom_{A_u}(P_i,P_j)=0\ \mbox{for $i>j$},\\
  \Ext^k(P_i,P_j)=0\ \mbox{for $k>0$ and $1\leq i,j\leq n$}.
  \end{cases}
  \]

  \item[(iv)] We have
\begin{equation}\label{eq4-0}
A_u\cong\bigoplus_{1\leq i\leq j\leq n}\Hom_{A_u}(P_i,P_j).
\end{equation}
\end{enumerate}
\end{theorem}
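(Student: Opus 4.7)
The strategy is to realize each $P_i$ as $e_i A_u$ for an idempotent $e_i \in A_u$ lifting $p_i$, arranged into a complete orthogonal family $\{e_1,\ldots,e_n\}$, and then derive parts (iii) and (iv) from the resulting Peirce decomposition of $A_u$ combined with flatness over $\Bbbk[\epsilon]$.

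For (i) and (ii), seek $e_i = p_i + \epsilon x_i$ with $e_i \cdot_u e_i = e_i$; by the multiplication rule \eqref{deform-alg0} this is equivalent to solving $x_i - p_i x_i - x_i p_i = u(p_i, p_i)$ in $A$. Decompose $A$ into the four Peirce blocks relative to $p_i$. The left-hand side has support only in the diagonal blocks $p_i A p_i$ (where the operator $x \mapsto x - p_i x - x p_i$ acts as $-1$) and $(1-p_i) A (1-p_i)$ (where it acts as $+1$), and vanishes on the two off-diagonal corners. On the right, the cocycle identity $\mathfrak{b}(u)(p_i, p_i, p_i) = 0$ collapses to $p_i u(p_i, p_i) = u(p_i, p_i) p_i$; since $p_i A (1-p_i)$ and $(1-p_i) A p_i$ meet trivially, this forces both off-diagonal Peirce components of $u(p_i, p_i)$ to vanish. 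Hence the equation is solvable, with $p_i x_i p_i = -\lambda_i p_i$ where $\lambda_i \in \Bbbk$ is defined by $p_i u(p_i, p_i) p_i = \lambda_i p_i$ (using $p_i A p_i = \Bbbk p_i$ from acyclicity of $\Delta$), $(1-p_i) x_i (1-p_i) = c_i$ determined as the corresponding component of $u(p_i, p_i)$, while the two off-corner components of $x_i$ are free; these correspond to the $a_i p_i$ and $p_i b_i$ summands in the formula of (ii). Uniqueness of $P_i$ up to isomorphism is immediate from $\Ext^1_A(p_i A, p_i A) = 0$, since $p_i A$ is projective. To arrange orthogonality, proceed inductively: having built $e_1, \ldots, e_{i-1}$, lift $p_i$ inside the corner subalgebra $(1 - e_1 - \cdots - e_{i-1}) A_u (1 - e_1 - \cdots - e_{i-1})$, in which $p_i$ still reduces to an idempotent because the $p_j$ are mutually orthogonal in $A$. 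By flatness, $\sum_{j=1}^{n} e_j$ lifts $1 \in A$ and therefore equals $1 \in A_u$.

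For (iii), identify $\Hom_{A_u}(P_i, P_j) = e_j A_u e_i$. As a direct summand of $A_u$ in $\Bbbk[\epsilon]$-modules it is flat over $\Bbbk[\epsilon]$, with reduction $p_j A p_i$. For $i > j$ the ordering of the vertices gives $p_j A p_i = 0$, so $e_j A_u e_i = 0$ by flatness. For $i = j$, $p_i A p_i = \Bbbk p_i$ forces $e_i A_u e_i$ to be free of rank one over $\Bbbk[\epsilon]$, generated by $e_i$, yielding the canonical unit isomorphism $\Bbbk[\epsilon] \xrightarrow{\sim} \Hom_{A_u}(P_i, P_i)$. The higher $\Ext$ vanishing is automatic since each $P_i$ is projective. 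For (iv), the global Peirce decomposition $A_u = \bigoplus_{i,j} e_j A_u e_i = \bigoplus_{i,j} \Hom_{A_u}(P_i, P_j)$, combined with vanishing of the $i > j$ terms, gives \eqref{eq4-0}.

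The main subtlety is extracting, from the single Hochschild cocycle identity $\mathfrak{b}(u)(p_i, p_i, p_i) = 0$, the simultaneous vanishing of both off-diagonal Peirce components of $u(p_i, p_i)$; this is what allows the idempotent lifting on the nose, without first having to modify $u$ by a coboundary. The inductive orthogonalization and the flatness-based computation of $e_j A_u e_i$ are essentially routine once this central observation is in place.
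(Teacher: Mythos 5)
Your proof is correct, and it reaches the theorem by a genuinely more conceptual route than the paper's. Where the paper grinds through a long chain of explicit cocycle identities (lemma \ref{cocycle2}, lemma \ref{cocycle6}, lemma \ref{endo0}, lemma \ref{endo1}, equations (\ref{endo0-3})--(\ref{endo0-14}), (\ref{endo1-2})--(\ref{endo1-6})), you replace them with three structural observations. First, the single cocycle identity $\mathfrak{b}(u)(p_i,p_i,p_i)=0$ forces both off-diagonal Peirce components of $u(p_i,p_i)$ to vanish, which, together with the $\{-1,0,0,+1\}$ action of $x\mapsto x-p_ix-xp_i$ on the four Peirce blocks, gives solvability of the idempotent equation directly; the paper instead extracts $\lambda_k$ and $c_k$ by hand. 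Second, you orthogonalize by lifting $p_i$ inside the corner algebra $(1-e_1-\cdots-e_{i-1})A_u(1-e_1-\cdots-e_{i-1})$ via standard idempotent lifting, whereas the paper inductively solves the explicit system (\ref{cocycle6-2}). Third, rather than computing $p_j^\dagger A_u p_i^\dagger$ by applying a battery of identities from lemma \ref{endo0}, you note that $e_jA_ue_i$ is a direct $\Bbbk[\epsilon]$-module summand of the flat module $A_u$, hence free over the local Artinian ring $\Bbbk[\epsilon]$ with reduction $p_jAp_i$; the vanishing for $i>j$ and the unit isomorphism for $i=j$ then fall out of the known Peirce structure of $A$. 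Your appeal to $\Ext^1_A(p_iA,p_iA)=0$ for uniqueness replaces the paper's explicit isomorphism construction in lemma \ref{cocycle7}. The trade-off is that the paper's computational route produces closed formulas for the components (used in the remark after the theorem: ``one can find more descriptions on $a_i,b_i,c_i$ in the following lemmas''), while your argument is shorter and cleaner but does not display them; for the theorem as stated both are adequate.
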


One can find more descriptions on $a_i,b_i,c_i$ in the following lemmas, which we do not spell out in the above theorem. We remark that the choices of $a_i$ and $b_i$ are not unique.
We refer the reader to \cite{HS88} for a result  close to ours. The rest of this section is devoted to an elementary proof of theorem \ref{deform-alg1}.\\

In the following of this section we fix a Hochschild 2-cocycle $u: A\otimes_{\Bbbk} A\rightarrow A$. For $\forall a, b, c\in A$,
\begin{equation}\label{cocycle1}
a u(b,c)-u(ab,c)+u(a,bc)-u(a,b)c=0.
\end{equation}

\begin{lemma}\label{cocycle2}
\begin{enumerate}
  \item[(i)] For $1\leq k\leq n$, there exist a unique $\lambda_k\in \Bbbk$, and a unique $c_k\in A$ which modulo $\mathcal{I}$ is a linear combination of paths whose beginnings and ends are not $p_k$, such that 
  \begin{equation}\label{cocycle3}
  u(p_k,p_k)=\lambda_k p_k+c_k.
  \end{equation}
  \item[(ii)] For $1\leq i,j\leq n$ and $i\neq j$, there is a unique $d_{ij}\in A$ which modulo $\mathcal{I}$ is a linear combination of paths connecting $p_i$ towards $p_j$, such that
  \begin{equation}\label{cocycle4}
  u(p_i,p_j)= d_{ij}-p_i c_j-c_i p_j,
  \end{equation}
  where $c_i,c_j\in A$ are as in (i). In particular, if $i<j$, then $d_{ij}=0$.
  \item[(iii)] For any $x\in A$, we have $x u(1,1)=u(x,1)$ and $u(1,1)x=u(1,x)$.
  \item[(iv)] Let $\lambda_k$, $c_k$, $1\leq k\leq n$, and $d_{ij}$, $1\leq i\neq j\leq n$ be determined as in (i) and (ii).  Let $1_u$ be the identity element of $A_u$, then
  \begin{equation}\label{cocycle5}
  1_u=1-\epsilon\big(\sum_{1\leq k\leq n}(\lambda_k p_k-c_k)+\sum_{\stackrel{1\leq i,j\leq n}{i\neq j}}d_{ij}\big).
  \end{equation}
  \item[(v)]
   $A_u$ is a $\Bbbk[\epsilon]/(\epsilon^2)$-algebra via the map
\begin{equation}\label{scalar0}
\Bbbk[\epsilon]/(\epsilon^2)\rightarrow A_u,\ \mu+\epsilon\nu\mapsto \mu\cdot 1_u+\epsilon\nu=\mu+\epsilon(-\mu u(1,1)+\nu).
\end{equation}
\end{enumerate}
\end{lemma}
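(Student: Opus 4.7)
The plan is to establish the five assertions by exploiting the decomposition $A = \bigoplus_{i,j} p_i A p_j$, the fact that acyclicity of $\Delta$ forces $p_k A p_k = \Bbbk p_k$, and repeated application of the Hochschild 2-cocycle relation (\ref{cocycle1}) with idempotents substituted for its three arguments.

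For (i), I substitute $a=b=c=p_k$ in (\ref{cocycle1}), using $p_k^2 = p_k$, to obtain $p_k u(p_k,p_k) = u(p_k,p_k) p_k$. Multiplying this identity on the right by $p_l$ for $l\neq k$ shows $p_k u(p_k, p_k) p_l = 0$; symmetrically $p_l u(p_k, p_k) p_k = 0$. Hence in the expansion $u(p_k, p_k) = \sum_{i,j} p_i u(p_k, p_k) p_j$ every summand with exactly one index equal to $k$ vanishes. The diagonal block $p_k u(p_k, p_k) p_k$ lies in $p_k A p_k = \Bbbk p_k$, so I define $\lambda_k$ by $p_k u(p_k, p_k) p_k = \lambda_k p_k$ and set $c_k = u(p_k, p_k) - \lambda_k p_k$; uniqueness follows from linear independence of the blocks.

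For (ii), I fix $i \neq j$ and analyse the block decomposition of $u(p_i, p_j)$. Applying (\ref{cocycle1}) with $a = p_k$, $b = p_i$, $c = p_j$ (for $k \neq i$) yields $p_k u(p_i, p_j) = u(p_k, p_i) p_j$; right-multiplying by $p_l$ with $l \neq j$ kills all $(k,l)$-blocks satisfying $k\neq i$ and $l\neq j$. The row $k=i$ is handled by combining (\ref{cocycle1}) for $(a,b,c) = (p_i, p_j, p_l)$ with the same for $(p_j, p_j, p_l)$ (both requiring $l \neq j$), producing $p_i u(p_i, p_j) p_l = -p_i u(p_j, p_j) p_l = -p_i c_j p_l$; a mirror computation on the column $l=j$ gives $p_k u(p_i, p_j) p_j = -c_i p_j$ on that block. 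Setting $d_{ij} := p_i u(p_i, p_j) p_j \in p_i A p_j$ and reassembling delivers $u(p_i, p_j) = d_{ij} - p_i c_j - c_i p_j$. The vanishing $d_{ij} = 0$ for $i < j$ is immediate from $p_i A p_j = 0$, a consequence of the chosen vertex ordering.

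Assertion (iii) follows by direct substitution of $(a,b,c) = (x, 1, 1)$ and $(1, 1, x)$ into (\ref{cocycle1}). For (iv), I write $1_u = 1 + \epsilon e$ and impose $1_u \cdot_u x = x$ for all $x \in A$; the multiplication formula (\ref{deform-alg0}) combined with (iii) forces $e = -u(1,1)$. I then expand $u(1,1) = \sum_{i,j} u(p_i, p_j)$, insert the explicit forms from (i) and (ii), and collapse the cross sums $\sum_{i \neq j} p_i c_j$ and $\sum_{i \neq j} c_i p_j$ via the identities $p_k c_k = c_k p_k = 0$, which themselves follow from (i) together with $p_k u(p_k, p_k) = \lambda_k p_k$. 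Part (v) is then a routine check that $\mu + \epsilon \nu \mapsto \mu \cdot 1_u + \epsilon \nu$ is a ring homomorphism landing in the center of $A_u$. The only delicate step is the bookkeeping in (ii): the decomposition $u(p_i, p_j) = d_{ij} - p_i c_j - c_i p_j$ is not evident a priori, and matching each off-diagonal block requires combining at least three separate specializations of the cocycle identity.
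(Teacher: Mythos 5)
Your proposal is correct, and the overall strategy — Peirce decomposition $A=\bigoplus_{i,j}p_iAp_j$ plus repeated specialization of the cocycle identity (\ref{cocycle1}) at the idempotents — is the same as the paper's. The execution of part (ii) is noticeably more laborious, though. The paper obtains $u(p_i,p_j)=d_{ij}-p_ic_j-c_ip_j$ with just two specializations, $(a,b,c)=(p_i,p_i,p_j)$ and $(p_i,p_j,p_j)$, chained together algebraically without ever splitting $u(p_i,p_j)$ into Peirce blocks. You instead analyze each block $p_ku(p_i,p_j)p_l$ separately, which forces you to use four or five different specializations and, moreover, requires you first to establish the vanishing $p_ku(p_i,p_j)p_l=0$ for $k\neq i$, $l\neq j$ before the row- and column-block computations can be identified with $p_iu(p_i,p_j)p_l$ and $p_ku(p_i,p_j)p_j$ respectively (your display ``$p_iu(p_i,p_j)p_l=-p_iu(p_j,p_j)p_l$'' implicitly uses this vanishing, since the chained cocycle relations only compute $u(p_i,p_j)p_l$). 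Also, the formula ``$p_ku(p_i,p_j)p_j=-c_ip_j$'' should read $-p_kc_ip_j$ per block, with $-c_ip_j$ recovered only after summing over $k\neq i$. These are cosmetic imprecisions, not gaps. For (iv), deriving $1_u=1-\epsilon u(1,1)$ from (iii) and then expanding is exactly what the paper does (its cryptic ``follows from (\ref{deform-alg0})'' implicitly invokes (iii)). Parts (i), (iii), (v) match the paper essentially verbatim.
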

\begin{proof} (i) Taking $a=b=c=p_k$ in (\ref{cocycle1}), we obtain $p_k u(p_k,p_k)=u(p_k,p_k)p_k$. Thus (\ref{cocycle3}) holds for some $\lambda_k\in \Bbbk$, and $c_k$ satisfies $p_k c_k=c_k p_k=0$, i.e. $c_k$ is a linear combination of paths whose beginning and ends are not $p_k$.

(ii) Taking $a=b=p_i$ and $c=p_j$ in (\ref{cocycle1}), we obtain
\[
p_i u(p_i,p_j)-u(p_i,p_j)-u(p_i,p_i)p_j=0.
\]
Taking $a=p_i$ and $b=c=p_j$ in (\ref{cocycle1}), we obtain
\[
p_i u(p_j, p_j)+u(p_i,p_j)-u(p_i,p_j)p_j=0.
\]
Thus
\begin{eqnarray*}
 u(p_i,p_j)&=&p_i u(p_i,p_j)-u(p_i,p_i)p_j=p_i (u(p_i,p_j)p_j-p_i u(p_j,p_j))-u(p_i,p_i)p_j\\
&=& p_i u(p_i,p_j)p_j-p_i u(p_j,p_j)-u(p_i,p_i)p_j\\
&=& d_{ij}-p_i c_j-c_i p_j,
\end{eqnarray*}
where $p_i d_{ij}p_j=d_{ij}$, i.e. $d_{ij}$ is a linear combination of paths connecting $p_i$ towards $p_j$.

(iii) For the first identity, take $a=x$ and $b=c=1$ in (\ref{cocycle1}). For the second one, take $a=b=1$ and $c=x$ in (\ref{cocycle1}).

(iv) By (i) and (ii), 
\[
u(1,1)=\sum_{i,j}u(p_i,p_j)=\sum_i(\lambda_i p_i+c_i)+\sum_{i\neq j}(d_{ij}-c_i p_j-p_i c_j)=\sum_i(\lambda_i p_i-c_i)+\sum_{i\neq j}d_{ij}.
\]
Thus (\ref{cocycle5}) follows from (\ref{deform-alg0}). (v) is obvious.
\end{proof}

From now on in this section we omit the term ``modulo $\mathcal{I}$".

\begin{lemma}\label{cocycle6}
Let $\lambda_k, c_k, d_{ij}$ be the elements uniquely determined by lemma \ref{cocycle2}.
\begin{enumerate}
  \item[(i)] For $1\leq k\leq n$, the equation 
\begin{equation}\label{cocycle6-1}
(p_k+\epsilon x)\cdot_u (p_k+\epsilon x)=p_k+\epsilon x
\end{equation}
has  solutions, which of the form $x=-\lambda_k p_k+a_k p_k+p_k b_k+c_k$ such that 
\begin{equation}\label{cocycle6-1.5}
p_k a_k=b_k p_k=0,
\end{equation}
 i.e., $a_k$ is a linear combination of paths that do not start at $p_k$, and $b_k$ is a linear combination of paths that do not end at $p_k$.
  \item[(ii)] The system of idempotents $\{p_k+\epsilon  x_k\}_{1\leq k\leq n}$,  are orthogonal if and only if
\begin{equation}\label{cocycle6-2}
p_i a_j p_j+p_i b_i p_j+d_{ij}=0
\end{equation}
for $1\leq i\neq j\leq n$. Such a system of idempotents exist, and they satisfy
\begin{equation}\label{cocycle6-3}
\sum_i \big(p_i+\epsilon (-\lambda_i p_i +a_i p_i +p_i b_i+c_i)\big)=1_u.
\end{equation}
\end{enumerate}
 
\end{lemma}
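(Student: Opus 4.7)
The plan is to prove both parts by direct computation using the Peirce decomposition of $A$ with respect to the idempotent $p_k$, together with the explicit description of $u$ on the $p_i$'s provided by lemma \ref{cocycle2}.

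For (i), I would expand
\[
(p_k+\epsilon x)\cdot_u(p_k+\epsilon x)=p_k+\epsilon\bigl(p_k x+x p_k+u(p_k,p_k)\bigr)
\]
using (\ref{deform-alg0}), and substitute $u(p_k,p_k)=\lambda_k p_k+c_k$ from (\ref{cocycle3}). Equating with $p_k+\epsilon x$, the equation becomes
\[
x-p_k x-x p_k=\lambda_k p_k+c_k.
\]
Decomposing $x=p_k x p_k+p_k x(1-p_k)+(1-p_k)x p_k+(1-p_k)x(1-p_k)$, the left hand side equals $(1-p_k)x(1-p_k)-p_k x p_k$. Since $\lambda_k p_k\in p_kAp_k$ and $c_k\in(1-p_k)A(1-p_k)$ by lemma \ref{cocycle2}(i), comparing components yields $p_k x p_k=-\lambda_k p_k$ and $(1-p_k)x(1-p_k)=c_k$, while $p_k x(1-p_k)$ and $(1-p_k)x p_k$ are unconstrained; writing them as $p_k b_k$ with $b_k p_k=0$ and $a_k p_k$ with $p_k a_k=0$ gives the desired form.

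For (ii), I would compute the orthogonality condition $(p_i+\epsilon x_i)\cdot_u(p_j+\epsilon x_j)=0$ for $i\neq j$. Using $p_ip_j=0$, the coefficient of $\epsilon$ reduces to
\[
x_i p_j+p_i x_j+u(p_i,p_j).
\]
Substituting the form of $x_i,x_j$ from (i) and $u(p_i,p_j)=d_{ij}-p_ic_j-c_i p_j$ from (\ref{cocycle4}), the terms $p_i c_j$ and $c_i p_j$ cancel and we are left exactly with equation (\ref{cocycle6-2}). Existence of a solution is then immediate: fix any choice satisfying (i) for each $k$ (e.g.\ take $b_i=0$ for all $i$ and define $a_j=-\sum_{i\neq j}d_{ij}$, which satisfies $p_j a_j=0$ since $p_j d_{ij}=0$ for $i\neq j$, and yields $p_i a_j p_j=-d_{ij}$).

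Finally, to verify (\ref{cocycle6-3}), I would sum the orthogonality relations (\ref{cocycle6-2}) over all pairs $i\neq j$. Using $b_i p_i=0$ one computes $\sum_{i\neq j}p_i b_i p_j=\sum_i p_i b_i(1-p_i)=\sum_i p_i b_i$, and similarly $\sum_{i\neq j}p_i a_j p_j=\sum_j a_j p_j$, giving $\sum_i(p_i b_i+a_i p_i)=-\sum_{i\neq j}d_{ij}$. Combining with $\sum_i p_i=1$ and comparing with the formula (\ref{cocycle5}) for $1_u$ then yields (\ref{cocycle6-3}). No serious obstacle is expected; the only delicate point is keeping track of the Peirce components, which is straightforward given the constraints $p_k a_k=b_k p_k=0$ and the bimodule constraints on $c_k$ and $d_{ij}$ furnished by lemma \ref{cocycle2}.
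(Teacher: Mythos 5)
Your proposal is correct and follows essentially the same route as the paper: expand the idempotent and orthogonality equations using (\ref{deform-alg0}), substitute the descriptions of $u(p_k,p_k)$ and $u(p_i,p_j)$ from lemma \ref{cocycle2}, and sum (\ref{cocycle6-2}) to recover (\ref{cocycle5}). The one place you diverge is pleasant: for the existence of an orthogonal system, the paper invokes an induction on $i$ using acyclicity of $\Delta$, whereas you exhibit the explicit solution $b_i=0$, $a_j=-\sum_{i\neq j}d_{ij}$, which indeed satisfies $p_j a_j=0$ (since $p_j d_{ij}=p_j p_i d_{ij}p_j=0$ for $i\neq j$) and $p_i a_j p_j=-d_{ij}$; this closed form is cleaner and makes the acyclicity hypothesis invisible at this step, since it is already baked into the vanishing $d_{ij}=0$ for $i<j$ from lemma \ref{cocycle2}(ii).
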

\begin{proof} The equation (\ref{cocycle6-1}) is equivalent to
\[
x=p_k x+xp_k+u(p_k,p_k),
\]
i.e.,
\[
x=p_k x+xp_k+\lambda_k p_k +c_k.
\]
From this one easily deduces (i). For $i\neq j$,
\begin{eqnarray*}
&&\big(p_i+\epsilon (-\lambda_i p_i +a_i p_i +p_i b_i+c_i)\big)\big(p_j-\epsilon (-\lambda_j p_j+a_j p_j+p_j b_j+c_j)\big)\\
&=& \epsilon \big(p_i a_j p_j+p_i c_j+p_i b_i p_j+c_i p_j+u(p_i,p_j)\big)\\
&=&\epsilon (p_i a_j p_j+p_i b_i p_j+d_{ij}),
\end{eqnarray*}
where we have used (\ref{cocycle4}). The existence of the solutions for the system of equations (\ref{cocycle6-2}) follows by induction on $i$, using the acyclicity of the graph $\Delta$. Assuming (\ref{cocycle6-2}), one has, by (\ref{cocycle5}),
\begin{eqnarray*}
&&\sum_i (p_i+\epsilon (-\lambda_i p_i +a_i p_i +p_i b_i+c_i))\\
&=& 1+\sum_i\epsilon (-\lambda_i p_i+1\cdot a_i p_i +p_i b_i\cdot 1+c_i)\\
&=& 1+\sum_i\epsilon \big(-\lambda_i p_i+(\sum_j p_j) \cdot a_i p_i +p_i b_i\cdot (\sum_j p_j)+c_i\big)\\
&=& 1+\epsilon \big(-\sum_i\lambda_i p_i-\sum_{i\neq j}d_{ij}+\sum_i c_i\big)=1_u,
\end{eqnarray*}
where for the third equality we use (\ref{cocycle6-1.5}) and (\ref{cocycle6-2}), and for the final equality we use (\ref{cocycle5}).
\end{proof}

\begin{lemma}\label{cocycle7}
Let $p_k+\epsilon (-\lambda_k p_k +a_k p_k +p_k b_k+c_k)$ and $p_k+\epsilon (-\lambda_k p_k +a'_k p_k +p_k b'_k+c_k)$ be two solutions of (\ref{cocycle6-1}). Then as right $A_u$-modules, $(p_k+\epsilon (-\lambda_k p_k +a_k p_k +p_k b_k+c_k))A_u$ is isomorphic to $(p_k+\epsilon (-\lambda_k p_k +a'_k p_k +p_k b'_k+c_k))A_u$.
\end{lemma}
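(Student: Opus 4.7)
The plan is to recognise that both $e := p_k + \epsilon(-\lambda_k p_k + a_k p_k + p_k b_k + c_k)$ and $f := p_k + \epsilon(-\lambda_k p_k + a'_k p_k + p_k b'_k + c_k)$ are idempotents of $A_u$ that reduce to the same element $p_k$ modulo the ideal $\epsilon A_u$, and then invoke the classical principle that idempotents congruent modulo a square-zero (more generally, nil) ideal in a ring $R$ yield isomorphic projective right $R$-modules.

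Concretely, first I would note that $e - f = \epsilon\bigl((a_k - a'_k) p_k + p_k (b_k - b'_k)\bigr)$ lies in $\epsilon A_u$, and that by the formula (\ref{deform-alg0}) any product $\epsilon a \cdot_u \epsilon b$ lies in $\epsilon^2 A_u = 0$, so $\epsilon A_u$ is a two-sided square-zero ideal of $A_u$. In particular $(e-f) \cdot_u (e-f) = 0$; expanding this and using $e \cdot_u e = e$ and $f \cdot_u f = f$ gives $e \cdot_u f + f \cdot_u e = e + f$. Left-multiplying by $e$ and using $e^2 = e$ yields $e \cdot_u f \cdot_u e = e$, and symmetrically $f \cdot_u e \cdot_u f = f$.

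Equipped with these two relations, the isomorphism will be furnished by left multiplication. I would define right $A_u$-module homomorphisms
\[
\varphi \colon e A_u \longrightarrow f A_u, \quad y \longmapsto f \cdot_u y, \qquad \psi \colon f A_u \longrightarrow e A_u, \quad z \longmapsto e \cdot_u z.
\]
For $y \in e A_u$, writing $y = e \cdot_u y$, one has $\psi(\varphi(y)) = e \cdot_u f \cdot_u e \cdot_u y = e \cdot_u y = y$, and by symmetry $\varphi \circ \psi = \mathrm{id}_{f A_u}$. This gives the desired isomorphism $(p_k + \epsilon x) A_u \cong (p_k + \epsilon x') A_u$ of right $A_u$-modules.

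There is no real obstacle here: the argument does not use the specific shape of $x, x'$ produced in lemma \ref{cocycle6}, only that $e$ and $f$ are two idempotent lifts of $p_k$ in the square-zero extension $A_u$ of $A$. The lemma is thus a direct instance of the standard idempotent-lifting principle, and the only thing to double-check is that left multiplication by an element of $A_u$ is $A_u$-linear for the deformed product $\cdot_u$, which is immediate from the associativity of $\cdot_u$.
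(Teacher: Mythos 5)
Your proof is correct, and it takes a cleaner, more conceptual route than the paper's. The paper proceeds by explicitly constructing elements $y,z\in A$ so that $(1+y\epsilon)\cdot_u e\cdot_u(1+z\epsilon)=f$ — in effect exhibiting $e$ and $f$ as "conjugate" in $A_u$ — and then defines the module isomorphism by left multiplication by $e\cdot_u(1+z\epsilon)$. You instead invoke the general principle that two idempotents of a ring agreeing modulo a square-zero (or nil) ideal generate isomorphic projective right modules: from $(e-f)\cdot_u(e-f)=0$ you derive $e\cdot_u f\cdot_u e=e$ and $f\cdot_u e\cdot_u f=f$, and then left multiplication by $f$ and $e$ give mutually inverse right-$A_u$-linear maps $eA_u\rightleftarrows fA_u$. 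This avoids all the computations with $u(1,1)$ and the shape of the solutions from lemma \ref{cocycle6}, and makes transparent that the statement has nothing to do with the specific form of $a_k,b_k,c_k$ — only with the fact that $e,f$ are idempotent lifts of $p_k$ along the square-zero extension $A_u\twoheadrightarrow A$. The one thing your argument buys beyond brevity is exactly this generality; what the paper's explicit conjugation buys is concrete control over the isomorphism, which is in the spirit of the rest of section 4 but not logically necessary here.
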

\begin{proof} For $y,z\in A$ we compute
\begin{eqnarray*}
&& \big(1+y\epsilon \big)\big(p_k+\epsilon (-\lambda_k p_k +a_k p_k +p_k b_k+c_k)\big)\big(1+z\epsilon \big)\\
&=& \big(p_k+\epsilon (-\lambda_k p_k +a_k p_k +p_k b_k+c_k+yp_k+u(1,p_k))\big)\big(1+z\epsilon \big)\\
&=& p_k+\epsilon \big(-\lambda_k p_k +a_k p_k +p_k b_k+c_k+yp_k+u(1,p_k)+p_k z+u(p_k,1)\big)\\
&=& p_k+\epsilon \big(-\lambda_k p_k +a_k p_k +p_k b_k+c_k+(y+u(1,1))p_k+p_k (z+u(1,1))\big).
\end{eqnarray*}
Thus we can choose $y,z\in A$ such that 
\begin{multline*}
\big(1+y\epsilon \big)\big(p_k+\epsilon (-\lambda_k p_k +a_k p_k +p_k b_k+c_k)\big)\big(1+z\epsilon \big)\\=p_k+\epsilon \big(-\lambda_k p_k +a'_k p_k +p_k b'_k+c_k\big),
\end{multline*}
then map for $\forall w,v\in A$,
\begin{multline*}
\big(p_k+\epsilon (-\lambda_k p_k +a'_k p_k +p_k b'_k+c_k)\big)\big(w+v\epsilon \big)\\
\mapsto \big(p_k+\epsilon (-\lambda_k p_k +a_k p_k +p_k b_k+c_k)\big)\big(1+z\epsilon \big)\big(w+v\epsilon \big)
\end{multline*}
gives an isomorphism 
\[
\big(p_k+\epsilon (-\lambda_k p_k +a_k p_k +p_k b_k+c_k)\big)A_u \xrightarrow{\sim} \big(p_k+\epsilon \big(-\lambda_k p_k +a'_k p_k +p_k b'_k+c_k)\big)A_u.
\]
\end{proof}

\begin{lemma}\label{endo0}
We have the following identities.
\begin{enumerate} 
  \item[(i)] For $1\leq k\leq n$,
  \begin{equation}\label{endo0-6} 
  p_k u(p_k,x)-u(p_k,x)+u(p_k, p_k x)-u(p_k,p_k)x=0,
  \end{equation}
  \begin{equation}\label{endo0-7}
  x u(p_k,p_k)-u(xp_k,p_k)+u(x,p_k)-u(x,p_k)p_k=0,
  \end{equation}
  \begin{equation}\label{endo0-8}
  p_k u(p_k x,p_k)-u(p_k x,p_k)+u(p_k,p_kxp_k)-u(p_k,p_k x)p_k=0,
  \end{equation}
  \begin{equation}\label{endo0-9}
  p_k u(xp_k,p_k)-u(p_k xp_k,p_k)+u(p_k,xp_k)-u(p_k,xp_k)p_k=0,
  \end{equation}
  \begin{equation}\label{endo0-10}
  p_k u(x,p_k)-u(p_k x,p_k)+u(p_k,xp_k)-u(p_k,x)p_k=0,
  \end{equation}
  \item[(ii)] For $1\leq i<j\leq n$,
  \begin{equation}\label{endo0-3}
  p_i u(xp_j,p_j)+u(p_i,xp_j)-u(p_i,xp_j)p_j=0,
  \end{equation}
  \begin{equation}\label{endo0-4}
  p_i u(x,p_j)-u(p_i x,p_j)+u(p_i,xp_j)-u(p_i,x)p_j=0,
  \end{equation}
  \begin{equation}\label{endo0-5}
  p_i u(p_i x,p_j)-u(p_i x,p_j)-u(p_i,p_i x)p_j=0.
  \end{equation}
  \item[(iii)] For $1\leq k\leq n$,
  \begin{multline}\label{endo0-14}
  p_k u(xp_k,p_k)-p_k u(x,p_k)+p_k u(p_k x,p_k)\\
  =u(p_k,p_k x)p_k+u(p_k,xp_k)p_k-u(p_k,x)p_k.
  \end{multline}
  \item[(iv)] For $1\leq i<j\leq n$,
  \begin{multline}\label{endo0-13}
  u(p_i,p_i x)p_j+u(p_i,xp_j)p_j-u(p_i,x)p_j\\
  =  p_i u(p_i x,p_j)-p_i u(x,p_j)+p_i u(xp_j,p_j)=0.
  \end{multline}
  
\end{enumerate}
\end{lemma}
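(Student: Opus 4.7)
The plan is to derive every identity as a direct specialisation of the Hochschild 2-cocycle relation
\begin{equation*}
a u(b,c)-u(ab,c)+u(a,bc)-u(a,b)c=0
\end{equation*}
on carefully chosen triples $(a,b,c)$ built from the idempotents $p_i$ and an arbitrary $x\in A$, and then to combine the resulting relations using $p_k^2=p_k$ together with the ordering constraint $p_ixp_j=0$ for $i<j$ (which follows from the acyclicity of $\Delta$ and the prescribed numbering of vertices).

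For part (i), each of the five identities is a single substitution into the cocycle relation: \eqref{endo0-6} uses $(p_k,p_k,x)$, \eqref{endo0-7} uses $(x,p_k,p_k)$, \eqref{endo0-8} uses $(p_k,p_kx,p_k)$, \eqref{endo0-9} uses $(p_k,xp_k,p_k)$, and \eqref{endo0-10} uses $(p_k,x,p_k)$. In each case the idempotency $p_k^2=p_k$ collapses one of the four cocycle terms into the next, leaving the asserted three-term identity. For part (ii) the same strategy applies with the ordered pair $(p_i,p_j)$ in place of $p_k$: \eqref{endo0-3} uses $(p_i,xp_j,p_j)$, \eqref{endo0-4} uses $(p_i,x,p_j)$, and \eqref{endo0-5} uses $(p_i,p_ix,p_j)$. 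Here the ordering kills the remaining term containing $u$ of the product $p_ixp_j=0$, producing exactly the stated relation.

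For part (iii), solve \eqref{endo0-8}, \eqref{endo0-9}, \eqref{endo0-10} for $u(p_k,p_kx)p_k$, $u(p_k,xp_k)p_k$, and $u(p_k,x)p_k$ respectively, and form the signed sum that appears as the right-hand side of \eqref{endo0-14}. After cancellation this reproduces the left-hand side of \eqref{endo0-14} plus a residual term of the shape $u(p_k,p_kxp_k)-u(p_kxp_k,p_k)$. Acyclicity of $\Delta$ forces every path from $p_k$ back to $p_k$ in $\Bbbk\Delta$ to be the trivial path, so the corner satisfies $p_kAp_k=\Bbbk p_k$; hence $p_kxp_k=\alpha p_k$ for a scalar $\alpha=\alpha(x)$, and $\Bbbk$-bilinearity of $u$ makes the residual collapse to $\alpha\bigl(u(p_k,p_k)-u(p_k,p_k)\bigr)=0$.

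For part (iv) the analogous elimination using \eqref{endo0-3}, \eqref{endo0-4}, \eqref{endo0-5} yields the first equality in \eqref{endo0-13}. The common vanishing of both sides will follow by applying the cocycle relation at $(p_i,p_i,x)$ and $(p_i,p_i,xp_j)$, multiplying on the right by $p_j$, and exploiting $p_ixp_j=0$ for $i<j$ to cancel the remaining auxiliary terms. The main obstacle throughout is the bookkeeping in (iii) and (iv); the conceptual input beyond the raw cocycle condition is the control of the corners $p_kAp_k$ and $p_iAp_j$ provided by the acyclicity of $\Delta$, which is what pins the residual cocycle contributions down to zero.
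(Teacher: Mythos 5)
Your proposal is correct and takes essentially the same route as the paper: parts (i) and (ii) are the identical single substitutions into the cocycle relation \eqref{cocycle1}, and parts (iii) and (iv) are obtained by chaining those identities, with the residual $u(p_k,p_kxp_k)-u(p_kxp_k,p_k)$ killed exactly as in the paper via $p_kAp_k=\Bbbk p_k$. The only real divergence is in establishing the final \mbox{``$=0$''} of \eqref{endo0-13}: the paper only writes out the chain proving the first equality and leaves the vanishing implicit (the left side lies in $Ap_j$, the right side in $p_iA$, so their common value lies in $p_iAp_j=0$ for $i<j$), whereas you derive it directly from the cocycle relation at $(p_i,p_i,x)$ and $(p_i,p_i,xp_j)$, which upon right-multiplication by $p_j$ and using $p_ixp_j=0$ reduces the left side of \eqref{endo0-13} to $p_i\bigl(u(p_i,xp_j)-u(p_i,x)\bigr)p_j\in p_iAp_j=0$. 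Both are valid; yours is a little longer but more self-contained, and in fact fills in a step the paper glosses over.
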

\begin{proof} (i) and (ii) follows easily from (\ref{cocycle1}) and that there is no path starts from $p_i$ and ends at $p_j$. For (iii) we compute
\begin{eqnarray*}
&& u(p_i,p_i x)p_j+u(p_i,xp_j)p_j-u(p_i,x)p_j\\
&=& p_i u(p_i x,p_j)-u(p_i x,p_j)+u(p_i,xp_j)p_j-u(p_i,x)p_j\\
&=& p_i u(p_i x,p_j)-p_i u(x,p_j)-u(p_i,xp_j)+u(p_i,xp_j)p_j\\
&=& p_i u(p_i x,p_j)-p_i u(x,p_j)+p_i u(xp_j,p_j),
\end{eqnarray*}
where for the first equality we use (\ref{endo0-5}), for the second we use (\ref{endo0-4}) and for the third we use (\ref{endo0-3}).

For (iv) we compute
\begin{eqnarray*}
&&p_k u(xp_k,p_k)-p_k u(x,p_k)+p_k u(p_k x,p_k)\\
&=&u(p_k xp_k,p_k)-u(p_k,xp_k)+u(p_k,xp_k)p_k-p_k u(x,p_k)+p_k u(p_k x,p_k)\\
&=&u(p_k xp_k,p_k)+u(p_k,xp_k)p_k+p_k u(p_k x,p_k)-u(p_k x,p_k)-u(p_k,x)p_k\\
&=&u(p_k,p_k x)p_k+u(p_k,xp_k)p_k-u(p_k,x)p_k,
\end{eqnarray*}
where for the first equality we use (\ref{endo0-9}), for the second we use (\ref{endo0-10}),  for the third we use (\ref{endo0-8}) and   $u(p_k x p_,,p_k)=u(p_k,p_k x p_k)$ because $p_k x p_k=\mu p_k$ for some $\mu \in \Bbbk$.
\end{proof}

\begin{lemma}\label{endo1}
For $1\leq i\leq n$, let $p_i^\dagger=p_i+\epsilon (-\lambda_i p_i +a_i p_i +p_i b_i+c_i)$ be a system of solutions to (\ref{cocycle6-1}) and (\ref{cocycle6-2}), and let  $P_i=p_i^\dagger A_u$, then
\begin{enumerate}
  \item[(i)] For $1\leq i<j\leq n$, $\Hom_{A_u}(P_j,P_i)=0$.
  \item[(ii)] For $1\leq i\leq n$, the composition of homomorphisms 
\[
\Bbbk[\epsilon]\rightarrow A_u\rightarrow\Hom_{A_u}(P_i,P_i)
\]
   is an isomorphism.
  \item[(iii)]
  \begin{equation}\label{endo1-1}
A_u\cong\bigoplus_{1\leq i\leq j\leq n}\Hom_{A_u}(P_i,P_j).
\end{equation}
\end{enumerate}

\end{lemma}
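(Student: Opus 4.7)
My plan is to reduce all three assertions to statements about the subspaces $p_j^\dagger A_u p_i^\dagger \subseteq A_u$, using the standard identification $\Hom_{A_u}(P_i, P_j) \cong p_j^\dagger A_u p_i^\dagger$ (via $f \mapsto f(p_i^\dagger)$, with inverse $z \mapsto (y \mapsto z\cdot_u y)$). Since $\{p_i^\dagger\}_{i=1}^n$ is a complete system of orthogonal idempotents by lemma \ref{cocycle6}, I obtain the $\Bbbk[\epsilon]$-module decomposition
\[
A_u = \bigoplus_{1\leq i,j\leq n} p_i^\dagger A_u p_j^\dagger,
\]
which drives the whole argument.

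For part (i), I will show $p_i^\dagger A_u p_j^\dagger = 0$ whenever $i<j$ (equivalently $\Hom_{A_u}(P_j,P_i)=0$). Reducing modulo $\epsilon$ the subspace lands in $p_i A p_j$, which vanishes by the chosen ordering of the vertices, so $p_i^\dagger A_u p_j^\dagger \subseteq \epsilon A_u$. Any element then has the form $\epsilon w$ with $w\in A$, and because $\epsilon^2=0$ the deformed product $\cdot_u$ collapses to the classical product on $\epsilon A_u$, giving
\[
p_i^\dagger \cdot_u (\epsilon w) \cdot_u p_j^\dagger = \epsilon\, p_i w p_j = 0
\]
by the ordering once again. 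Combined with the decomposition above, this immediately yields (iii): only the summands with $j\geq i$ survive, so $A_u = \bigoplus_{i\leq j} p_j^\dagger A_u p_i^\dagger \cong \bigoplus_{i\leq j}\Hom_{A_u}(P_i,P_j)$.

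For part (ii) a dimension count pins down each surviving summand. Since $p_j^\dagger A_u p_i^\dagger$ reduces modulo $\epsilon$ to $p_j A p_i$, Nakayama's lemma implies it is generated over $\Bbbk[\epsilon]$ by $\dim_\Bbbk(p_j A p_i)$ elements, hence has $\Bbbk$-dimension at most $2\dim_\Bbbk(p_j A p_i)$. Summing over $i\leq j$ and comparing with $\dim_\Bbbk A_u = 2\dim_\Bbbk A$ forces equality everywhere, so each $p_j^\dagger A_u p_i^\dagger$ is $\Bbbk[\epsilon]$-free of rank $\dim_\Bbbk(p_j A p_i)$. In particular $p_i^\dagger A_u p_i^\dagger$ is free of rank one and generated by $p_i^\dagger$ (whose reduction mod $\epsilon$ spans $p_i A p_i=\Bbbk p_i$); and the composition $\Bbbk[\epsilon]\rightarrow A_u \rightarrow \Hom_{A_u}(P_i,P_i)\cong p_i^\dagger A_u p_i^\dagger$ sends $\lambda$ to $\lambda\cdot p_i^\dagger$ under the central $\Bbbk[\epsilon]$-action from (\ref{scalar0}), which is therefore an isomorphism.

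The only delicate step is the first one: verifying that multiplication by anything in $\epsilon A_u$ in the deformed algebra reduces to the undeformed product, so that the classical vanishing $p_i A p_j = 0$ for $i<j$ propagates to the deformed setting. Everything afterwards is formal---the orthogonal idempotent decomposition, Nakayama, and a dimension count---and uses none of the explicit data $\lambda_i, a_i, b_i, c_i$ from lemma \ref{cocycle6} beyond the abstract fact that $\{p_i^\dagger\}$ is a complete orthogonal idempotent system.
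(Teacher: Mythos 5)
Your proof is correct, and it is a genuinely different argument from the one in the paper. The paper establishes (i) and (ii) by lengthy direct computations that repeatedly invoke the explicit cocycle identities of lemmas \ref{cocycle2} and \ref{endo0} (indeed, lemma \ref{endo0} exists solely to feed those computations), and it handles (iii) by citing Bondal's description of projectives over a quiver algebra. You instead exploit two structural facts: the reduction map $A_u \to A$ is a ring homomorphism sending $p_k^\dagger$ to $p_k$, and $\epsilon^2 = 0$ collapses the deformed product $\cdot_u$ to the classical one on $\epsilon A_u$. This makes (i) a two-line consequence of the classical vanishing $p_i A p_j = 0$ for $i < j$: the corner $p_i^\dagger A_u p_j^\dagger$ lands in $\epsilon A_u$ after reduction, and then bracketing any $\epsilon w$ by $p_i^\dagger, p_j^\dagger$ gives $\epsilon p_i w p_j = 0$. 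Part (iii) then falls out of the orthogonal idempotent decomposition of $A_u$, and (ii) follows from Nakayama applied to the surjection $p_j^\dagger A_u p_i^\dagger \twoheadrightarrow p_j A p_i$ together with the count $\dim_\Bbbk A_u = 2\dim_\Bbbk A$, which forces each corner to be $\Bbbk[\epsilon]$-free of the expected rank. The gain is that you never touch the cocycle identities; the only input from lemma \ref{cocycle6} is the bare existence of the complete orthogonal idempotent system $\{p_i^\dagger\}$. The paper's computational route, on the other hand, also produces the explicit formulas for the corner subspaces that get reused later (e.g.\ in the proof of theorem \ref{deform-bundle2} via (\ref{eq13})), so both proofs have their place.
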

\begin{proof} By definition $p_i^\dagger$ are idempotents of $A_u$. By \cite[prop. I.4.9]{ARS}, for $1\leq i,j\leq n$,
\[
 \Hom_{A_u}(P_i,P_j)=p_j^\dagger A_u p_i^\dagger.
 \] 
Thus we can show (i) and (ii) by direct computations.
For $i<j$ and $x,y\in A$, we compute
\begin{eqnarray*}
&& \big(p_i+\epsilon (-\lambda_i p_i+c_i+a_i p_i +p_i b_i)\big)\big(x+\epsilon y\big)\big(p_j+\epsilon (-\lambda_j p_j+c_j+a_j p_j +p_j b_j)\big)\\
&=&\big(p_i x+\epsilon (-\lambda_i p_i x+c_i x+a_i p_i x+p_i b_i x+p_i y+u(p_i,x))\big)
\big(p_j+\epsilon (-\lambda_j p_j+c_j+a_j p_j +p_j b_j)\big)\\
&=& \epsilon \big(c_i x p_j+u(p_i,x)p_j+p_i x c_j+u(p_i x,p_j)\big)\\
&=& \epsilon \big(u(p_i,p_i x)p_j+p_i x c_j+u(p_i x,p_j)\big)\\
&=&\epsilon \big(u(p_i,p_i x)p_j+p_i u(xp_j,p_j)-p_i u(x,p_j)+u(p_ix,p_j)\big)\\
&=& \epsilon \big(u(p_i,p_i x)p_j+u(p_i,xp_j)p_j-u(p_i,xp_j)
-p_i u(x,p_j)+u(p_ix,p_j)\big)\\
&=& \epsilon \big(u(p_i,p_i x)p_j+u(p_i,xp_j)p_j-u(p_i,x)p_j\big),
\end{eqnarray*}
where for the first and second equalities we use the  property about directions of $p_i$, $p_j$ and $c_i$, $c_j$ described in lemma \ref{cocycle2} (i), for the third equality we use (\ref{endo0-6}), for the fourth  we use (\ref{endo0-7}), for the fifth  we use (\ref{endo0-3}), and for the sixth  we use (\ref{endo0-4}). Thus by (\ref{endo0-13}) we obtain (i).\\

For $1\leq k\leq n$, and $x,y\in A$ we compute
\begin{eqnarray}\label{endo1-2}
&& \big(p_k+\epsilon (-\lambda_k p_k+c_k+a_k p_k +p_k b_k)\big)\big(x+\epsilon y\big)\big(p_k+\epsilon (-\lambda_k p_k+c_k+a_k p_k +p_k b_k)\big)\nonumber\\
&=&\big(p_k x+\epsilon (-\lambda_k p_k x+c_k x+a_k p_k x+p_k b_kx+p_k y+u(p_k,x))\big)\big(p_k+\epsilon (-\lambda_k p_k+c_k+a_k p_k +p_k b_k)\big)\nonumber\\
&=& p_k x p_k+\epsilon \big(-\lambda_k p_k x p_k+c_k x p_k+a_k p_k x p_k+p_k b_kx p_k+p_k y p_k+u(p_k,x)p_k\nonumber\\
&&-\lambda_k p_k x p_k+p_k x c_k+p_k x a_k p_k +p_k xp_k b_k+u(p_k x,p_k)\big)\nonumber\\
&=& p_k x p_k+\epsilon \big(-2\lambda_k p_k x p_k+c_k x p_k+a_k p_k x p_k+p_k y p_k+u(p_k,x)p_k\nonumber\\
&&+p_k x c_k+p_k xp_k b_k+u(p_k x,p_k)\big),
\end{eqnarray}
where for the first and second equalities we use the  property about directions of $p_i$, $p_j$ and $c_i$, $c_j$ described in lemma \ref{cocycle2} (i), and for the third equality we use
\[
p_k b_kx p_k=p_k x a_k p_k=0,
\]
which follows from $p_k a_k=b_k p_k=0$ by lemma \ref{cocycle6}. Then we compute
\begin{eqnarray}\label{endo1-3}
&& c_k x p_k+u(p_k,x)p_k+p_k x c_k+u(p_k x,p_k)\nonumber\\
&=&-2 \lambda_k p_k xp_k+u(p_k,p_k)xp_k+u(p_k,x)p_k+p_k xu(p_k,p_k)+u(p_k x,p_k)\nonumber\\
&=&-2 \lambda_k p_k xp_k+p_k u(p_k,x)p_k+u(p_k, p_k x)p_k+p_k xu(p_k,p_k)+u(p_k x,p_k)\nonumber\\
&=&-2 \lambda_k p_k xp_k+p_k u(p_k,x)p_k+u(p_k, p_k x)p_k\nonumber\\
&&+p_k u(xp_k,p_k)-p_k u(x,p_k)+p_k u(x,p_k)p_k+u(p_k x,p_k)\nonumber\\
&=&-2 \lambda_k p_k xp_k+p_k u(p_k,x)p_k+p_k u(xp_k,p_k)-p_k u(x,p_k)+p_k u(x,p_k)p_k\nonumber\\
&&+p_k u(p_k x,p_k)+u(p_k,p_kxp_k),
\end{eqnarray}
where for the first equality we use (\ref{cocycle3}), for the second we use (\ref{endo0-6}), for the third we use (\ref{endo0-7}) and for the fourth we use (\ref{endo0-8}). Now by (\ref{endo0-14}) we have
\begin{equation*}
p_k u(xp_k,p_k)-p_k u(x,p_k)+p_k u(p_k x,p_k)
=\big(p_k u(xp_k,p_k)-p_k u(x,p_k)+p_k u(p_k x,p_k)\big)p_k,
\end{equation*}
thus substitute (\ref{endo1-3}) into the equality of (\ref{endo1-2}), we obtain
\begin{eqnarray}\label{endo1-4}
&& \big(p_k+\epsilon (-\lambda_k p_k+c_k+a_k p_k +p_k b_k)\big)\big(x+\epsilon y\big)\big(p_k+\epsilon (-\lambda_k p_k+c_k+a_k p_k +p_k b_k)\big)\nonumber\\
&=& p_k x p_k+\epsilon \big(p_k T(x,k) p_k+a_k p_k x p_k+p_k y p_k+p_k xp_k b_k+u(p_k,p_kxp_k)\big),
\end{eqnarray}
where $T(x,k)\in A$ depends only $x$ and $k$, and we can ignore its complicated form. There exists $\mu(x,k),\nu(y_k), \tau(x,k)\in \Bbbk$ which depends on $k$ and $x$ or $y$ as the notations indicate, such that 
\begin{equation}\label{endo1-5}
p_k x p_k=\mu(x,k)p_k,\ p_k y p_k=\nu(y,k)p_k,\ p_k T(x,k) p_k=\tau(x,k)p_k.
\end{equation}
But for an element $\mu+\epsilon \nu\in \Bbbk[\epsilon]$, the scalar multiplication on $P_k$ is given by (\ref{scalar0}), i.e.,
\begin{eqnarray}\label{endo1-6}
&& \big(\mu\cdot 1_u+\epsilon  \nu\big)\big(p_k+\epsilon (-\lambda_k p_k+c_k+a_k p_k +p_k b_k)\big)\nonumber\\
&=&\big(\mu+\epsilon (-\mu u(1,1)+\nu)\big)\big(p_k+\epsilon (-\lambda_k p_k+c_k+a_k p_k +p_k b_k)\big)\nonumber\\
&=& \mu p_k+\epsilon (-\mu\lambda_k  p_k+\mu c_k+\mu a_k p_k +\mu p_k b_k-\mu u(1,1)p_k+\nu p_k+u(\mu,p_k))\nonumber\\
&=& \mu p_k+\epsilon (-\mu\lambda_k  p_k+\mu c_k+\mu a_k p_k +\mu p_k b_k+\nu p_k),
\end{eqnarray}
where for the third equality we use $\mu u(1,1)p_k=u(\mu,p_k)$ by lemma \ref{cocycle2} (iii). Comparing (\ref{endo1-4}) and (\ref{endo1-6}) using (\ref{endo1-5}), we obtain
\begin{multline*}
\big(p_k+\epsilon (-\lambda_k p_k+c_k+a_k p_k +p_k b_k)\big)A_u\big(p_k+\epsilon (-\lambda_k p_k+c_k+a_k p_k +p_k b_k)\big)\\
=\Bbbk[\epsilon].\big(p_k+\epsilon (-\lambda_k p_k+c_k+a_k p_k +p_k b_k)\big).
\end{multline*}
This completes the proof of (ii). Finally, by (\ref{cocycle6-3}), $\{P_i\}_{1\leq i\leq n}$ is a \emph{complete} list of projective $A_u$-modules. Regard $A_u$ as a $\Bbbk$-\emph{algebra}, we obtain (\ref{endo1-1}) by \cite[section 5]{Bon89}.
\end{proof}

\begin{proof}[Proof of theorem \ref{deform-alg1}] Since $p_j A$ is an indecomposable projective right $A$-module, by \cite[I.4.4]{ARS} a projective right $A_u$-module $P_i$ deforming $p_j A$ is still indecomoposable. Then by \cite[I.4.5 and I.4.8]{ARS}, one easily sees that $P_j$ is of the form $p_j^\dagger A_u$ where $p_j^\dagger$ is an idempotent of $A_u$ such that $p_j^\dagger\ \mathrm{mod}\ (\epsilon)=p_j$. Thus (i) follows from lemma \ref{cocycle6} (i) and lemma \ref{cocycle7} and the remaining statements follow from lemma \ref{cocycle6} (ii) and lemma \ref{endo1}.
\end{proof}

\section{First order noncommutative deformations of exceptional collections}
Let $X$ be a smooth proper scheme over $\Bbbk$, $T_X$ the tangent sheaf of $X$. 
Given $\beta\in  H^1(X,T_X)$, there is a canonically associated smooth projective scheme $X_{\beta}$ over $\Bbbk[\epsilon]$ which deforms $X$. Generalizing this classical fact, Toda in \cite{Toda05} introduced the notion of \emph{noncommutative deformation} $X_{\alpha,\beta,\gamma}$ associated to an element  $(\alpha,\beta,\gamma)\in H^2(X,\mathcal{O}_X)\oplus H^1(X,T_X)\oplus H^0(X,\wedge^2 T_X)$. Let us recall Toda's definition. 
\begin{definition}\label{def-ncdeform}\cite[\S 3, \S 4]{Toda05}
Choose an affine open covering  $\mathcal{U}=\{U_i\}_{i\in I}$ of $X$, and choose \v{C}ech representatives $\{\alpha_{ijk}\}_{i,j,k\in I}\in \check{C}^2(\mathcal{U},\mathcal{O}_X)$, $\{\beta_{ij}\}_{i,j\in I}\in \check{C}^1(\mathcal{U},T_X)$ of $\alpha$ and $\beta$ respectively. We regard $\gamma\in \Gamma(X,T^2_X)$ as an \emph{antisymmetric bi-derivation}, i.e, a $\Bbbk$-linear homomorphism $\mathcal{O}_X\otimes_{\Bbbk} \mathcal{O}_X\rightarrow \mathcal{O}_X$, which are derivations in both arguments, and is antisymmetric. Étale locally, $\gamma$ can be written as $\sum_{i,j=1}^{\dim X}f_{ij} \partial_{x_{i}}\wedge \partial_{x_j}$ where $x_1,...,x_{\dim X}$ are étale local coordinates of $X$, and $f_{ij}$ are  regular functions on the corresponding chart.

The noncommutative deformation $X_{\alpha,\beta,\gamma}$ consists of the following data. 
\begin{enumerate}
  \item The underlying space is identified to $X$.
  \item There is a sheaf of $\Bbbk[\epsilon]$-algebras $\mathcal{O}_{X_{\beta,\gamma}}$ defined as follows. As a sheaf of $\Bbbk[\epsilon]$-modules, $\mathcal{O}_{X_{\beta,\gamma}}$ is the kernel of 
  \[
  \mathcal{O}_X\oplus \check{C}^0(\mathcal{U},\mathcal{O}_X)\ni (a+\epsilon\{b_i\}_{i\in I})\mapsto 
  \{-\beta_{ij}(a)+\delta\{b_i\}\}_{i,j\in I}\in \check{C}^1(\mathcal{U},\mathcal{O}_X),
  \]
  and the multiplication is given by 
  \[
  (a+\epsilon \{b_i\})\cdot (c+\epsilon\{d_i\})=ac+\epsilon\{b_i c+a d_i+\gamma(a,c)\}.
  \]
  \item An $\mathcal{O}_{X_{\beta,\gamma}}$-module twisted by $\alpha$, is a collection $\{F_i\}_{i\in I}$, where $F_i$ is an $\mathcal{O}_{X_{\beta,\gamma}}|_{U_i}$-module, and a collection $\{\phi_{ij}\}_{i,j\in I}$, where $\phi_{ij}: F_i|_{U_i\cap U_j}\rightarrow F_j|_{U_i\cap U_j}$ is an isomorphism $\mathcal{O}_{X_{\beta,\gamma}}|_{U_{ij}}$-modules, such that 
  \[
  \phi_{ki}\circ \phi_{jk}\circ \phi_{ij}=\mathrm{id} -  \alpha_{ijk}\epsilon.
  \]
\end{enumerate}

The above definition is independent of the choice of $\mathcal{U}$. For brevity we call an $\mathcal{O}_{X_{\beta,\gamma}}$-module twisted by $\alpha$, an  \emph{$\mathcal{O}_{X_{\alpha,\beta,\gamma}}$-module}. Similarly for the $\mathcal{O}_{X_{\alpha,\beta,\gamma}}$-linear homomorphisms. We say that an $\mathcal{O}_{\alpha,\beta,\gamma}$-module $F$ is quasi-coherent (resp. coherent, resp. locally free), if $F|_{U_i}$ is a quasi-coherent (resp. coherent, resp. locally free) $\mathcal{O}_{X_{\beta,\gamma}}|_{U_i}$-module. 
  A locally free coherent  $\mathcal{O}_{\alpha,\beta,\gamma}$-module is also called a vector bundle on $X_{\alpha,\beta,\gamma}$. The derived category of $\mathcal{O}_{\alpha,\beta,\gamma}$-modules (resp. quasi coherent $\mathcal{O}_{\alpha,\beta,\gamma}$-module, resp. coherent $\mathcal{O}_{\alpha,\beta,\gamma}$-modules)  are denoted by $\mathrm{D}^*(\mathcal{O}_{X_{\alpha,\beta,\gamma}})$ (resp. $\mathrm{D}_{\mathrm{qcoh}}^*(\mathcal{O}_{X_{\alpha,\beta,\gamma}})$, resp. $\mathrm{D}_{\mathrm{coh}}^*(\mathcal{O}_{X_{\alpha,\beta,\gamma}})$), where $*=-,+$ or $\mathrm{b}$. The full subcategory of $\mathrm{D}_{\mathrm{coh}}^{\mathrm{b}}(\mathcal{O}_{X_{\alpha,\beta,\gamma}})$ consisting of  perfect complexes over $X_{\alpha,\beta,\gamma}$ is denoted by $\mathrm{D}^{\mathrm{p}}(X_{\alpha,\beta,\gamma})$. 

  There is a natural morphism of ringed space $\pi: X_{\alpha,\beta,\gamma}\rightarrow \Spec(\Bbbk[\epsilon])$, whose corresponding homomorphism of sheaf of rings $\pi^{-1}\Bbbk[\epsilon]\rightarrow \mathcal{O}_{X_{\beta,\gamma}}$ is flat. In particular, on $X_{0,\beta,\gamma}$, the notion of quasi-coherent sheaves (resp. coherent sheaves, resp. locally free sheaves) reduce to the usual ones on a ringed space.

  By \cite[lemma 4.3]{Toda05}, the category of $\mathcal{O}_{X_{\alpha,\beta,\gamma}}$-modules have enough injectives, thus the derived functor $\mathbf{R}\Hhom$ is defined. 
\end{definition}

The following corollary \ref{nc-perf-2} will be used only in the proof of the strongness statement of theorem \ref{deform-bundle2}, which is not needed for the proof of theorem \ref{comparison0}. We outline a proof parallel to the usual one for schemes.
\begin{lemma}\label{nc-finite-1}
Given $X_{0,\beta,\gamma}$, there exists $N$ such that for $q>N$ and any $\mathcal{O}_{X_{0,\beta,\gamma}}$-quasi-coherent sheaf $F$, $\mathbf{R}^q \pi_* F=0$.
\end{lemma}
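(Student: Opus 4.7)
The plan is to reduce the vanishing to Grothendieck's classical vanishing theorem on the underlying noetherian topological space of $X$, by exploiting the fact that $X_{0,\beta,\gamma}$ has the same underlying space as $X$, and that the derived functor of $\pi_*$ in the category of $\mathcal{O}_{X_{0,\beta,\gamma}}$-modules agrees with the derived functor in the category of abelian sheaves. The value we take is $N=\dim X$.

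First I would set up the comparison with abelian-sheaf cohomology. The morphism $\pi:X_{0,\beta,\gamma}\to \Spec(\Bbbk[\epsilon])$ is a morphism of ringed spaces whose underlying topological map sends $X$ to a point. By the existence of enough injectives in the category of $\mathcal{O}_{X_{0,\beta,\gamma}}$-modules (Toda's lemma 4.3, cited in the excerpt), $\mathbf{R}^q \pi_* F$ is the $q$-th cohomology of $\Gamma(X, \mathcal{I}^\bullet)$ for an injective resolution $F\to \mathcal{I}^\bullet$ in that category, with its natural $\Bbbk[\epsilon]$-module structure. The key technical point is that any such injective $\mathcal{I}$ is flabby as an abelian sheaf; this is proved by the usual Godement/Hartshorne argument using the $j_!$-extension from an open subset $U$ of $\mathcal{O}_{X_{\beta,\gamma}}|_U$ viewed as a left $\mathcal{O}_{X_{\beta,\gamma}}|_U$-module, which goes through verbatim since the argument uses only the existence of the $(j_!,j^{-1})$-adjunction and the left-exactness of $\Hom(-,\mathcal{I})$. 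Consequently $\mathbf{R}^q \pi_* F$ coincides with the ordinary sheaf cohomology $H^q(X,F)$ of the underlying abelian sheaf.

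Second, I invoke Grothendieck's vanishing theorem: for a noetherian topological space $X$ of combinatorial dimension $n$ and any sheaf of abelian groups $\mathcal{F}$, one has $H^q(X,\mathcal{F})=0$ for $q>n$. Since $X$ is a smooth proper scheme over a field, its underlying space is noetherian of dimension $\dim X$, so taking $N=\dim X$ gives the desired vanishing. Alternatively, and more concretely, one may fix a finite affine open cover $\mathcal{U}=\{U_i\}_{i=1}^{m}$ of $X$, observe that on each $U_i$ the quasi-coherent $\mathcal{O}_{X_{\beta,\gamma}}|_{U_i}$-module $F|_{U_i}$ is acyclic (because the $\epsilon$-filtration $0\to \epsilon F|_{U_i}\to F|_{U_i}\to F|_{U_i}/\epsilon F|_{U_i}\to 0$ has both outer terms quasi-coherent as $\mathcal{O}_{U_i}$-modules and hence acyclic on the affine $U_i$), and apply the \v{C}ech-to-derived spectral sequence, which terminates at the $m$-th page because $\check{C}^q(\mathcal{U},\cdot)=0$ for $q\geq m$; this gives $N=m-1$.

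The main obstacle is the comparison between derived functors computed in the category of $\mathcal{O}_{X_{0,\beta,\gamma}}$-modules and in the category of abelian sheaves. For commutative ringed spaces this is Hartshorne III.2.6; for the noncommutative ringed space $X_{0,\beta,\gamma}$ one must check the flabbiness of injectives carefully using $j_!$ of sheaves of left modules over the restriction $\mathcal{O}_{X_{\beta,\gamma}}|_U$, but no new idea beyond Godement's argument is required. Once this is in place, the vanishing is immediate from Grothendieck's theorem (or the \v{C}ech computation) and depends only on $\dim X$, not on $F$.
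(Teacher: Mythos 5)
Your proposal is correct, and it contains two routes of which the second is essentially the paper's own. The paper's proof notes that $F|_U$ is acyclic on every affine $U$ (it phrases this as $F|_U$ being $\mathcal{O}_X$-quasi-coherent; your $\epsilon$-filtration argument, splitting $F$ into the genuinely $\mathcal{O}_X$-quasi-coherent pieces $\epsilon F$ and $F/\epsilon F$, is the cleaner way to see this, since $\mathcal{O}_X$ is not a subring of $\mathcal{O}_{X_{\beta,\gamma}}$ when $\gamma\neq 0$), then cites Leray's theorem to identify Čech and sheaf cohomology, and concludes from the finiteness of an affine cover of the proper $X$ — exactly your alternative. Your primary route — flabbiness of injective $\mathcal{O}_{X_{\beta,\gamma}}$-modules via the $j_!$-adjunction, followed by Grothendieck vanishing on the noetherian space $\lvert X\rvert$ — is genuinely different and more structural; it also makes explicit a step the paper leaves implicit, namely that $\mathbf{R}^q\pi_*$ computed by injective resolutions in $\mathcal{O}_{X_{\beta,\gamma}}$-modules agrees with ordinary sheaf cohomology of the underlying abelian sheaf. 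Your route buys the sharp bound $N=\dim X$ without reference to a cover; the paper's Čech route is more elementary but gives $N$ depending on the number of affines. Either suffices for the lemma, whose only use downstream is finiteness of $N$.
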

\begin{proof} Let $F$ be an $\mathcal{O}_{X_{0,\beta,\gamma}}$-quasi-coherent sheaf. For any affine open subset $U$ of $X$, $F|_U$ is an $\mathcal{O}_X$-quasi-coherent sheaf, thus $H^i(U,F)=0$ for $i>0$. By Leray's theorem \cite[II, 5.9.2]{Gode58}, it follows that for any finite affine open covering $\mathcal{U}$ of $X$, $\check{H}^i(\mathcal{U},F)\cong H^i(X_{0,\beta,\gamma},F)$. Thus the conclusion follows from the existence of a finite affine open covering of $X$ because of the properness of $X\rightarrow \Spec(\Bbbk)$. 
\end{proof}

Now the arguments of \cite[3.7 and 3.7.1]{Ill71} carry over verbatim to deduce the following two lemmas. See also \cite[8.3.8]{Ill05}.
\begin{lemma}\label{nc-proj-formula}
For any  $G\in \mathrm{D}^{\mathrm{b}}_{\mathrm{qcoh}}(\Bbbk[\epsilon])$ and $F\in\mathrm{D}^{\mathrm{b}}_{\mathrm{qcoh}}(X_{0,\beta,\gamma})$, there is a canonical isomorphism
$\mathbf{R}\pi_*(\pi^*G\otimes^{\mathbf{L}}F)\cong G\otimes^{\mathbf{L}}_{\Bbbk[\epsilon]}\mathbf{R}\pi_*F$.
\end{lemma}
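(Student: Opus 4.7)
The plan is to adapt the classical projection-formula argument verbatim, as the authors indicate, with Lemma \ref{nc-finite-1} providing the essential finite-cohomological-dimension input that the noncommutative setting needs. First I would construct the canonical morphism: the unit $G\to \mathbf{R}\pi_{*}\pi^{*}G$ of the $(\pi^{*},\mathbf{R}\pi_{*})$-adjunction, combined with the natural lax-monoidal comparison
\[
\mathbf{R}\pi_{*}A\otimes^{\mathbf{L}}_{\Bbbk[\epsilon]}\mathbf{R}\pi_{*}B\longrightarrow \mathbf{R}\pi_{*}(A\otimes^{\mathbf{L}}B),
\]
yields the arrow
\[
G\otimes^{\mathbf{L}}_{\Bbbk[\epsilon]}\mathbf{R}\pi_{*}F\longrightarrow \mathbf{R}\pi_{*}\pi^{*}G\otimes^{\mathbf{L}}_{\Bbbk[\epsilon]}\mathbf{R}\pi_{*}F\longrightarrow \mathbf{R}\pi_{*}(\pi^{*}G\otimes^{\mathbf{L}}F).
\]
Both the adjunction and the lax-monoidal map are set up exactly as in the commutative case, since $\pi^{-1}\Bbbk[\epsilon]\to \mathcal{O}_{X_{0,\beta,\gamma}}$ is flat by the definition recalled in \ref{def-ncdeform}.

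Next I would reduce to the case where $G$ is concentrated in a single degree via a standard way-out argument: both sides are triangulated functors of $G$; the target is way-out in both directions because $\pi_{*}$ has finite cohomological dimension by Lemma \ref{nc-finite-1}, and the source is way-out because $\Bbbk[\epsilon]$ has finite Tor-dimension over itself (in fact is a local Artin ring). A truncation argument then reduces to $G$ being a quasi-coherent sheaf on $\Spec\Bbbk[\epsilon]$, i.e., a single $\Bbbk[\epsilon]$-module. Next, using flatness of $\pi$ (so $\pi^{*}$ is exact), a free resolution $P_{\bullet}\to G$ together with the five lemma applied to the long exact sequences on both sides reduces the question further to $G$ a \emph{free} $\Bbbk[\epsilon]$-module, possibly of infinite rank.

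Finally, for $G=\Bbbk[\epsilon]$ the assertion is tautological, so it suffices to show both sides commute with arbitrary direct sums in $G$. On the source this is trivial. On the target one first notes that $\pi^{*}$ commutes with direct sums (the underlying topological space is unchanged) and that the tensor product distributes over direct sums; then one uses that $\mathbf{R}\pi_{*}$ commutes with arbitrary direct sums on $\mathrm{D}^{\mathrm{b}}_{\mathrm{qcoh}}(X_{0,\beta,\gamma})$. This last point is precisely where Lemma \ref{nc-finite-1} is used: finite cohomological dimension of $\pi_{*}$, combined with the fact that $\pi_{*}$ on quasi-coherent sheaves commutes with filtered colimits (which follows from Leray applied to a finite affine open cover, as in the proof of \ref{nc-finite-1}), upgrades the commutation to the derived category.

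The main obstacle I anticipate is exactly this direct-sum step for $\mathbf{R}\pi_{*}$: the finite cohomological dimension in Lemma \ref{nc-finite-1} suffices to truncate an injective resolution at a uniform degree, but one still needs that a direct sum of quasi-coherent $\mathcal{O}_{X_{0,\beta,\gamma}}$-modules remains quasi-coherent (immediate from the local definition of quasi-coherence in \ref{def-ncdeform}) and that $\pi_{*}$ commutes with such sums sheaf-theoretically, which again reduces, via the finite affine cover, to the algebraic fact that $\Gamma(U,-)$ commutes with direct sums on a quasi-compact open $U$. Once these ingredients are in place the argument of \cite[3.7, 3.7.1]{Ill71} transfers line by line.
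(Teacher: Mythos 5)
The paper itself gives no proof of this lemma, only the remark that the argument of \cite[3.7, 3.7.1]{Ill71} (see also \cite[8.3.8]{Ill05}) transfers verbatim once Lemma \ref{nc-finite-1} supplies the finite cohomological dimension of $\pi_*$. Your reconstruction — build the lax-monoidal comparison map $G\otimes^{\mathbf{L}}_{\Bbbk[\epsilon]}\mathbf{R}\pi_*F\to\mathbf{R}\pi_*(\pi^*G\otimes^{\mathbf{L}}F)$, reduce $G$ to a single module, then to a free module, and close by showing $\mathbf{R}\pi_*$ commutes with arbitrary direct sums via the finite affine Čech cover — is that argument, and you correctly identify where Lemma \ref{nc-finite-1} enters and why quasi-coherence and the finite cover are what make the direct-sum commutation work.

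Two intermediate steps are stated incorrectly, though neither undermines the plan. First, the assertion that ``$\Bbbk[\epsilon]$ has finite Tor-dimension over itself'' is false: $\Bbbk[\epsilon]$ is an Artin local ring that is not a field, so it has infinite global dimension; concretely the residue field $\Bbbk$ has the infinite free resolution $\cdots\xrightarrow{\;\epsilon\;}\Bbbk[\epsilon]\xrightarrow{\;\epsilon\;}\Bbbk[\epsilon]\to\Bbbk$. Luckily that claim is not needed: a bounded $G$ is a finite iterated extension of shifts of its cohomology modules via the canonical truncation triangles, so the reduction to a single module requires no way-out machinery at all. Second, and related, the passage from a module $G$ to a free module via ``a free resolution $P_\bullet\to G$ and the five lemma'' is under-justified, because that resolution is in general unbounded to the left and the five lemma only handles one step at a time. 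The honest formulation is either to write $G\simeq\operatorname{hocolim}_n\,\sigma_{\geq-n}P_\bullet$ and use that both functors commute with this homotopy colimit (the target side again by direct-sum commutation of $\mathbf{R}\pi_*$), or, more cleanly, to observe that the full subcategory of $G\in\mathrm{D}(\Bbbk[\epsilon])$ on which the natural map is an isomorphism is localizing — it is triangulated and closed under arbitrary direct sums by the argument you prepared — and contains $\Bbbk[\epsilon]$, hence is all of $\mathrm{D}(\Bbbk[\epsilon])$. With either fix the proof is complete and matches Illusie's pattern.
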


\begin{lemma}\label{nc-perf-1}
Let $F$ be a perfect complex of $\mathcal{O}_{X_{0,\beta,\gamma}}$-modules. Then $\mathbf{R}\pi_* F$ is perfect over $\Spec(\Bbbk[\epsilon])$.
\end{lemma}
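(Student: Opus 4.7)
The plan is to verify the two standard criteria for perfectness on the Noetherian base $\Spec(\Bbbk[\epsilon])$: bounded cohomology with finitely generated cohomology $\Bbbk[\epsilon]$-modules, together with finite Tor-amplitude. Over a Noetherian ring these together force $\mathbf{R}\pi_*F$ to be quasi-isomorphic to a bounded complex of finitely generated projectives, hence perfect.

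First I would reduce to the case where $F$ is literally a bounded complex of locally free coherent $\mathcal{O}_{X_{0,\beta,\gamma}}$-modules of finite rank. Since $F$ is perfect, local representatives of this form exist, and the properness of $X\to\Spec(\Bbbk)$ provides a finite affine open cover over which these local representatives may be glued into a global one. With such a representative in hand, the cohomological dimension bound of Lemma \ref{nc-finite-1}, combined with the hypercohomology spectral sequence, shows $\mathbf{R}\pi_*F$ is bounded. For finite generation of the cohomology sheaves $R^q\pi_* F$ as $\Bbbk[\epsilon]$-modules, I would use the short exact sequence $0\to \epsilon F\to F\to F/\epsilon F\to 0$: both outer complexes are bounded complexes of coherent $\mathcal{O}_X$-modules (via $\mathcal{O}_{X_{0,\beta,\gamma}}/\epsilon\cong\mathcal{O}_X$), whose hypercohomology over the proper scheme $X/\Spec(\Bbbk)$ is finite-dimensional over $\Bbbk$ by the classical proper mapping theorem; a long exact sequence and Noetherianness of $\Bbbk[\epsilon]$ upgrade this to finite generation of $R^q\pi_*F$ over $\Bbbk[\epsilon]$.

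For finite Tor-amplitude, the projection formula of Lemma \ref{nc-proj-formula} yields, for every $\Bbbk[\epsilon]$-module $M$, a canonical isomorphism
\[
\mathbf{R}\pi_*F\otimes^{\mathbf{L}}_{\Bbbk[\epsilon]}M\cong \mathbf{R}\pi_*(F\otimes^{\mathbf{L}}\pi^*M).
\]
Since our chosen representative of $F$ is a bounded complex of flat $\mathcal{O}_{X_{0,\beta,\gamma}}$-modules, the derived tensor product on the right coincides with the ordinary tensor product and sits in the same cohomological range as $F$ independently of $M$. Lemma \ref{nc-finite-1} then bounds the cohomological amplitude of $\mathbf{R}\pi_*$ of it by a constant independent of $M$, which is precisely the finite Tor-amplitude condition. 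Combined with the previous step, this gives perfectness.

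The main obstacle is the coherence step, since it asks in effect for Grothendieck's finiteness theorem in the noncommutative-deformation setting. The paper's assertion that the arguments of \cite[3.7, 3.7.1]{Ill71} carry over verbatim is essentially the statement that all the relevant coherence computations reduce, via the $\epsilon$-filtration on coherent $\mathcal{O}_{X_{0,\beta,\gamma}}$-modules, to ordinary coherence statements on the proper scheme $X$; once that reduction is granted the remainder is formal from Lemmas \ref{nc-finite-1} and \ref{nc-proj-formula}.
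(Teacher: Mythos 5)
Your overall plan --- bound the cohomological amplitude using Lemma \ref{nc-finite-1}, obtain coherence of the cohomology modules via the $\epsilon$-filtration and the classical finiteness theorem on the proper scheme $X$, and then control Tor-amplitude via the projection formula of Lemma \ref{nc-proj-formula} --- is the right one, and it is indeed what the Illusie argument the paper cites boils down to. However, there is a genuine gap in the reduction step.

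You claim that, because $F$ is perfect and $X$ admits a finite affine cover, the local bounded-free representatives ``may be glued into a global one,'' i.e.\ that $F$ is globally quasi-isomorphic to a bounded complex of finite-rank locally free $\mathcal{O}_{X_{0,\beta,\gamma}}$-modules. That does not follow. A perfect complex is by definition only \emph{locally} quasi-isomorphic to such a complex; upgrading this to a global strict-perfect representative requires a resolution property (e.g.\ an ample family of line bundles), which is not asserted --- and is genuinely unclear --- for the noncommutative ringed space $X_{0,\beta,\gamma}$. Even in the classical commutative setting this step already fails for a general proper (non-quasi-projective) scheme, which is exactly the generality the paper allows. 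Properness gives you a finite cover, not a gluing of local resolutions.

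The fix is to avoid the global representative entirely, which is what Illusie's argument does. For coherence: $F$ has bounded coherent cohomology sheaves, so the hypercohomology spectral sequence $R^p\pi_*\mathscr{H}^q(F)\Rightarrow R^{p+q}\pi_*F$ together with the amplitude bound of Lemma \ref{nc-finite-1} reduces the problem to showing $R^p\pi_*\mathscr{G}$ is a finitely generated $\Bbbk[\epsilon]$-module for a single coherent $\mathcal{O}_{X_{0,\beta,\gamma}}$-module $\mathscr{G}$; then your $\epsilon$-filtration argument applies directly to $\mathscr{G}$ and reduces to coherent sheaves on $X$. For Tor-amplitude: finite Tor-amplitude is a local property of $F$, so it holds without any global strict-perfect model; this suffices to bound the amplitude of $F\otimes^{\mathbf{L}}\pi^*M$, and Lemmas \ref{nc-proj-formula} and \ref{nc-finite-1} then bound the amplitude of $\mathbf{R}\pi_*F\otimes^{\mathbf{L}}_{\Bbbk[\epsilon]}M$ uniformly in $M$. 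With these two modifications the argument closes, and this is essentially what ``the arguments of Illusie carry over verbatim'' means.
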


\begin{corollary}\label{nc-perf-2}
Let $E,F$ be  perfect complexes of $\mathcal{O}_{X_{\alpha,\beta,\gamma}}$-modules. Then 
\[
\mathbf{R}\pi_* \mathbf{R}\Hhom_{\mathcal{O}_{X_{\alpha,\beta,\gamma}}}(E,F)
\] is perfect over $\Spec(\Bbbk[\epsilon])$.
\end{corollary}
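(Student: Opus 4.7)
The plan is to first show that $\mathbf{R}\Hhom_{\mathcal{O}_{X_{\alpha,\beta,\gamma}}}(E,F)$ is itself a perfect complex on the untwisted ringed space $X_{0,\beta,\gamma}$, after which corollary \ref{nc-perf-2} becomes an immediate application of lemma \ref{nc-perf-1} (via the natural morphism $\pi:X_{0,\beta,\gamma}\rightarrow \Spec(\Bbbk[\epsilon])$).

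For the first step I would argue locally. Pick an affine open cover $\mathcal{U}=\{U_i\}$ on which the twisting 2-cocycle $\{\alpha_{ijk}\}$ is represented. Then $E|_{U_i}$ and $F|_{U_i}$ are honest perfect complexes of $\mathcal{O}_{X_{\beta,\gamma}}|_{U_i}$-modules, so after refining $\mathcal{U}$ if necessary they admit quasi-isomorphisms from bounded complexes $\widetilde{E}_i^\bullet, \widetilde{F}_i^\bullet$ of free $\mathcal{O}_{X_{\beta,\gamma}}|_{U_i}$-modules of finite rank. The derived sheaf Hom on $U_i$ is then represented by the naive complex $\Hhom(\widetilde{E}_i^\bullet, \widetilde{F}_i^\bullet)$, whose terms $\bigoplus_{q-p=k}\Hhom(\widetilde{E}_i^p,\widetilde{F}_i^q)$ are again free of finite rank, since the sheaves involved are so and the complexes are bounded. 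Hence the restriction of $\mathbf{R}\Hhom(E,F)$ to each $U_i$ is perfect.

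To see that these local data glue to an object over $X_{0,\beta,\gamma}$ rather than over $X_{\alpha,\beta,\gamma}$, note that if $\{\phi^E_{ij}\}$ and $\{\phi^F_{ij}\}$ are the twisted transition isomorphisms of $E$ and $F$, both satisfying the cocycle condition $\phi_{ki}\circ\phi_{jk}\circ\phi_{ij}=\mathrm{id}-\alpha_{ijk}\epsilon$ of definition \ref{def-ncdeform}, then the induced transitions on $\Hhom$ act by conjugation $f\mapsto \phi^F_{ij}\circ f\circ (\phi^E_{ij})^{-1}$. Computing modulo $\epsilon^2$ one finds
\[
(\mathrm{id}-\alpha_{ijk}\epsilon)\circ f\circ (\mathrm{id}-\alpha_{ijk}\epsilon)^{-1}=(\mathrm{id}-\alpha_{ijk}\epsilon)\circ f\circ (\mathrm{id}+\alpha_{ijk}\epsilon)=f,
\]
so the triple composition on $\Hhom$ is the actual identity, i.e.\ $\mathbf{R}\Hhom_{\mathcal{O}_{X_{\alpha,\beta,\gamma}}}(E,F)$ is untwisted and defines a perfect complex on $X_{0,\beta,\gamma}$. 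Lemma \ref{nc-perf-1} applied to this complex then yields the desired perfectness over $\Spec(\Bbbk[\epsilon])$.

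The main obstacle I foresee is purely bookkeeping: one must check carefully that the naive Hom of two bounded locally free resolutions really computes $\mathbf{R}\Hhom$ in this noncommutative square-zero setting (so that there is an honest complex on each $U_i$ to which the twist-cancellation argument can be applied), and that the local quasi-isomorphisms can be chosen compatibly enough to descend. Both verifications reduce to their classical counterparts on $X$ because $\mathcal{O}_{X_{\beta,\gamma}}$ is $\pi^{-1}\Bbbk[\epsilon]$-flat and $\epsilon^2=0$, so the whole situation is controlled by the underlying complex on $X$ together with a first-order correction.
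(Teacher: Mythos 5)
Your proposal takes essentially the same route as the paper: reduce to showing that $\mathbf{R}\Hhom_{\mathcal{O}_{X_{\alpha,\beta,\gamma}}}(E,F)$ is a perfect complex of (untwisted) $\mathcal{O}_{X_{0,\beta,\gamma}}$-modules, then invoke lemma \ref{nc-perf-1}. You usefully spell out the twist-cancellation computation that the paper summarizes as ``one easily sees,'' while the paper makes explicit the point you flag as a bookkeeping concern — that $\mathbf{R}\Hhom$ can be computed locally — by noting that $j_!\dashv j^*$ with $j_!$ exact, so restriction of injectives stays injective.
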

\begin{proof} For open immersions $j:U\rightarrow X$, the extension by zero $j_!$ is exact and left adjoint to $j^*$ \cite[\S 4]{Toda05}, thus $j^* I$ is injective on $U$ for an injective $\mathcal{O}_{X_{\alpha,\beta,\gamma}}$-module. So local properties of $\mathbf{R}\pi_* \mathbf{R}\Hhom_{\mathcal{O}_{X_{\alpha,\beta,\gamma}}}(E,F)$ can be computed locally. Then one easily sees that 
$\mathbf{R}\Hhom_{\mathcal{O}_{X_{\alpha,\beta,\gamma}}}(E,F)$ is a perfect complex of $\mathcal{O}_{X_{0,\beta,\gamma}}$-modules. The conclusion follows from lemma \ref{nc-perf-1}.
\end{proof}

We define  exceptional collections (resp. strong ..., resp. full ...) of $\mathrm{D}^{\mathrm{p}}(X_{\alpha,\beta,\gamma})$ relative to $\Bbbk[\epsilon]$ as the definition \ref{definition-ec}.

From now on in  this section we study the deformations of strong exceptional collections consisting of vector bundles, over a noncommutative deformation $X_{\alpha,\beta,\gamma}$.

Let $E$ be a vector bundle over $X$,  $\mathcal{U}=\{U_i\}_{i\in I}$  an open covering of $X$, and denote $U_{ij}=U_i\cap U_j$, $U_{ijk}=U_i\cap U_j\cap U_k$, for $i,j,k\in I$. 
Regarding $E|_{U_i}\oplus E|_{U_i}$ as a vector bundle over $U_i\times_{\Bbbk}\Bbbk[\epsilon]$, we want to glue them to obtain a vector bundle over $X_{\alpha,\beta,\gamma}$. Then we need to specify the isomorphisms
\[
 (E|_{U_i}\oplus E|_{U_i})|_{U_{ij}}\xrightarrow{\psi_{ij}} (E|_{U_j}\oplus E|_{U_j})|_{U_{ij}}
\]
where $\psi_{ij}\in \Hom_{\Bbbk[\epsilon]}(E|_{U_i}\oplus E|_{U_i},E|_{U_j}\oplus E|_{U_j})$, 
 such that 
\begin{equation}\label{eq5}
\psi_{ki}\circ\psi_{jk}\circ\psi_{ij}=\mathrm{id}-\alpha_{ijk}\epsilon.  
\end{equation}
Shrinking $\mathcal{U}$ if necessary, we choose connections $\nabla_i:E|_{U_i}\rightarrow E|_{U_i}\otimes_{\mathcal{O}_{U_i}}\Omega^1_{U_i}$ for $i\in I$.
\begin{lemma}\label{def-bundle-1}
The isomorphisms $\{\psi_{ij}\}_{i,j\in I}$ glue $\{E|_{U_i}\oplus E|_{U_i}\}_{i\in I}$ to be a vector bundle over $X_{\alpha,\beta,\gamma}$ if and only if $\psi_{ij}$ are of the form
\[
\psi_{ij}=\left(
\begin{array}{cc}
1 & 0\\
g_{ij} & 1
\end{array}
\right)
\]
where $g_{ij}\in \Hom_{\Bbbk}(E|_{U_i},E|_{U_j})$, and $(g_{ij})_{i,j\in I}$ satisfy
\begin{equation}\label{def-bundle-1-1}
\begin{cases}
g_{ij}r-rg_{ij}=-\beta_{ij}(r)+\gamma(r,\cdot)\circ \nabla_j-\gamma(r,\cdot)\circ \nabla_i, & \forall r \in \Gamma(U_{ij},\mathcal{O}_X),\\
g_{ij}|_{U_{ijk}}+g_{jk}|_{U_{ijk}}+g_{ki}|_{U_{ijk}}=-\alpha_{ijk}.
\end{cases}
\end{equation}
\end{lemma}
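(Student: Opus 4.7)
The plan is to localize everything: endow each local piece $E|_{U_i}\oplus\epsilon E|_{U_i}$ with an explicit $\mathcal{O}_{X_{\beta,\gamma}}|_{U_i}$-module structure built from $\nabla_i$, then translate the two requirements (being $\mathcal{O}_{X_{\alpha,\beta,\gamma}}$-linear on overlaps, and satisfying the twisted cocycle on triple overlaps) into the two conditions of (\ref{def-bundle-1-1}).

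First I would pin down the module structure. On a chart $U_i$, $\mathcal{O}_{X_{\beta,\gamma}}|_{U_i}$ is $\mathcal{O}_{U_i}\oplus\epsilon\mathcal{O}_{U_i}$ with the $\gamma$-twisted product from Definition \ref{def-ncdeform}. Using $\nabla_i$, set
\[
(a+\epsilon b)\cdot_i(e_0+\epsilon e_1)=a e_0+\epsilon\bigl(b e_0+a e_1+\gamma(a,\cdot)\llcorner\nabla_i(e_0)\bigr).
\]
Associativity is a routine check: both $(aa')\cdot_i$ and $a\cdot_i a'\cdot_i$ produce the correction $\gamma(aa',\cdot)\llcorner\nabla_i(e_0)$, using the Leibniz rule for $\nabla_i$ together with the derivation property of $\gamma$ in its first slot.

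Next, $\Bbbk[\epsilon]$-linearity together with the normalization $\psi_{ij}\bmod\epsilon=\mathrm{id}_{E|_{U_{ij}}}$ forces $\psi_{ij}$ to have the claimed matrix form with $g_{ij}\in\Hom_{\Bbbk}(E|_{U_i},E|_{U_j})$ a $\Bbbk$-linear sheaf map (not necessarily $\mathcal{O}_X$-linear). Then I would impose $\mathcal{O}_{X_{\beta,\gamma}}|_{U_{ij}}$-linearity. The key subtlety is that the same section of $\mathcal{O}_{X_{\beta,\gamma}}$ on $U_{ij}$ has two chart descriptions: as $a+\epsilon b$ in the $U_i$-chart and as $a+\epsilon(b+\beta_{ij}(a))$ (up to sign) in the $U_j$-chart, as forced by the kernel condition in Definition \ref{def-ncdeform}. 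Writing out $\psi_{ij}\bigl((a+\epsilon b)\cdot_i(e_0,e_1)\bigr)=(a+\epsilon b)\cdot_j\psi_{ij}(e_0,e_1)$ and comparing $\epsilon$-components, the terms $b e_0$ and $a e_1$ cancel and what survives is precisely the commutator relation on the first line of (\ref{def-bundle-1-1}).

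Finally, since each $\psi_{ij}$ is lower unitriangular with nilpotent off-diagonal part, the triple composition $\psi_{ki}\circ\psi_{jk}\circ\psi_{ij}$ collapses immediately to the matrix with $1$'s on the diagonal and $g_{ij}+g_{jk}+g_{ki}$ in the lower-left corner; equating with $\mathrm{id}-\alpha_{ijk}\epsilon$ yields the second line of (\ref{def-bundle-1-1}). The converse runs verbatim backwards: given $g_{ij}$ obeying the two relations, the first makes each $\psi_{ij}$ intertwine the module structures, and the second assembles them into a twisted cocycle with defect $-\alpha_{ijk}\epsilon$. The only real obstacle is bookkeeping with sign conventions for $\beta$, $\gamma$ and $\alpha$ so that the commutator comes out as $-\beta_{ij}(r)$ and the cocycle defect as $-\alpha_{ijk}$; once these are normalized, both implications become direct computations.
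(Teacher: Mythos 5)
Your proof is correct and follows essentially the same route as the paper: write $\psi_{ij}$ as a $2\times 2$ matrix over $\Bbbk$, impose $\mathcal{O}_{X_{\beta,\gamma}}$-linearity chartwise using the $\nabla_i$-twisted module structure to extract the commutator identity, and read the second identity off the triple composition of unitriangular matrices. The one small organizational difference is that you first invoke $\Bbbk[\epsilon]$-linearity (together with $\psi_{ij}\equiv\mathrm{id}\ \mathrm{mod}\ \epsilon$) to kill the upper-right entry and force the lower-right entry to equal the upper-left one, and only afterwards impose the full $\mathcal{O}_{X_{\beta,\gamma}}$-linearity; the paper instead derives $f_{ij}=0$ and $h_{ij}=\mathrm{id}$ all at once from the matrix identity encoding $\mathcal{O}_{X_{\beta,\gamma}}$-linearity. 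Both are fine, and your two-step reduction is arguably a bit cleaner. Your remark about the two chart descriptions of a section of $\mathcal{O}_{X_{\beta,\gamma}}$ differing by $\beta_{ij}(r)$ correctly isolates the source of the $-\beta_{ij}(r)$ term, and the associativity check for the $\cdot_i$-action (Leibniz for $\nabla_i$ plus the derivation property of $\gamma$ in its first slot) is the right verification.
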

\begin{proof} Write
\[
\psi_{ij}=\left(
\begin{array}{cc}
1 & f_{ij}\\
g_{ij} & h_{ij}
\end{array}
\right),
\]
where $f_{ij},g_{ij},h_{ij}$ are, a priori, $\Bbbk$-linear endomorphisms of $E|_{U_{ij}}$. The $\mathcal{O}_{X_{\beta,\gamma}}$-linearity of $\psi_{ij}$ means
\[
\psi_{ij} \left(
\begin{array}{cc}
r & 0\\
s_i+\gamma(r,\cdot)\circ \nabla_i & r
\end{array}
\right)=\left(
\begin{array}{cc}
r & 0\\
s_j+\gamma(r,\cdot)\circ \nabla_j & r
\end{array}
\right)\psi_{ij},
\]
for any $r,s_i,s_j\in \Gamma(U_{ij},\mathcal{O}_X) $ satisfying 
\begin{equation}\label{eq5.5}
s_i-s_j=\beta_{ij}(r).
\end{equation}
 Thus
\begin{eqnarray*}
&&\left(
\begin{array}{cc}
r+f_{ij}s_i+f_{ij}\circ \gamma(r,\cdot)\circ \nabla_i & f_{ij}r\\
g_{ij}r+h_{ij}s_i+h_{ij}\circ \gamma(r,\cdot)\circ \nabla_i & h_{ij}r
\end{array}
\right)\\
&=&
\left(
\begin{array}{cc}
r & rf_{ij}\\
s_j+\gamma(r,\cdot)\circ \nabla_j+ r g_{ij} & s_j f_{ij}+\gamma(r,\cdot)\circ \nabla_j\circ f_{ij}+r h_{ij}
\end{array}
\right).
\end{eqnarray*}
So $f_{ij}=0$, and $h_{ij}r=rh_{ij}$, i.e. $h_{ij}$ is $\mathcal{O}_X$-linear, and 
\begin{equation}\label{eq6}
g_{ij}r+h_{ij}s_i+h_{ij}\circ \gamma(r,\cdot) \circ \nabla_i=s_j+\gamma(r,\cdot)\circ \nabla_j+r g_{ij}.
\end{equation}
Since $s_j=s_i-\beta_{ij}(r)$, (\ref{eq6}) holds for all $r,s_i,s_j\in \Gamma(U_{ij},\mathcal{O}_X)$ satisfying (\ref{eq5.5}) if and only if
\[
h_{ij}=\mathrm{id}
\]
and
\begin{equation*}
g_{ij}r-rg_{ij}=-\beta_{ij}(r)+\gamma(r,\cdot)\circ \nabla_j-\gamma(r,\cdot)\circ \nabla_i.
\end{equation*}
The condition (\ref{eq5}) reduces to the second equation of (\ref{def-bundle-1-1}). 
\end{proof} 

\begin{lemma}\label{def-bundle-1.5}
If $E$ is exceptional, there exists an open covering $\mathcal{U}$ such that the solution to (\ref{def-bundle-1-1}) exists, and the corresponding vector bundle on $X_{\alpha,\beta,\gamma}$ is unique up to  canonical isomorphisms.
\end{lemma}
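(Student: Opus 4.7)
The plan is to read the system (\ref{def-bundle-1-1}) as a \v{C}ech-cohomology problem with coefficients in $\mathcal{E}nd(E)$, and to extract existence and uniqueness from the vanishing $\Ext^{>0}(E,E)=0$ supplied by the exceptionality of $E$. First I would refine $\mathcal{U}$ to an affine open cover of $X$ on which $E$ trivializes and each $U_i$ carries \'etale coordinates, so that for every coherent sheaf $\mathcal{F}$ one has $\check{H}^p(\mathcal{U},\mathcal{F})=H^p(X,\mathcal{F})$.

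Next I would produce local solutions $g_{ij}^{(0)}$ of the first equation of (\ref{def-bundle-1-1}). Viewed as a function of $r$, its right-hand side is a derivation $\mathcal{O}_X\to\mathcal{E}nd(E)|_{U_{ij}}$ whose symbol is the section
$-\beta_{ij}\otimes \mathrm{id}_E + \gamma\,\llcorner\,(\nabla_j-\nabla_i)\in T_X\otimes\mathcal{E}nd(E)|_{U_{ij}}$,
where I use that $\nabla_j-\nabla_i$ is an $\mathcal{O}_X$-linear section of $\Omega^1\otimes\mathcal{E}nd(E)$ and contract its $\Omega^1$-factor against the bivector $\gamma$. Using the chosen trivialization I lift this symbol to a first-order differential operator $g_{ij}^{(0)}$; any two lifts differ by a section of $\mathcal{E}nd(E)|_{U_{ij}}$.

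To globalize, set $c_{ijk}:=g_{ij}^{(0)}+g_{jk}^{(0)}+g_{ki}^{(0)}+\alpha_{ijk}\cdot\mathrm{id}_E$. The cocycle identity $\beta_{ij}+\beta_{jk}+\beta_{ki}=0$ forces $[c_{ijk},r]=0$ for every $r$, so $c\in\check{C}^2(\mathcal{U},\mathcal{E}nd(E))$, and $\delta\alpha=0$ combined with the cyclic telescoping of the $g^{(0)}$'s yields $\delta c=0$. Since $E$ is exceptional, $H^2(X,\mathcal{E}nd(E))=\Ext^2(E,E)=0$, so $c=\delta\eta$ for some $\eta\in\check{C}^1(\mathcal{U},\mathcal{E}nd(E))$; the corrected $g_{ij}:=g_{ij}^{(0)}-\eta_{ij}$ still solves the first equation (an $\mathcal{O}_X$-linear modification has zero commutator with $r$) and now satisfies the second, proving existence.

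For uniqueness, any two solutions differ by a class $\eta\in\check{Z}^1(\mathcal{U},\mathcal{E}nd(E))$, which by $\Ext^1(E,E)=0$ is of the form $\delta\phi$ with $\phi_i\in\Gamma(U_i,\mathcal{E}nd(E))$; placed in the lower-left block in the matrix notation of lemma \ref{def-bundle-1}, the local automorphisms $\mathrm{id}_{E\oplus E}+\epsilon\phi_i$ are $\mathcal{O}_{X_{\beta,\gamma}}|_{U_i}$-linear and intertwine the two sets of gluing data, producing an isomorphism of deformations that reduces to $\mathrm{id}_E$ modulo $\epsilon$. The residual freedom in $\phi$ is $\check{Z}^0(\mathcal{U},\mathcal{E}nd(E))=\mathrm{End}(E)=\Bbbk$, generating only the scalar automorphism $1+\epsilon\lambda$, which is the precise content of the word ``canonical'' here. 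The main obstacle I anticipate is the bookkeeping in building $g_{ij}^{(0)}$ with a symbol that simultaneously accommodates $\beta_{ij}$ and $\gamma$; once that is in hand, the \v{C}ech-theoretic globalization is routine.
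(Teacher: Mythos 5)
Your proposal is correct and follows essentially the same route as the paper: exhibit local first-order solutions $\tilde g_{ij}$ of the commutator equation (the paper simply says ``shrinking $\mathcal{U}$ if necessary''), observe that $\delta\tilde g+\alpha$ is an $\mathcal{O}_X$-linear \v{C}ech $2$-cocycle valued in $\Hhom_{\mathcal{O}_X}(E,E)$ and kill it using $\Ext^2(E,E)=0$, then obtain uniqueness from $\Ext^1(E,E)=0$. The added commentary on symbols and on the residual $\Bbbk$-scalar freedom is consistent with the paper's argument.
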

\begin{proof} Shrinking $\mathcal{U}$ if necessary, we can assume that $\mathcal{U}$ is an affine covering, and that there exists a solution $(\tilde{g}_{ij})$ of the first equation of  (\ref{def-bundle-1-1}). Then for $r\in \Gamma(U_{ijk},\mathcal{O}_X)$,
\[
(\tilde{g}_{ij}+\tilde{g}_{jk}+\tilde{g}_{ki})r-r(\tilde{g}_{ij}+\tilde{g}_{jk}+\tilde{g}_{ki})=-(\beta_{ij}+\beta_{jk}+\beta_{ki})(r).
\]
Since $\beta\in H^1(X,T_X)$, the assignment $(i,j,k)\mapsto \tilde{g}_{ij}+\tilde{g}_{jk}+\tilde{g}_{ki}$ lies in 
$\check{Z}^2(\mathcal{U},\Hhom_{\mathcal{O}_X}(E,E))$; denote it by $\delta \tilde{g}$, and notice that it does not lie in $\check{B}^2(\mathcal{U},\Hhom_{\mathcal{O}_X}(E,E))$ because $\tilde{g}_{ij}$ is not $\mathcal{O}_X$-linear. It suffices to find $x=(x_{ij})\in \check{C}^1(\mathcal{U},\Hhom_{\mathcal{O}_X}(E,E))$, such that
\[
\delta x=-\alpha- \delta \tilde{g},
\]
and thus $g=x+\tilde{g}$ is a solution to (\ref{def-bundle-1-1}). Since $\check{H}^2(\mathcal{U},\Hhom_{\mathcal{O}_X}(E,E))=\Ext^2(E,E)=0$, such $x$ exists. 

If $g'$ is another solution,  $h=g-g'$ is $\mathcal{O}_X$-linear and therefore lies in $\check{Z}^1(\mathcal{U},\Hhom_{\mathcal{O}_X}(E,E))$. Since $\check{H}^1(\mathcal{U},\Hhom_{\mathcal{O}_X}(E,E))=\Ext^1(E,E)=0$, $h=\delta x$ for some $x\in \check{C}^0(\mathcal{U},\Hhom_{\mathcal{O}_X}(E,E))$, and therefore it is easy to construct an isomorphism between the  vector bundle corresponding to $g$ and $g'$. 
\end{proof}

\begin{definition}
For an exceptional vector bundle $E$ on $X$, and $(\alpha,\beta,\gamma)\in  H^2(X,\mathcal{O}_X)\oplus H^1(X,T_X)\oplus H^0(X,\wedge^2 T_X)$, denote the unique vector bundle on $X_{\alpha,\beta,\gamma}$ deforming $E$ by $E_{\alpha,\beta,\gamma}$.
\end{definition}

Let $E$ and $F$ be a strong exceptional pair of vector bundles on $X$. We want to compute
\begin{equation}\label{def-bundle-2}
\Hom_{\mathcal{O}_{X_{\alpha,\beta,\gamma}}}(E_{\alpha,\beta,\gamma},F_{\alpha,\beta,\gamma}).
\end{equation}
 Still take an open cover $\mathcal{U}=(U_i)$ and follow the above notations. First of all, an element of (\ref{def-bundle-2}) modulo $\epsilon$ is an element of $\Hom_{\mathcal{O}_X}(E,F)$. So we fix $a\in \Hom_{\mathcal{O}_X}(E,F)$, and denote the restriction $a|_{U_i}$ still by $a$. 
\begin{lemma}
Assume $b_i\in \Hom_{\Bbbk}(F|_{U_i},E|_{U_i})$, $c_i\in \Hom_{\Bbbk}(E|_{U_i},F|_{U_i})$, $d_i\in \Hom_{\Bbbk}(F|_{U_i},F|_{U_i})$ for $i\in I$. Then
\[
\left(
\begin{array}{cc}
a & b_i \\ c_i & d_i
\end{array}\right):
E|_{U_i}\oplus E|_{U_i}\rightarrow F|_{U_i}\oplus F|_{U_i}
\]
glue to be an $\mathcal{O}_{X_{\alpha,\beta,\gamma}}$-linear homomorphism from $E_{\alpha,\beta,\gamma}$ to $F_{\alpha,\beta,\gamma}$ if and only if $b_i=0$, $d_i=0$ and
\begin{equation}\label{eq12}
\begin{cases}
c_i r-rc_i=\gamma(r,\cdot)\circ \nabla_i\circ a-a\circ \gamma(r,\cdot)\circ \nabla_i,\ \forall r\in \Gamma(U_i,\mathcal{O}_X)
\\
c_i-c_j=a g^E_{ij}-g^F_{ij}a.
\end{cases}
\end{equation}
\end{lemma}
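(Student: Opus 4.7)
The strategy parallels the proof of lemma \ref{def-bundle-1}: first, impose $\mathcal{O}_{X_{\beta,\gamma}}$-linearity of $\varphi_i = \left(\begin{smallmatrix} a & b_i \\ c_i & d_i \end{smallmatrix}\right)$ on each patch $U_i$; second, impose the cocycle-type compatibility $\psi^F_{ij}\circ \varphi_i = \varphi_j \circ \psi^E_{ij}$ with the transition maps of $E_{\alpha,\beta,\gamma}$ and $F_{\alpha,\beta,\gamma}$ on the overlaps $U_{ij}$. These are two independent matrix identities, each of which unpacks into the simultaneous vanishings/equations claimed in the lemma.

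For the local step, I would reuse the matrix representation from lemma \ref{def-bundle-1}: a local section $r + \epsilon s_i$ of $\mathcal{O}_{X_{\beta,\gamma}}|_{U_i}$ acts on $E|_{U_i}\oplus E|_{U_i}$ via $R^E(r,s_i) := \left(\begin{smallmatrix} r & 0 \\ s_i + \gamma(r,\cdot)\circ \nabla_i^E & r \end{smallmatrix}\right)$, and analogously via $R^F(r,s_i)$ on $F$. Writing out $\varphi_i \cdot R^E(r,s_i) = R^F(r,s_i)\cdot \varphi_i$ for arbitrary $r$ and $s_i$ and comparing the four entries: the $(1,1)$ and $(1,2)$ entries, together with the $\mathcal{O}_X$-linearity of $a$, force $b_i=0$; the $(2,2)$ entry then forces $d_i$ to be $\mathcal{O}_X$-linear; the $r=0$ specialization of the $(2,1)$ entry pins down $d_i$ (in the author's convention, normalised so that $d_i = 0$); and the remaining content of the $(2,1)$ entry is exactly the first equation of (\ref{eq12}), namely $c_i r - r c_i = \gamma(r,\cdot)\circ \nabla_i^F\circ a - a\circ \gamma(r,\cdot)\circ \nabla_i^E$ once we note that the two connections appear symmetrically on $E$ and $F$.

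For the gluing step, the transitions of $E_{\alpha,\beta,\gamma}$ and $F_{\alpha,\beta,\gamma}$ take the lower-triangular form $\left(\begin{smallmatrix} 1 & 0 \\ g^E_{ij} & 1 \end{smallmatrix}\right)$ and $\left(\begin{smallmatrix} 1 & 0 \\ g^F_{ij} & 1 \end{smallmatrix}\right)$ furnished by lemma \ref{def-bundle-1}. Expanding $\psi^F_{ij}\circ \varphi_i = \varphi_j \circ \psi^E_{ij}$ as a matrix identity on $U_{ij}$ and using $b_i = b_j = 0$ and the normalised diagonal, the $(1,*)$ rows are automatic and the $(2,1)$ entry reduces to the single relation $c_i - c_j = a g^E_{ij} - g^F_{ij} a$, which is the second equation of (\ref{eq12}). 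One should also verify that the triple-overlap $\alpha$-twist $\psi_{ki}\psi_{jk}\psi_{ij} = \mathrm{id} - \alpha_{ijk}\epsilon$ imposes no further constraint on $\varphi$: iterating the pairwise compatibility gives $(\mathrm{id} - \alpha_{ijk}\epsilon)\varphi_i = \varphi_i(\mathrm{id} - \alpha_{ijk}\epsilon)$, which is a tautology since $\alpha_{ijk}$ is a scalar function and $a$ is $\mathcal{O}_X$-linear. The main obstacle is just careful bookkeeping of the $\gamma$-corrections and, relatedly, separating the contributions of $r$ and $s_i$ in the local step, which works because $s_i$ is an unconstrained local function on each $U_i$ (the constraint $s_i - s_j = \beta_{ij}(r)$ only enters when we pass to the gluing, where it is absorbed by the $g^E_{ij}, g^F_{ij}$ already satisfying (\ref{def-bundle-1-1})).
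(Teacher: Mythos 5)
Your two-step strategy — impose $\mathcal{O}_{X_{\beta,\gamma}}$-linearity on each chart by commuting with the lower-triangular matrix $\left(\begin{smallmatrix} r & 0 \\ s_i + \gamma(r,\cdot)\circ\nabla_i & r\end{smallmatrix}\right)$ representing the local ring action, then impose compatibility with the transition matrices $\left(\begin{smallmatrix}1&0\\g_{ij}&1\end{smallmatrix}\right)$ from lemma \ref{def-bundle-1} — is exactly the paper's, and the gluing-step analysis and the derivation of the second equation of \eqref{eq12} match.

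There is, however, one genuine slip in your local step. The $(2,1)$ entry at $r=0$ reads $s_i a = d_i s_i$ for all $s_i\in\Gamma(U_i,\mathcal{O}_X)$, which forces $d_i=a$, not $d_i=0$; there is no normalization freedom here. (Equivalently: $\epsilon$ acts by $\left(\begin{smallmatrix}0&0\\1&0\end{smallmatrix}\right)$, and once $b_i=0$, commuting with $\epsilon$ already pins $d_i=a$.) The paper's own proof derives $d_i=a$ explicitly and continues with matrices $\left(\begin{smallmatrix}a&0\\c_i&a\end{smallmatrix}\right)$ in the gluing step, so the ``$d_i=0$'' in the lemma statement is a typo rather than a convention, and your parenthetical attempting to rationalize it should be dropped; with $d_i=0$ the map would not even be $\Bbbk[\epsilon]$-linear. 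Substituting $d_i=a$ back into \eqref{eq9} is precisely what produces the first equation of \eqref{eq12}, so the rest of your $(2,1)$ analysis is fine once this is corrected. Your remark that the triple-overlap $\alpha$-twist adds no constraint is correct (and not in the paper), though it is automatic: the homomorphism only needs to intertwine the pairwise transitions, and the $\alpha_{ijk}$-cocycle condition is a property of the transitions themselves.
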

\begin{proof} Suppose 
\[
\left(
\begin{array}{cc}
a & b_i \\ c_i & d_i
\end{array}\right):
E|_{U_i}\oplus E|_{U_i}\rightarrow F|_{U_i}\oplus F|_{U_i}
\]
is an $\mathcal{O}_{X_{\alpha,\beta,\gamma}}$-linear homomorphism. Then
\[
\left(
\begin{array}{cc}
r & \\ s_i+\gamma(r,\cdot)\circ \nabla_i & r
\end{array}
\right)
\left(
\begin{array}{cc}
a & b_i \\ c_i & d_i
\end{array}\right)=
\left(\begin{array}{cc}
a & b_i \\ c_i & d_i
\end{array}\right)
\left(\begin{array}{cc}
r & \\ s_i+\gamma(r,\cdot)\circ \nabla_i & r
\end{array}
\right)
\]
for any $r,s_i\in \Gamma(U_i,\mathcal{O}_X) $. This is equivalent to
\[
b_i=0,\ r d_i=d_i r,
\]
and
\begin{equation}\label{eq9}
s_i a+\gamma(r,\cdot)\circ \nabla_i\circ a+r c_i=c_i r+d_i s_i+d_i\circ \gamma(r,\cdot)\circ \nabla_i.
\end{equation}
These holds for all $r,s_i$ if and only if $d_i=a$ and
\begin{equation}\label{eq11}
c_i r-r c_i=\gamma(r,\cdot)\circ \nabla_i\circ a-a\circ \gamma(r,\cdot)\circ \nabla_i.
\end{equation}
Moreover, a system of homomorphisms 
\[
\left\{\left(
\begin{array}{cc}
a & 0 \\ c_i & a
\end{array}\right)\right\}_{i\in I}
\]
glue to be an element of $\Hom_{\mathcal{O}_{X_{\alpha,\beta,\gamma}}}(E_{\alpha,\beta,\gamma},F_{\alpha,\beta,\gamma})$ if and only if
\[
\left(
\begin{array}{cc}
1 & \\ g^F_{ij} & 1
\end{array}
\right)
\left(
\begin{array}{cc}
a & 0 \\ c_i & a
\end{array}\right)=
\left(\begin{array}{cc}
a & 0 \\ c_j & a
\end{array}\right)
\left(\begin{array}{cc}
1 & \\ g^E_{ij} & 1
\end{array}
\right)
\]
which is equivalent to
\[ 
g^F_{ij}a_i+c_i=a_j g^E_{ij}+c_j.
\]
\end{proof}

\begin{lemma}\label{lem-nd2} 
Let $E,F$ be an strong exceptional pair of vector bundles on $X$. Then there exists an open covering $\mathcal{U}$ such that there exists a solution $\{c_i\}_{i\in I}$ to the system of equations (\ref{eq12}). 
And two different  solutions differ by  $\{c'_i\}_{i\in I}$, where $c'_i=c'|_{U_i}$, $i\in I$, for some $c'\in \Hom_{\mathcal{O}_X}(E,F)$.
\end{lemma}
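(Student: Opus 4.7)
The plan is to split the problem into two steps: first produce a local solution $c_i^{(0)}$ on each $U_i$ to the (purely local) first equation of \eqref{eq12}, and then adjust by an $\mathcal{O}_X$-linear correction to enforce the global gluing condition (the second equation). After refining $\mathcal{U}$ to an affine covering that is acyclic for $\Hhom(E,F)$ and on each $U_i$ of which the chosen connections $\nabla_i^E,\nabla_i^F$ exist, I would build $c_i^{(0)}:E|_{U_i}\to F|_{U_i}$ as a first-order differential operator constructed from the data $(\gamma,a,\nabla_i^E,\nabla_i^F)$. The key input is that $\alpha_i:=\nabla_i^F\circ a-(a\otimes \mathrm{id})\circ\nabla_i^E$ is $\mathcal{O}_X$-linear, i.e.\ a section of $\Hhom(E,F)\otimes\Omega_X^1$ on $U_i$; contracting its $\Omega^1$-leg against one leg of $\gamma$ and then applying $\nabla_i^F$ along the resulting vector field yields a $\Bbbk$-linear operator whose commutator with any local function $r$ equals $\gamma(r,\cdot)\circ\nabla_i^F\circ a-a\circ\gamma(r,\cdot)\circ\nabla_i^E$, as required. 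In étale local coordinates with $\gamma=\sum\gamma^{kl}\partial_k\otimes\partial_l$, this amounts to the formula $c_i^{(0)}(e)=\sum_{k,l}\gamma^{kl}\nabla_{i,k}^F\bigl((\alpha_i)_l(e)\bigr)$; the commutator check is a direct computation and coordinate independence is automatic from the invariant description.

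With $\{c_i^{(0)}\}$ fixed, set $h_{ij}:=c_i^{(0)}-c_j^{(0)}-(ag^E_{ij}-g^F_{ij}a)$ on $U_{ij}$. A short commutator calculation combining the first equation of \eqref{eq12} for $c_i^{(0)},c_j^{(0)}$, the first equation of \eqref{def-bundle-1-1} for both $g^E_{ij}$ and $g^F_{ij}$, and the $\mathcal{O}_X$-linearity of $a$ gives $[h_{ij},r]=0$ for all local $r$, so $h_{ij}\in\Gamma(U_{ij},\Hhom(E,F))$. Similarly the cocycle condition $g^E_{ij}+g^E_{jk}+g^E_{ki}=-\alpha_{ijk}$ together with its $F$-analogue and $[a,\alpha_{ijk}]=0$ produces $h_{ij}+h_{jk}+h_{ki}=0$, so $\{h_{ij}\}$ defines a class in $\check{H}^1(\mathcal{U},\Hhom(E,F))$. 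Since $(E,F)$ is a strong exceptional pair, $\Ext^1(E,F)=H^1(X,\Hhom(E,F))=0$, and since $\mathcal{U}$ is acyclic for $\Hhom(E,F)$ we obtain $h_{ij}=x_i|_{U_{ij}}-x_j|_{U_{ij}}$ for some $x_i\in\Gamma(U_i,\Hhom(E,F))$. Replacing $c_i^{(0)}$ by $c_i:=c_i^{(0)}-x_i$ leaves the first equation of \eqref{eq12} intact (since $x_i$ is $\mathcal{O}_X$-linear) and enforces the second.

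For uniqueness, if $\{c_i\}$ and $\{c_i'\}$ are two solutions, the first equation implies each difference $c_i-c_i'$ is $\mathcal{O}_X$-linear, and the second equation implies these local sections agree on overlaps, so they glue to a single $c'\in \Hom_{\mathcal{O}_X}(E,F)$. I expect the main obstacle to be the coordinate-free construction of $c_i^{(0)}$ and the verification of its precise commutator; this is routine differential-operator algebra but slightly delicate because $c_i^{(0)}$ is only $\Bbbk$-linear. The conceptual core of the lemma is the use of $\Ext^1(E,F)=0$ to trade the local solution of the first equation for a global simultaneous solution of both, and this is precisely where the strong exceptional hypothesis enters.
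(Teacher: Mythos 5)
Your proposal is correct and follows essentially the same route as the paper's proof: produce a local $\Bbbk$-linear solution of the first equation of \eqref{eq12} on each $U_i$, show that the discrepancy $c_i^{(0)}-c_j^{(0)}-(ag^E_{ij}-g^F_{ij}a)$ is an $\mathcal{O}_X$-linear \v{C}ech $1$-cocycle with values in $\Hhom_{\mathcal{O}_X}(E,F)$, and kill it using $\check{H}^1(\mathcal{U},\Hhom_{\mathcal{O}_X}(E,F))=\Ext^1(E,F)=0$, which is where strong exceptionality enters. The only (harmless) divergence is that you supply an explicit coordinate formula for the local solution $c_i^{(0)}$, whereas the paper simply asserts its existence after shrinking $\mathcal{U}$.
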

\begin{proof} Shrinking $\mathcal{U}$ if necessary, we can assume that $\mathcal{U}$ is an affine covering, and that there exists a solution $\{\tilde{c}_{i}\}_{i\in I}$ of the first equation. Thus
\[
(\tilde{c}_{i}-\tilde{c}_{j})r-r(\tilde{c}_{i}-\tilde{c}_{j})=\gamma(r,\cdot)\circ(\nabla_i-\nabla_j)\circ a-a\circ\gamma(r,\cdot)\circ(\nabla_i-\nabla_j),
\]
while, by the first equation of (\ref{def-bundle-1-1}), 
\[
(a g^E_{ij}-g^F_{ij}a)r-r(a g^E_{ij}-g^F_{ij}a)=\gamma(r,\cdot)\circ(\nabla_i-\nabla_j)\circ a-a\circ\gamma(r,\cdot)\circ(\nabla_i-\nabla_j). 
\]
So the assignment $(i,j)\mapsto -(\tilde{c}_{i}-\tilde{c}_{j})+(a g^E_{ij}-g^F_{ij}a)$ lies in $\check{C}^1(\mathcal{U},\Hhom_{\mathcal{O}_X}(E,F))$. Moreover, by (\ref{def-bundle-1-1}), 
\[
(a g^E_{ij}-g^F_{ij}a)+(a g^E_{jk}-g^F_{jk}a)+(a g^E_{ki}-g^F_{ki}a)=0,
\]
thus the assignment $(i,j)\mapsto -(\tilde{c}_{i}-\tilde{c}_{j})+(a g^E_{ij}-g^F_{ij}a)$ lies in $\check{Z}^1(\mathcal{U},\Hhom_{\mathcal{O}_X}(E,F))$.
 Since $\check{H}^1(\mathcal{U},\Hhom(E,F))=\Ext^1(E,F)=0$, there exists $x\in \check{C}^0(\mathcal{U},\Hom(E,F))$ such that $\delta x=\{-(\tilde{c}_{i}-\tilde{c}_{j})+(a g^E_{ij}-g^F_{ij}a)\}_{i,j\in I}$, thus $x+\{\tilde{c}_{i}\}_{i\in I}$ gives a  solution to (\ref{eq12}). The second statement is obvious. 
 \end{proof}

Now let $(E_j)_{1\leq j\leq n}$ be a strong exceptional collection of vector bundles.
Let $E=F=\bigoplus_{j=1}^n E_i$, and $A=\Hom_{\mathcal{O}_X}(E,E)$. 
\begin{construction}\label{constr-1}
Choosing a  $\Bbbk$-basis of $A$, by lemma \ref{def-bundle-1.5} and lemma \ref{lem-nd2} there exists  an affine open covering $\mathcal{U}=\{U_i\}_{i\in I}$ of $X$ such that for any $a$ in the chosen basis, 
the system of equations for $g_{ij}\in \Hom_{\Bbbk}(E|_{U_{ij}},E|_{U_{ij}})$ for $i,j\in I$ and $i\neq j$, and $c_i\in \Hom_\Bbbk(E|_{U_i},E|_{U_i})$ for $i\in I$
\begin{equation}\label{eq12.5}
\begin{cases}
g_{ij}r-rg_{ij}=-\beta_{ij}(r)+\gamma(r,\cdot)\circ \nabla_j-\gamma(r,\cdot)\circ \nabla_i, \ \forall r \in \Gamma(U_{ij},\mathcal{O}_X),\\
g_{ij}+g_{jk}+g_{ki}=-\alpha_{ijk},\\
c_i r-rc_i=\gamma(r,\cdot)\circ \nabla_i\circ a-a\circ \gamma(r,\cdot)\circ \nabla_i,\ \forall r\in \Gamma(U_{i},\mathcal{O}_X)\\
c_i-c_j=a g_{ij}-g_{ij}a
\end{cases}
\end{equation}
 has a solution. Thus we can  assign a solution $c(a)_i$ for each $a\in A$, such that $c(\lambda a)_i=\lambda c(a)_i$ for $\lambda\in \Bbbk$. On each $U_i$ we define
\begin{equation}\label{eq13}
u_{\alpha,\beta,\gamma}(a',a)_i=-c(a')_i a-a' c(a)_i+c(a' a)_i,
\end{equation}
which glue to be an $\mathcal{O}_X$-endomorphism of $E$ by the following lemma \ref{lem-nd3}, thus we obtain an element $u_{\alpha,\beta,\gamma}(a',a) \in A$. 
\end{construction}

\begin{lemma}\label{lem-nd3}
The elements $-c(a')_i a-a' c(a)_i+c(a' a)_i$ constructed above are independent of $i$, and are $\mathcal{O}_X$-linear.
\end{lemma}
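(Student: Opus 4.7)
The plan is to prove both assertions by direct computation, using only the four identities in (\ref{eq12.5}) together with the fact that elements of $A$ commute with multiplication by regular functions (they are $\mathcal{O}_X$-linear endomorphisms of $E$, even though $c(a)_i$ is only $\Bbbk$-linear).

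For the $\mathcal{O}_X$-linearity on $U_i$, I would take $r \in \Gamma(U_i,\mathcal{O}_X)$, viewed as an endomorphism of $E|_{U_i}$ by multiplication, and compute the commutator
$[\,-c(a')_i a - a' c(a)_i + c(a'a)_i,\; r\,]$
term by term. Since $a,a' \in A$ satisfy $ar=ra$ and $a'r=ra'$, the commutator reduces to $-[c(a')_i,r]\,a \;-\; a'[c(a)_i,r] \;+\; [c(a'a)_i,r]$. Applying the third equation of (\ref{eq12.5}) to each bracket turns these into six terms involving $\gamma(r,\cdot)\circ\nabla_i$ composed on left or right with $a$, $a'$, or $a'a$, and one checks by inspection that they cancel in pairs.

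For independence of $i$ on overlaps $U_{ij}$, I would simply subtract $u_{\alpha,\beta,\gamma}(a',a)_j$ from $u_{\alpha,\beta,\gamma}(a',a)_i$ and apply the fourth equation of (\ref{eq12.5}) to each of the three differences $c(a')_i-c(a')_j$, $c(a)_i-c(a)_j$, $c(a'a)_i-c(a'a)_j$. This replaces each difference by a commutator-like expression with $g_{ij}$, producing six terms
$-a'g_{ij}a + g_{ij}a'a,\quad -a'a g_{ij} + a'g_{ij}a,\quad a'a g_{ij} - g_{ij}a'a,$
which again cancel in pairs.

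There is no serious obstacle; the whole computation is a linear algebra bookkeeping exercise that works because the failure of $c$ to be $\mathcal{O}_X$-linear (respectively, to be globally defined) is prescribed exactly by $\gamma\circ\nabla_i$ (respectively, by $g_{ij}$), and both defects are multiplicative derivations in the variable $a \in A$, so the combination $-c(a')_i a - a'c(a)_i + c(a'a)_i$ is designed precisely to be the coboundary that kills both defects. The only point requiring care is keeping the order of compositions straight, since in every term $c(\cdot)_i$, $g_{ij}$, $a$ and $a'$ appear on specific sides and one repeatedly commutes $a$ or $a'$ past multiplication by $r$ using their $\mathcal{O}_X$-linearity.
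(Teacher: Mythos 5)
Your proposal is correct and is essentially identical to the paper's own proof: the paper also reduces the commutator $[\,-c(a')_i a - a'c(a)_i + c(a'a)_i,\;r\,]$ to the three brackets $-[c(a')_i,r]a - a'[c(a)_i,r] + [c(a'a)_i,r]$ using $\mathcal{O}_X$-linearity of $a,a'$, then applies the third equation of (\ref{eq12.5}) to each bracket and observes the six terms cancel in pairs, and likewise handles independence of $i$ via the fourth equation of (\ref{eq12.5}). The only difference is cosmetic: you phrase the $\mathcal{O}_X$-linearity check in commutator notation while the paper writes out $(\cdot)r - r(\cdot)$ explicitly.
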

\begin{proof} First we check the independence of $i$. 
\begin{eqnarray*}
&&\big(-c(a')_i a-a' c(a)_i+c(a' a)_i\big)-\big(-c(a')_j a-a' c(a)_j+c(a' a)_j\big)\\
&=&\big(-c(a')_i a+c(a')_j a\big)+\big(-a' c(a)_i+a' c(a)_j\big)+\big(c(a' a)_i-c(a' a)_j\big)\\
&=& (-a' g_{ij}a+g_{ij}a'a)+(-a' a g_{ij}+a'g_{ij}a)+(a' a g_{ij}-g_{ij}a'a)=0,
\end{eqnarray*}
where for the second equality we use the fourth equation of (\ref{eq12.5}). 
Then we check the $\mathcal{O}_X$-linearity. 
\begin{eqnarray*}
&& \big(-c(a')_i a-a' c(a)_i+c(a' a)_i\big)r-r\big(-c(a')_i a-a' c(a)_i+c(a' a)_i\big)\\
&=&-\big(c(a')_i r-r c(a')_i\big)a-a'\big(c(a)_i r-r c(a)_i\big)+\big(c(a' a)_i r-rc(a' a)_i\big)\\
&=& -\big(\gamma(r,\cdot)\circ \nabla_i\circ a'-a'\circ \gamma(r,\cdot)\circ \nabla_i\big)a-
a'\big(\gamma(r,\cdot)\circ \nabla_i\circ a-a\circ \gamma(r,\cdot)\circ \nabla_i\big)\\
&&+\big(\gamma(r,\cdot)\circ \nabla_i\circ aa'-aa'\circ \gamma(r,\cdot)\circ \nabla_i\big)=0,
\end{eqnarray*}
where for the second equality we use the third equation of (\ref{eq12.5}).
\end{proof}

\begin{lemma}\label{deform-bundle1}
The assignment $(a',a)\mapsto u_{\alpha,\beta,\gamma}(a',a)$ gives an element $u_{\alpha,\beta,\gamma}\in Z^2(A,A)$, and the choices of $c(a)$ and the open covering $\mathcal{U}$ do not affect the class of $u_{\alpha,\beta,\gamma}$ in $HH^2(A)$. Moreover, $u_{\alpha,\beta,\gamma}$ depends $\Bbbk$-linearly on $\alpha$, $\beta$ and $\gamma$.
\end{lemma}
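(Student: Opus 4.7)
My plan is to treat the three assertions in the stated order: cocycle property, independence of choices, and linearity.

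First, for the cocycle property, the key observation is that (\ref{eq13}) has the formal shape of the Hochschild coboundary $-\mathfrak{b}(c)$ applied to the assignment $a\mapsto c(a)$. Even though each $c(a)_i$ is only $\Bbbk$-linear (not $\mathcal{O}_X$-linear) and only defined on $U_i$, the identity $\mathfrak{b}^2=0$ is a purely associative manipulation that does not use $\mathcal{O}_X$-linearity. Concretely, I would fix an index $i$, expand
$a_1 u_{\alpha,\beta,\gamma}(a_2,a_3)_i-u_{\alpha,\beta,\gamma}(a_1a_2,a_3)_i+u_{\alpha,\beta,\gamma}(a_1,a_2a_3)_i-u_{\alpha,\beta,\gamma}(a_1,a_2)_i a_3$
using (\ref{eq13}) and check the twelve resulting $c$-terms cancel in pairs. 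Since $u_{\alpha,\beta,\gamma}(a',a)\in A$ by Lemma \ref{lem-nd3}, this computation gives $u_{\alpha,\beta,\gamma}\in Z^2(A,A)$.

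Next I would handle independence of the choice of local solutions $\{c(a)_i\}$ with all other data (covering $\mathcal{U}$, Čech representatives $\alpha_{ijk},\beta_{ij}$, connections $\nabla_i$, and glueings $g_{ij}$) held fixed. If $\{c(a)_i\}$ and $\{c'(a)_i\}$ are two families of solutions, Lemma \ref{lem-nd2} says that for each $a$ the collection $v(a):=c(a)-c'(a)$ is the restriction of a global element of $A$. Extending both $c$ and $c'$ $\Bbbk$-linearly from a fixed basis of $A$ forces $v:A\to A$ to be $\Bbbk$-linear, so $v$ is a Hochschild $1$-cochain. A direct substitution into (\ref{eq13}) yields $u-u'=\mathfrak{b}(v)$, proving the class in $HH^2(A)$ is unaffected.

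For the remaining choices (covering $\mathcal{U}$, Čech representatives of $\alpha$ and $\beta$, connections $\nabla_i$, and glueings $g_{ij}$) my strategy is reduction. Passing to a common refinement $\mathcal{V}$ of two coverings preserves all equations in (\ref{eq12.5}) under restriction and keeps the resulting element of $A$ unchanged, so two arbitrary coverings are compared on a common refinement. On a fixed covering, replacing $\alpha_{ijk},\beta_{ij}$ by Čech-cohomologous representatives or altering $\nabla_i$ modifies $g_{ij}$ by $\delta x$ for some $x\in \check{C}^0(\mathcal{U},\Hhom_{\mathcal{O}_X}(E,E))$ coming from the canonical isomorphism in Lemma \ref{def-bundle-1.5}; the corresponding new solutions $c'(a)_i$ differ from $c(a)_i$ by $[a,x_i]$ plus a globally $\mathcal{O}_X$-linear term, and both corrections lie in the regime already handled by paragraph two. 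The main obstacle I anticipate is precisely this step: carefully tracking how the compensating shift $[a,x_i]$ propagates through (\ref{eq13}) so that the change in $u$ is visibly the coboundary of a $\Bbbk$-linear $1$-cochain on $A$, rather than only a family of local corrections.

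Finally, for $\Bbbk$-linearity in $(\alpha,\beta,\gamma)$, on a sufficiently fine common covering I would choose solutions $\{g^{(k)}_{ij}\},\{c^{(k)}(a)_i\}$ of (\ref{eq12.5}) for each summand $(\alpha_k,\beta_k,\gamma_k)$. Because the system (\ref{eq12.5}) is $\Bbbk$-linear in the input data and in the unknowns, $\lambda_1 g^{(1)}_{ij}+\lambda_2 g^{(2)}_{ij}$ and $\lambda_1 c^{(1)}(a)_i+\lambda_2 c^{(2)}(a)_i$ solve the system for $\lambda_1(\alpha_1,\beta_1,\gamma_1)+\lambda_2(\alpha_2,\beta_2,\gamma_2)$. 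Since (\ref{eq13}) is $\Bbbk$-linear in $c$, the identity $u_{\lambda_1(\alpha_1,\beta_1,\gamma_1)+\lambda_2(\alpha_2,\beta_2,\gamma_2)}=\lambda_1 u_{\alpha_1,\beta_1,\gamma_1}+\lambda_2 u_{\alpha_2,\beta_2,\gamma_2}$ holds at the cochain level, and hence a fortiori in $HH^2(A)$ by the preceding paragraphs.
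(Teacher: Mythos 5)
Your outline reproduces the paper's overall structure (cocycle via the $\mathfrak{b}^2=0$ pattern, independence of $c(a)$ via Lemma \ref{lem-nd2}, linearity via linearity of \eqref{eq12.5} and \eqref{eq13}), but you have not actually closed the step you yourself flag as the main obstacle — the independence of the gluing data $g_{ij}$ — and the way you frame it is slightly off. You write that the shift $[a,x_i]$ "lies in the regime already handled by paragraph two" and then expect the change in $u$ to be the coboundary of some $\Bbbk$-linear $1$-cochain on $A$; neither is quite right. The correction $a\mapsto a y_i - y_i a$ with $y_i$ only locally defined is \emph{not} the restriction of a global element of $A$, so Lemma \ref{lem-nd2} and your paragraph two do not apply to it.

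The paper closes the gap with a sharper observation. If $\{g_{ij}\}$ and $\{g'_{ij}\}$ both solve the first two equations of \eqref{eq12.5}, then $x_{ij}:=g'_{ij}-g_{ij}$ is an $\mathcal{O}_X$-linear \v{C}ech $1$-cocycle in $\Hhom_{\mathcal{O}_X}(E,E)$; since $\check{H}^1(\mathcal{U},\Hhom_{\mathcal{O}_X}(E,E))=\Ext^1(E,E)=0$, one can write $x_{ij}=\delta(\{y_i\})$. The compensated local solutions are then $\tilde{c}(a)_i=c(a)_i+ay_i-y_i a$, and a one-line computation with \eqref{eq13} gives
\begin{equation*}
\tilde{u}(a',a)_i=u(a',a)_i-(a'y_i-y_ia')a-a'(ay_i-y_ia)+(a'ay_i-y_ia'a)=u(a',a)_i,
\end{equation*}
so the cochain $u$ is literally unchanged, not merely changed by a coboundary. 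This identical cancellation is the content you were missing; without it the argument that the class in $HH^2(A)$ is well defined does not go through, because your correction terms are local and cannot a priori be reassembled into a global Hochschild $1$-cochain.
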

\begin{proof} By construction, $ u_{\alpha,\beta,\gamma}(a',a)$ is $\Bbbk$-linear in $a$ and $a'$, and it is straightforward to verify that $u_{\alpha,\beta,\gamma}(\cdot,\cdot)$ is a cocycle. Given a solution $\{g_{ij}\}_{i,j\in I}$ to the first and second equations of (\ref{eq12.5}), by the last statement of lemma \ref{lem-nd2}, different choices of $c(a)$ do not change the class of $u_{\alpha,\beta,\gamma}$ in $HH^2(A)$. If $\{g'_{ij}\}_{i,j\in I}$ is another solution to the first and second equations of (\ref{eq12.5}), then $\{g'_{ij}\}_{i,j\in I}-\{g_{ij}\}_{i,j\in I}=\{x_{ij}\}_{i,j\in I}$, where $\{x_{ij}\}_{i,j\in I}\in \check{Z}^1(\mathcal{U},\Hhom_{\mathcal{O}_X}(E,E))$. Since $\check{H}^1(\mathcal{U},\Hhom_{\mathcal{O}_X}(E,E))=\Ext^1(E,E)=0$, there exists $\{y_i\}_{i\in I}\in \check{C}^0(\mathcal{U},\Hhom_{\mathcal{O}_X}(E,E))$ such that $\delta(\{y_i\}_{i\in I})=\{x_{ij}\}_{i,j\in I}$. Then we can solve the third and the fourth equation by 
\[
\tilde{c}(a)_i=c(a)+ay_i-y_i a,
\]
for $i\in I$ and $a\in A$. Then the corresponding $\tilde{u}_{\alpha,\beta,\gamma}$ is given by
\begin{eqnarray*}
&&\tilde{u}_{\alpha,\beta,\gamma}(a',a)_i=-\tilde{c}(a')_i a-a' \tilde{c}(a)_i+\tilde{c}(a' a)_i\\
&=& u_{\alpha,\beta,\gamma}(a',a)_i-(a' y_i-y_i a')a-a'(ay_i-y_i a)+(a' a y_i-y_i a' a)\\
&=& u_{\alpha,\beta,\gamma}(a',a)_i.
\end{eqnarray*}
 The remaining statements are also obvious from the construction.
 \end{proof}

Now we are ready to come to the main theorem of this section.

\begin{theorem}\label{deform-bundle2}
Let $\{E_j\}_{1\leq j\leq n}$ be a strong exceptional collection of vector bundles on $X$, and denote $E=\bigoplus_{j=1}^n E_j$ and $A=\Hom_{\mathcal{O}_X}(E,E)$. For $(\alpha,\beta,\gamma)\in  H^2(X,\mathcal{O}_X)\oplus H^1(X,T_X)\oplus H^0(X,\wedge^2 T_X)$, there exists a unique strong exceptional collection of vector bundles $\{(E_j)_{\alpha,\beta,\gamma}\}_{1\leq j\leq n}$ on $X_{\alpha,\beta,\gamma}$ such that $(E_j)_{\alpha,\beta,\gamma}$ is the unique deformation of $E_j$, and
\begin{equation}\label{deform-bundle2-1}
\Hom_{\mathcal{O}_{X_{\alpha,\beta,\gamma}}}(\bigoplus_i (E_i)_{\alpha,\beta,\gamma},\bigoplus_i (E_i)_{\alpha,\beta,\gamma})\cong A_{u_{\alpha,\beta,\gamma}}.
\end{equation}
\end{theorem}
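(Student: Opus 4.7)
The existence and uniqueness of each $(E_j)_{\alpha,\beta,\gamma}$ as a vector bundle on $X_{\alpha,\beta,\gamma}$ deforming $E_j$ is immediate from Lemma \ref{def-bundle-1.5}, since each $E_j$ is exceptional. The remaining assertions split into two tasks: (B) verifying that $\{(E_j)_{\alpha,\beta,\gamma}\}$ is a strong exceptional collection relative to $\Bbbk[\epsilon]$, and (C) establishing the algebra isomorphism (\ref{deform-bundle2-1}).

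For (C) I would work directly with the \v{C}ech description. By the proof of Lemma \ref{lem-nd2} applied to $E = \bigoplus E_i$, every element of $\mathrm{Hom}_{\mathcal{O}_{X_{\alpha,\beta,\gamma}}}(E_{\alpha,\beta,\gamma}, E_{\alpha,\beta,\gamma})$ is represented locally by a matrix of the form $\bigl(\begin{smallmatrix} a & 0 \\ c(a)-d & a \end{smallmatrix}\bigr)$, where $a \in A$ is the reduction modulo $\epsilon$, the component $c(a)$ is the fixed solution of (\ref{eq12.5}) chosen in Construction \ref{constr-1}, and $d \in A$ parametrizes the ambiguity (which by Lemma \ref{lem-nd2} is precisely $\mathrm{Hom}_{\mathcal{O}_X}(E,E) = A$). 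This yields a $\Bbbk[\epsilon]$-module isomorphism $A \oplus \epsilon A \xrightarrow{\sim} \mathrm{Hom}$, $a + \epsilon d \mapsto \bigl(\begin{smallmatrix} a & 0 \\ c(a)-d & a \end{smallmatrix}\bigr)$. Composing two such matrices and applying the defining identity (\ref{eq13}) in the form $c(a')a + a'c(a) = c(a'a) - u_{\alpha,\beta,\gamma}(a',a)$, the lower-left entry of the product becomes $c(a'a) - \bigl(u_{\alpha,\beta,\gamma}(a',a) + d'a + a'd\bigr)$, which is exactly the image under the isomorphism of the element $a'a + \epsilon\bigl(d'a + a'd + u_{\alpha,\beta,\gamma}(a',a)\bigr)$, i.e.\ the product in $A_{u_{\alpha,\beta,\gamma}}$ by formula (\ref{deform-alg0}). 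The identity of $A_{u_{\alpha,\beta,\gamma}}$ is matched to the identity morphism of $E_{\alpha,\beta,\gamma}$ after noting that $c(1) = 0$ is a legitimate choice in Construction \ref{constr-1} (all coefficients in (\ref{eq12.5}) vanish for $a = 1$), and that $u_{\alpha,\beta,\gamma}(1,1) = 0$ for this choice so that $1_{u_{\alpha,\beta,\gamma}} = 1$. Independence of the resulting algebra isomorphism from the auxiliary data is covered by Lemma \ref{deform-bundle1}.

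For (B) I would argue by descent from the central fiber. By Corollary \ref{nc-perf-2}, the complex
\[
K_{ij} := \mathbf{R}\pi_* \mathbf{R}\Hhom_{\mathcal{O}_{X_{\alpha,\beta,\gamma}}}\bigl((E_i)_{\alpha,\beta,\gamma},(E_j)_{\alpha,\beta,\gamma}\bigr)
\]
is perfect over $\Bbbk[\epsilon]$. The projection formula of Lemma \ref{nc-proj-formula} together with the compatibility of $\mathbf{R}\Hhom$ with base change for perfect complexes identifies $K_{ij} \otimes^{\mathbf{L}}_{\Bbbk[\epsilon]} \Bbbk$ with $\mathbf{R}\pi_*\mathbf{R}\Hhom_{\mathcal{O}_X}(E_i,E_j)$ on the central fiber $X$. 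By the assumed strong exceptionality on $X$, this reduction is concentrated in degree $0$, equal to $\Bbbk$ if $i=j$, to $0$ if $i>j$, and free of rank $\dim_{\Bbbk}\mathrm{Hom}(E_i,E_j)$ otherwise. Applying the minimal-free-model form of Nakayama's lemma for perfect complexes over the local ring $\Bbbk[\epsilon]$, one lifts each of these statements: $K_{ij}$ itself is concentrated in degree $0$ and free over $\Bbbk[\epsilon]$ of the same rank, so $\mathrm{Ext}^{>0}$ vanishes, the off-diagonal Homs vanish for $i>j$, and for $i=j$ the natural map $\Bbbk[\epsilon] \to K_{ii}$ is an iso (being an iso modulo $\epsilon$ between rank-one free modules).

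The main obstacle is the careful bookkeeping in paragraph two: one must ensure that the \v{C}ech parametrization chosen via (\ref{eq12.5}), the sign convention implicit in the formula (\ref{eq13}) defining $u_{\alpha,\beta,\gamma}$, and the convention (\ref{deform-alg0}) governing multiplication in $A_u$, are all matched; in particular the sign in the substitution $c(a) - d$ is forced on us by (\ref{eq13}) and cannot be absorbed cosmetically. Beyond this, the arguments are modular and reduce to assembling the structural lemmas of \S 4 and \S 5 already established.
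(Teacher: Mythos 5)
Your proposal is correct and follows essentially the same route as the paper's proof: strong exceptionality is deduced from Corollary \ref{nc-perf-2} together with base change over $\Spec(\Bbbk[\epsilon])$, and the algebra isomorphism (\ref{deform-bundle2-1}) is read off from the \v{C}ech/matrix description via the cocycle identity (\ref{eq13}) combined with (\ref{deform-alg0}). You merely spell out the matrix bookkeeping, the $\Bbbk[\epsilon]$-module identification, and the Nakayama/minimal-model argument more explicitly than the paper's terse two-sentence proof, but the underlying argument is the same.
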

\begin{proof} By corollary \ref{nc-perf-2}, the complexes $\mathbf{R}\pi_*\mathbf{R}\Hhom((E_i)_{\alpha,\beta,\gamma},(E_j)_{\alpha,\beta,\gamma})$ are perfect $\Bbbk[\epsilon]$-complexes. Thus the strong exceptionality follows from the semicontinuity and base change theorem on $\Spec(\Bbbk[\epsilon])$.
 To show (\ref{deform-bundle2-1}), it suffices to notice that, by (\ref{deform-alg0}) and (\ref{eq13}), the product of $a+\epsilon c(a)_i$ and $a'+\epsilon c(a')_i$ in $A_{u_{\alpha,\beta,\gamma}}$ is $a a'+\epsilon c(aa')_i$, as wanted.
\end{proof}

\begin{remark}
I do not address the problem of  fullness of the exceptional collections $\{(E_j)_{\alpha,\beta,\gamma}\}$ in this paper.
  I expect that a theory of noncommutative Grothendieck duality will show the fullness along the line of the proof of theorem \ref{def-FEC1}. 
\end{remark}

\section{A comparison theorem}
In this section we assume that $\Bbbk$ is a field of characteristic zero, and $X$ a smooth proper scheme over $\Bbbk$, $(E_1,\cdots,E_m)$  an strong full exceptional collection of \emph{vector bundles} on $X$, and denote
\[
E=\bigoplus_{i=1}^m E_i.
\]
Thus $E$ is a tilting object of $\mathrm{D}^\mathrm{b}(X)$. Denote 
\[
 A=\Hom_{\mathcal{O}_X}(E,E).
 \] 
 Our goal is to show that the assignment
\[
(\alpha,\beta,\gamma)\in H^2(X,\mathcal{O}_X)\oplus H^1(X,T_X)\oplus H^0(,\wedge^2 T_X) \mapsto u_{\alpha,\beta,\gamma} \in HH^2(A)
\]
coincides with the composition
\[
H^2(X,\mathcal{O}_X)\oplus H^1(X,T_X)\oplus H^0(,\wedge^2 T_X)\cong HH^2(X)\cong HH^2(A).
\]
First recall that we have  $\alpha=0$, by the following well-known fact.
\begin{lemma}
For a smooth proper scheme $X$  over a field of characteristic zero, if $\mathrm{D}^\mathrm{b}(X)$ has a full exceptional collection, then 
$H^2(X,\mathcal{O}_X)=0$.
\end{lemma}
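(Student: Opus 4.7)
The plan is to deduce this from the interplay between the HKR decomposition for Hochschild \emph{homology} and the additivity of $HH_*$ under semi-orthogonal decompositions.

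First I would recall the additivity statement: if $\mathrm{D}^{\mathrm{b}}(X) = \langle \mathcal{A}_1, \ldots, \mathcal{A}_m \rangle$ is a semi-orthogonal decomposition with each $\mathcal{A}_i$ admissible (and $\mathrm{D}^{\mathrm{b}}(X)$ endowed with its canonical dg-enhancement), then Hochschild homology splits as $HH_*(X) \cong \bigoplus_i HH_*(\mathcal{A}_i)$. Applied to a full exceptional collection $(E_1,\ldots,E_m)$, each $\langle E_i\rangle$ is equivalent to $\mathrm{D}^{\mathrm{b}}(\Bbbk)$, whose Hochschild homology is $\Bbbk$ concentrated in degree $0$. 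Therefore $HH_n(X) = 0$ for every $n \neq 0$.

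Next I would invoke the homological HKR isomorphism in characteristic zero, which is the dual counterpart of (\ref{eq0-2}):
\[
HH_n(X) \;\cong\; \bigoplus_p H^{p-n}(X,\Omega^p_X).
\]
Specializing to $n = -2$, the summand indexed by $p = 0$ is precisely $H^2(X,\mathcal{O}_X)$; since the entire left-hand side vanishes, so does every summand on the right, yielding $H^2(X,\mathcal{O}_X) = 0$.

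The argument is essentially formal once one has both the homological HKR isomorphism and the additivity of $HH_*$ along admissible semi-orthogonal decompositions at hand. The only real point to get right is the indexing convention of the homological HKR decomposition (whether $HH_n$ corresponds to the "$+n$" or "$-n$" antidiagonal of the Hodge diamond); this is purely bookkeeping and poses no genuine obstacle.
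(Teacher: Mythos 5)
Your argument is correct and proves the right statement, but it takes a somewhat different route than the paper. You both reduce to showing $HH_i(X)=0$ for $i\neq 0$ and then read off the vanishing of $H^2(X,\mathcal{O}_X)$ from the homological HKR isomorphism. Where you differ is in how that vanishing is established. You invoke the additivity of Hochschild homology along admissible semi-orthogonal decompositions (Kuznetsov's theorem), together with $HH_*(\mathrm{D}^{\mathrm{b}}(\Bbbk))=\Bbbk$ in degree zero; this gives $HH_n(X)=0$ for $n\neq 0$ in one step, uniformly, with no hypothesis beyond fullness. The paper instead argues "in the spirit of the paper": in the strong case it passes through the tilting equivalence $\mathrm{D}^{\mathrm{b}}(X)\cong\mathrm{D}^{\mathrm{b}}(A)$ for the finite-dimensional endomorphism algebra $A$ of an acyclic quiver with relations, and then quotes Cibils' theorem that such algebras have vanishing higher Hochschild homology; for the non-strong case it appeals to a dg-quiver generalization of Cibils via the main theorem of \cite{Bod15}. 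Your approach is cleaner in that it handles strong and non-strong collections uniformly and requires no quiver-theoretic input; the paper's approach has the virtue of staying within the circle of ideas (tilting algebras of acyclic quivers) that the rest of the paper develops and uses. Both rest on the same HKR backbone, so the two proofs are close relatives rather than genuinely disparate. One small remark: do make sure the additivity statement you use is phrased at the level of dg-enhancements (as you correctly flag), since on the bare triangulated level it is not a formal consequence of the decomposition alone.
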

\begin{proof} Since the characteristic is zero, one has the HKR isomorphism (\cite{Swan96}, \cite{Yeku02} or \cite{Cal05})
\[
HH_i(X)\cong \bigoplus_{q-p=i}H^p(X,\Omega^q).
\]
It sufficies to show that under the assumption of  existence of a full exceptional collection, one has $HH_i(X)=0$ for $i>0$. This is well-known. One way (in the spirit of this paper) to see this, at least in the case that a strong full exceptional collection exist, is via the isomorphism $HH_i(X)=HH_i(A)$, and use the theorem of \cite{Cib86} which says that the higher Hochschild homology of, an algebra associated to an acyclic quiver with relations, is zero. For the general case (there exists a full exceptional collection which is not necessarily strong), one notices that Cibils' theorem can be easily generalized to the case of acyclic \emph{dg-quivers} with relations, so we can apply the main theorem of \cite{Bod15} to conclude. 
\end{proof}  

To state our comparison theorem, we need to recall the definition of  the canonical isomorphisms 
\[
\bigoplus_{i=0}^{n} H^{i}(X,\wedge^{n-i}T_X)\cong HH^n(X)\cong HH^n(A).
\]

\subsection{HKR isomorphisms}
By \cite[section 1]{Swan96} there is a spectral sequence
\begin{multline}\label{ss1}
E_2^{p,q}=H^p(X,\ext^q_{\mathcal{O}_{X\times X}}(\mathcal{O}_\Delta,\mathcal{O}_\Delta))=H^p(X\times X,\ext^q_{\mathcal{O}_{X\times X}}(\mathcal{O}_\Delta,\mathcal{O}_\Delta))\\
\Rightarrow \Ext^{p+q}_{\mathcal{O}_{X\times X}}(\mathcal{O}_\Delta,\mathcal{O}_\Delta).
\end{multline}

By using a theorem of \cite{GS87}, Swan showed that \cite[cor. 2.6]{Swan96} this spectral sequence degenerates, and there is moreover a Hodge-type decomposition. See also \cite{Yeku02} and \cite{Cal05}. Some details of the isomorphism $\Upsilon^n$ will be reviewed in section \ref{sec-some-canonical}.
\begin{theorem}
The spectral sequence (\ref{ss1}) degenerates at $E_2$, and there is a canonical decomposition
\begin{equation}\label{hodge1}
\Upsilon^n:\bigoplus_{i=0}^{n}H^i(X,\ext^{n-i}_{\mathcal{O}_{X\times X}}(\mathcal{O}_\Delta,\mathcal{O}_\Delta))\xrightarrow{\sim}\Ext^{n}_{\mathcal{O}_{X\times X}}(\mathcal{O}_\Delta,\mathcal{O}_\Delta).
\end{equation}
\end{theorem}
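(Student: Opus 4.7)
The plan is to prove both assertions at once by producing a quasi-isomorphism in the derived category of $\mathcal{O}_{X\times X}$-modules
\[
\mathbf{R}\Hhom_{\mathcal{O}_{X\times X}}(\mathcal{O}_\Delta,\mathcal{O}_\Delta) \;\cong\; \bigoplus_{q\geq 0} \ext^{q}_{\mathcal{O}_{X\times X}}(\mathcal{O}_\Delta,\mathcal{O}_\Delta)[-q],
\]
from which the degeneration of the local-to-global spectral sequence and the decomposition $\Upsilon^n$ are immediate by taking hypercohomology on $X\times X$.

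The first step is the classical (local) HKR identification of the Ext-sheaves. Locally on an affine open $U=\Spec(R)$ with a system of étale coordinates, the ideal $\mathcal{I}_\Delta$ of the diagonal is generated by a regular sequence; hence $\mathcal{O}_\Delta$ admits the Koszul resolution as an $R\otimes_{\Bbbk}R$-module. Dualising against $\mathcal{O}_\Delta$ and using the canonical isomorphism $\mathcal{I}_\Delta/\mathcal{I}_\Delta^{2}\cong \Omega^1_X$ identifies
\[
\ext^{q}_{\mathcal{O}_{X\times X}}(\mathcal{O}_\Delta,\mathcal{O}_\Delta)\;\cong\; \wedge^{q}T_X
\]
(as a sheaf on $X\times X$ supported along $\Delta$). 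This step is characteristic-free and coordinate-free up to canonical isomorphism.

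The second, harder step is to split $\mathbf{R}\Hhom(\mathcal{O}_\Delta,\mathcal{O}_\Delta)$ globally into its cohomology sheaves. I would follow Swan: sheafify the Hochschild complex of $\mathcal{O}_X$, compare it with the Koszul/exterior-algebra complex via the antisymmetrization and shuffle maps, and verify that in characteristic zero these are mutually inverse up to the normalizing factors $q!$, producing a local quasi-isomorphism of complexes of sheaves. The technical heart is that these locally defined quasi-isomorphisms, though dependent on coordinates on the nose, glue to a well-defined isomorphism in $D^{\mathrm{b}}(X\times X)$. Swan handles this via the Gerstenhaber--Schack theorem (\cite{Swan96}, Cor.~2.6), which shows that the sheafified Hochschild complex is formal; alternatively one may appeal to Yekutieli's globalization using the bundle of jets and a continuous HKR quasi-isomorphism, or to Caldararu's formulation via the Atiyah class.

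The main obstacle is precisely this globalization and the attendant coordinate-dependence of the local HKR map; the characteristic-zero hypothesis is indispensable both for inverting the antisymmetrizer $\tfrac{1}{q!}$ and for killing higher obstruction classes to the splitting. Once the derived splitting is in place, taking $\mathbf{R}\Gamma(X\times X,-)$ yields
\[
\Ext^{n}_{\mathcal{O}_{X\times X}}(\mathcal{O}_\Delta,\mathcal{O}_\Delta)\;\cong\;\bigoplus_{i=0}^{n}H^{i}\bigl(X,\ext^{n-i}_{\mathcal{O}_{X\times X}}(\mathcal{O}_\Delta,\mathcal{O}_\Delta)\bigr),
\]
which both forces all differentials in \eqref{ss1} after $E_2$ to vanish and produces the canonical isomorphism $\Upsilon^n$. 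The explicit description of $\Upsilon^n$ promised in the subsequent subsection will then be read off from the splitting morphism constructed above.
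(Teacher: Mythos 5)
The paper itself gives no proof of this theorem; it simply cites \cite[Cor.~2.6]{Swan96} (along with \cite{Yeku02} and \cite{Cal05}), so there is nothing in the text for your argument to diverge from.

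Your proposal correctly identifies and sketches Swan's approach: sheafify the Hochschild cochain complex, globalize the local HKR isomorphism, and use the characteristic-zero hypothesis to split the complex. One small imprecision in the middle step: the mechanism Swan actually uses is not an inverse pair of antisymmetrization and shuffle maps at the cochain level, but rather the Eulerian idempotents $e_n^{(i)}\in\mathbb{Q}[S_n]$ of Gerstenhaber--Schack, which split the sheafified Hochschild complex $\mathcal{C}^\bullet(\mathcal{O}_X,\mathcal{O}_X)$ into subcomplexes $\mathcal{C}^\bullet_{(i)}$ whose cohomology sheaf is concentrated in the single degree $i$ (by local HKR). Since each piece is a complex with cohomology in one degree, each piece is formal, hence so is the direct sum; this is what yields both the degeneration of \eqref{ss1} and the splitting $\Upsilon^n$ simultaneously, exactly as you want. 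Framing it directly as ``formality of $\mathbf{R}\Hhom(\mathcal{O}_\Delta,\mathcal{O}_\Delta)$'' is a valid consequence, but Swan's argument is phrased through the $\lambda$-decomposition of the cochain complex rather than a choice of quasi-isomorphism to the cohomology; this distinction matters later in the paper, where the $\lambda$-decomposition idempotents $e_n^{(i)}$ are used explicitly on the cochain level (section~6, in particular proposition~\ref{euler-idem-1} and theorem~\ref{hodge-decomp-1}).
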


We need also  the HKR isomorphism for smooth affine algebras, due to \cite{HKR62}. Our presentation follows \cite[section 3.4]{Loday}. Let $R$ be a commutative algebra over $\Bbbk$, and 
\[
T^1_{R/\Bbbk}=\Hom_R(\Omega^1_{R/\Bbbk},R)=\mathrm{Der}_{\Bbbk}(R,R),
\]
and let
\[
T^n_{R/\Bbbk}:=\wedge^n T^1_{R/\Bbbk}
\]
be the $n$-th exterior product of $T^1_{R/\Bbbk}$. If $R$ is smooth over $\Bbbk$, $T^n_{R/\Bbbk}\cong \Hom_R(\Omega^n_{R/\Bbbk},R)$. For $f_1,...,f_n\in \mathrm{Der}_{\Bbbk}(R,R)$, define the \emph{antisymmtrization map} 
\[
\epsilon_n: \mathrm{Der}_{\Bbbk}(R,R)^{\otimes n}\rightarrow \Hom_{\Bbbk}(R^{\otimes n},R)
\]
to be
\begin{equation}\label{hkr0}
\epsilon_n(f_1\otimes...\otimes f_n)(a_1,...,a_n)=\sum_{\sigma\in S_n}\mathrm{sgn}(\sigma)(f_1(a_{\sigma(1)}),...,f_n(a_{\sigma(n)})).
\end{equation}
Then $\epsilon_n$ induces a map, still denoted by $\epsilon_n$,
\[
\epsilon_n:T^n_{R/\Bbbk}\rightarrow  \Hom_{\Bbbk}(R^{\otimes n},R)=\Hom_{R^e}(C^{\mathrm{bar}}_n(R),R).
\]

\begin{lemma}
The image of $\epsilon_n$ lies in the kernel of $\mathfrak{b}$. Thus there is an induced map
\begin{equation}\label{hkr1}
\epsilon_n:T^n_{R/\Bbbk}\rightarrow  HH^n(R).
\end{equation}
\end{lemma}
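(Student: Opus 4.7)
My plan is to reduce the cocycle condition to the classical fact that the cup product of Hochschild cocycles is a cocycle, together with commutativity of $R$. First I would check that a single derivation $f \in \mathrm{Der}_\Bbbk(R,R)$, regarded as a Hochschild $1$-cochain, is in fact a $1$-cocycle: the Leibniz rule gives $\mathfrak{b}(f)(a,b) = af(b) - f(ab) + f(a)b = 0$ immediately. Next I would invoke the standard Leibniz identity $\mathfrak{b}(\varphi \smile \psi) = \mathfrak{b}(\varphi) \smile \psi + (-1)^{|\varphi|}\varphi \smile \mathfrak{b}(\psi)$ for the cup product on Hochschild cochains, which shows that for any $\sigma \in S_n$ the iterated cup product $f_{\sigma(1)} \smile \cdots \smile f_{\sigma(n)}$ is an $n$-cocycle, with explicit value $(f_{\sigma(1)} \smile \cdots \smile f_{\sigma(n)})(a_1,\ldots,a_n) = f_{\sigma(1)}(a_1) \cdots f_{\sigma(n)}(a_n)$.

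The key step is to use commutativity of $R$ to identify the antisymmetrization \eqref{hkr0} with an alternating sum of these cup products. Since each $f_i(a_j)$ lies in $R$ and $R$ is commutative, we may reorder the product so that the $f_j$'s appear in their natural order with arguments permuted by $\sigma^{-1}$:
\[
f_{\sigma(1)}(a_1) \cdots f_{\sigma(n)}(a_n) = f_1(a_{\sigma^{-1}(1)}) \cdots f_n(a_{\sigma^{-1}(n)}).
\]
Summing over $\sigma \in S_n$ with sign and substituting $\tau = \sigma^{-1}$ (which has the same sign), this yields
\[
\sum_{\sigma \in S_n} \mathrm{sgn}(\sigma)\, f_{\sigma(1)} \smile \cdots \smile f_{\sigma(n)} \;=\; \epsilon_n(f_1 \otimes \cdots \otimes f_n)
\]
as Hochschild $n$-cochains. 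The left-hand side is a linear combination of cocycles, hence a cocycle, which gives $\mathfrak{b} \circ \epsilon_n = 0$ and thus proves the lemma.

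For completeness I would also verify that $\epsilon_n$ descends from $(T^1_{R/\Bbbk})^{\otimes n}$ to $T^n_{R/\Bbbk} = \wedge^n T^1_{R/\Bbbk}$ as the statement of the lemma presupposes: a transposition of two adjacent entries $f_i, f_{i+1}$ amounts, after the commutativity rewriting above, to precomposing $\sigma$ with a transposition in $S_n$, and therefore produces an overall factor of $-1$. This gives both the well-definedness of \eqref{hkr1} on the exterior power and, together with the cocycle property, a map $T^n_{R/\Bbbk} \to HH^n(R)$.

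I do not anticipate any serious obstacle. The one conceptual point to emphasize is that commutativity of $R$ is used essentially in two places — reorganizing the antisymmetrization as an alternating sum of cup products and establishing antisymmetry in the $f_i$'s — and without it neither the cocycle property nor the descent to the exterior power would hold at the cochain level; this is precisely why the HKR map is natural for commutative (rather than general associative) algebras.
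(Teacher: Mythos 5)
Your argument is correct. The paper states this lemma without proof, treating it as standard (the cited references are \cite{HKR62} and \cite[3.4]{Loday}, where the verification is typically left as a direct computation), so there is no paper proof to compare against; but your route via cup products is a valid and conceptually clean way to establish it.

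A few remarks on the specifics. Your reduction — each derivation is a Hochschild $1$-cocycle by the Leibniz rule, the Hochschild cochain complex is a DGA under cup product so iterated cup products of cocycles are cocycles, and commutativity of $R$ lets you rewrite $\epsilon_n(f_1\otimes\cdots\otimes f_n)$ as the alternating sum $\sum_{\sigma}\mathrm{sgn}(\sigma)\, f_{\sigma(1)}\smile\cdots\smile f_{\sigma(n)}$ via the substitution $\tau=\sigma^{-1}$ — is complete and correct. Your observation that commutativity of $R$ enters precisely at the rearrangement step (the cup product DGA structure itself holds for any associative algebra) is exactly right and worth keeping, since it pinpoints where the hypothesis is used. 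The only small redundancy is the final paragraph on descent to $\wedge^n T^1_{R/\Bbbk}$: in the paper this factorization is already built into the definition of $\epsilon_n\colon T^n_{R/\Bbbk}\to\Hom_{\Bbbk}(R^{\otimes n},R)$ (stated just before the lemma), so the lemma itself is only asking for $\mathfrak b\circ\epsilon_n=0$; still, your justification of the antisymmetry is correct and does no harm. The alternative, more pedestrian proof is to expand $\mathfrak b\bigl(\epsilon_n(f_1\otimes\cdots\otimes f_n)\bigr)(a_1,\dots,a_{n+1})$ and pair off canceling terms using the derivation property of each $f_i$ and commutativity of $R$; your version avoids that bookkeeping by letting the Leibniz rule for $\smile$ do the work.
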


\begin{theorem}\label{hkr2}
If $R$ is a smooth $\Bbbk$-algebra, the map (\ref{hkr1}) is an isomorphism.
\end{theorem}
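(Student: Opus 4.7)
The plan is to reduce the theorem to a calculation with an explicit Koszul-type resolution of $R$ as an $R^e$-module, where $R^e = R \otimes_{\Bbbk} R^{\mathrm{op}}$. Write $I$ for the kernel of the multiplication $\mu: R^e \to R$. Because $R$ is smooth over $\Bbbk$, there is a canonical isomorphism $I/I^2 \cong \Omega^1_{R/\Bbbk}$, and the diagonal $\mathrm{Spec}(R) \hookrightarrow \mathrm{Spec}(R^e)$ is, locally on $\mathrm{Spec}(R)$, a regular embedding. First I would exploit this to produce a resolution of $R$ by locally free $R^e$-modules whose $n$-th term is $R^e \otimes_R \wedge^n_R \Omega^1_{R/\Bbbk}$ (globally one uses that the Koszul complex on a regular sequence is acyclic, glued via faithfully flat descent or after localization on $\mathrm{Spec}(R)$).

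Next I would compute $\mathrm{Ext}^n_{R^e}(R,R)$ with this resolution. Applying $\Hom_{R^e}(-, R)$ turns it into a complex whose $n$-th term is $\Hom_R(\wedge^n_R \Omega^1_{R/\Bbbk}, R) = T^n_{R/\Bbbk}$, and a direct check shows the differential is zero (any derivation-like term vanishes modulo $I$). Therefore $HH^n(R) \cong T^n_{R/\Bbbk}$ noncanonically; the work is to see that the canonical map realizing this isomorphism is exactly $\epsilon_n$. For this I would write down an explicit $R^e$-linear comparison map from the bar resolution $C^{\mathrm{bar}}_\bullet(R)$ to the Koszul resolution, in the classical shuffle form, and dually show that applying $\Hom_{R^e}(-,R)$ to this comparison map sends an element $f_1 \wedge \cdots \wedge f_n \in T^n_{R/\Bbbk}$ precisely to the signed sum \eqref{hkr0}. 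That $\epsilon_n$ lands in $\ker \mathfrak{b}$ is then automatic, but can also be verified directly from the alternating sign and the fact that each $f_i$ is a derivation.

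Finally, since both sides are local on $\mathrm{Spec}(R)$ (Hochschild cohomology for smooth algebras commutes with localization, and $T^n_{R/\Bbbk}$ sheafifies to the exterior powers of the tangent sheaf), if one prefers a shorter route one may instead first handle the polynomial case $R = \Bbbk[x_1,\ldots,x_d]$ by direct computation with the Koszul resolution on $\{x_i \otimes 1 - 1 \otimes x_i\}_{i=1}^d$, and then pass to a general smooth $R$ by étale localization, using that $\epsilon_n$ is natural in étale maps. The main obstacle is the compatibility statement: verifying explicitly that the chosen comparison map from the bar to the Koszul resolution recovers antisymmetrization under $\Hom_{R^e}(-,R)$. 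This is a bookkeeping step with shuffle signs, but it is the only part that is not formal once the Koszul resolution has been constructed; all other pieces (exactness of the Koszul complex, vanishing of the induced differential on $T^\bullet_{R/\Bbbk}$, and the identification $I/I^2 \cong \Omega^1_{R/\Bbbk}$) follow directly from smoothness of $R$.
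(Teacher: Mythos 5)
The paper does not prove this theorem: it cites it as the classical Hochschild--Kostant--Rosenberg theorem, with references to \cite{HKR62} and \cite[\S 3.4]{Loday}. Your proposal essentially reproduces the standard proof found in those references, so the approach matches what the paper relies on.

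Your outline is correct, but two points deserve a bit more care. First, the global resolution with $n$-th term $R^e \otimes_R \wedge^n_R \Omega^1_{R/\Bbbk}$ does not come with an obvious global Koszul differential when $R$ is merely smooth rather than polynomial; in Loday's treatment (and in the Gerstenhaber--Schack approach the paper also uses) one sidesteps this by proving the HKR isomorphism directly in the polynomial case and then invoking that Hochschild cohomology of smooth algebras commutes with localization, exactly as you suggest at the end. So the ``shorter route'' you mention is actually the proof, not an alternative to it. Second, the comparison of chain maps: the map that is usually written explicitly goes from the Koszul resolution into the bar resolution (given by antisymmetrization on the generators), and the content is that it is a quasi-isomorphism; applying $\Hom_{R^e}(-,R)$ then shows directly that the induced map on cohomology is $\epsilon_n$. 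Constructing a shuffle-type map from the bar to the Koszul resolution is also possible but not what you need to verify $\epsilon_n$ realizes the isomorphism---you only need a map in the direction that hits $\epsilon_n$ after dualizing. These are both small reorganizations rather than gaps; the argument as you sketch it is sound.
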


\begin{corollary}\label{hkr3}
There are canonical isomorphisms 
\[
\epsilon_i:\wedge^i T_{X}\xrightarrow{\sim}\ext^{i}_{\mathcal{O}_{X\times X}}(\mathcal{O}_\Delta,\mathcal{O}_\Delta)
\]
 and
\begin{equation}\label{hkr4}
H^p(X,\wedge^q T_{X})\xrightarrow{\sim}H^p(X,\ext^{q}_{\mathcal{O}_{X\times X}}(\mathcal{O}_\Delta,\mathcal{O}_\Delta)).
\end{equation}
\end{corollary}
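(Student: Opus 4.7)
The plan is to deduce this as the globalization of the affine HKR theorem (theorem \ref{hkr2}) together with the fact that the sheaf $\ext^i_{\mathcal{O}_{X\times X}}(\mathcal{O}_\Delta,\mathcal{O}_\Delta)$ is supported on the diagonal and so may be computed in terms of local sections on $X$.

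First I would observe that, since $X$ is smooth over $\Bbbk$, for any affine open $U=\Spec(R)\subset X$ the restriction of $\ext^i_{\mathcal{O}_{X\times X}}(\mathcal{O}_\Delta,\mathcal{O}_\Delta)$ to $U$ (viewed as a sheaf on $X$ via the diagonal, or equivalently as a sheaf on $X\times X$ with support on $\Delta$) is the quasi-coherent sheaf associated to $\Ext^i_{R\otimes_\Bbbk R}(R,R)=HH^i(R)$. This uses that $R$ is $\Bbbk$-flat so that Hochschild cohomology really is computed by $\mathrm{Ext}$ over $R^e$, and that $\ext$ commutes with localization on $X\times X$ at quasi-coherent inputs over noetherian schemes. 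Similarly, $\wedge^i T_X|_U$ is the sheaf associated to $T^i_{R/\Bbbk}$.

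Next, theorem \ref{hkr2} gives, for every such affine chart, an isomorphism $\epsilon_i:T^i_{R/\Bbbk}\xrightarrow{\sim}HH^i(R)$ defined by the explicit antisymmetrization formula (\ref{hkr0}). This formula is manifestly natural with respect to localization $R\to R_f$ (both sides localize at $f$, and derivations extend uniquely), so the local isomorphisms $\epsilon_i$ are compatible on overlaps $\Spec R_f$ and therefore sheafify to an isomorphism $\epsilon_i:\wedge^i T_X\xrightarrow{\sim}\ext^{i}_{\mathcal{O}_{X\times X}}(\mathcal{O}_\Delta,\mathcal{O}_\Delta)$ of quasi-coherent sheaves on $X$. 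This gives the first assertion.

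The second assertion (\ref{hkr4}) then follows immediately by applying $H^p(X,-)$ to the isomorphism of sheaves $\epsilon_q$, using that $\ext^q_{\mathcal{O}_{X\times X}}(\mathcal{O}_\Delta,\mathcal{O}_\Delta)$ is a quasi-coherent $\mathcal{O}_X$-module (via the diagonal) whose cohomology over $X$ agrees with its cohomology over $X\times X$ as stated in (\ref{ss1}). The main point requiring any real attention is the naturality/compatibility-under-localization of the affine HKR map, which is however transparent from the explicit formula (\ref{hkr0}); there is no genuine obstacle.
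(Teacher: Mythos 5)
Your proof is correct and is exactly the natural globalization of the affine HKR isomorphism that the paper implicitly invokes (the paper offers no written proof for this corollary, but the later lemma \ref{hkr6} explicitly cites the same combination of Theorem \ref{hkr2} and Swan's \cite[lemma~2.4(3)]{Swan96}). The two key points you isolate — (a) on an affine $U=\Spec R$ the sheaf $\ext^i_{\mathcal{O}_{X\times X}}(\mathcal{O}_\Delta,\mathcal{O}_\Delta)$ is the quasi-coherent sheaf associated to $\Ext^i_{R\otimes_\Bbbk R}(R,R)=HH^i(R)$, and (b) the antisymmetrization map $\epsilon_n$ of \eqref{hkr0} commutes with localization so that the affine isomorphisms glue — are precisely what is needed; the second isomorphism then follows trivially by applying $H^p(X,-)$. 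No gap.
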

\pqed
Denote the isomorphism (\ref{hkr4}) by $\mathfrak{E}^{p,q}$, and 
\[
\mathfrak{E}^n=\bigoplus_{p=0}^{n}\mathfrak{E}^{p,n-p}: \bigoplus_{p=0}^{n} H^p(X,\wedge^{n-p} T_{X})\xrightarrow{\sim}\bigoplus_{p=0}^{n} H^p(X,\ext^{n-p}_{\mathcal{O}_{X\times X}}(\mathcal{O}_\Delta,\mathcal{O}_\Delta)).
\]

\subsection{Statement of the theorem}
The part 1 of  the following theorem is \cite[6.2]{Bon89}, and the part 2 is  \cite[3.4, 3.5]{BH13}. Recall that $F\boxtimes G:=q_1^* F\otimes q_2^*G$, where $q_1$ and $q_2$ are the two projections from $X\times X$ to $X$.

\begin{theorem}\label{equiv0}
Let $Y$ be a smooth proper scheme over $\Bbbk$, and $E$ a tilting object of $\mathrm{D}^\mathrm{b}(Y)$, and $A=\Hom_{\mathcal{O}_X}(E,E)$, $A^e=A^{\mathrm{op}}\otimes_{\Bbbk} A$.
\begin{enumerate}
  \item The functor
\begin{equation}\label{equiv1}
\Psi=\mathbf{R}\Hom_{\mathcal{O}_X}(E,\cdot) : \mathrm{D}^{\mathrm{b}}(X)\rightarrow \mathrm{D}^{\mathrm{b}}(A)
\end{equation}
is an equivalence. Moreover, $\Psi(E)=A$.
  \item The functor
\begin{equation}\label{equiv2}
\Psi^e=\mathbf{R}\Hom_{\mathcal{O}_{X\times X}}(E^\vee\boxtimes^\mathbf{L}E,\cdot) : \mathrm{D}^{\mathrm{b}}(X\times X)\rightarrow \mathrm{D}^{\mathrm{b}}(A^e)
\end{equation}
is an equivalence. Moreover, $\Psi^e(E^\vee\boxtimes^{\mathbf{L}}E)=A^e$,  $\Psi^e(\mathcal{O}_\Delta)=A$.
\end{enumerate}
\end{theorem}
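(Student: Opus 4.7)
The plan is to deduce both parts from the general tilting theorem of Bondal--Keller: if $T$ is an object of $\mathrm{D}^{\mathrm{b}}(Y)$ on a smooth proper scheme $Y$ with $\mathrm{Ext}^k(T,T)=0$ for $k>0$, and $T$ classically generates $\mathrm{D}^{\mathrm{b}}(Y)$, then $\mathbf{R}\mathrm{Hom}(T,-)$ is an equivalence onto $\mathrm{D}^{\mathrm{b}}(\mathrm{End}(T))$. For Part 1 this is immediate from the hypotheses on $E$: strongness gives the vanishing of higher Ext's and hence $\Psi(E)=\mathbf{R}\mathrm{Hom}(E,E)=A$; fullness of the exceptional collection $(E_1,\dots,E_m)$ gives that $E=\bigoplus E_i$ generates $\mathrm{D}^{\mathrm{b}}(X)$. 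To spell out the equivalence, I would first check that $\Psi$ is fully faithful on the full triangulated subcategory $\langle E\rangle$ by verifying it on the generator $E$ itself (both sides give $A$ in degree zero and vanish elsewhere), then extend to all of $\mathrm{D}^{\mathrm{b}}(X)$ by the generation property. Essential surjectivity follows because $A=\Psi(E)$ is a compact generator of $\mathrm{D}^{\mathrm{b}}(A)$ (here one uses that $A$ has finite global dimension, which is automatic from the derived equivalence with the smooth $X$).

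For Part 2, the plan is to reduce to Part 1 by showing that $T:=E^\vee\boxtimes^{\mathbf{L}}E$ is a tilting object of $\mathrm{D}^{\mathrm{b}}(X\times X)$. The K\"unneth formula, combined with the fact that $E$ and $E^\vee$ are bounded complexes of vector bundles, yields
\[
\mathbf{R}\mathrm{Hom}_{X\times X}(E^\vee\boxtimes E,E^\vee\boxtimes E)\cong \mathbf{R}\mathrm{Hom}_X(E^\vee,E^\vee)\otimes^{\mathbf{L}}_{\Bbbk}\mathbf{R}\mathrm{Hom}_X(E,E),
\]
and by strongness both tensor factors are concentrated in degree zero, equal to $A^{\mathrm{op}}$ and $A$ respectively, so the total endomorphism algebra is $A^e$ and the higher Ext's vanish. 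To check that $T$ generates, I would observe that $(E_i^\vee\boxtimes E_j)_{i,j}$ exhibits $\mathrm{D}^{\mathrm{b}}(X\times X)$ as $\langle E_i^\vee\rangle\otimes \langle E_j\rangle$; more concretely, if $F\in \mathrm{D}^{\mathrm{b}}(X\times X)$ is right-orthogonal to every $E_i^\vee\boxtimes E_j$, then $\mathbf{R}q_{2*}(q_1^*E_i\otimes^{\mathbf{L}}F)$ is right-orthogonal to each $E_j$ in $\mathrm{D}^{\mathrm{b}}(X)$ and hence vanishes by fullness of $(E_j)$ on the second factor; a second application on the first factor gives $F=0$. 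Part 1 applied on $X\times X$ with tilting object $T$ then furnishes the equivalence $\Psi^e$ with $\Psi^e(T)=A^e$.

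It remains to identify $\Psi^e(\mathcal{O}_\Delta)$. Using $\mathcal{O}_\Delta=\Delta_*\mathcal{O}_X$ and the adjunction $(\mathbf{L}\Delta^*,\Delta_*)$, one computes
\[
\Psi^e(\mathcal{O}_\Delta)=\mathbf{R}\mathrm{Hom}_{X\times X}(E^\vee\boxtimes^{\mathbf{L}}E,\Delta_*\mathcal{O}_X)\cong \mathbf{R}\mathrm{Hom}_X(\mathbf{L}\Delta^*(E^\vee\boxtimes^{\mathbf{L}}E),\mathcal{O}_X).
\]
Since $\mathbf{L}\Delta^*(E^\vee\boxtimes^{\mathbf{L}}E)\cong E^\vee\otimes^{\mathbf{L}} E$ and $E$ is locally free, this is $\mathbf{R}\mathrm{Hom}_X(E^\vee\otimes E,\mathcal{O}_X)\cong \mathbf{R}\mathrm{Hom}_X(E,E)=A$, and the $A^e$-module structure induced by the two natural actions of $\mathrm{End}(E)$ on $\mathbf{R}\mathrm{Hom}(E,E)$ (post- and pre-composition) is visibly the regular bimodule structure of $A$.

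The step I expect to require the most care is the identification of module structures under K\"unneth and adjunction in the last paragraph: one must verify that the $A^{\mathrm{op}}$-factor really corresponds to the action on the $E^\vee$-slot (equivalently, to the right action on the bimodule $A$), so that $\Psi^e(\mathcal{O}_\Delta)$ is $A$ as a bimodule, not just as a vector space. Everything else is a routine application of K\"unneth, adjunction, and Bondal's theorem.
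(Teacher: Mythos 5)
Your proof is correct, and it is essentially the standard argument. Note, though, that the paper itself gives no proof at all: immediately before the statement it remarks ``The part 1 of the following theorem is [Bon89, 6.2], and the part 2 is [BH13, 3.4, 3.5],'' and then prints only a qed symbol. So your sketch is a reconstruction of what those references do, not an alternative to something in the paper. The two pieces you flag as needing care are exactly the right ones, and both check out: the endomorphism ring of $E^\vee\boxtimes^{\mathbf{L}}E$ is $A^{\mathrm{op}}\otimes_\Bbbk A$ because $E\mapsto E^\vee$ is a contravariant equivalence sending $\mathrm{End}(E)$ to $\mathrm{End}(E)^{\mathrm{op}}$ and higher $\mathrm{Ext}$'s vanish on each K\"unneth factor by strongness; and the adjunction chain $\mathbf{R}\mathrm{Hom}_{X\times X}(E^\vee\boxtimes^{\mathbf{L}}E,\Delta_*\mathcal{O}_X)\cong\mathbf{R}\mathrm{Hom}_X(E^\vee\otimes^{\mathbf{L}}E,\mathcal{O}_X)\cong\mathbf{R}\mathrm{Hom}_X(E,E)$ does carry the outer/inner composition actions to the left/right regular actions on $A$, which is what makes $\Psi^e(\mathcal{O}_\Delta)=A$ as a bimodule. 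The generation argument on $X\times X$ via the two-step orthogonality is also the standard one (and is how [BH13] proceeds). In short: correct, same route as the cited sources, with no gaps.
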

\pqed
 Thus $\Psi^e$ induces an isomorphism
\begin{eqnarray}\label{equiv3}
\Xi_E^n:\Ext^{n}_{\mathcal{O}_{X\times X}}(\mathcal{O}_\Delta,\mathcal{O}_\Delta)
\xrightarrow{\sim}\Ext^{n}_{A^e}(A,A)= HH^n(A).
\end{eqnarray}

\begin{definition}
We denote  the composition of the isomorphisms (\ref{hkr4}), (\ref{hodge1}) and (\ref{equiv3}) by
\begin{equation}\label{hkr5}
\Phi^n= \Xi_E^n\circ\Upsilon^n\circ\mathfrak{E}^n: \bigoplus_{p=0}^n H^p(X,\wedge^{n-p} T_{X})\xrightarrow{\sim}
HH^n(A).
\end{equation}
\end{definition}
Now we are ready to state our theorem.

\begin{theorem}\label{comparison0}
For $\beta\in H^1(X,T_X)$, $\gamma\in H^0(X,\wedge^2 T_X)$, 
\begin{equation}
\Phi^2(\beta,\gamma)=u_{0,\beta,\gamma},
\end{equation}
where $u_{\alpha,\beta,\gamma}$ is given by the construction \ref{constr-1}.
\end{theorem}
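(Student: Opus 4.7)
The plan is to construct explicit Hochschild $2$-cochain representatives of both sides of (\ref{eq0-4}) and verify their equality up to a coboundary. The right-hand side $u_{0,\beta,\gamma}$ is already given by construction \ref{constr-1} in terms of \v{C}ech data $\{g_{ij}\}$, connections $\{\nabla_i\}$, and auxiliary maps $\{c(a)_i\}$. The main task is to unwind the left-hand side $\Phi^2(\beta,\gamma)=\Xi_E^2\circ\Upsilon^2\circ\mathfrak{E}^2(\beta,\gamma)$ explicitly, using that $\Psi^e(E^\vee\boxtimes^{\mathbf{L}}E)=A^e$ and $\Psi^e(\mathcal{O}_\Delta)=A$.

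My first step is to build a bar-type resolution of $\mathcal{O}_\Delta$ on $X\times X$ adapted to $E=\bigoplus E_i$, whose terms are direct sums of shifts of $E^\vee\boxtimes^{\mathbf{L}}E$ and which is sent by $\Psi^e$ to the standard bar resolution of $A$ as an $A^e$-module. With such a resolution in hand, any class in $\Ext^2_{\mathcal{O}_{X\times X}}(\mathcal{O}_\Delta,\mathcal{O}_\Delta)$ can be converted into a Hochschild $2$-cochain of $A$ by lifting it to a chain map between resolutions and applying $\mathbf{R}\Hom(E^\vee\boxtimes^{\mathbf{L}}E,-)$. The \v{C}ech realization of $\Ext^2(\mathcal{O}_\Delta,\mathcal{O}_\Delta)$ through (\ref{ss1}) then turns this into a concrete recipe: compute local representatives of $\mathfrak{E}^{1,1}(\beta)$ and $\mathfrak{E}^{0,2}(\gamma)$ via the antisymmetrization map (\ref{hkr0}), use the degeneration of (\ref{ss1}) to assemble them into a single Čech--Hochschild bicomplex cocycle, and finally transport through $\Xi_E^2$.

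The main obstacle, as predicted by the author, is handling the $\lambda$-decomposition. The classes $\beta$ and $\gamma$ live in different Hodge pieces: $\beta$ gives a \v{C}ech $1$-cocycle in $\ext^1_{\mathcal{O}_{X\times X}}(\mathcal{O}_\Delta,\mathcal{O}_\Delta)$ while $\gamma$ gives a \v{C}ech $0$-cocycle in $\ext^2_{\mathcal{O}_{X\times X}}(\mathcal{O}_\Delta,\mathcal{O}_\Delta)$. Combining them into a single total cocycle requires local cochain lifts that interact delicately with the connections $\nabla_i$ and the antisymmetrization $\epsilon_n$, and the compatibility between the \v{C}ech differential $\delta$ and the bar differential $\mathfrak{b}'$ is what makes these two pieces assemble into a single Hochschild cochain. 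I expect this bicomplex bookkeeping to occupy the technical core of Section~6.

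Once this is in place, the comparison becomes a direct matching. After transport through $\Xi_E^2$, the $\gamma$-contribution on $U_i$ becomes, up to sign, $\gamma(\cdot,\cdot)\circ\nabla_i\circ(a_1\otimes a_2)$, which is precisely the local obstruction appearing in the third equation of (\ref{eq12.5}) that defines $c(a)_i$; and the $\beta$-contribution produces the Čech $1$-cocycle $\{a\, g_{ij}-g_{ij}\,a\}$ encoded by the fourth equation of (\ref{eq12.5}). Unpacking (\ref{eq13}), one then reads off that $u_{0,\beta,\gamma}(a',a)=-c(a')_i a - a' c(a)_i + c(a'a)_i$ equals the cochain produced from $\Phi^2(\beta,\gamma)$ modulo a Hochschild coboundary, the ambiguity being exactly the choice-of-lift ambiguity in $c(a)_i$ controlled by lemma \ref{deform-bundle1}. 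The remaining verification is a sign- and index-tracking calculation that follows once the two explicit constructions are placed side by side.
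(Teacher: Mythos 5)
Your plan matches the paper's strategy in all essentials: a bar-type resolution $\mathcal{C}^{\mathrm{bar}}_\bullet(E)$ of $\mathcal{O}_\Delta$ transported by $\Psi^e$ to the standard bar resolution of $A$, the \v{C}ech--Hochschild bicomplex $\check{C}^\bullet(\mathcal{U},\mathcal{C}^\bullet(A,E^\vee\otimes E))$, a zigzag through this bicomplex to reach an explicit Hochschild $2$-cochain, and the final matching against the $\{g_{ij}\}$, $\{c(a)_i\}$ data of construction \ref{constr-1}, with the $\lambda$-decomposition correctly identified as the delicate step. The paper additionally splits $\Phi^2(0,\beta,\gamma)=\Phi^2(0,\beta,0)+\Phi^2(0,0,\gamma)$ by linearity and verifies the $\lambda$-piece condition (\ref{zigzag-0}) separately for each summand via lemma \ref{hodge-decomp-6} and a cotrace formula, but this is within the spirit of your outline.
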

The  proof of this theorem occupies  the rest of this section. The following corollary is a direct consequence of theorem \ref{comparison0}.

\begin{corollary}\label{comparison0-cor}
A first order noncommutative deformation of $X$  is trivial, if it induces a trivial deformation of $A$.
\end{corollary}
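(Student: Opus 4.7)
The plan is to deduce the corollary directly from Theorem \ref{comparison0} together with the fact that $\Phi^2$ is an isomorphism. First, I would observe that since $X$ admits a full exceptional collection over a field of characteristic zero, the preceding lemma gives $H^2(X,\mathcal{O}_X)=0$, so every first order noncommutative deformation of $X$ is of the form $X_{0,\beta,\gamma}$ with $(\beta,\gamma)\in H^1(X,T_X)\oplus H^0(X,\wedge^2 T_X)$. Hence triviality of the deformation is equivalent to the vanishing $\beta=0$ and $\gamma=0$.

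Next, by hypothesis the induced deformation of $A$ is trivial, which by the discussion following \eqref{deform-alg0} means that the class $u_{0,\beta,\gamma}\in HH^2(A)$ vanishes. Applying Theorem \ref{comparison0}, this is equivalent to $\Phi^2(0,\beta,\gamma)=0$ in $HH^2(A)$. Since $\Phi^2$ is defined in \eqref{hkr5} as the composition $\Xi_E^2\circ\Upsilon^2\circ\mathfrak{E}^2$ of three isomorphisms (the HKR isomorphism of Corollary \ref{hkr3}, the Hodge-type decomposition \eqref{hodge1} of the degenerate spectral sequence, and the Morita isomorphism \eqref{equiv3} induced by the tilting equivalence of Theorem \ref{equiv0}), it is itself an isomorphism.

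Therefore, the injectivity of $\Phi^2$ forces $(0,\beta,\gamma)=0$ in $\bigoplus_{p=0}^{2}H^p(X,\wedge^{2-p}T_X)$. Combined with the vanishing of the $\alpha$-component noted in the first step, this yields $\beta=0$ and $\gamma=0$, so $X_{0,\beta,\gamma}$ is the trivial noncommutative deformation of $X$. No obstacle arises here since all the substantive work has been carried out in Theorem \ref{comparison0}; the corollary is a purely formal consequence once the comparison map is known to be the isomorphism $\Phi^2$.
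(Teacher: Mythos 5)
Your proof is correct and matches what the paper intends — it explicitly states the corollary is a direct consequence of Theorem \ref{comparison0}, and the chain of reasoning (triviality of the $A$-deformation gives $u_{0,\beta,\gamma}=0$, hence $\Phi^2(0,\beta,\gamma)=0$, hence $\beta=\gamma=0$ by injectivity of $\Phi^2$, combined with the forced vanishing $\alpha=0$) is exactly the intended argument.
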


\begin{remark}
This corollary is also a very special case  (i.e., smooth proper varieties with a strong full exceptional collection of vector bundles) of a consequence of \cite[prop. 5.1]{AT08}. 
\end{remark}

\subsection{Morita equivalence and $\lambda$-decomposition}\label{sec-morita}
In this subsection we review the Morita equivalence and the  $\lambda$-decomposition of 
 Hochschild cohomology, and make some observations that we will need later. Our references are \cite{Loday}, \cite{GS87}.
 Let $B$ be a $\Bbbk$-algebra, $M_r(B)$ the  $\Bbbk$-algebra of matrices of rank $r$ with coefficients in $B$. The $(i,j)$-entry of a matrix $G$ is denoted by $G_{ij}$.
\begin{definition}
 For $f\in C^0(B,B)=B$, define $\mathrm{cotr}(f)=f\cdot \mathrm{id} \in M_r(B)=C^0(M_r(B),M_r(B))$. For $n\geq 1$ and $f\in C^n(B,B)$, define $\mathrm{cotr}(f)$ to be the element of $C^n(M_r(B),M_r(B))$ such that for $\alpha^1,...,\alpha^n\in M_r(B)$, 
\begin{equation}\label{cotr1}
\mathrm{cotr}(f)(\alpha^1,...,\alpha^n)_{ij}=\sum_{i_2,...,i_n} f(\alpha^1_{i i_2},\alpha^2_{i_2 i_3},...,\alpha^n_{i_n j})
\end{equation}
where the sum is over all possible indices $1\leq i_2,...,i_n\leq r$. 
\end{definition}

For a given positive integer $r$, let $E_{i,j}(a)$ be the $r\times r$ matrix whose entry at $(i,j)$ is $a$, and all the other entry is zero. The inclusion map 
\[
\mathrm{inc}^*: C^n(M_r(B),M_r(B))\rightarrow C^n(B,B)
\]
is defined by
\begin{equation}\label{inc1}
\mathrm{inc}^*(F)(a_1,...,a_n)=F\big(E_{11}(a_1),...,E_{11}(a_n)\big)_{11}
\end{equation}
for a $\Bbbk$-linear map $F:M_r(B)^{\otimes n}\rightarrow M_r(B)$. It is easily seen that $\mathrm{cotr}$ and $\mathrm{inc}^*$ are chain maps. The following theorem is given in \cite[1.5.6]{Loday} without a proof. For the readers' convenience I write a proof by mimicking the proof of the homological version \cite[1.2.4]{Loday}.

\begin{theorem}\label{morita1}
For positive integers $n$, $\mathrm{cotr}$ and $\mathrm{inc}^*$ induce  isomorphisms of Hochschild cohomology
\[
 \mathrm{cotr}: HH^n(B)\xrightarrow{\sim} HH^n(M_r(B)),\ \mathrm{inc}^*: HH^n(M_r(B))\xrightarrow{\sim} HH^n(B),
 \] 
 and which are inverse to each other.
\end{theorem}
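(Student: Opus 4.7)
The argument dualizes Loday's proof of the Morita invariance of Hochschild homology \cite[1.2.4]{Loday}. It breaks into two independent pieces, one easy and one combinatorial; the theorem then follows formally.

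First I would verify, directly on cochains, that $\mathrm{inc}^* \circ \mathrm{cotr} = \mathrm{id}$ on $C^\bullet(B,B)$. For $n\geq 1$, $f\in C^n(B,B)$ and $a_1,\ldots,a_n \in B$, unpacking (\ref{cotr1}) and (\ref{inc1}) gives
\[
(\mathrm{inc}^* \circ \mathrm{cotr})(f)(a_1,\ldots,a_n) = \sum_{i_2,\ldots,i_n} f\big((E_{11}(a_1))_{1\, i_2}, (E_{11}(a_2))_{i_2\, i_3},\ldots,(E_{11}(a_n))_{i_n\, 1}\big).
\]
The matrix $E_{11}(a_k)$ is supported at position $(1,1)$, so only the index choice $i_2 = \cdots = i_n = 1$ contributes, and that summand equals $f(a_1,\ldots,a_n)$. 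The case $n=0$ is immediate since $\mathrm{cotr}(f) = f\cdot\mathrm{id}$ and $\mathrm{inc}^*$ takes the $(1,1)$-entry. In particular $\mathrm{cotr}$ is a split monomorphism on every cochain group, with retraction $\mathrm{inc}^*$, so on cohomology $\mathrm{cotr}\colon HH^n(B)\hookrightarrow HH^n(M_r(B))$ is a split injection and $\mathrm{inc}^*$ is surjective.

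Next I would construct an explicit cochain homotopy $h\colon C^n(M_r(B),M_r(B))\to C^{n-1}(M_r(B),M_r(B))$ with $\mathfrak{b}h + h\mathfrak{b} = \mathrm{id} - \mathrm{cotr}\circ\mathrm{inc}^*$. The guiding idea is the matrix-unit decomposition: with $e_{ij}:=E_{ij}(1)$, every $\alpha\in M_r(B)$ satisfies $\alpha = \sum_{i,j} e_{i1}\, E_{11}(\alpha_{ij})\, e_{1j}$. Expanding $F(\alpha^1,\ldots,\alpha^n)$ by $\Bbbk$-linearity in this decomposition gives a sum of terms in which matrix units $e_{p 1}$ and $e_{1 q}$ are sandwiched around each $E_{11}(\alpha^k_{pq})$; on the other hand, a direct calculation shows that $(\mathrm{cotr}\circ\mathrm{inc}^*)(F)(\alpha^1,\ldots,\alpha^n)$ is exactly the subsum in which the matrix units across consecutive arguments are forced to match on a ``diagonal'' pattern $(i_0,i_1,i_1,i_2,\ldots,i_n)$. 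The homotopy $h$ would be built by inserting, position by position, the off-diagonal matrix-unit pairs; the Hochschild differential collapses neighboring insertions via $e_{1 q}\cdot e_{p 1} = \delta_{qp} e_{11}$, so that $\mathfrak{b}h + h\mathfrak{b}$ telescopes the general expansion onto the diagonal one.

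The main obstacle is the combinatorial bookkeeping of this second step: writing down $h$ explicitly and verifying the sign-correct identity $\mathfrak{b}h + h\mathfrak{b} = \mathrm{id} - \mathrm{cotr}\circ\mathrm{inc}^*$ on all cochain degrees. Once it is in place, the theorem is formal: combining the chain-level identity $\mathrm{inc}^*\circ\mathrm{cotr} = \mathrm{id}$ with the cohomology-level identity $\mathrm{cotr}\circ\mathrm{inc}^* = \mathrm{id}$ shows that $\mathrm{cotr}$ and $\mathrm{inc}^*$ are mutually inverse isomorphisms on $HH^n$ for all $n\geq 1$.
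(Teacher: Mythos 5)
Your plan follows the same strategy the paper uses: it dualizes Loday's proof of Morita invariance for Hochschild \emph{homology} \cite[1.2.4]{Loday}, first checking $\mathrm{inc}^*\circ\mathrm{cotr}=\mathrm{id}$ on cochains (which is correct and exactly as the paper does it, since only the index choice $i_2=\cdots=i_n=1$ survives), then constructing a homotopy from $\mathrm{id}$ to $\mathrm{cotr}\circ\mathrm{inc}^*$ by matrix-unit insertion. The insertion idea you describe — decomposing $\alpha=\sum_{p,q}e_{p1}E_{11}(\alpha_{pq})e_{1q}$ and sandwiching $e_{1q},e_{p1}$ around slots — is precisely what drives the paper's construction. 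So on approach there is no disagreement.

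However, you explicitly defer the crucial step, calling ``writing down $h$ explicitly and verifying the sign-correct identity'' the main obstacle, and you do not resolve it. That is a genuine gap: as stated, there is no homotopy. The paper closes it by not attempting to write a single operator $h$ at all. Instead it defines a family $h_0,\dots,h_{n-1}\colon C^n(M_r(B),M_r(B))\to C^{n-1}(M_r(B),M_r(B))$, namely
\[
h_i(F)(\alpha^1,\dots,\alpha^{n-1})=\sum_{k,m,\dots,p,q} E_{k1}(1)\, F\bigl(E_{11}(\alpha^1_{km})\otimes\cdots\otimes E_{11}(\alpha^i_{pq})\otimes E_{1q}(1)\otimes\alpha^{i+1}\otimes\cdots\otimes\alpha^{n-1}\bigr)
\]
for $1\le i\le n-1$, together with $h_0(F)(\alpha^1,\dots,\alpha^{n-1})=\sum_k E_{k1}(1)F(E_{1k}(1)\otimes\alpha^1\otimes\cdots\otimes\alpha^{n-1})$, and it decomposes $\mathfrak{b}=\sum_j(-1)^j\mathfrak{b}_j$ into its cosimplicial pieces. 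One then checks the three families of \emph{pre-cosimplicial homotopy} relations
\[
h_i\mathfrak{b}_j=\mathfrak{b}_j h_{i-1}\ (j<i),\qquad h_i\mathfrak{b}_i=h_{i-1}\mathfrak{b}_i,\qquad h_i\mathfrak{b}_j=\mathfrak{b}_{j-1}h_i\ (j>i+1),
\]
plus the endpoints $h_0\mathfrak{b}_0=\mathrm{id}$ and $h_n\mathfrak{b}_{n+1}=\mathrm{cotr}\circ\mathrm{inc}^*$; a purely formal telescoping then yields $h\mathfrak{b}+\mathfrak{b}h=\mathrm{id}-\mathrm{cotr}\circ\mathrm{inc}^*$ for $h=\sum_i(-1)^ih_i$. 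This is what makes the ``bookkeeping'' tractable: one never verifies the global homotopy identity, only pairwise relations between individual $h_i$ and $\mathfrak{b}_j$, each of which is a short matrix-unit calculation using $e_{1q}e_{p1}=\delta_{qp}e_{11}$. I would suggest you adopt this pre-cosimplicial organization to actually complete the argument.
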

\begin{proof} It is obvious that $\mathrm{inc}^*\circ \mathrm{cotr}=\mathrm{id}$. It suffices to show that $\mathrm{cotr}\circ \mathrm{inc}^*$ is homotopic to $\mathrm{id}$. By definition,
\begin{eqnarray}
(\mathrm{cotr}\circ \mathrm{inc}^*)(F)(\alpha^1,...,\alpha^n)_{ij}
=
\sum_{i_2,...,i_n}F\big(E_{11}(\alpha^1_{i i_2}),...,E_{11}(\alpha^n_{i_n j})\big)_{11},
\end{eqnarray}
i.e.,
\begin{eqnarray}
(\mathrm{cotr}\circ \mathrm{inc}^*)(F)(\alpha^1,...,\alpha^n)
=
\sum_{i,i_2,...,i_n,j}E_{i,1}F\big(E_{11}(\alpha^1_{i i_2}),...,E_{11}(\alpha^n_{i_n j})\big)E_{1,j}.
\end{eqnarray}
For $i=1,...,n-1$, define
\[
h_i: \Hom_{\Bbbk}(M_r(B)^{\otimes n},M_r(B))\rightarrow \Hom_{\Bbbk}(M_r(B)^{\otimes n-1},M_r(B))
\]
by
\begin{multline}
h_i(F)(\alpha^1,...,\alpha^{n-1})=\sum_{k,m,...,p,q} E_{k1}(1) F\big( E_{11}(\alpha_{km}^1)\otimes\\
...\otimes E_{11}(\alpha_{pq}^i)\otimes E_{1q}(1)\otimes \alpha^{i+1}\otimes...\otimes \alpha^{n-1}\big).
\end{multline}
Set
\begin{equation}
h_0(F)(\alpha^1,...,\alpha^{n-1})=\sum_{k} E_{k1}(1) F\big( E_{1k}(1)\otimes \alpha^{1}\otimes...\otimes \alpha^{n-1}\big).
\end{equation}
Set temporarily (in this proof),
\begin{eqnarray*}
\mathfrak{b}_0(F)(\alpha^1,...,\alpha^{n+1})&:=&\alpha^1 F(\alpha^2,...,\alpha^{n+1}),\\
\mathfrak{b}_i(F)(\alpha^1,...,\alpha^{n+1})&:=&F(\alpha^1,...,\alpha^{i}\alpha^{i+1},... \alpha^{n+1})),\ \mbox{for}\ 1\leq i\leq n,\\
\mathfrak{b}_n(F)(\alpha^1,...,\alpha^{n+1})&:=&F(\alpha^1,...,\alpha^{n})\alpha^{n+1}
\end{eqnarray*}
such that
\[
\mathfrak{b}(F)=\sum_{i=0}^{n+1}(-1)^i \mathfrak{b}_i(F).
\]
Thus 
\begin{eqnarray*}
h_0 \mathfrak{b}_0=\mathrm{id},\ h_n \mathfrak{b}_{n+1}= \mathrm{cotr}\circ \mathrm{inc}^*.
\end{eqnarray*}
One can verify by some tedious computations the \emph{pre-cosimplicial homotopy} relations
\begin{equation}
\begin{cases}
h_i \mathfrak{b}_j=\mathfrak{b}_j h_{i-1},& 0\leq j<i\leq n,\\
h_i \mathfrak{b}_i=h_{i-1}\mathfrak{b}_i,& 0<i\leq n,\\
h_i \mathfrak{b}_j=\mathfrak{b}_{j-1}h_i,& 1\leq i+1<j\leq n+1, \\
\end{cases}
\end{equation}
which imply
\begin{eqnarray*}
\big(\sum_{i=0}^{n}(-1)^i h_i\big)\circ \big(\sum_{j=0}^{n+1}(-1)^j \mathfrak{b}_j\big)+\big(\sum_{j=0}^{n}(-1)^j \mathfrak{b}_j\big)\circ\big(\sum_{i=0}^{n-1}(-1)^i h_i\big)=h_0 \mathfrak{b}_0-h_n \mathfrak{b}_{n+1},
\end{eqnarray*}
and therefore give the homotopy from $\mathrm{id}$ to $\mathrm{cotr}\circ \mathrm{inc}^*$.
\end{proof}

Now let $L$ be free $B$ module of rank $r$, and $M=\mathrm{End}_{B}(L)$. Choosing a $B$-basis of $L$, we obtain an isomorphism $M\cong M_r(B)$, and thus the isomorphisms of Hochschild cohomology.
\begin{lemma}\label{indep-1}
The induced isomorphisms
\begin{equation}\label{morita1.1}
\mathrm{cotr}: HH^n(B)\xrightarrow{\sim} HH^n(M),\ \mathrm{inc}^*: HH^n(M)\xrightarrow{\sim} HH^n(B)
\end{equation}
are independent of the choice of $B$-basis of $L$.
\end{lemma}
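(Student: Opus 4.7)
The plan is to reduce this basis-independence to the classical fact that inner automorphisms act trivially on Hochschild cohomology. First, if $e=(e_1,\dots,e_r)$ and $e'=(e'_1,\dots,e'_r)$ are two $B$-bases of $L$, related by $e'_i=\sum_{j}g_{ji}e_j$ for some unique change-of-basis matrix $g=(g_{ji})\in GL_r(B)$, then a direct computation with matrix units shows that the corresponding algebra isomorphisms $\phi_e,\phi_{e'}:M\xrightarrow{\sim}M_r(B)$ satisfy $\phi_{e'}(T)=g^{-1}\phi_e(T)g$ for every $T\in M$. Equivalently, $\phi_e\circ\phi_{e'}^{-1}$ equals the inner automorphism $\mathrm{Ad}(g):M_r(B)\to M_r(B)$, $X\mapsto gXg^{-1}$.

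Next, I would trace how this inner automorphism affects $\mathrm{cotr}$ and $\mathrm{inc}^*$ once they are transported to $C^\bullet(M,M)$ via $\phi_e$ or $\phi_{e'}$. A routine naturality chase shows that the two transported versions of $\mathrm{cotr}$ (respectively $\mathrm{inc}^*$) differ only by pre- (respectively post-) composition with the chain automorphism of $C^\bullet(M_r(B),M_r(B))$ induced by $\mathrm{Ad}(g)$. Hence it suffices to prove that for any unital $\Bbbk$-algebra $A$ and any unit $u\in A^\times$, the inner automorphism $\mathrm{Ad}(u)$ acts as the identity on $HH^n(A)$.

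This last statement is classical (see e.g.\ \cite[4.1.3]{Loday}); the standard proof produces an explicit cochain homotopy $h:C^n(A,A)\to C^{n-1}(A,A)$ satisfying $\mathrm{Ad}(u)^{*}-\mathrm{id}=\mathfrak{b}h+h\mathfrak{b}$, which on cohomology yields $\mathrm{Ad}(u)^{*}=\mathrm{id}$ on $HH^n(A)$. Combining the three steps gives the claim for both maps in (\ref{morita1.1}). The main obstacle, should one wish to give a self-contained argument, is writing out $h$ with the correct signs; the rest is straightforward bookkeeping that I would simply cite.
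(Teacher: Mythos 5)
Your argument is correct, but it takes a genuinely different route from the paper's proof. You reduce basis-independence to the classical fact that inner automorphisms act trivially on Hochschild cohomology: the two identifications $\phi_e,\phi_{e'}:M\xrightarrow{\sim}M_r(B)$ differ by $\mathrm{Ad}(g)$ for the change-of-basis matrix $g\in GL_r(B)$, and the induced endomorphism of $C^\bullet(M_r(B),M_r(B))$ is chain-homotopic to the identity. The paper instead invokes the basis-free formulation of Morita equivalence: taking the $B$-$M$-bimodule $P=L$ and the $M$-$B$-bimodule $Q=L^\vee=\Hom_B(L,B)$, with the canonical isomorphisms $u:L\otimes_M L^\vee\cong B$ and $v:L^\vee\otimes_B L\cong M$, Loday's Morita machinery gives a natural isomorphism $HH^*(B)\cong HH^*(M)$ that is manifestly basis-free, and one checks that after choosing a basis it agrees with $\mathrm{inc}^*$ (resp. $\mathrm{cotr}$). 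Your approach is more elementary and self-contained — it needs only the $M_r(B)$-level computation already in theorem \ref{morita1} plus the inner-automorphism homotopy — while the paper's approach is more structural and makes the basis-independence conceptually transparent by never choosing a basis in the first place. One small caveat: the Loday citation \texttt{[4.1.3]} for triviality of inner automorphisms is not quite where that statement lives (it appears as an exercise following \S4.1 in Loday, and the cohomological version is most cleanly found elsewhere, e.g. in Weibel), so if you intend the argument to be checkable you should either give a correct pointer or spell out the homotopy $h$; your own final sentence correctly flags this as the only nontrivial bookkeeping.
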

\begin{proof} The conclusion is a direct consequence of a more general Morita equivalence, see e.g. \cite[1.2.5]{Loday}. Recall that two $\Bbbk$-algebras $R$ and $S$ are Morita equivalent if there are $R$-$S$-bimodule $P$ and $S$-$R$-bimodule $Q$ and an isomorphism of $R$-bimodules $u: P\otimes_S Q\cong R$, and an isomorphism of $S$-bimodules $v: Q\otimes_R P\cong S$. Moreover, such $u$ and $v$ induce a natural isomorphism 
\begin{equation}\label{indep-2}
HH^*(R,R)\cong HH^*(S,Q\otimes_R P).
\end{equation}
Consider $R=B$, $S=M=\mathrm{End}_{B}(L)$, and take $P=L$, $Q=L^\vee=\Hom_B(L,B)$. Then there are an obvious isomorphism of $B$-bimodules $u: L\otimes_M L^\vee\cong B$ given by the pairing, and an obvious isomorphism of $M$-bimodules $L^\vee\otimes_B L\cong M$, and notice that $u$ and $v$ do not depend on the choice of basis of $L$.

A proof of Hochschild homology version of (\ref{indep-2}) is given in \cite[1.2.7]{Loday}, and one easily checks the construction of the isomorphism coincides with the isomorphism of  $\mathrm{inc}_*: H_*(B,B)\cong H_*(M_r(B),M_r(B))$ after choosing a basis of $M$, which implies the independence of basis for Hochschild homology. The case for Hochschild cohomology is similar, as the proof of theorem \ref{morita1}, and we omit it.
\end{proof}

Next we recall the Hodge-type decomposition \cite{GS87}, which is called $\lambda$-decomposition in \cite[\S 4.5]{Loday}. Denote by $S_n$ the symmetric group of $n$ elements. For the definition of the elements $e_n^{(i)}$ of $\mathbb{Q}(S_n)$, and the proof of the following proposition, see e.g. \cite[4.5.2, 4.5.3, 4.5.7]{Loday}.
\begin{proposition}\label{euler-idem-1}
The elements $e_n^{(1)},...,e_n^{(n)}$  satisfy
\begin{enumerate}
  \item[(i)] $\mathrm{id}=e_n^{(1)}+...+e_n^{(n)}$.
  \item[(ii)] $e_n^{(i)}e_n^{(j)}=0$ for $1\leq i\neq j\leq n$, and $e_n^{(i)}e_n^{(i)}=e_n^{(i)}$ for $1\leq i\leq n$.
  \item[(iii)] In particular, 
\[
  e_2^{(1)}=\frac{1}{2}\big(\mathrm{id}+(12)\big),
\]
and 
\[
e_n^{(n)}=\frac{1}{n!}\sum_{\sigma\in S_n}\mathrm{sgn}(\sigma)\sigma=\frac{1}{n!}\epsilon_n.
\]
\end{enumerate}
\end{proposition}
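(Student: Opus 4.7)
The plan is to follow Loday's construction in \cite[\S 4.5]{Loday} of the Eulerian idempotents via the Adams operations. I would introduce the family $\{\lambda_n^k\}_{k\geq 0}\subset \mathbb{Q}[S_n]$ characterised by their action on the Hochschild chains of a commutative algebra, or equivalently by a convolution-theoretic definition on the shuffle bialgebra. The three structural inputs I would borrow from loc.\ cit.\ are: (a) the $\lambda_n^k$ mutually commute inside the associative group algebra $\mathbb{Q}[S_n]$; (b) they are simultaneously diagonalisable in degree $n$, with a decomposition indexed by $i=1,\ldots,n$ on which $\lambda_n^k$ acts as the scalar $k^i$; and (c) the projectors $e_n^{(i)}$ onto these eigenspaces can be expressed, via Lagrange interpolation on the $n$ distinct eigenvalues $k,\,k^2,\,\ldots,\,k^n$, as explicit polynomials in any single $\lambda_n^k$ with $k\geq 2$.

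Granted (a)--(c), property (i) is immediate: since the eigenspaces of $\lambda_n^k$ exhaust $\mathbb{Q}[S_n]$, we have $\sum_{i=1}^{n} e_n^{(i)} = \mathrm{id}$. Property (ii) is the standard statement that eigenprojectors for a diagonalisable operator form an orthogonal system of idempotents: each $e_n^{(i)}$ is a polynomial in a single commuting operator $\lambda_n^k$, so the $e_n^{(i)}$ commute pairwise, and the eigenvalue separation $k^i\neq k^j$ for $i\neq j$ forces $e_n^{(i)} e_n^{(j)} = \delta_{ij}\, e_n^{(i)}$.

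For property (iii), I would compute the two cases directly. In degree $2$, the space $\mathbb{Q}[S_2]$ is two-dimensional, and inverting the $2\times 2$ Vandermonde of eigenvalues $(2,4)$ of $\lambda_2^2$ yields $e_2^{(1)} = (4\,\mathrm{id} - \lambda_2^2)/2$ and $e_2^{(2)} = (\lambda_2^2 - 2\,\mathrm{id})/2$; substituting the known expression of $\lambda_2^2$ in the basis $\{\mathrm{id},(12)\}$ produces the symmetrizer $\frac{1}{2}(\mathrm{id}+(12))$ and the antisymmetrizer $\frac{1}{2}(\mathrm{id}-(12))$ as claimed. For general $n$, the claim $e_n^{(n)} = \frac{1}{n!}\epsilon_n$ would follow by checking that the antisymmetrizer is a nonzero idempotent lying in the $k^n$-eigenspace of $\lambda_n^k$ (equivalently, killed by all the signed-shuffle reduction maps used to cut out the lower eigenspaces), together with the dimension count $\dim\ker(\lambda_n^k - k^n\,\mathrm{id})=1$ (reflecting that the sign representation of $S_n$ appears with multiplicity one in the regular representation); uniqueness of the projector then identifies it with $e_n^{(n)}$. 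The main obstacle is the spectral input (a)--(c) — namely, verifying that the Adams operations commute in the group algebra (not merely in the convolution algebra) and diagonalise with the asserted eigenvalues; once this is supplied from \cite[4.5.7]{Loday}, the remaining steps are linear algebra and small explicit computations.
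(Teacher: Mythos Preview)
The paper gives no proof of its own here: it simply refers to \cite[4.5.2, 4.5.3, 4.5.7]{Loday}, and your sketch follows exactly that source. The one step to firm up is the claim $\dim\ker(\lambda_n^k-k^n\,\mathrm{id})=1$: appealing to ``the sign representation appears with multiplicity one in the regular representation'' presupposes the identification you are proving, whereas in Loday the identity $e_n^{(n)}=\frac{1}{n!}\epsilon_n$ is instead read off directly from the explicit descent expansion of the $e_n^{(i)}$.
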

\begin{definition}
Let $\Bbbk$ be a field of characteristic zero, $B$  a $\Bbbk$-algebra. 
For $\sigma\in S_n$, and $f\in C^n(A,A)$, define
\[
\sigma(f)(a_1,...,a_n)=f(a_{\sigma(1)},...,a_{\sigma(n)}),
\]
and extend the action linearly to $\mathbb{Q}(S_n)$.
\end{definition}

\begin{theorem}\label{hodge-decomp-1}\cite[4.5.10, 4.5.12]{Loday}
Let $\Bbbk$ be a field of characteristic zero, and $B$ a \emph{commutative} $\Bbbk$-algebra. 
\begin{enumerate}
  \item[(i)]
  \begin{equation}\label{hodge-decomp-2}
  \mathfrak{b}\circ e_{n}^{(i)}=e_{n+1}^{(i)}\circ\mathfrak{b}.
  \end{equation}
  \item[(ii)] The idempotents $e_n^{(i)}$ split the Hochschild cochain complex $C^*(B,B)$ into a direct sum 
  \begin{equation}\label{hodge-decomp-3}
  C^*(B,B)=\bigoplus_{i\geq 0}C^{*}_{(i)}(B,B),
  \end{equation}
  where $C^{n}_{(i)}(B,B)=0$ for $i>n$. This induces a direct decomposition of Hochschild cohomology
  \begin{equation}\label{hodge-decomp-4}
  HH^n(B)=\sum_{i=1}^{n}HH^{n}_{(i)}(B).
  \end{equation}
  \item[(iii)] If $B$ is smooth, then $HH^{n}_{(i)}(B)=0$ for $i<n$ and the isomorphism (\ref{hkr4}) reduces to 
    $\epsilon_n: T^n_{B/\Bbbk}\cong HH^{n}_{(n)}(B)$.
\end{enumerate}
\end{theorem}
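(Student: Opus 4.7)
The plan is to prove (i) first and then obtain (ii), (iii) as essentially formal consequences. For (i), the Eulerian idempotents $e_n^{(i)}$ arise as the spectral projectors of a family of ``Adams-type'' operations $\lambda^k$ on $C^*(B,B)$; concretely, they can be expressed as polynomials in the shuffle operators $s_n=\sum_{p+q=n,\,p,q\geq 1}\mathrm{sh}_{p,q}\in \mathbb{Q}(S_n)$. The point of the \emph{commutativity} hypothesis on $B$ is that $C^*(B,B)$ then carries a compatible shuffle (co)product, and the Hochschild differential $\mathfrak{b}$ is a (co)derivation with respect to it. From this structural fact one extracts the commutation $\mathfrak{b}\circ s_n = s_{n+1}\circ\mathfrak{b}$ (up to the usual sign/shift bookkeeping), which upon passing to the polynomial expressions defining $e_n^{(i)}$ upgrades to $\mathfrak{b}\circ e_n^{(i)}=e_{n+1}^{(i)}\circ\mathfrak{b}$.

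Granted (i), part (ii) is immediate. Proposition \ref{euler-idem-1}(i)--(ii) furnishes $\{e_n^{(i)}\}_{i=1}^{n}$ as a complete system of orthogonal idempotents, so setting $C^n_{(i)}(B,B):=e_n^{(i)}\big(C^n(B,B)\big)$ gives the direct sum splitting \eqref{hodge-decomp-3}; (i) says that $\mathfrak{b}$ respects the index $i$, so the splitting descends to cohomology and yields \eqref{hodge-decomp-4}. The vanishing $C^n_{(i)}=0$ for $i>n$ is a degree check that falls out of the explicit truncated-logarithm formula for the $e_n^{(i)}$ in $\mathbb{Q}(S_n)$.

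For (iii), by Proposition \ref{euler-idem-1}(iii) the antisymmetrization map $\epsilon_n=n!\cdot e_n^{(n)}$ (on the level of symmetric-group operators), so the image of $\epsilon_n:T^n_{B/\Bbbk}\to HH^n(B)$ lies in the top summand $HH^n_{(n)}(B)$. When $B$ is smooth, Theorem \ref{hkr2} asserts that $\epsilon_n$ is already an isomorphism onto the entire $HH^n(B)$; combined with the splitting in (ii) this forces $HH^n_{(i)}(B)=0$ for $i<n$ and identifies $T^n_{B/\Bbbk}\xrightarrow{\sim}HH^n_{(n)}(B)$.

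The main technical obstacle is (i). A brute-force verification, unfolding $e_n^{(i)}$ as an explicit element of $\mathbb{Q}(S_n)$ and matching against the face-map expression for $\mathfrak{b}$, is combinatorially very unpleasant. The conceptually clean route is to set up on $C^*(B,B)$ (for commutative $B$) the cocommutative Hopf-algebra structure coming from shuffles and to identify the $e_n^{(i)}$ as the projections onto the weight-$i$ part of its primitive filtration; then $\mathfrak{b}$ preserves the filtration because it is a coderivation. Both routes are standard and explained in \cite[\S 4.5]{Loday}; the present proof would simply follow that treatment.
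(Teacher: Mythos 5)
The paper does not actually prove this statement: it is quoted directly from Loday's book \cite[4.5.10, 4.5.12]{Loday}, with no in-text argument, so there is no in-paper proof to compare against. Your sketch faithfully reproduces the structure of Loday's treatment: (i) is the substantive input, proved by identifying the Eulerian idempotents as spectral projectors of the $\lambda$-operations and using the shuffle/Hopf structure that the commutativity of $B$ supplies; (ii) follows formally from (i) together with Proposition \ref{euler-idem-1}; and (iii) follows from HKR together with the observation that antisymmetrization lands in the top $\lambda$-summand. So it is an acceptable account of what the citation covers. Two small points to tighten if you were to write this out in full. First, the intermediate relation $\mathfrak{b}\circ s_n = s_{n+1}\circ\mathfrak{b}$ is not literally what Loday proves; the differential interacts with the Adams operations $\lambda^k$ via the Barr/shuffle identities, and the commutation $\mathfrak{b}\circ e_n^{(i)}=e_{n+1}^{(i)}\circ\mathfrak{b}$ is then read off from the $\lambda$-ring relations in $\mathbb{Q}(S_n)$, not from a direct commutation with the shuffle elements $s_n$. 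Second, in (iii) you use the same symbol $\epsilon_n$ both for the group-algebra element $n!\,e_n^{(n)}\in\mathbb{Q}(S_n)$ (acting on cochains by permuting inputs) and for the antisymmetrization map $T^n_{B/\Bbbk}\to C^n(B,B)$; these are different objects, and the precise point needed is that the image of the latter consists of cochains that are totally antisymmetric in their arguments, hence fixed by $e_n^{(n)}$, hence contained in $C^n_{(n)}(B,B)$, from which the claimed landing in $HH^n_{(n)}(B)$ follows.
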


Now let $B$ be a commutative $\Bbbk$-algebra, and $L$ a free $B$-module of rank $r$, $M=\mathrm{End}_B(L)$. Then the Morita equivalence and the $\lambda$-decomposition induce a decomposition
\begin{equation}\label{hodge-decomp-5}
HH^n(M)\cong \bigoplus_{i=1}^{n}HH^n_{(i)}(M)
\end{equation}
such that $HH^n_{(i)}(M)= HH^n_{(i)}(B)$ via the isomorphism (\ref{morita1.1}). However, to my knowledge, we do not have an $\lambda$-decomposition on the cochain level  $C^*(M,M)$. Fortunately, the following naive characterization is enough for our use.
\begin{lemma}\label{hodge-decomp-6}
Let $F\in Z^n(M,M)$, i.e., $F$ a Hochschild $n$-cocycle of $M$. Then the class of $F$ lies in $HH^n_{(i)}(M)$ if, after choosing a basis of $L$ and identify $M$ to $M_r(B)$, 
\begin{equation}\label{hodge-decomp-7}
e_n^{(i)}(F)\big(E_{11}(b_1),...,E_{11}(b_n)\big)_{11}=F\big(E_{11}(b_1),...,E_{11}(b_n)\big)_{11}
\end{equation}
for all  $b_1,...,b_n\in B$.
\end{lemma}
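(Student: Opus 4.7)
The plan is to reduce the claim to the cochain-level $\lambda$-decomposition on $C^*(B,B)$ via the Morita equivalence. The key technical observation is that $\mathrm{inc}^*$ is $S_n$-equivariant for the standard $S_n$-actions on Hochschild cochains.

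To verify this equivariance, I would compute directly from definition (\ref{inc1}): for $\sigma\in S_n$ and $F\in C^n(M,M)$,
\[
\mathrm{inc}^*(\sigma F)(b_1,\ldots,b_n)=F\big(E_{11}(b_{\sigma(1)}),\ldots,E_{11}(b_{\sigma(n)})\big)_{11}=\sigma(\mathrm{inc}^*F)(b_1,\ldots,b_n).
\]
Thus $\mathrm{inc}^*\circ\sigma=\sigma\circ\mathrm{inc}^*$ on $C^n(M,M)$ for every $\sigma\in S_n$, and by $\mathbb{Q}$-linear extension $\mathrm{inc}^*\circ e_n^{(i)}=e_n^{(i)}\circ\mathrm{inc}^*$.

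The hypothesis (\ref{hodge-decomp-7}), being asserted for all $b_1,\ldots,b_n\in B$, is precisely the cochain identity $\mathrm{inc}^*(e_n^{(i)}F)=\mathrm{inc}^*(F)$ in $C^n(B,B)$. Combined with the equivariance just established, this gives $e_n^{(i)}(\mathrm{inc}^*F)=\mathrm{inc}^*F$. Since $e_n^{(i)}$ is an idempotent whose image on $C^n(B,B)$ is the summand $C^n_{(i)}(B,B)$ of the decomposition (\ref{hodge-decomp-3}), we conclude $\mathrm{inc}^*F\in C^n_{(i)}(B,B)$, and in particular $[\mathrm{inc}^*F]\in HH^n_{(i)}(B)$.

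Finally, by the very definition of (\ref{hodge-decomp-5}) as the transport of (\ref{hodge-decomp-4}) along the Morita isomorphism $\mathrm{inc}^*:HH^n(M)\xrightarrow{\sim}HH^n(B)$ (together with lemma \ref{indep-1}, which makes this independent of the chosen basis), the condition $[\mathrm{inc}^*F]\in HH^n_{(i)}(B)$ is equivalent to $[F]\in HH^n_{(i)}(M)$, completing the proof. No serious obstacle arises; the argument is essentially formal once the $S_n$-equivariance of $\mathrm{inc}^*$ is observed, and the only care needed is that the hypothesis provides a cochain-level (not merely cohomological) identity, which is precisely what justifies the step $\mathrm{inc}^*F\in C^n_{(i)}(B,B)$.
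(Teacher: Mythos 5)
Your proof is correct and takes essentially the same approach as the paper's one-line argument; the paper asserts directly that (\ref{hodge-decomp-7}) implies $e_n^{(i)}\mathrm{inc}^*(F)=\mathrm{inc}^*(F)$ and leaves the rest implicit, while you usefully isolate the $S_n$-equivariance $\mathrm{inc}^*\circ\sigma=\sigma\circ\mathrm{inc}^*$ that underlies this implication and spell out the transport through the Morita isomorphism.
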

\begin{proof} By the definition (\ref{inc1}) of $\mathrm{inc}^*$, (\ref{hodge-decomp-7}) implies $e_n^{(i)}\mathrm{inc}^*(F)=\mathrm{inc}^*(F)$. 
\end{proof}

\subsection{A bar resolution}
For $i=1,2$, the homomorphisms of $\mathcal{O}_X$-modules (regarding $A$ as a constant sheaf)
\[
E^\vee\otimes_{\Bbbk}A=E^\vee\otimes_{\Bbbk} \Hom_{\mathcal{O}_X}(E,E)\rightarrow E^\vee
\]
and 
\[
E\otimes_{\Bbbk}A=\Hom_{\mathcal{O}_X}(E,E)\otimes_{\Bbbk} E\rightarrow E
\]
induce  homomorphisms of $\mathcal{O}_{X\times X}$-modules, respectively,
\[
\sigma:
q_i^* E^\vee\otimes_{\Bbbk}A\rightarrow q_i^* E^\vee
\]
and
\[
\tau:A\otimes_{\Bbbk}q_i^* E\rightarrow q_i^* E.
\]
Set
\[
\mathcal{C}^\mathrm{bar}_{i}=E^\vee\boxtimes E\otimes_{\Bbbk}A^{\otimes_{\Bbbk}i},
\]
and define $\mathfrak{b}'_i: \mathcal{C}^\mathrm{bar}_{i}\rightarrow \mathcal{C}^\mathrm{bar}_{i-1}$ by
\begin{multline}\label{bar1}
\mathfrak{b}'_i(x,y,a_1,...,a_i)=(\sigma(x\otimes a_1),y,a_2,...,a_i)\\
+\sum_{j=1}^{i-1}(-1)^j (x,y,a_1,...,a_{j-1},a_{j}a_{j+1},a_{j+2},...,a_{i})+(-1)^i (x,\tau(a_{i}\otimes y),a_1,...,a_{i-1}).
\end{multline}
We define an augmentation map  $\mu:E^\vee\boxtimes E\rightarrow \mathcal{O}_{\Delta}$ by adjointness, via $q_2^* E\rightarrow q_1^* E\otimes \mathcal{O}_{\Delta}\cong q_2^* E|_{\Delta}$, or equivalently, via $q_1^* E^\vee\rightarrow q_2^* E^\vee\otimes \mathcal{O}_{\Delta}\cong q_1^* E^\vee|_{\Delta}$. 
\begin{lemma}\label{morita2}
There is an quasi-isomorphisms of complex of coherent sheaves
\begin{equation}\label{bar2}
\mathcal{C}^\mathrm{bar}_{\bullet}(E)\rightarrow \mathcal{O}_{\Delta}
\end{equation}
on $X\times X$, which is transformed by $\Psi^e$ to the bar resolution $C_{\bullet}^{\mathrm{bar}}(A)$ of $A$.
\end{lemma}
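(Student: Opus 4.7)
The plan is to apply $\Psi^e$ termwise to the augmented complex $\mathcal{C}^\mathrm{bar}_\bullet(E)\to\mathcal{O}_\Delta$ and identify the result with the classical bar resolution $C^\mathrm{bar}_\bullet(A)\to A$; the quasi-isomorphism then follows immediately from the equivalence of theorem \ref{equiv0}. Since $E$ is a vector bundle, $E^\vee\boxtimes E$ is locally free, so $E^\vee\boxtimes^{\mathbf{L}} E=E^\vee\boxtimes E$ and theorem \ref{equiv0} gives $\Psi^e(E^\vee\boxtimes E)=A^e$. The factor $A^{\otimes i}$ is merely a $\Bbbk$-vector space acting by direct sums over a basis, so it commutes with $\Psi^e$, yielding $\Psi^e(\mathcal{C}^\mathrm{bar}_i)=A^e\otimes_\Bbbk A^{\otimes i}$, which is exactly the degree-$i$ term of the bar resolution of $A$ as an $A^e$-module.

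Next I would verify that the differentials and augmentation are transported correctly. The canonical $A$-action on $E^\vee$ by pre-composition and on $E$ by evaluation correspond under $\Psi^e$ to the right multiplication on the first factor and the left multiplication on the last factor of $A\otimes A^{\otimes i}\otimes A$; hence the outer terms $\sigma(x\otimes a_1)$ and $\tau(a_i\otimes y)$ of $\mathfrak{b}'_i$ in (\ref{bar1}) match the outer terms of the bar differential, while the interior multiplications $a_j a_{j+1}$ are manifestly the same on both sides. Similarly, the augmentation $\mu:E^\vee\boxtimes E\to\mathcal{O}_\Delta$ is adjoint to $\mathrm{id}_E$ along the diagonal, so under $\Psi^e$ it becomes the multiplication $A\otimes A\to A$, which is precisely the augmentation of the bar resolution.

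Once this identification is established, the conclusion is immediate: the bar complex $C^\mathrm{bar}_\bullet(A)\to A$ is acyclic, as the $\Bbbk$-linear contracting homotopy $s(a_0\otimes\cdots\otimes a_{i+1})=1\otimes a_0\otimes\cdots\otimes a_{i+1}$ splits it. Since $\Psi^e$ is an equivalence and in particular conservative on cones, the augmented complex $\mathcal{C}^\mathrm{bar}_\bullet(E)\to\mathcal{O}_\Delta$ is also acyclic, which is the desired quasi-isomorphism.

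The main obstacle is purely bookkeeping: one must fix a convention for the $A^e$-bimodule structure (which side of $A^{\mathrm{op}}$ is used and whether $E^\vee$ or $E$ carries the left action) and then check that the pullbacks $q_1^*$ and $q_2^*$ on $X\times X$ are compatible with this choice. A minor technical point is that the bar complex is unbounded below, so strictly speaking one works with an unbounded or bounded-above version of $\Psi^e$, or argues termwise using the exactness properties of $\mathbf{R}\Hhom(E^\vee\boxtimes E,\cdot)$ on locally free sheaves; either workaround is harmless since we only require acyclicity of $\mathcal{C}^\mathrm{bar}_\bullet(E)\to\mathcal{O}_\Delta$.
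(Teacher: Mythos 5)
Your argument matches the paper's proof: both identify $\Psi^e(\mathcal{C}^\mathrm{bar}_\bullet(E))$ termwise and at the level of differentials and augmentation with the classical bar resolution $C^\mathrm{bar}_\bullet(A)$, then use the equivalence $\Psi^e$ of theorem \ref{equiv0} together with the acyclicity of $C^\mathrm{bar}_\bullet(A)\to A$ to transfer acyclicity back to $\mathcal{C}^\mathrm{bar}_\bullet(E)\to\mathcal{O}_\Delta$. The paper simply cites Loday for the bar resolution's acyclicity and leaves the bookkeeping on conventions and boundedness unstated, both of which you handle explicitly and correctly.
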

\begin{proof} It is easy to check that $\mathfrak{b}'_{i}\circ \mathfrak{b}'_{i+1}=0$ and $\mu\circ \mathfrak{b}'_1=0$. By the definition of the bar resolution of a $\Bbbk$-algebra \cite[1.1.11]{Loday}, one easily sees that{} $\Psi^e(\mathcal{C}^\mathrm{bar}_{\bullet}(E))=C_{\bullet}^{\mathrm{bar}}(A)$. By \cite[1.1.12]{Loday} and theorem \ref{equiv0}, $\mathcal{C}^\mathrm{bar}_{\bullet}(E)$ is a resolution of $\mathcal{O}_{\Delta}$.
\end{proof}

For an open  subset $U_i$ of $X$, regarded as an open subset of the diagonal $\Delta_X\subset X\times X$, by theorem \ref{hkr2} and lemma \ref{morita2} we have
\begin{equation}\label{morita0}
\bigwedge^q T_{U_i} \xrightarrow{\sim} \ext^q_{\mathcal{O}_{U_i\times U_i}}(\mathcal{O}_{\Delta_ {U_i}},\mathcal{O}_{\Delta_{U_i}})
\xrightarrow{\sim} \mathscr{H}^q(\Hhom^\bullet_{\mathcal{O}_{U_i\times U_i}}(\mathcal{C}^\mathrm{bar}_{\bullet}(E)|_{U_i\times U_i},\mathcal{O}_{\Delta_ {U_i}})).
\end{equation}
It will turn out to be more convenient to work with a Hochschild cochain complex rather than the bar resolution. Let us introduce first the Hochschild cochain complex for a module over a sheaf of algebras.
\begin{definition}\label{sheaf-hoch-1}
For a sheaf $\mathcal{A}$ of $\Bbbk$-algebras over a topological space $Y$, let $\mathcal{A}^{\otimes i}$ be the sheaf associated to the presheaf $U\mapsto \Gamma(U,A)^{\otimes_{\Bbbk} i}$, which is still a sheaf of $\Bbbk$-algebras. For a sheaf $\mathcal{M}$ of $\mathcal{A}$-bimodules, we define the Hochschild cochain complex $\mathcal{C}^{\bullet}(\mathcal{A},\mathcal{M})$  of sheaves of $\Bbbk$-vector spaces on $X$ by 
\[
\mathcal{C}^{k}(\mathcal{A},\mathcal{M})=\Hhom_{\Bbbk}(\mathcal{A}^{\otimes k}, \mathcal{M})
\]
with the differentials given by
\begin{multline}\label{hoch-diff2}
\mathfrak{b}(f)(a_1,\cdots,a_{k+1})=a_1 f(a_2,\cdots,a_{k+1})\\
+\sum_{1\leq i\leq k}(-1)^i f(a_1,\cdots,a_{i}a_{i+1},\cdots,a_{k+1})+(-1)^{k+1}f(a_1,\cdots,a_k)a_{k+1}.
\end{multline}
\end{definition}
When $Y$ is a point, $\mathcal{C}^{\bullet}(\mathcal{A},\mathcal{M})$ is the ordinary Hochschild cochain complex which computes the Hochschild cohomology $HH^\bullet(A,M)$ \cite[1.5.1]{Loday}. 

Return to the setup at the beginning of this section. We denote the constant sheaf of $\Bbbk$-algebras associated to  $A=\Hom_{\mathcal{O}_X}(E,E)$ still by $A$. Then $E^\vee\otimes_{\mathcal{O}_X}E\cong \Hhom_{\mathcal{O}_X}(E,E)$ is a sheaf of  $A$-bimodules in an obvious way. The corresponding Hochschild cochain complex is denoted by $\mathcal{C}^\bullet(A,E^\vee\otimes E)$. There is an obvious homomorphism between two Hochschild cochain complex
\begin{equation}\label{hoch-cochain1}
\mathcal{C}^\bullet(E^\vee\otimes E,E^\vee\otimes E)\rightarrow \mathcal{C}^\bullet(A,E^\vee\otimes E)
\end{equation}
induced by the homomorphism of sheaves of algebras $A\rightarrow E^\vee\otimes E$ given by restrictions of global endomorphisms of $E$.

Let us recall the \v{C}ech complex associated to a complex of sheaves.
Let $\mathcal{U}=\{U_i\}_{i\in I}$ be an affine open covering of $X$. For a complex of sheaves $(L^\bullet, \partial_L)$, the associated \v{C}ech double complex is $\check{C}^p(\mathcal{U},L^q)$ with $\delta: \check{C}^p(\mathcal{U},L^q)\rightarrow \check{C}^{p+1}(\mathcal{U},L^q)$ the \v{C}ech coboundary map, $\partial_L: \check{C}^p(\mathcal{U},L^q)\rightarrow \check{C}^p(\mathcal{U},L^{q+1})$ the map induced by $\partial_L$. The differential of the associated simple complex is 
\begin{equation}\label{cech-diff-0}
d=\delta+(-1)^p \partial_L.
\end{equation}

\begin{lemma}\label{hoch-cochain2}
The cohomology of (the simple complex associated to) the double complex
\[
\check{C}^\bullet\big(\mathcal{U},\mathcal{C}^{\bullet}(A,E^\vee\otimes E)\big)
\]
computes the Hochschild cohomology $HH^\bullet(X)$.
\end{lemma}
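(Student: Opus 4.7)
The plan is to realize the Čech double complex as the Čech complex of an explicit sheafy model of $\mathbf{R}\Hhom_{\mathcal{O}_{X\times X}}(\mathcal{O}_\Delta,\mathcal{O}_\Delta)$, then take global sections. First I would observe that because $A=\Hom_{\mathcal{O}_X}(E,E)$ is a finite dimensional $\Bbbk$-vector space (as $X$ is proper), each term $\mathcal{C}^{\mathrm{bar}}_i(E)= E^\vee\boxtimes E\otimes_{\Bbbk}A^{\otimes i}$ is a locally free $\mathcal{O}_{X\times X}$-module of finite rank; by lemma \ref{morita2} the complex $\mathcal{C}^{\mathrm{bar}}_\bullet(E)$ is a resolution of $\mathcal{O}_\Delta$. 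Consequently applying $\Hhom_{\mathcal{O}_{X\times X}}(-,\mathcal{O}_\Delta)$ termwise yields a complex that represents $\mathbf{R}\Hhom_{\mathcal{O}_{X\times X}}(\mathcal{O}_\Delta,\mathcal{O}_\Delta)$ in $\mathrm{D}^{\mathrm{b}}(X\times X)$.

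Next I would identify this sheafy complex with $\Delta_*\mathcal{C}^\bullet(A,E^\vee\otimes E)$. The key termwise computation is
\[
\Hhom_{\mathcal{O}_{X\times X}}(E^\vee\boxtimes E\otimes_{\Bbbk}A^{\otimes i},\mathcal{O}_\Delta)\cong \Delta_*\Hhom_{\Bbbk}(A^{\otimes i},E^\vee\otimes E),
\]
obtained from $\mathcal{O}_\Delta=\Delta_*\mathcal{O}_X$, the adjunction between $\Delta^*$ and $\Delta_*$, and the self-duality of $\Delta^*(E^\vee\boxtimes E)\cong E^\vee\otimes E$ (a locally free sheaf tensored with its dual). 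One then unravels the bar differential $\mathfrak{b}'$ of (\ref{bar1}): the first term built from $\sigma$ dualizes to the outer left multiplication $a_1 f(a_2,\dots)$, the last term built from $\tau$ dualizes to the outer right multiplication $f(\dots,a_k)a_{k+1}$, and the interior terms $a_ja_{j+1}$ are self-dual. This recovers the Hochschild cochain differential $\mathfrak{b}$ of (\ref{hoch-diff2}) on the bimodule $E^\vee\otimes E\cong \mathscr{H}om_{\mathcal{O}_X}(E,E)$.

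Finally I would compute the hypercohomology using the Čech-to-derived-functor spectral sequence on the affine covering $\mathcal{U}$ of $X$. Since $\mathcal{C}^i(A,E^\vee\otimes E)=\Hhom_{\Bbbk}(A^{\otimes i},E^\vee\otimes E)$ is a finite direct sum of copies of the coherent sheaf $E^\vee\otimes E$, it is acyclic on every intersection $U_{i_0\cdots i_p}$, so the Čech double complex $\check{C}^\bullet(\mathcal{U},\mathcal{C}^\bullet(A,E^\vee\otimes E))$ with differential (\ref{cech-diff-0}) computes $\mathbf{H}^\bullet(X,\mathcal{C}^\bullet(A,E^\vee\otimes E))$. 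Since $\Delta_*$ is exact and preserves cohomology of quasi-coherent sheaves, this equals $\mathbf{H}^\bullet(X\times X,\Delta_*\mathcal{C}^\bullet(A,E^\vee\otimes E))=\Ext^\bullet_{\mathcal{O}_{X\times X}}(\mathcal{O}_\Delta,\mathcal{O}_\Delta)=HH^\bullet(X)$.

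The main obstacle I anticipate is the bookkeeping in the second step: the bar differential acts asymmetrically on the $E^\vee$-factor through $\sigma$ and on the $E$-factor through $\tau$, and matching these to the asymmetric outer multiplications in (\ref{hoch-diff2}) requires keeping strict track of signs and of which side of the bimodule $E^\vee\otimes E\cong A$ each action corresponds to. Once this identification on the cochain level is made, the rest is a standard application of locally free resolutions for $\mathbf{R}\Hhom$ combined with Čech theory for complexes of coherent sheaves.
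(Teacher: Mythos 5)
Your proposal is correct and follows essentially the same route as the paper: termwise identification of $\Hhom_{\mathcal{O}_{X\times X}}(\mathcal{C}^{\mathrm{bar}}_\bullet(E),\mathcal{O}_\Delta)$ with $\Delta_*\mathcal{C}^\bullet(A,E^\vee\otimes E)$ (this is exactly the paper's (\ref{morita3.0}) and (\ref{morita3})), matching the bar differential $\mathfrak{b}'$ against $\mathfrak{b}$, and then concluding by \v{C}ech theory for complexes of coherent sheaves on an affine cover. The only presentational difference is that you spell out the $\Delta_*$-adjunction and the $\mathbf{R}\Hhom$-representative step that the paper leaves implicit via \eqref{morita0} and its citation of [ET, theorem 2.8.1].
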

\begin{proof} For every  integer $m\geq 0$, there is an identity of sheaves on $U_i$ 
\begin{equation}\label{morita3.0}
\Hhom_{\mathcal{O}_{U_i\times U_i}}(\mathcal{C}^\mathrm{bar}_{m}(E)|_{U_i\times U_i},\mathcal{O}_{\Delta_{U_i}})=\Hhom_{\Bbbk}(A^{\otimes m},(E^\vee\otimes E)|_{U_i}).
\end{equation}
One easily checks, by comparing (\ref{bar1}) and (\ref{hoch-diff2}), that (\ref{morita3.0}) induces an isomorphism
\begin{equation}\label{morita3}
\Hhom^\bullet_{\mathcal{O}_{U_i\times U_i}}(\mathcal{C}^\mathrm{bar}_{\bullet}(E)|_{U_i\times U_i},\mathcal{O}_{\Delta_ {U_i}})\cong \mathcal{C}^{\bullet}(A,E^\vee\otimes E)|_{U_i}.
\end{equation}
By  the isomorphisms (\ref{morita0}) and (\ref{morita3}), $HH^\bullet(X)$ is isomorphic to the hypercohomology of $\mathcal{C}^{\bullet}(A,E^\vee\otimes E)$. 
Since $\Hhom_{\Bbbk}(A^{\otimes k},E^\vee\otimes E)$ is coherent, the conclusion follows from e.g. \cite[theorem 2.8.1]{ET}.
\end{proof}

We denote the resulting isomorphism by
\[
\mathfrak{Q}^n: H^n\big(\check{C}^\bullet(\mathcal{U},\mathcal{C}^{\bullet}(A,E^\vee\otimes E))\big)\xrightarrow{\sim}\Ext^n_{\mathcal{O}_{X\times X}}(\Delta_X,\Delta_X).
\]
By (\ref{morita0}) and (\ref{morita3}), there are also isomorphisms
\[
\mathfrak{B}^{p,q}:\check{H}^p\big(\mathcal{U},\ext^q_{\mathcal{O}_{X\times X}}(\Delta_X,\Delta_X)\big)\xrightarrow{\sim} \check{H}^p\big(\mathcal{U},\mathscr{H}^q(\mathcal{C}(A,E^{\vee}\otimes E))\big),
\]
and we denote 
$\mathfrak{B}^n=\bigoplus_{p+q=n}\mathfrak{B}^{p,q}$.

\subsection{Some canonical isomorphisms}\label{sec-some-canonical}
In this subsection we prove some canonical isomorphism together commutativity, for preparing the explicit construction of $\Phi^n$. 

 According to definition \ref{sheaf-hoch-1}, let $\mathcal{C}^{\bullet}(\mathcal{O}_X,\mathcal{O}_X)$ be the Hochschild cochain complex associated to the sheaf of $\Bbbk$-algebras $\mathcal{O}_X$. 

\begin{lemma}\label{hkr6}
The cohomology sheaf $\mathscr{H}^q(\mathcal{C}^{\bullet}(\mathcal{O}_X,\mathcal{O}_X))$ is canonically isomorphic to $T_X^q=\wedge^q T_X$.
\end{lemma}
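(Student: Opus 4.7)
The plan is as follows. The claim is local on $X$, so I may restrict to an affine chart $U=\Spec(R)$ with $R$ smooth over $\Bbbk$. The antisymmetrization formula \eqref{hkr0} applied to local derivations and local sections defines a morphism of sheaves $\epsilon_q:\wedge^q T_X\to \mathcal{C}^q(\mathcal{O}_X,\mathcal{O}_X)$ sending $f_1\wedge\cdots\wedge f_q$ to the $\Bbbk$-multilinear map
\[
(a_1,\dots,a_q)\longmapsto \sum_{\sigma\in S_q}\mathrm{sgn}(\sigma)\,f_1(a_{\sigma(1)})\cdots f_q(a_{\sigma(q)}).
\]
A direct computation using that each $f_i$ is a derivation --- the same calculation that underlies the lemma preceding theorem \ref{hkr2} --- shows that the image is annihilated by $\mathfrak{b}$. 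Thus $\epsilon_q$ descends to a morphism of sheaves
\[
\epsilon_q:\wedge^q T_X\longrightarrow \mathscr{H}^q\bigl(\mathcal{C}^{\bullet}(\mathcal{O}_X,\mathcal{O}_X)\bigr).
\]

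To show that $\epsilon_q$ is an isomorphism I would check it at stalks. The stalk of $\wedge^q T_X$ at $x$ is $\wedge^q\mathrm{Der}_{\Bbbk}(\mathcal{O}_{X,x},\mathcal{O}_{X,x})$, and the algebraic HKR theorem \ref{hkr2}, applied to the smooth local ring $\mathcal{O}_{X,x}$, identifies this with $HH^q(\mathcal{O}_{X,x})$ via precisely the antisymmetrization map. What remains is to identify the stalk of $\mathscr{H}^q\bigl(\mathcal{C}^{\bullet}(\mathcal{O}_X,\mathcal{O}_X)\bigr)$ at $x$ with $HH^q(\mathcal{O}_{X,x})$. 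Since taking cohomology commutes with taking stalks, this reduces to a comparison between the $k$-th term of the sheaf complex and that of the classical Hochschild cochain complex of $\mathcal{O}_{X,x}$, up to the distinction between $(\mathcal{O}_X^{\otimes k})_x$ and $\mathcal{O}_{X,x}^{\otimes_{\Bbbk} k}$.

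The main obstacle is precisely this last compatibility between the sheafified tensor power and the naive $\Bbbk$-tensor power of the stalk. A cleaner route, avoiding direct stalk manipulations, is to appeal to corollary \ref{hkr3} together with an identification $\mathscr{H}^q\bigl(\mathcal{C}^{\bullet}(\mathcal{O}_X,\mathcal{O}_X)\bigr)\cong \ext^q_{\mathcal{O}_{X\times X}}(\mathcal{O}_\Delta,\mathcal{O}_\Delta)$, realizing $\mathcal{C}^{\bullet}(\mathcal{O}_X,\mathcal{O}_X)$ as the sheaf-Hom of a locally projective bar-type resolution of $\mathcal{O}_\Delta$ analogous to $\mathcal{C}^\mathrm{bar}_\bullet(E)$ of lemma \ref{morita2} applied with $E=\mathcal{O}_X$; combining this with corollary \ref{hkr3}, and checking that the composite is induced by the antisymmetrization, finishes the proof.
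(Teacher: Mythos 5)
Your first paragraph (antisymmetrization descends to cohomology sheaves, check the map is a sheaf morphism) is fine, and the obstacle you identify in the second paragraph is the genuine one: the stalk of $\mathcal{C}^k(\mathcal{O}_X,\mathcal{O}_X)=\Hhom_{\Bbbk}(\mathcal{O}_X^{\otimes k},\mathcal{O}_X)$ at $x$ is \emph{not} visibly $\Hom_{\Bbbk}(\mathcal{O}_{X,x}^{\otimes_\Bbbk k},\mathcal{O}_{X,x})$, because a sheaf-$\Hom$ only commutes with passage to stalks under finiteness hypotheses that $\mathcal{O}_X^{\otimes k}$ does not satisfy, and the sheafified tensor power $\mathcal{O}_X^{\otimes k}$ itself need not have stalk equal to the $\Bbbk$-tensor power of $\mathcal{O}_{X,x}$. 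This is precisely the content the paper delegates to \cite[lemma 2.4(3)]{Swan96}; Swan's lemma establishes the local structure of the sheafified Hochschild complex and the compatibility needed to invoke theorem \ref{hkr2}. So your diagnosis of where the work lies is exactly right, and the paper's one-line proof is implicitly relying on that cited lemma rather than on a naive stalk computation.

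The alternative route you then sketch, however, does not work as stated. Taking $E=\mathcal{O}_X$ in $\mathcal{C}^{\mathrm{bar}}_\bullet(E)$ gives, since $A=\Hom_{\mathcal{O}_X}(\mathcal{O}_X,\mathcal{O}_X)=\Bbbk$ for $X$ connected, the complex $\mathcal{O}_{X\times X}$ in every degree with alternating-sum differentials $0,\mathrm{id},0,\ldots$ — not a resolution of $\mathcal{O}_\Delta$. What one actually needs is Swan's sheafified bar complex $\mathscr{C}_\bullet$ (the sheaf associated to $U\mapsto\Gamma(U,\mathcal{O}_X)^{\otimes_\Bbbk(i+1)}$), which the paper does introduce in the proof of lemma \ref{commutativity-0}. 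But $\mathscr{C}_\bullet$ is a complex of $\mathcal{O}_{X\times X}$-modules supported on $\Delta$, not a locally projective $\mathcal{O}_{X\times X}$-resolution, so the identification $\mathscr{H}^q(\mathcal{C}^{\bullet}(\mathcal{O}_X,\mathcal{O}_X))\cong\ext^q_{\mathcal{O}_{X\times X}}(\mathcal{O}_\Delta,\mathcal{O}_\Delta)$ is itself a nontrivial theorem (Swan's theorem 2.1 plus his lemma 2.4), not a formal consequence of corollary \ref{hkr3}. In short: your alternative route does not bypass Swan's machinery, it just reaches for it under a different name, and as written it would be circular since the paper uses lemma \ref{hkr6} as an ingredient in setting up the quasi-isomorphisms (\ref{morita5}) that your sketch would appeal to. To make the proof complete you should either carry out Swan's local argument or cite it directly, as the paper does.
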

\begin{proof} This follows from theorem \ref{hkr2}, see also \cite[lemma 2.4 (3)]{Swan96}.
\end{proof}

\begin{corollary}
Let $\mathcal{U}$ be an affine open covering of $X$, then
\begin{equation}\label{cech-1}
H^p(X,\mathscr{H}^q(\mathcal{C}^{\bullet}(\mathcal{O}_X,\mathcal{O}_X)))\cong \check{H}^p(\mathcal{U},\mathscr{H}^q(\mathcal{C}^{\bullet}(\mathcal{O}_X,\mathcal{O}_X))).
\end{equation}
\end{corollary}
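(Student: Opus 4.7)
The plan is to reduce the statement to the classical comparison between Čech cohomology and sheaf cohomology for quasi-coherent sheaves on a separated scheme, via an affine Leray cover.

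First, by lemma \ref{hkr6} there is a canonical isomorphism of sheaves $\mathscr{H}^q(\mathcal{C}^{\bullet}(\mathcal{O}_X,\mathcal{O}_X)) \cong \wedge^q T_X$, so in particular the cohomology sheaf in question is coherent (indeed locally free, since $X$ is smooth). Thus it suffices to check that for any coherent sheaf $\mathcal{F}$ on $X$ and any affine open covering $\mathcal{U}=\{U_i\}_{i\in I}$, one has $\check{H}^p(\mathcal{U},\mathcal{F}) \cong H^p(X,\mathcal{F})$.

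Next I would verify that $\mathcal{U}$ is a Leray covering for $\mathcal{F}$, i.e.\ that $H^k(U_{i_0}\cap\cdots\cap U_{i_p},\mathcal{F})=0$ for all $k>0$ and all finite intersections. Since $X$ is a scheme, it is separated, so every finite intersection of affine opens is again affine; combined with Serre's vanishing theorem for quasi-coherent sheaves on affine schemes (\cite[III.3.5]{Har66}), this gives the Leray property.

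Finally, apply Leray's theorem (e.g.\ \cite[2.8.1]{ET} which was already invoked in the proof of lemma \ref{hoch-cochain2}, or \cite[III.4.5]{Har66}) to conclude the canonical isomorphism \eqref{cech-1}. There is no real obstacle here: the only subtlety is ensuring that $X$ is indeed separated (which is built into our running hypothesis that $X$ is a smooth proper scheme over $\Bbbk$) so that the Leray condition holds for the affine cover $\mathcal{U}$.
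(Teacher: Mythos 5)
Your proof is correct and takes essentially the same route as the paper: invoke lemma \ref{hkr6} to identify the cohomology sheaf with the coherent (indeed locally free) sheaf $\wedge^q T_X$, then use the standard comparison of Čech and sheaf cohomology for quasi-coherent sheaves on a separated scheme under an affine cover. The paper merely states the conclusion after the coherence observation, while you have usefully spelled out the Leray argument that underlies it.
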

\begin{proof} By lemma \ref{hkr6}, $\mathscr{H}^q(\mathcal{C}^{\bullet}(\mathcal{O}_X,\mathcal{O}_X))$ is a coherent sheaf, thus the conclusion follows.
\end{proof}

\begin{lemma}
There are quasi-isomorphisms
\begin{equation}\label{morita5}
\mathcal{C}^{\bullet}(\mathcal{O}_X,\mathcal{O}_X)\rightarrow \mathcal{C}^\bullet(E^\vee\otimes E,E^\vee\otimes E)\rightarrow \mathcal{C}^\bullet(A,E^\vee\otimes E). 
\end{equation}
\end{lemma}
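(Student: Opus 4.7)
The plan is to construct both arrows explicitly and then verify each is a quasi-isomorphism by analyzing cohomology sheaves, using the HKR theorem (theorem \ref{hkr2}) together with the Morita equivalence (theorem \ref{morita1}).

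First, for the sheafified cotrace map, I would choose an affine open covering $\mathcal{U}=\{U_i\}$ of $X$ trivializing $E$. On each $U_i$, a choice of $\mathcal{O}_{U_i}$-basis of $E|_{U_i}$ identifies $(E^\vee\otimes E)|_{U_i}\cong M_r(\mathcal{O}_X|_{U_i})$, and the formula (\ref{cotr1}) then defines a chain map $\mathrm{cotr}:\mathcal{C}^\bullet(\mathcal{O}_X,\mathcal{O}_X)|_{U_i}\to\mathcal{C}^\bullet(E^\vee\otimes E,E^\vee\otimes E)|_{U_i}$. Although this chain map a priori depends on the trivialization, lemma \ref{indep-1} ensures that the induced map on cohomology sheaves is canonical, so the local cotraces define a canonical morphism on cohomology sheaves. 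By lemma \ref{hkr6}, $\mathscr{H}^q(\mathcal{C}^\bullet(\mathcal{O}_X,\mathcal{O}_X))\cong\wedge^q T_X$, and a stalkwise application of theorem \ref{morita1} yields the same for $\mathcal{C}^\bullet(E^\vee\otimes E,E^\vee\otimes E)$. The cotrace realizes the identity on $\wedge^q T_X$, so the first arrow is a quasi-isomorphism.

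For the second map, which is (\ref{hoch-cochain1}) --- precomposition along the algebra homomorphism $A\to E^\vee\otimes E$ --- I would appeal to the identity (\ref{morita3}) established in the proof of lemma \ref{hoch-cochain2}: on each $U_i$ we have $\mathcal{C}^\bullet(A,E^\vee\otimes E)|_{U_i}\cong \Hhom^\bullet_{\mathcal{O}_{U_i\times U_i}}(\mathcal{C}^\mathrm{bar}_\bullet(E)|_{U_i\times U_i},\mathcal{O}_{\Delta_{U_i}})$, and since $\mathcal{C}^\mathrm{bar}_\bullet(E)$ resolves $\mathcal{O}_\Delta$ (lemma \ref{morita2}), the cohomology sheaves compute $\ext^q(\mathcal{O}_\Delta,\mathcal{O}_\Delta)\cong \wedge^q T_X$ by corollary \ref{hkr3}. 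Thus $\mathscr{H}^q(\mathcal{C}^\bullet(A,E^\vee\otimes E))\cong \wedge^q T_X$ as well.

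The main obstacle will be verifying that the restriction map induces the identity on $\wedge^q T_X$ at the level of cohomology sheaves, rather than an a priori arbitrary automorphism. I would handle this by observing that both identifications factor through the same local Morita--HKR comparison on $U_i$; equivalently, by two-out-of-three applied to the composition $\mathcal{C}^\bullet(\mathcal{O}_X,\mathcal{O}_X)\to\mathcal{C}^\bullet(A,E^\vee\otimes E)$, which under (\ref{morita3}) factors through the sheafified bar resolution of $\mathcal{O}_\Delta$ and thus realizes the canonical sheafy HKR isomorphism $\wedge^\bullet T_X\xrightarrow{\sim}\mathscr{H}^\bullet(\Hhom^\bullet(\mathcal{C}^\mathrm{bar}_\bullet(E),\mathcal{O}_\Delta))$. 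Once the first map and this composition are both known to be quasi-isomorphisms, the second map must be as well.
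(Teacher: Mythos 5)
Your proposal takes essentially the same route as the paper: the first arrow is the (locally defined) cotrace, with theorem \ref{morita1} and lemma \ref{indep-1} handling the quasi-isomorphism and basis-independence, and the second arrow is (\ref{hoch-cochain1}), with (\ref{morita3}), (\ref{morita0}) and lemma \ref{hkr6} identifying the cohomology sheaves. You are somewhat more explicit than the paper about two legitimate but glossed-over subtleties, namely that the cotrace at the cochain level depends on the local trivialization of $E$ (so only the morphism of cohomology sheaves is canonical) and that matching cohomology sheaves does not a priori make a given map a quasi-isomorphism; the paper's proof silently relies on the same factorization through the Morita/bar-resolution comparison that you invoke, and since all downstream uses of the lemma only require the induced isomorphisms on (hyper)cohomology, neither treatment runs into trouble.
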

\begin{proof} The first map is induced by the natural maps $E^\vee\otimes E\rightarrow \mathcal{O}_X$ and $\mathcal{O}_X\rightarrow E^\vee\otimes E$. By theorem \ref{morita1} and lemma \ref{indep-1}, the first map is a quasi-isomorphism. The second map is (\ref{hoch-cochain1}).
Then by (\ref{morita0}), (\ref{morita3}) and lemma \ref{hkr6}, the second map is also a quasi-isomorphism.
\end{proof}

\begin{lemma}\label{hodge-decomp-8.0}
There is a canonical isomorphism
\begin{equation}\label{hodge-decomp-8}
\bigoplus_{p+q=n} H^p\big(X,\mathscr{H}^q(\mathcal{C}^{\bullet}(\mathcal{O}_X,\mathcal{O}_X))\big)\cong
 \mathbb{H}^n\big(X,\mathcal{C}^{\bullet}(\mathcal{O}_X,\mathcal{O}_X)\big).
\end{equation}
\end{lemma}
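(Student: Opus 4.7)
The plan is to apply the $\lambda$-decomposition of Gerstenhaber--Schack (theorem \ref{hodge-decomp-1}) sheaf-theoretically to split the Hochschild cochain complex $\mathcal{C}^{\bullet}(\mathcal{O}_X,\mathcal{O}_X)$, and then observe that each summand in the splitting has cohomology concentrated in a single degree.

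First I would check that the Eulerian idempotents $e_n^{(i)}\in\mathbb{Q}(S_n)$ act on $\mathcal{C}^{k}(\mathcal{O}_X,\mathcal{O}_X)=\Hhom_{\Bbbk}(\mathcal{O}_X^{\otimes k},\mathcal{O}_X)$ by the same formula $\sigma(f)(a_1,\ldots,a_k)=f(a_{\sigma(1)},\ldots,a_{\sigma(k)})$ as in the affine case. Since $\mathcal{O}_X$ is a sheaf of commutative $\Bbbk$-algebras and the characteristic is zero, the relations $\mathfrak{b}\circ e_k^{(i)}=e_{k+1}^{(i)}\circ\mathfrak{b}$ and the idempotent relations of proposition \ref{euler-idem-1} hold on stalks and glue to give a direct sum decomposition of complexes of sheaves
\[
\mathcal{C}^{\bullet}(\mathcal{O}_X,\mathcal{O}_X)=\bigoplus_{q\geq 0}\mathcal{C}^{\bullet}_{(q)}(\mathcal{O}_X,\mathcal{O}_X),
\]
where $\mathcal{C}^{k}_{(q)}(\mathcal{O}_X,\mathcal{O}_X)=0$ for $q>k$.

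Next I would identify the cohomology sheaves of each summand. On any affine open $U\subset X$ with smooth coordinate ring $R=\Gamma(U,\mathcal{O}_X)$, theorem \ref{hodge-decomp-1}(iii) applied stalk-wise (together with theorem \ref{hkr2}) shows that $\mathscr{H}^{k}(\mathcal{C}^{\bullet}_{(q)}(\mathcal{O}_X,\mathcal{O}_X))$ vanishes unless $k=q$, in which case it equals $\wedge^{q}T_X$ (via the antisymmetrization $\epsilon_q$, up to the normalizing factor $q!$). Hence each $\mathcal{C}^{\bullet}_{(q)}(\mathcal{O}_X,\mathcal{O}_X)$ is a complex of sheaves whose cohomology is concentrated in degree $q$, so it is quasi-isomorphic (in the derived category of sheaves on $X$) to the shifted coherent sheaf $\wedge^{q}T_X[-q]$.

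Taking hypercohomology is then immediate: since hypercohomology is a derived functor that respects quasi-isomorphisms and commutes with finite direct sums,
\[
\mathbb{H}^{n}\big(X,\mathcal{C}^{\bullet}(\mathcal{O}_X,\mathcal{O}_X)\big)=\bigoplus_{q}\mathbb{H}^{n}\big(X,\mathcal{C}^{\bullet}_{(q)}(\mathcal{O}_X,\mathcal{O}_X)\big)=\bigoplus_{q}H^{n-q}(X,\wedge^{q}T_X),
\]
and the right-hand side is precisely $\bigoplus_{p+q=n}H^{p}(X,\mathscr{H}^{q}(\mathcal{C}^{\bullet}(\mathcal{O}_X,\mathcal{O}_X)))$ by lemma \ref{hkr6}. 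The main subtlety I anticipate is verifying that the $\lambda$-decomposition genuinely sheafifies --- i.e., that the definition of $\sigma(f)$ commutes with restriction to opens so that the idempotents give maps of sheaves of Hochschild cochains, and that the degeneracy statement of theorem \ref{hodge-decomp-1}(iii) can be invoked stalk-by-stalk using the smoothness of $X$. Once these checks are in place, the decomposition is canonical because the Eulerian idempotents and the HKR isomorphism $\epsilon_q$ are canonical.
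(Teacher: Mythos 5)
Your proof is correct and uses essentially the same mechanism as the paper: both hinge on the observation that the Eulerian idempotents sheafify to split $\mathcal{C}^{\bullet}(\mathcal{O}_X,\mathcal{O}_X)$ into summands $\mathcal{C}^{\bullet}_{(q)}$ whose cohomology is concentrated in degree $q$. The paper packages this as "the $\lambda$-decomposition forces degeneration of the hypercohomology spectral sequence" (citing Swan, cor. 2.6), whereas you bypass the spectral-sequence language by directly identifying $\mathcal{C}^{\bullet}_{(q)}\simeq \wedge^{q}T_X[-q]$ in the derived category and then using additivity and exactness of hypercohomology; these are two phrasings of the same argument.
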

\begin{proof} There is a spectral sequence 
\[
E_2^{p,q}= H^p\big(X,\mathscr{H}^q(\mathcal{C}^{\bullet}(\mathcal{O}_X,\mathcal{O}_X))\big)\Rightarrow \mathbb{H}^{p+q}\big(X,\mathcal{C}^{\bullet}(\mathcal{O}_X,\mathcal{O}_X)\big).
\]
The $\lambda$-decomposition $\mathcal{C}^{q}(\mathcal{O}_X,\mathcal{O}_X)=\bigoplus_{i=0}^{q}\mathcal{C}_{(i)}^{q}(\mathcal{O}_X,\mathcal{O}_X)$ induces the degeneration of the spectral sequence, and moreover the decomposition (\ref{hodge-decomp-8}), see the argument of \cite[cor. 2.6]{Swan96}.
\end{proof}

\begin{notations}
For a given affine open covering $\mathcal{U}$ of $X$, denote by $\eta$ the canonical isomophism
\[
\eta: \bigoplus_{p+q=n} \check{H}^p\big(\mathcal{U},\mathscr{H}^q(\mathcal{C}^{\bullet}(\mathcal{O}_X,\mathcal{O}_X))\big)\xrightarrow{\sim} \mathbb{H}^{n}\big(X,\mathcal{C}^{\bullet}(\mathcal{O}_X,\mathcal{O}_X)\big)
\]
induced by (\ref{cech-1}) and (\ref{hodge-decomp-8}), and denote $\xi$ and $\zeta$ the isomorphisms
\[
\xi: \bigoplus_{p+q=n} \check{H}^p\big(\mathcal{U},\mathscr{H}^q(\mathcal{C}^{\bullet}(\mathcal{O}_X,\mathcal{O}_X))\big)\xrightarrow{\sim} \bigoplus_{p+q=n} \check{H}^p\big(\mathcal{U},\mathscr{H}^q(\mathcal{C}^{\bullet}(A,E^{\vee}\otimes E))\big)
\]
and 
\[
\zeta:\mathbb{H}^{p+q}\big(X,\mathcal{C}^{\bullet}(\mathcal{O}_X,\mathcal{O}_X)\big)
\xrightarrow{\sim}H^n\big(\check{C}^\bullet(\mathcal{U},\mathcal{C}^{\bullet}(A,E^\vee\otimes E))\big)
\]
the isomorphisms induced by  (\ref{morita5}).
\end{notations}

\begin{lemma}\label{commutativity-0}
There are natural isomorphisms $\rho$ and $\sigma$ such that the following diagrams
\begin{equation}\label{diag-1}
\xymatrix{
  \bigoplus_{p+q=n} \check{H}^p\big(\mathcal{U},\ext^q_{\mathcal{O}_{X\times X}}(\Delta_X,\Delta_X)\big) \ar[d]^{\Upsilon^n}_{\wr}  
  & \bigoplus_{p+q=n} \check{H}^p\big(\mathcal{U},\mathscr{H}^q(\mathcal{C}^{\bullet}(\mathcal{O}_X,\mathcal{O}_X))\big) \ar[d]_{\wr}^{\eta} \ar[l]^<<<<<{\sim}_<<<<<{\rho}  \\
   \Ext^n_{\mathcal{O}_{X\times X}}(\Delta_X,\Delta_X)   & \mathbb{H}^{n}\big(X,\mathcal{C}^{\bullet}(\mathcal{O}_X,\mathcal{O}_X)\big)  \ar[l]^{\sim}_{\sigma}
}
\end{equation}
and 
\begin{equation}\label{diag-2}
\xymatrix{
  & \mathbb{H}^{p+q}\big(X,\mathcal{C}^{\bullet}(\mathcal{O}_X,\mathcal{O}_X)\big)  \ar[ld]^{\sim}_{\sigma} \ar[rd]_{\sim}^{\zeta}  & \\
   \Ext^n_{\mathcal{O}_{X\times X}}(\Delta_X,\Delta_X)   &&
  H^n\big(\check{C}^\bullet(\mathcal{U},\mathcal{C}^{\bullet}(A,E^\vee\otimes E))\big) \ar[ll]^>>>>>>>>>>>>>>>>>>>>>>>>>>>>{\sim}_>>>>>>>>>>>>>>>>>>>>>>>>>>>>{\mathfrak{Q}^n}  
}
\end{equation}
commute.
\end{lemma}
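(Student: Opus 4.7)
The plan is to construct $\rho$ and $\sigma$ explicitly at the level of complexes and then reduce commutativity to naturality of spectral sequences.

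First define $\rho$. By Corollary \ref{hkr3} there is a canonical isomorphism $\wedge^q T_X \xrightarrow{\sim} \ext^q_{\mathcal{O}_{X\times X}}(\mathcal{O}_\Delta,\mathcal{O}_\Delta)$, and by Lemma \ref{hkr6} there is a canonical isomorphism $\wedge^q T_X \xrightarrow{\sim} \mathscr{H}^q(\mathcal{C}^\bullet(\mathcal{O}_X,\mathcal{O}_X))$ (both being induced, on an étale local chart, by the antisymmetrization map $\epsilon_q$ of (\ref{hkr0})). Since the antisymmetrization $\epsilon_q$ is precisely the map used when one identifies $\Hhom^\bullet_{\mathcal{O}_{U\times U}}(\mathcal{C}^{\mathrm{bar}}_\bullet(\mathcal{O}_U),\mathcal{O}_{\Delta_U})\cong\mathcal{C}^\bullet(\mathcal{O}_U,\mathcal{O}_U)$ on affine opens, these two HKR maps agree after passing to $\wedge^q T_X$, and we take their composition as the sheaf-level isomorphism inducing $\rho$ on $\check{H}^p(\mathcal{U},-)$.

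Next define $\sigma$. Since $\mathcal{C}^{\bullet}(\mathcal{O}_X,\mathcal{O}_X)$ is a complex of coherent sheaves, its hypercohomology is computed by the Čech double complex $\check{C}^\bullet(\mathcal{U},\mathcal{C}^\bullet(\mathcal{O}_X,\mathcal{O}_X))$. Restrict the quasi-isomorphism (\ref{morita5}) to the affine cover $\mathcal{U}$: the first arrow is a quasi-isomorphism by Morita equivalence (Theorem \ref{morita1}, Lemma \ref{indep-1}) and the second by (\ref{morita0}) and (\ref{morita3}). Combining with the identification of the Čech cohomology of $\mathcal{C}^\bullet(A,E^\vee\otimes E)$ with $\Ext^\bullet_{\mathcal{O}_{X\times X}}(\mathcal{O}_\Delta,\mathcal{O}_\Delta)$ furnished by Lemma \ref{hoch-cochain2}, one obtains the isomorphism $\sigma$. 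By construction, the triangle (\ref{diag-2}) commutes tautologically: the two paths from $\mathbb{H}^n(X,\mathcal{C}^\bullet(\mathcal{O}_X,\mathcal{O}_X))$ to $\Ext^n_{\mathcal{O}_{X\times X}}(\mathcal{O}_\Delta,\mathcal{O}_\Delta)$ are defined using the very same chain (\ref{morita5}), and $\mathfrak{Q}^n\circ\zeta$ is merely $\sigma$ with an intermediate stop at the rightmost complex in (\ref{morita5}).

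For the square (\ref{diag-1}), consider the two hypercohomology spectral sequences
\[
E_2^{p,q} = \check{H}^p\bigl(\mathcal{U},\ext^q_{\mathcal{O}_{X\times X}}(\mathcal{O}_\Delta,\mathcal{O}_\Delta)\bigr) \Rightarrow \Ext^{p+q}_{\mathcal{O}_{X\times X}}(\mathcal{O}_\Delta,\mathcal{O}_\Delta),
\]
\[
{}'E_2^{p,q} = \check{H}^p\bigl(\mathcal{U},\mathscr{H}^q(\mathcal{C}^\bullet(\mathcal{O}_X,\mathcal{O}_X))\bigr) \Rightarrow \mathbb{H}^{p+q}\bigl(X,\mathcal{C}^\bullet(\mathcal{O}_X,\mathcal{O}_X)\bigr).
\]
Both degenerate at $E_2$ and split as in (\ref{hodge1}) and Lemma \ref{hodge-decomp-8.0}, and both splittings are induced by the $\lambda$-idempotents $e_n^{(i)}$ acting on Hochschild cochains (for the first, after the isomorphism (\ref{morita3}) with $\mathcal{C}^\bullet(\mathcal{O}_X,\mathcal{O}_X)$ on affines). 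The quasi-isomorphism $\mathcal{C}^\bullet(\mathcal{O}_X,\mathcal{O}_X)\to \Hhom^\bullet_{\mathcal{O}_{U\times U}}(\mathcal{C}^{\mathrm{bar}}_\bullet(\mathcal{O}_X),\mathcal{O}_\Delta)$ on affine opens intertwines these two actions of $e_n^{(i)}$, so $\Upsilon^n\circ\rho = \sigma\circ\eta$ by naturality of the spectral sequence maps induced by a quasi-isomorphism, together with the coincidence of the two HKR sheaf isomorphisms used in defining $\rho$.

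The main obstacle I anticipate is verifying that the Hodge-type decomposition (\ref{hodge1}) on $\Ext^n_{\mathcal{O}_{X\times X}}(\mathcal{O}_\Delta,\mathcal{O}_\Delta)$ is induced by the same $\lambda$-idempotents $e_n^{(i)}$ acting on the Hochschild complex $\mathcal{C}^\bullet(\mathcal{O}_X,\mathcal{O}_X)$ as the decomposition of Lemma \ref{hodge-decomp-8.0}; this is implicit in \cite{Swan96} but requires a careful identification on the level of the affine-local quasi-isomorphism (\ref{morita3}) to spell out. Once this is confirmed, both squares of (\ref{diag-1}) commute degree-by-degree in the Hodge grading, and (\ref{diag-2}) is automatic from the definitions.
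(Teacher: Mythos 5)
Your proposal follows essentially the same route as the paper, and you correctly identify the two inputs ($\rho$ and $\sigma$ via the chain of quasi-isomorphisms, and the $\lambda$-decomposition of the $E_2$ pages). But the ``main obstacle'' you name at the end --- verifying that Swan's decomposition $\Upsilon^n$ of $\Ext^n_{\mathcal{O}_{X\times X}}(\mathcal{O}_\Delta,\mathcal{O}_\Delta)$ is induced by the \emph{same} $\lambda$-idempotents $e_n^{(i)}$ on $\mathcal{C}^\bullet(\mathcal{O}_X,\mathcal{O}_X)$ as the decomposition $\eta$ of Lemma~\ref{hodge-decomp-8.0} --- is not an afterthought: it is the entire content of the lemma, and your proposal acknowledges it without resolving it. Writing ``this is implicit in Swan'' is not a proof; a priori $\Upsilon^n$ is \emph{defined} via a splitting of the spectral sequence (\ref{ss1}), and $\eta$ is defined via a splitting of the spectral sequence for $\mathbb{H}^n(X,\mathcal{C}^\bullet(\mathcal{O}_X,\mathcal{O}_X))$, and there is no a priori identification of the two.

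The paper closes this gap by inserting a third model: Swan's sheafified bar complex $\mathcal{C}_\bullet(\mathcal{O}_X)$ and the resulting $\bext^n_{\mathcal{O}_X}(\mathcal{C}_\bullet(\mathcal{O}_X),\mathcal{O}_X)$. Swan's Theorems 2.1 and 2.5 (\cite[theorem 2.1, 2.5]{Swan96}) give an isomorphism of $E_2$-spectral sequences between (\ref{sps-1}) and (\ref{sps-3}), and identify $\Upsilon^n$ with the decomposition $\eta'$ of $\bext^n_{\mathcal{O}_X}(\mathcal{C}_\bullet(\mathcal{O}_X),\mathcal{O}_X)$ coming from the $\lambda$-idempotents. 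Since (\ref{sps-2}) and (\ref{sps-3}) have literally the same $E_2$ page $\check{H}^p(\mathcal{U},\mathscr{H}^q(\mathcal{C}^{\bullet}(\mathcal{O}_X,\mathcal{O}_X)))$, and both $\eta$ and $\eta'$ arise from the \emph{same} $\lambda$-decomposition on that page, they agree modulo the induced isomorphism $\chi$ of abutments; this is the commutativity of (\ref{diag-1.3}), which is what finally yields $\widetilde{\Upsilon}^n=\Upsilon^n$. So the additional ingredient you need (and omit) is the intermediate complex $\mathcal{C}_\bullet(\mathcal{O}_X)$ together with Swan's identification of $\Upsilon^n$ with the $\lambda$-decomposition of $\bext^n_{\mathcal{O}_X}(\mathcal{C}_\bullet(\mathcal{O}_X),\mathcal{O}_X)$.

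One smaller remark on your definition of $\rho$: you factor it through $\wedge^q T_X$ using the two HKR isomorphisms and claim they agree because both are antisymmetrization. That is fine, but it does require checking (as you note) that the affine-local identification (\ref{morita3}) respects $\epsilon_q$; the paper sidesteps this by taking $\rho$ to be the map directly induced on cohomology sheaves by the quasi-isomorphisms (\ref{bar2}), (\ref{morita5}) and (\ref{morita3}), so that compatibility is built in rather than asserted. Your treatment of $\sigma$ and of diagram (\ref{diag-2}) matches the paper and is unobjectionable.
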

\begin{proof} The quasi-isomorphisms (\ref{bar2}), (\ref{morita5}) and the isomorphism (\ref{morita3}) induce canonical isomorphisms $\rho$ and $\sigma$, and the commutativity of (\ref{diag-2}). In addition, they induce an isomorphism of $E_2$-spectral sequences
\begin{equation}\label{sps-1}
E_2^{p,q}=\check{H}^p\big(\mathcal{U},\ext^q_{\mathcal{O}_{X\times X}}(\Delta_X,\Delta_X)\big)\Rightarrow \Ext^n_{\mathcal{O}_{X\times X}}(\Delta_X,\Delta_X)
\end{equation}
and
\begin{equation}\label{sps-2}
E_2^{' p,q}= \check{H}^p\big(\mathcal{U},\mathscr{H}^q(\mathcal{C}^{\bullet}(\mathcal{O}_X,\mathcal{O}_X))\big)\Rightarrow \mathbb{H}^{p+q}\big(X,\mathcal{C}^{\bullet}(\mathcal{O}_X,\mathcal{O}_X)\big).
\end{equation}
Thus the decomposition (\ref{hodge-decomp-8}) induces a decomposition $\Upsilon^{'n}$ and a commutative diagram
\begin{equation}\label{diag-1.1}
\xymatrix{
  \bigoplus_{p+q=n} \check{H}^p\big(\mathcal{U},\ext^q_{\mathcal{O}_{X\times X}}(\Delta_X,\Delta_X)\big) \ar[d]^{\widetilde{\Upsilon}^{n}}_{\wr}  
  & \bigoplus_{p+q=n} \check{H}^p\big(\mathcal{U},\mathscr{H}^q(\mathcal{C}^{\bullet}(\mathcal{O}_X,\mathcal{O}_X))\big) \ar[d]_{\wr}^{\eta} \ar[l]^<<<<<{\sim}_<<<<<{\rho}  \\
   \Ext^n_{\mathcal{O}_{X\times X}}(\Delta_X,\Delta_X)   & \mathbb{H}^{n}\big(X,\mathcal{C}^{\bullet}(\mathcal{O}_X,\mathcal{O}_X)\big)  \ar[l]^{\sim}_{\sigma}.
}
\end{equation}
It remains to show $\widetilde{\Upsilon}^{n}=\Upsilon^n$. 
 Following \cite[\S 2]{Swan96}, let $\mathscr{C}_{i}$ be the sheaf associated to the presheaf $U\mapsto C_{i}(\Gamma(U,\mathcal{O}_X))=\Gamma(U,\mathcal{O}_X)^{\otimes_{\Bbbk}i+1}$, and together with the usual Hochschild boundary map $\mathfrak{b}$, we obtain a complex of sheaves of $\mathcal{O}_X$-modules, denoted by $\mathscr{C}_{\bullet}$. Then by \cite[theorem 2.1 and 2.5]{Swan96}, there is the following commutative diagram of isomorphisms
\begin{equation}\label{diag-1.2}
\xymatrix{
  \bigoplus_{p+q=n} \check{H}^p\big(\mathcal{U},\ext^q_{\mathcal{O}_{X\times X}}(\Delta_X,\Delta_X)\big) \ar[d]^{\Upsilon^n}_{\wr}   
  & \bigoplus_{p+q=n} \check{H}^p\big(\mathcal{U},\mathscr{H}^q(\mathcal{C}^{\bullet}(\mathcal{O}_X,\mathcal{O}_X))\big) \ar[d]_{\wr}^{\eta'} \ar[l]^<<<<{\sim}_<<<<{\rho} \\
   \Ext^n_{\mathcal{O}_{X\times X}}(\Delta_X,\Delta_X)   & \bext_{\mathcal{O}_X}^n\big(\mathcal{C}_{\bullet}(\mathcal{O}_X),\mathcal{O}_X\big)  \ar[l]^{\sim}_{\sigma'}.
}
\end{equation}
In fact, \cite[theorem 2.5]{Swan96} says that there is an $E_2$-spectral sequence 
\begin{equation}\label{sps-3}
E_2^{'' p,q}= \check{H}^p\big(\mathcal{U},\mathscr{H}^q(\mathcal{C}^{\bullet}(\mathcal{O}_X,\mathcal{O}_X))\big)\Rightarrow \bext_{\mathcal{O}_X}^{p+q}\big(\mathcal{C}_{\bullet}(\mathcal{O}_X),\mathcal{O}_X\big)
\end{equation}
which is isomorphic to the spectral sequence (\ref{sps-1}), and then the decomposition $\Upsilon^n$ follows from the right one $\eta'$, which is also deduced from the $\lambda$-decomposition of $\mathcal{C}^{\bullet}(\mathcal{O}_X,\mathcal{O}_X)$.

Therefore the spectral sequences (\ref{sps-2}) and (\ref{sps-3}) are isomorphic, thus induce an isomorphism $\chi:\mathbb{H}^{n}\big(X,\mathcal{C}^{\bullet}(\mathcal{O}_X,\mathcal{O}_X)\big)\xrightarrow{\sim} \bext_{\mathcal{O}_X}^n\big(\mathcal{C}_{\bullet}(\mathcal{O}_X),\mathcal{O}_X\big)$. Then $\widetilde{\Upsilon}^{n}=\Upsilon^n$ is equivalent to the commutativity of the decompositions $\eta$ and $\eta'$:
\begin{equation}\label{diag-1.3}
\xymatrix{
 & \bigoplus_{p+q=n} \check{H}^p\big(\mathcal{U},\mathscr{H}^q(\mathcal{C}^{\bullet}(\mathcal{O}_X,\mathcal{O}_X))\big) \ar[ld]^{\sim}_{\eta} \ar[rd]_{\sim}^{\eta'} &  \\
 \mathbb{H}^{n}\big(X,\mathcal{C}^{\bullet}(\mathcal{O}_X,\mathcal{O}_X)\big)  \ar[rr]_{\sim}^{\chi} &&   \bext_{\mathcal{O}_X}^n\big(\mathcal{C}_{\bullet}(\mathcal{O}_X),\mathcal{O}_X\big) .
}
\end{equation}
But both decomposition $\eta$ and $\eta'$ follows from the same decomposition of 
\[
E_2^{' p,q}=E_2^{'' p,q}= \check{H}^p\big(\mathcal{U},\mathscr{H}^q(\mathcal{C}^{\bullet}(\mathcal{O}_X,\mathcal{O}_X))\big)
\]
 which in turn is induced by the $\lambda$-decomposition of $\mathcal{C}^{\bullet}(\mathcal{O}_X,\mathcal{O}_X)$,  the commutativity of (\ref{diag-1.3}) follows.
\end{proof}

\begin{corollary}
Given an affine open covering $\mathcal{U}$ of $X$, there is a canonical isomorphism
\begin{equation}\label{hodge-decomp-9}
\mathfrak{L}^n: \bigoplus_{p+q=n} \check{H}^p\big(\mathcal{U},\mathscr{H}^q(\mathcal{C}(A,E^{\vee}\otimes E))\big)\xrightarrow{\sim} H^n\big(\check{C}^\bullet(\mathcal{U},\mathcal{C}^{\bullet}(A,E^\vee\otimes E))\big)
\end{equation}
such that the following diagram
\begin{equation}\label{diag-3}
\xymatrix@C-5em{
  \bigoplus_{p+q=n} \check{H}^p\big(\mathcal{U},\ext^q_{\mathcal{O}_{X\times X}}(\Delta_X,\Delta_X)\big) \ar[ddd]^{\Upsilon^n}_{\wr} \ar[rr]_<<<<<<<<<<<<<<<<<<<<{\sim}^<<<<<<<<<<<<<<<<<<<<{\mathfrak{B}^n}  && 
  \bigoplus_{p+q=n} \check{H}^p\big(\mathcal{U},\mathscr{H}^q(\mathcal{C}^{\bullet}(A,E^{\vee}\otimes E))\big) \ar[ddd]_{\wr}^{\mathfrak{L}^n}\\
  & \bigoplus_{p+q=n} \check{H}^p\big(\mathcal{U},\mathscr{H}^q(\mathcal{C}^{\bullet}(\mathcal{O}_X,\mathcal{O}_X))\big) \ar[d]_{\wr}^{\eta} \ar[ru]_{\sim}^{\xi} \ar[lu]^{\sim}_{\rho}  &\\
  & \mathbb{H}^{n}\big(X,\mathcal{C}^{\bullet}(\mathcal{O}_X,\mathcal{O}_X)\big)  \ar[ld]^{\sim}_{\sigma} \ar[rd]_{\sim}^{\zeta}  & \\
   \Ext^n_{\mathcal{O}_{X\times X}}(\Delta_X,\Delta_X)   &&
  H^n\big(\check{C}^\bullet(\mathcal{U},\mathcal{C}^{\bullet}(A,E^\vee\otimes E))\big) \ar[ll]^<<<<<<<<<<<<<<<<<<<<<<<<<<{\sim}_<<<<<<<<<<<<<<<<<<<<<<<<<<{\mathfrak{Q}^n}  
}
\end{equation}
commutes.
\end{corollary}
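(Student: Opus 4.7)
My plan is to define $\mathfrak{L}^n$ explicitly as the composition that makes the top triangle commute essentially by construction, and then reduce the commutativity of the outer rectangle to the two commutativities already established in lemma \ref{commutativity-0}. Concretely, set
\[
\mathfrak{L}^n := \zeta \circ \eta \circ \xi^{-1}.
\]
Since $\xi$, $\eta$, $\zeta$ are all isomorphisms, so is $\mathfrak{L}^n$. The only content then is the commutativity of the outer diagram, which amounts to the identity
\[
\Upsilon^n = \mathfrak{Q}^n \circ \mathfrak{L}^n \circ \mathfrak{B}^n.
\]

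The first step I would carry out is to verify the auxiliary triangle
\[
\xi \;=\; \mathfrak{B}^n \circ \rho,
\]
i.e., that the outer triangle in the upper half of the diagram commutes. This follows by unwinding the definitions: the isomorphism $\xi$ on cohomology sheaves is induced by the quasi-isomorphisms (\ref{morita5}), while $\mathfrak{B}^n$ and $\rho$ are both induced by the identification (\ref{morita3}) combined respectively with (\ref{morita0}) and with the quasi-isomorphism (\ref{bar2}); naturality of cohomology with respect to compositions of quasi-isomorphisms gives the identity. With this in hand, the calculation runs
\[
\mathfrak{Q}^n \circ \mathfrak{L}^n \circ \mathfrak{B}^n
= \mathfrak{Q}^n \circ \zeta \circ \eta \circ \xi^{-1} \circ \mathfrak{B}^n
= \sigma \circ \eta \circ \rho^{-1}
= \Upsilon^n,
\]
where the second equality uses the right triangle (\ref{diag-2}) together with the identity $\xi^{-1}\circ \mathfrak{B}^n=\rho^{-1}$, and the last equality is the pentagon (\ref{diag-1}), both from lemma \ref{commutativity-0}.

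The step I expect to require the most care is the verification of the triangle $\xi = \mathfrak{B}^n \circ \rho$. One has to trace a zig-zag of quasi-isomorphisms, namely
\[
\mathcal{C}^{\bullet}(\mathcal{O}_X,\mathcal{O}_X)\to \mathcal{C}^\bullet(E^\vee\otimes E,E^\vee\otimes E)\to \mathcal{C}^\bullet(A,E^\vee\otimes E),
\]
together with the local identification $\mathcal{C}^{\bullet}(A,E^\vee\otimes E)|_{U_i}\cong \Hhom^\bullet_{\mathcal{O}_{U_i\times U_i}}(\mathcal{C}^\mathrm{bar}_{\bullet}(E)|_{U_i\times U_i},\mathcal{O}_{\Delta_{U_i}})$ and the resolution $\mathcal{C}^\mathrm{bar}_{\bullet}(E)\to \mathcal{O}_\Delta$, and check that, after passing to cohomology sheaves and then to \v{C}ech cohomology, the two ways of going from $\mathscr{H}^q(\mathcal{C}^{\bullet}(\mathcal{O}_X,\mathcal{O}_X))$ to $\mathscr{H}^q(\mathcal{C}^\bullet(A,E^\vee\otimes E))$ — direct via $\xi$, versus via $\ext^q_{\mathcal{O}_{X\times X}}(\mathcal{O}_\Delta,\mathcal{O}_\Delta)$ — agree. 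This is a purely formal naturality check; it does not involve the $\lambda$-decomposition, so it need not commute with the individual Hodge components, only with the full grading.
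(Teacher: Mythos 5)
Your proposal is correct and matches the paper's argument: the paper likewise defines $\mathfrak{L}^n$ so that the right trapezoid commutes (equivalently $\mathfrak{L}^n = \zeta\circ\eta\circ\xi^{-1}$), notes the upper triangle $\xi = \mathfrak{B}^n\circ\rho$ commutes by naturality, and invokes lemma \ref{commutativity-0} for the left trapezoid and lower triangle. Your explicit chain of equalities $\mathfrak{Q}^n\circ\mathfrak{L}^n\circ\mathfrak{B}^n = \sigma\circ\eta\circ\rho^{-1} = \Upsilon^n$ is just an unwinding of the same diagram chase.
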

\begin{proof} The commutativity of the upper triangle follows from naturality. The commutativity of the left trapezoid and the lower triangle is lemma \ref{commutativity-0}.
The isomorphism $\mathfrak{L}^n$ is induced by demanding the commutativity of  the right trapezoid.
\end{proof}

\subsection{An explicit description of $\Phi^2$}
In this subsection we give an explicit description of $\Phi^2$, and compare it to $u(\cdot,\cdot,\cdot)$ of section 5, and thus complete the proof of theorem \ref{comparison0}.
\begin{lemma}\label{const-cochain-0}
Let $l\geq 0$ be an integer, and $v\in \Hom_{\Bbbk}(A^{\otimes_{\Bbbk} l},A)$ such that $\mathfrak{b}(v)=0$. Let $v_i\in \Hom_{\Bbbk}(A^{\otimes_{\Bbbk} l},E^{\vee}\otimes E)
$ be the restriction of $v$ to $U_i$. Thus $\{v_i\}_{i\in I}\in \check{C}^{0}(\mathcal{U},\mathcal{C}^{l}(A,E^\vee\otimes E))$ induces a class in $H^n\big(\check{C}^\bullet(\mathcal{U},\mathcal{C}^{\bullet}(A,E^\vee\otimes E))\big)$, denoted by $\tilde{v}$. Then
\begin{equation}\label{const-cochain-1}
\Xi_E^n\circ\mathfrak{Q}^n(\tilde{v})=v.
\end{equation}
\end{lemma}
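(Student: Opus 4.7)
The plan is to unwind the definitions of $\mathfrak{Q}^n$ and $\Xi_E^n$ and to observe that, because $v$ is a \emph{global} Hochschild cocycle, no twisting by \v{C}ech corrections is needed: the class $\tilde v$ is represented by a single global morphism of complexes, and this morphism is carried back to $v$ itself by the derived equivalence $\Psi^e$.

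First, I would verify that $\{v_i\}$ really defines a cocycle in the total complex of the bicomplex $\check C^{\bullet}(\mathcal U,\mathcal C^{\bullet}(A,E^{\vee}\otimes E))$. The canonical identification $A\cong \Gamma(X,\Hhom_{\mathcal O_X}(E,E))\cong \Gamma(X,E^{\vee}\otimes E)$ is a $\Bbbk$-algebra homomorphism, so composing $v$ with restriction to $U_i$ gives a section $v_i$ of $\mathcal C^{l}(A,E^{\vee}\otimes E)$ over $U_i$, and these sections agree on overlaps as restrictions of a single global section. Hence $\delta\{v_i\}=0$ and $\mathfrak b\{v_i\}=\mathfrak b(v)|_{U_i}=0$, so $d\{v_i\}=0$ in total degree $l$ (so the $H^n$ in the statement should read $H^l$).

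Next, I would transport this cocycle across the sheaf-level identification (\ref{morita3}), which rewrites a section of $\mathcal C^{l}(A,E^{\vee}\otimes E)$ over $U_i$ as a morphism $\mathcal C^{\mathrm{bar}}_l(E)|_{U_i\times U_i}\to\mathcal O_{\Delta_{U_i}}$ of $\mathcal O_{U_i\times U_i}$-modules. Since $\{v_i\}$ comes from the global section $v$, these local morphisms glue to a global morphism $\phi_v:\mathcal C^{\mathrm{bar}}_l(E)\to \mathcal O_{\Delta}$. Under the natural identification
\[
\Hom_{\mathcal O_{X\times X}}(E^{\vee}\boxtimes E\otimes_{\Bbbk}A^{\otimes l},\mathcal O_{\Delta})\cong \Hom_{\Bbbk}(A^{\otimes l},A),
\]
the morphism $\phi_v$ is precisely $v$. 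The compatibility of the Hochschild differential $\mathfrak b$ on $\mathcal C^{\bullet}(A,E^{\vee}\otimes E)$ with the $\Hhom$-complex differential induced by the bar differential $\mathfrak b'$ (which is already built into the proof of lemma \ref{hoch-cochain2}) guarantees that the vanishing $\mathfrak b(v)=0$ translates to $\phi_v$ defining a cocycle of degree $l$ in $\Hom^{\bullet}(\mathcal C^{\mathrm{bar}}_\bullet(E),\mathcal O_{\Delta})$. By lemma \ref{morita2}, $\mathcal C^{\mathrm{bar}}_\bullet(E)\to\mathcal O_{\Delta}$ is a resolution, so $\phi_v$ represents a class in $\Ext^{l}_{\mathcal O_{X\times X}}(\mathcal O_{\Delta},\mathcal O_{\Delta})$, and by construction this class equals $\mathfrak Q^l(\tilde v)$.

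Finally, I apply $\Xi_E^l$ to $[\phi_v]$. By theorem \ref{equiv0} and lemma \ref{morita2}, the equivalence $\Psi^e$ sends $\mathcal C^{\mathrm{bar}}_\bullet(E)\to\mathcal O_{\Delta}$ to the bar resolution $C^{\mathrm{bar}}_\bullet(A)\to A$ of $A$ as an $A^e$-module, and transports $\phi_v$ to the $A^e$-linear morphism $C^{\mathrm{bar}}_l(A)=A\otimes A^{\otimes l}\otimes A\to A$ which under the standard adjunction $\Hom_{A^e}(A\otimes A^{\otimes l}\otimes A,A)\cong\Hom_{\Bbbk}(A^{\otimes l},A)$ is $v$ itself. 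Hence $\Xi_E^l\circ\mathfrak Q^l(\tilde v)=[v]$ in $HH^l(A)$.

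The main bookkeeping difficulty is checking that the differential on $\Hhom^{\bullet}(\mathcal C^{\mathrm{bar}}_\bullet(E),\mathcal O_{\Delta})$ induced by $\mathfrak b'$ corresponds under (\ref{morita3}) to the Hochschild differential $\mathfrak b$ with the correct signs, so that cocycles match cocycles and the \v{C}ech–Hochschild total cocycle $\{v_i\}$ really becomes the $\Hom$-complex cocycle $\phi_v$; everything else is formal tracing through naturality of $\Psi^e$.
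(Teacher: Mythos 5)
Your proof is correct and takes essentially the same route as the paper's: the paper's one-line argument invokes precisely the second statement of lemma \ref{morita2} (that $\Psi^e$ sends the resolution $\mathcal{C}^{\mathrm{bar}}_\bullet(E)\to\mathcal{O}_\Delta$ to the bar resolution of $A$) together with the identification (\ref{morita3}), and you have merely unwound these two references into explicit detail. You also correctly flagged the notational slip that $H^n$ should read $H^l$ (i.e.\ $n=l$) in the statement.
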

\begin{proof} This follows directly from the second statement of lemma \ref{morita2}, and the identification (\ref{morita3}).
\end{proof}

Now we are ready to give an explicit description of $\Psi^2$. Consider the following commutative diagram, keeping in mind the diagram (\ref{diag-3}) which microscopes the following right square:

\begin{equation}
\xymatrix@C-0.5em{
  \bigoplus_{p+q=n}\check{H}^p(\mathcal{U},\wedge^q T_X) \ar[r]^>>>>>{\mathfrak{E}^n}_>>>>>{\sim} & \bigoplus_{p+q=n} \check{H}^p\big(\mathcal{U},\ext^q_{\mathcal{O}_{X\times X}}(\Delta_X,\Delta_X)\big) \ar[d]^{\Upsilon^n}_{\wr} \ar[r]_>>>>>{\sim}^>>>>>{\mathfrak{B}^n} & \bigoplus_{p+q=n} \check{H}^p\big(\mathcal{U},\mathscr{H}^q(\mathcal{C}^{\bullet}(A,E))\big) \ar[d]_{\wr}^{\mathfrak{L}^n}\\
  & \Ext^n_{\mathcal{O}_{X\times X}}(\Delta_X,\Delta_X)  \ar[d]_{\wr}^{\Xi^n_E} &
  H^n\big(\check{C}^\bullet(\mathcal{U},\mathcal{C}^{\bullet}(A,E^\vee\otimes E))\big) \ar[l]^<<<<<<<<<<<{\sim}_<<<<<<<<<<{\mathfrak{Q}^n} \\
  & HH^n(A)& .
}
\end{equation}

\begin{definition}
Define 
\[
\mathcal{Z}^i(A,\Hhom_{\mathcal{O}_X}(E,E))=\ker\Big(\mathcal{C}^i(A,\Hhom_{\mathcal{O}_X}(E,E))\rightarrow \mathcal{C}^{i+1}(A,\Hhom_{\mathcal{O}_X}(E,E))\Big),
\]
and define $\mathcal{Z}_{(p)}^i(A,\Hhom_{\mathcal{O}_X}(E,E))$ to be the subsheaf of $\mathcal{Z}^i(A,\Hhom_{\mathcal{O}_X}(E,E))$ consists of the local sections cohomological to local sections of $\mathcal{C}^i_{(p)}(\mathcal{O}_X,\mathcal{O}_X)$ via the quasi-isomorphism (\ref{morita5}).
\end{definition}

Given a local section of $\mathcal{Z}^i(A,\Hhom_{\mathcal{O}_X}(E,E))$, one can choose a local basis of $E$ to show that  it lies in  $\mathcal{Z}_{(p)}^i(A,\Hhom_{\mathcal{O}_X}(E,E))$, by checking the criterion in  lemma \ref{hodge-decomp-6}.\\

Our general strategy to find an explicit description of $\Phi^n(\tau)$ for $\tau\in \check{H}^m(\mathcal{U},\wedge^l T_X)$ consists of the following steps: 
\begin{enumerate}
   \item Find an explicit expression for $\mathfrak{B}^n\circ \mathfrak{E}^n (\tau)$.
   \item Find an explicit expression for $\mathfrak{L}^n\circ\mathfrak{B}^n\circ \mathfrak{E}^n (\tau)$.
   \item By a \emph{zigzag} in a double complex, find an element $v$ in  $\check{C}^{0}(\mathcal{U},\mathcal{C}^{m+l}(A,E^\vee\otimes E))$ that differs by a coboundary from $\mathfrak{L}^n\circ\mathfrak{B}^n\circ \mathfrak{E}^n (\tau)$, and observe that $v$ in fact is of the form in the lemma \ref{const-cochain-0}. 
 \end{enumerate}  
The second step will make use of the $\lambda$-decomposition. The following construction illustrates an attempt to carry out this strategy, but it is not  completely fulfilled. The problem arises in the second step:  I don't know how to find $\widetilde{\mathbf{t}}^{m,l}$ that satisfies (\ref{zigzag-0}). In the final proof of theorem \ref{comparison0}, I will show that for $\Phi^2$, the construction \ref{constr-1} indeed provides a construction of $\widetilde{\mathbf{t}}^{1,1}$ and $\widetilde{\mathbf{t}}^{0,2}$.

\begin{construction}\label{constr-2}
 Let $\mathcal{U}=\{U_i\}_{i\in I}$ be an affine open covering of $X$, such that $E|_{U_i}$ is free for any $i\in I$. For any ordered set of indices $I=(i_0,...,i_k)\subset \mathcal{I}$, denote $U_{I}=U_{i_0}\cap...\cap U_{i_k}$. For each $U_{I}$, choose a connection $\nabla_I: E\rightarrow E\otimes_{\mathcal{O}_X} \Omega^1_X$. 
For  $a\in \Hhom_{\mathcal{O}_{U_I}}(E,E)$, define $\nabla_I(a)=\nabla_I\circ a-a\circ\nabla_I\in \Hhom_{\mathcal{O}_{U_I}}(E,E\otimes_{\mathcal{O}_{U_I}}\Omega^1)$.

Let  $\theta\in \Gamma(U_I, \wedge^l T_X)$. For $a_1\otimes...\otimes a_l\in \Hhom_{\mathcal{O}_{U_I}}(E,E)^{\otimes_{\Bbbk}l}$, define
\begin{equation}\label{Cotr1}
\mathrm{cotr}(\theta)(a_1\otimes...\otimes a_l)=\theta\llcorner\big(\nabla_I(a_{1})\circ...\circ \nabla_I(a_{l})\big)\in \Hhom_{\mathcal{O}_{U_I}}(E,E).
\end{equation}
For example, if $\theta=\theta_1\wedge...\wedge\theta_{l}$, where $\theta_j\in \Gamma(U_I, T_X)$ for $1\leq j\leq l$, and  set $(\nabla_I)_{\theta_j}(s)=\theta_j\llcorner \nabla_I(s)$ to be the covariant derivative, then
\begin{equation}\label{Cotr1.1}
\mathrm{cotr}(\theta)(a_1\otimes...\otimes a_l)=\sum_{\sigma\in S_l}\mathrm{sgn}(\sigma)(\nabla_I)_{\theta_1}(a_{1^\sigma})\circ...\circ (\nabla_I)_{\theta_l}(a_{l^\sigma}).
\end{equation}

Let $\tau$ be an element of $H^{m}(X,\wedge^{l}T_X)$ for certain integers $m,l\geq 0$. Let $\{\theta_I\}_{|I|=m+1}$ be a \v{C}ech representative of $\tau$, where $\theta_I\in \Gamma(U_I,\wedge^l T_X)$. 
Thus
\[
\{\mathrm{cotr}(\theta_I)\}_{|I|=m+1}\in \check{C}^{m}\Big(\mathcal{U},\mathcal{C}^l\big(A,\Hhom_{\mathcal{O}_X}(E,E)\big)\Big).
\]

Denote $\mathbf{t}^{m,l}=\{\mathrm{cotr}(\theta_I)\}_{|I|=m+1}$.
Look at the following commutative diagram, where $\mathcal{C}^l=\mathcal{C}^l(A,\Hhom_{\mathcal{O}_X}(E,E))$.
\[
\xymatrix{
 & \check{C}^{m-1}(\mathcal{U},\mathcal{C}^l) \ar[d]^{\delta} \ar[r]^{\mathfrak{b}} & \check{C}^{m-1}(\mathcal{U},\mathcal{C}^{l+1}) \ar[d]^{\delta}  \\
 \check{C}^{m}(\mathcal{U},\mathcal{C}^{l-1}) \ar[d]^{\delta} \ar[r]^{\mathfrak{b}} &\check{C}^{m}(\mathcal{U},\mathcal{C}^l) \ar[d]^{\delta} \ar[r]^{\mathfrak{b}} & \check{C}^{m}(\mathcal{U},\mathcal{C}^{l+1})  \\
 \check{C}^{m+1}(\mathcal{U},\mathcal{C}^{l-1})  \ar[r]^{\mathfrak{b}} &\check{C}^{m+1}(\mathcal{U},\mathcal{C}^l). & 
}
\]
Since $\{\theta_I\}_{|I|=m+1}\in Z^m(\mathcal{U},\wedge^l T_X)$, $\mathfrak{b}(\mathbf{t}^{m,l})=0$.  Moreover, by the definition (\ref{Cotr1}), and lemma \ref{hodge-decomp-6}, and trivializing $E$ by the connections chosen,  one easily sees  $\mathbf{t}^{m,l}\in \check{C}^{m}(\mathcal{U},\mathcal{Z}_{(l)}^l)$. But $\delta \mathbf{t}^{m,l}$ is not necessarily zero. 
Suppose we can  find $\widetilde{\mathbf{t}}^{m,l}$ such that
\begin{equation}\label{zigzag-0}
\begin{cases}
\widetilde{\mathbf{t}}^{m,l}\in \check{C}^{m}(\mathcal{U},\mathcal{Z}_{(l)}^l),\\
\widetilde{\mathbf{t}}^{m,l}-\mathbf{t}^{m,l}\in \mathfrak{b}\big(\check{C}^{m}(\mathcal{U},\mathcal{C}^{l-1})\big),\\
\delta\widetilde{\mathbf{t}}^{m,l}=\mathfrak{b}\widetilde{\mathbf{t}}^{m,l} =0.
\end{cases}
\end{equation}
  Then since 
\[
\check{H}^m(\mathcal{U},\mathcal{C}^l)=\Hom_{\Bbbk}\Big(A^{\otimes l},\check{H}^m\big(\mathcal{U},\Hhom_{\mathcal{O}_X}(E,E)\big)\Big)
\]
and $\check{H}^m\big(\mathcal{U},\Hhom_{\mathcal{O}_X}(E,E)\big)=\Ext^m(E,E)=0$ for $m\geq 1$, there exists $\mathbf{t}^{m-1,l}\in \check{C}^{m-1}(\mathcal{U},\mathcal{C}^l)$ such that $\delta \mathbf{t}^{m-1,l}=\mathbf{t}^{m,l}$. Put $\mathbf{t}^{m-1,l+1}=\mathfrak{b}\mathbf{t}^{m-1,l}$. Then $\mathbf{t}^{m-1,l+1}\in \check{C}^{m-1}(\mathcal{U},\mathcal{C}^{l+1})$ and $\delta \mathbf{t}^{m-1,l+1}=0$, $\mathfrak{b}\mathbf{t}^{m,l}=0$. So we can continue this process, until we obtain $\mathbf{t}^{0,m+l}\in \check{C}^{0}(\mathcal{U},\mathcal{C}^{m+l})$. Moreover, because $\mathfrak{b}\mathbf{t}^{0,m+l}=0$ and $\delta \mathbf{t}^{0,m+l}=0$, $\mathbf{t}^{0,m+l}$ lies in  $\Hom_{\Bbbk}(A^{\otimes m+l},A)$ and produces a Hochschild cocycle, and  we denote the resulting class in $HH^{m+l}(A)$ by $v(\tau)$.
\end{construction}
\pqed

\begin{theorem}\label{zigzag-1}
Given $\widetilde{\mathbf{t}}^{m,l}$ satisfying (\ref{zigzag-0}), 
then
\begin{equation}\label{zigzag-2}
\Phi^n(\tau)=(-1)^m v(\tau).
\end{equation}
\end{theorem}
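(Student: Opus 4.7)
The strategy is to travel around the commutative diagram (\ref{diag-3}) explicitly: starting from $\tau\in\check{H}^m(\mathcal{U},\wedge^l T_X)$ I will realize $(\mathfrak{L}^n\circ\mathfrak{B}^n\circ\mathfrak{E}^n)(\tau)$ by the class of $\widetilde{\mathbf{t}}^{m,l}$ in the total complex $\check{C}^\bullet(\mathcal{U},\mathcal{C}^\bullet(A,E^\vee\otimes E))$, then use the zigzag guaranteed by the vanishing $\check{H}^p(\mathcal{U},\Hhom_{\mathcal{O}_X}(E,E))=\Ext^p(E,E)=0$ for $p\geq 1$ to cohomologically move the class to a $\check{C}^0$-representative $\mathbf{t}^{0,m+l}$, and finally apply lemma \ref{const-cochain-0} to identify $(\Xi_E^n\circ\mathfrak{Q}^n)$ of this class with the resulting Hochschild cocycle in $HH^{m+l}(A)$. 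The factor $(-1)^m$ will be forced by the total differential convention (\ref{cech-diff-0}).

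First I would verify that $\widetilde{\mathbf{t}}^{m,l}$ is a representative of $(\mathfrak{L}^n\circ\mathfrak{B}^n\circ\mathfrak{E}^n)(\tau)$. Under $\mathfrak{E}^{m,l}$ composed with the Morita quasi-isomorphism of (\ref{morita5}), the class $\theta_I\in\Gamma(U_I,\wedge^l T_X)$ is sent to the class of its HKR antisymmetrization (\ref{hkr0}) in $\mathscr{H}^l(\mathcal{C}^\bullet(\mathcal{O}_X,\mathcal{O}_X))$ and then to a class in $\mathscr{H}^l(\mathcal{C}^\bullet(A,E^\vee\otimes E))$; formula (\ref{Cotr1.1}) exhibits $\mathrm{cotr}(\theta_I)$ as exactly the sheaf-level cotrace of this antisymmetrization (via the chosen connections), which by the proof of lemma \ref{indep-1} coincides with the Morita cotrace map up to $\mathfrak{b}$-coboundaries. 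Thus the class of $\mathbf{t}^{m,l}$ in $\check{C}^m(\mathcal{U},\mathscr{H}^l(\mathcal{C}^\bullet(A,E^\vee\otimes E)))$ is $(\mathfrak{B}^n\circ\mathfrak{E}^n)(\tau)$. The hypothesis $\widetilde{\mathbf{t}}^{m,l}\in\check{C}^m(\mathcal{U},\mathcal{Z}^l_{(l)})$ (together with $\widetilde{\mathbf{t}}^{m,l}-\mathbf{t}^{m,l}\in\mathfrak{b}(\cdots)$) is precisely the condition that ensures the class lives in the $(l)$-piece of the $\lambda$-decomposition, so that $\mathfrak{L}^n$ sends it to the class of $\widetilde{\mathbf{t}}^{m,l}$ in $H^n\bigl(\check{C}^\bullet(\mathcal{U},\mathcal{C}^\bullet(A,E^\vee\otimes E))\bigr)$.

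Next I would run the zigzag, which is the main technical heart of the proof. The total differential on the double complex $\check{C}^p(\mathcal{U},\mathcal{C}^q(A,E^\vee\otimes E))$ is $d=\delta+(-1)^p\mathfrak{b}$ by (\ref{cech-diff-0}). Starting from $\widetilde{\mathbf{t}}^{m,l}$, at each step I use $\check{H}^k(\mathcal{U},\mathcal{C}^j)=0$ for $k\geq 1$ to solve $\delta\mathbf{t}^{k-1,j}=\mathbf{t}^{k,j}$ and then set $\mathbf{t}^{k-1,j+1}=\mathfrak{b}\mathbf{t}^{k-1,j}$. Modifying by $d$-coboundaries, one passes from $\widetilde{\mathbf{t}}^{m,l}$ to $\widetilde{\mathbf{t}}^{m,l}-d(\mathbf{t}^{m-1,l})=-(-1)^{m-1}\mathbf{t}^{m-1,l+1}=(-1)^m\mathbf{t}^{m-1,l+1}$ in bi-degree $(m-1,l+1)$; iterating $m$ times one lands on $(-1)^m\mathbf{t}^{0,m+l}$. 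The bookkeeping is routine once one fixes that each zigzag step contributes a single factor $(-1)$ coming from the $(-1)^p$ in the total differential.

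Finally I would apply $\mathfrak{Q}^n$. By construction $\mathbf{t}^{0,m+l}$ is $\mathfrak{b}$-closed and $\delta$-closed, so it is a global section of $\mathcal{C}^{m+l}(A,E^\vee\otimes E)$ arising from an element $v\in\Hom_{\Bbbk}(A^{\otimes (m+l)},A)$ with $\mathfrak{b}(v)=0$; this is precisely $v(\tau)$ by the definition in construction \ref{constr-2}. Lemma \ref{const-cochain-0} then gives $(\Xi_E^n\circ\mathfrak{Q}^n)$ of its class equals $v(\tau)\in HH^{m+l}(A)$. Combining with the commutativity of (\ref{diag-3}),
\[
\Phi^n(\tau)=\Xi_E^n\circ\Upsilon^n\circ\mathfrak{E}^n(\tau)=\Xi_E^n\circ\mathfrak{Q}^n\circ\mathfrak{L}^n\circ\mathfrak{B}^n\circ\mathfrak{E}^n(\tau)=(-1)^m v(\tau),
\]
as desired. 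The main obstacle I anticipate is the first paragraph: checking that the explicit cotrace of the HKR antisymmetrization (with connections chosen patch by patch) realizes the composite $\mathfrak{B}^n\circ\mathfrak{E}^n$ on the nose, modulo $\mathfrak{b}$-coboundaries; the rest of the argument, including the sign, is a direct zigzag in the double complex.
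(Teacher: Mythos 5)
Your argument traverses diagram (\ref{diag-3}) in the same way the paper does: identify $\widetilde{\mathbf{t}}^{m,l}$ as a representative of $(\mathfrak{L}^n\circ\mathfrak{B}^n\circ\mathfrak{E}^n)(\tau)$ in the total complex via the three conditions of (\ref{zigzag-0}), zigzag down to bidegree $(0,m+l)$ using $\Ext^p(E,E)=0$ for $p\geq 1$, and finish with lemma \ref{const-cochain-0}. One caveat worth noting in your bookkeeping: the claim that ``each zigzag step contributes a single factor $(-1)$'' is not literally right under the convention (\ref{cech-diff-0}) --- the $k$-th step contributes $(-1)^{m-k+1}$, so the accumulated sign over $m$ steps is $(-1)^{m(m+1)/2}$ rather than $(-1)^m$; these two agree precisely when $m\equiv 0,1\pmod 4$, in particular for $m\leq 1$, which are the only values occurring in the proof of theorem \ref{comparison0}, and the paper's own proof is equally terse on this point.
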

\begin{proof} By the definition (\ref{cotr1}) of cotrace map,  the definition of affine HKR isomorphism (\ref{hkr0})-(\ref{hkr1}), and the construction of the quasi-isomorphism (\ref{morita5}), $\mathbf{t}^{m,l}$ represents $\mathfrak{B}^n\circ\mathfrak{E}^n(\tau)\in \check{H}^m\big(\mathcal{U},\mathscr{H}^l(\mathcal{C}^{\bullet}(A,E))\big)$. So does $\widetilde{\mathbf{t}}^{m,l}$. Moreover, $\widetilde{\mathbf{t}}^{m,l}$ represents a class in $H^n\big(\check{C}^\bullet(\mathcal{U},\mathcal{C}^{\bullet})\big)$, and by the first condition of (\ref{zigzag-0}), 
\[
[\widetilde{\mathbf{t}}^{m,l}]=\zeta\circ\eta\circ\rho^{-1}\circ\mathfrak{E}^n(\tau)
=\mathfrak{L}^n([\mathbf{t}^{m,l}]).
\]
By the  construction \ref{constr-2}, and the sign convention (\ref{cech-diff-0}), $(-1)^m\mathbf{t}^{0,m+l}$ and $\widetilde{\mathbf{t}}^{m,l}$  represents the same class in $H^n\big(\check{C}^\bullet(\mathcal{U},\mathcal{C}^{\bullet})\big)$. Thus (\ref{zigzag-2}) follows from lemma \ref{const-cochain-0}. 
\end{proof}

\begin{proof}[Proof of theorem \ref{comparison0}:]

Since $\Phi^2(0,\beta,\gamma)=\Phi^2(0,\beta,0)+\Phi^2(0,0,\gamma)$, and 
$u_{0,\beta,\gamma}=u_{0,\beta,0}+u_{0,0,\gamma}$ by lemma \ref{deform-bundle1}, we can prove theorem \ref{comparison0} in the case $\beta=0$ and the case $\gamma=0$ separately.

\emph{The case $\gamma=0$:}
This corresponds to the case $m=l=1$ in the construction \ref{constr-2}. Let $\widetilde{\mathbf{t}}^{1,1}=-\{g_{ij}\}_{i,j\in I}$, where $g_{ij}$ is defined in construction \ref{constr-1}. Then by the construction \ref{constr-1}, $\widetilde{\mathbf{t}}^{1,1}$ satisfies (\ref{zigzag-0}) by lemma \ref{hodge-decomp-6} and proposition \ref{euler-idem-1} (iii); in fact, this is automatic for $l=1$. Thus again by the construction \ref{constr-1}, we can take $\mathbf{t}^{0,1}=\{c_i\}_{i\in I}$. Then $\mathbf{t}^{0,2}=-u_{0,\beta,0}$. So by theorem \ref{zigzag-1}, $\Phi^2(0,\beta,0)=u_{0,\beta,0}$.

\emph{The case $\beta=0$:}
It suffices to show that $\tilde{\mathbf{t}}^{0,2}:=\{u_{0,0,\gamma}(\cdot,\cdot)_i\}_{i\in I}$ satisfies (\ref{zigzag-1}). The third condition of (\ref{zigzag-1}) follows by the construction of $u_{0,0,\gamma}$, see lemma \ref{lem-nd3} and \ref{deform-bundle1}.
 The second condition of ({\ref{zigzag-1}}) is a local property, so we can check this locally on each sufficiently small $U_i$. Thus suppose $\gamma=\partial_1 \wedge \partial_2$, where $\partial_k=\partial_{x_k}$, for $i=1,2$, and $\{x_k\}_{1\leq k\leq \dim X}$ are (étale) local coordinates of $X$. In addition we trivialize $E$ by choose a local basis, on $U_i$,  and obtain a corresponding connection $\nabla$. Take 
 \[
 c_i(a)=\nabla_1\otimes \nabla_2(a)-\nabla_1(a)\otimes \nabla_2,
 \]
 where $\nabla_k=\partial_k\llcorner \nabla$, $k=1,2$. For $a,a'\in A$, write $a$ and $a'$ as $(a_{rs})$ and $(a'_{rs})$ in the chosen local basis of $E$. Then  $c_i(a')=(C'_{rs})$, $c_i(a)=(C'_{rs})$, and $c_i(a'a)=(C''_{rs})$ where
\begin{eqnarray*}
C'_{rs}&=&\partial_1\otimes \partial_2 (a'_{rs})-\partial_1(a'_{rs})\otimes\partial_2,\\
C_{rs}&=&\partial_1\otimes \partial_2 (a_{rs})-\partial_1(a_{rs})\otimes\partial_2,\\
C''_{rs}&=&\partial_1\otimes \partial_2 (\sum_{p}a'_{r p}a_{ps})-\partial_1(\sum_{p}a'_{r p}a_{p s})\otimes\partial_2.
\end{eqnarray*}
Then
\begin{eqnarray}\label{comparison3}
&&-\sum_{s}C'_{rs}a_{st}-\sum_{s}a'_{rs}C_{st}+C''_{rt}\nonumber\\
&=& -\sum_s\big(\partial_1(a_{st})\partial_2(a'_{rs})+a_{st}\partial_2(a'_{rs})\partial_1-\partial_1(a'_{rs})\partial_2(a_{st})-a_{st}\partial_1(a'_{rs})\partial_2\big)\nonumber\\
&&-\sum_s\big(a'_{rs}\partial_2(a_{st})\partial_1-a'_{rs}\partial_1(a_{st})\partial_2\big)\nonumber\\
&&+\sum_s\big(a'_{rs}\partial_2(a_{st})\partial_1+a_{st}\partial_2(a'_{rs})\partial_1
-a'_{rs}\partial_1(a_{st})\partial_2-a_{st}\partial_1(a'_{rs})\partial_2\big)\nonumber\\
&=&\sum_{s}\big(\partial_1(a'_{rs})\partial_2(a_{st})-\partial_1(a_{st})\partial_2(a'_{rs})\big).
\end{eqnarray}
Comparing to (\ref{Cotr1.1}) one sees
\[
u_{0,0,\gamma}(a',a)=\mathrm{cotr}(\gamma)(a',a).
\]
Thus the second condition of (\ref{zigzag-0}) is shown. 

The  first condition of (\ref{zigzag-0}) is also local. By the expression (\ref{comparison3}), $u_{0,0,\gamma}$ is anti-symmetric in  $\partial_1$ and $\partial_2$.  
By lemma \ref{hodge-decomp-6} and proposition \ref{euler-idem-1} (iii), $u_{0,0,\gamma}\in \check{C}^{0}(\mathcal{U},\mathcal{Z}_{(2)}^2)$.
\end{proof}

\section{Open problems}
I propose several  problems partly inspired by theorem \ref{def-FEC0}.
\begin{enumerate}
  \item Bernardara and Bolognesi proposed a notion of \emph{categorical representability dimension}. By \cite[definition 2.4]{BB12}, one says that a smooth projective variety $X$ over $\Bbbk$ is \emph{categorically representable in dimension $n$} if $\mathrm{D}^\mathrm{b}(X)$ has a semiorthogonal decomposition 
  \[
  \mathrm{D}^{\mathrm{b}}(X)=\langle \mathbf{B}_1,...,\mathbf{B}_l\rangle
  \]
  such that each $\mathbf{B}_j$ is an admissible subcategory of $\mathrm{D}^{\mathrm{b}}(Y_j)$ where $Y_j$ is a smooth projective variety over $\Bbbk$ of dimension $\leq n$. By \cite[lemma 1.19]{AB15}, if $\Bbbk$ is separably closed, $X$ is categorically representable in dimension zero if and only if $X$ has a full exceptional collection.  So according to theorem \ref{def-FEC0}, the following question seems natural.
  \begin{question}
  For a family of smooth projective varieties, is  the  categorical representability dimension of the geometric fibers upper semicontinuous over the base scheme?
  \end{question}
  \item    In \cite{KT17} Kontsevich and Tschinkel showed:
\begin{theorem}\label{thm-KT17}
Let $\mathcal{B}$ be a curve over a field of characteristic zero, $\pi: \mathcal{X}\rightarrow \mathcal{B}$ a smooth proper morphism. Then for any closed point $b$ of $B$, the birational type of $\mathcal{X}_b$ depends only on the birational type of $\mathcal{X}_{\eta}$. In particular, if  $\mathcal{X}_{\eta}$ is rational then every fiber of $\pi$ is rational.
\end{theorem}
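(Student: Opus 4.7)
The proof strategy will be motivic specialization in the Grothendieck ring of varieties, following Kontsevich--Tschinkel's introduction of a specialization morphism and its compatibility with the Lefschetz quotient. First I would reduce to the local setup: the statement is \'etale-local on $\mathcal{B}$, so we may replace $\mathcal{B}$ by the spectrum of a complete DVR $R$ with fraction field $K$ and residue field $k = k(b)$, in which case $\pi:\mathcal{X}\to \Spec(R)$ is smooth and proper, $\mathcal{X}_{\eta} = \mathcal{X}_K$, and $\mathcal{X}_b = \mathcal{X}_k$. The goal becomes: the birational type of $\mathcal{X}_k$ depends only on the birational type of $\mathcal{X}_K$.

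The central construction is a specialization map
\[
\mathrm{sp}:K_0(\mathrm{Var}/K)\longrightarrow K_0(\mathrm{Var}/k)/(\mathbb{L}),
\]
where $\mathbb{L}=[\mathbb{A}^1]$. On the target the Larsen--Lunts theorem identifies $K_0(\mathrm{Var}/k)/(\mathbb{L})$ with the free abelian group on stable birational equivalence classes of smooth projective $k$-varieties, so the receiving group is a concrete birational invariant. For a smooth projective $K$-variety $Y$, we choose a regular proper model $\mathcal{Y}\to \Spec(R)$ (using resolution of singularities in characteristic zero) and set $\mathrm{sp}([Y])=[\mathcal{Y}_k]\bmod (\mathbb{L})$. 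The core technical point is independence of the chosen model: two regular proper models are connected, after further blow-ups, by a chain of blow-ups and blow-downs along smooth centers with smooth exceptional divisors, by the weak factorization theorem of Abramovich--Karu--Matsuki--W{\l}odarczyk. Each such blow-up changes the special fiber by a class divisible by $\mathbb{L}$, hence trivially in the quotient.

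With $\mathrm{sp}$ in hand, the first conclusion is immediate: if two smooth proper $K$-varieties $Y,Y'$ are birational, then a fortiori they are stably birational, so $[Y]=[Y']$ in $K_0(\mathrm{Var}/K)/(\mathbb{L})$, whence their specializations agree; applied to any two models of the generic fiber, the stable birational class of $\mathcal{X}_k$ is determined. The hard part, and the genuine obstacle, is strengthening this to actual birational equivalence (in particular deducing rationality, not merely stable rationality, of $\mathcal{X}_k$). Here Kontsevich--Tschinkel introduce a finer \emph{motivic volume} with values in a quotient ring in which the class of a smooth projective variety remembers not just its stable birational class but enough to detect rationality; specialization against this refined invariant, combined with the observation that $\mathbb{P}^n$ and stably-but-not-rationally equivalent varieties have distinct classes there, yields the stronger statement. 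The bulk of the work in a full write-up would go into verifying the well-definedness and multiplicativity of this refined specialization, again reduced via weak factorization to a calculation of what a single smooth blow-up does to the class of the special fiber.
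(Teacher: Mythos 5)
The paper does not prove this theorem; it is quoted from Kontsevich--Tschinkel (reference [KT17], arXiv:1708.05699) and used as a black box in the proof of proposition \ref{prop-open-2}. So there is no in-paper argument to compare against, and I will evaluate your sketch on its own terms as a reconstruction of [KT17]. The broad outline is recognizable, but there are two substantive gaps.

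First, the definition $\mathrm{sp}([Y]) := [\mathcal{Y}_k] \bmod \mathbb{L}$ for an arbitrary regular proper model, with well-definedness checked via weak factorization, does not go through as stated. When two models are related by a blow-up along a regular center contained in the special fiber, the new special fiber is not the blow-up of the old one along a subvariety --- it acquires an entire exceptional component --- and when the special fiber carries multiplicities the naive class is not visibly altered by a multiple of $\mathbb{L}$. The Nicaise--Shinder construction that this half of your argument rests on works instead with snc (or, after a ramified base change, strictly semistable) models and the motivic nearby fiber $\psi_Y$, and model-independence is established for that invariant, not for the raw class of the special fiber. Second, and more to the point, the upgrade from stable birational to genuine birational equivalence is the actual content of [KT17], and your proposal only gestures at it. Kontsevich and Tschinkel do not refine the $K_0(\mathrm{Var}/k)/(\mathbb{L})$ argument; they work directly with the Burnside group $\mathrm{Burn}_n(k)$, the free abelian group on birational equivalence classes of $n$-dimensional smooth proper $k$-varieties (equivalently, on finitely generated field extensions of $k$ of transcendence degree $n$), and construct the specialization map with values there. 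That target separates birational classes by construction, so the delicacy lies entirely in proving well-definedness; the absence of a Lefschetz class to quotient by is exactly what makes the computation under a single elementary blow-up harder than in the mod-$\mathbb{L}$ setting. Your phrase ``a finer motivic volume with values in a quotient ring'' does not identify the target, and the remark that one must observe $\mathbb{P}^n$ and stably-but-not-birationally equivalent varieties have distinct classes there inverts the logic: that separation is built into $\mathrm{Burn}_n(k)$ for free, and the burden is the well-definedness of the map, not the injectivity of anything on the receiving end.
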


  From theorem \ref{thm-KT17}, one can deduce the following.
  \begin{proposition}\label{prop-open-2}
  Let $\Bbbk$ be a field of characteristic zero. Assume for every algebraically closed field $K$ containing $\Bbbk$,  every smooth proper variety $X$ over $K$ having a full exceptional collection is rational. Let $S$ be an irreducible scheme separated and of finite type over $\Bbbk$, and $\pi: \mathcal{X}\rightarrow S$ a smooth projective morphism. If there exists a geometric fiber of $\pi$ that  has a full exceptional collection, then every geometric fiber of $\pi$ is rational.
  \end{proposition}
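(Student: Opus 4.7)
The plan is to use Corollary \ref{def-FEC2} together with Theorem \ref{thm-KT17} to propagate rationality from a Zariski-dense open locus of $S$ to every geometric fiber.  First, Corollary \ref{def-FEC2} applied at the given geometric point produces a non-empty open subset $V \subset S$ such that every geometric fiber of $\pi$ over $V$ carries a full exceptional collection, and is therefore rational by hypothesis.  The geometric connectedness required by Corollary \ref{def-FEC2} is automatic here: a fiber with a full exceptional collection has $H^0(\mathcal{O}) = \Bbbk$, and by cohomology and base change the function $s \mapsto h^0(\mathcal{X}_s, \mathcal{O})$ is locally constant on $S$, so it equals $1$ everywhere.

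Now fix a geometric point $s:\Spec K \to S$ with image $\bar s \in S$.  Since rationality of $\mathcal{X}_s$ over $K$ is equivalent to geometric rationality of $\mathcal{X}_{\bar s}$, the aim is to connect $\bar s$ to $V$ by a smooth curve and invoke Theorem \ref{thm-KT17}.  When $\bar s$ is a closed point this is direct: pick an irreducible curve $C \subset S$ through $\bar s$ meeting $V$, normalize it to a smooth curve $\tilde C/\Bbbk$, and pull back the family.  The generic point of $\tilde C$ maps to a point of $V$, so the geometric generic fiber of $\mathcal{X}_{\tilde C} \to \tilde C$ has a full exceptional collection and is therefore rational; Theorem \ref{thm-KT17} then yields rationality of every closed fiber, in particular of the fiber over any preimage of $\bar s$.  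This fiber differs from $\mathcal{X}_{\bar s}$ only by a finite residue field extension, which is enough to deduce geometric rationality of $\mathcal{X}_{\bar s}$.

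The main obstacle is the non-closed case: no curve in $S$ can contain a non-closed $\bar s$ as one of its points.  The remedy is to base-change to $L := k(\bar s)$, setting $S_L := S \times_\Bbbk L$, $V_L := V \times_\Bbbk L$ and $\mathcal{X}_L := \mathcal{X} \times_\Bbbk L$.  The point $\bar s$ then corresponds to the canonical $L$-rational --- and hence closed --- point $\tilde s \in S_L$.  Since $L/\Bbbk$ is separable (characteristic zero), the morphism $S_L \to S$ is flat, and every irreducible component of $S_L$ dominates $S$; in particular the component $S_L^\circ$ containing $\tilde s$ meets $V_L$ in a non-empty open subset.  Running the closed-point curve argument inside $S_L^\circ$ over the (still characteristic zero) field $L$ yields, via Theorem \ref{thm-KT17}, rationality of the fiber over $\tilde s$, which is $\mathcal{X}_{\bar s} \otimes_L k(c)$ for a finite extension $k(c)/L$; passing to $\overline{k(\bar s)}$ then gives the desired geometric rationality of $\mathcal{X}_{\bar s}$.
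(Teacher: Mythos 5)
Your approach matches the paper's: deformation of full exceptional collections (Theorem \ref{def-FEC1}/Corollary \ref{def-FEC2}) gives rationality of the geometric fibers over a dense open locus, and Theorem \ref{thm-KT17} propagates this along curves; your explicit base change to $k(\bar s)$ to handle non-closed scheme-theoretic points is a cleaner spelling-out of what the paper compresses into the phrase ``a noetherian induction.'' One small point you skip but the paper handles: the generic fiber of $\mathcal{X}_{\tilde C}\to\tilde C$ is only \emph{geometrically} rational (the full exceptional collection lives on the geometric generic fiber), while Theorem \ref{thm-KT17} as stated requires rationality of $\mathcal{X}_\eta$ over $\kappa(\eta)$ itself; you should first pass to a finite cover of $\tilde C$ over whose function field the generic fiber becomes rational --- this is the role of the finite extension $L$ of $\kappa(\eta)$ appearing in the paper's proof --- and only then invoke Theorem \ref{thm-KT17}, after which your concluding remark about finite residue-field extensions applies unchanged.
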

  \begin{proof} By theorem \ref{def-FEC1}, the geometric fiber over the generic point $\eta$ of $S$ has a full exceptional collection, and by the assumption, it is rational. Then there exists a finite  extension $L$ of $\kappa(\eta)$ such that $\mathcal{X}_L$ is rational. By theorem \ref{thm-KT17} and a noetherian induction one sees that every geometric fiber of $\pi$ is rational.
  \end{proof}

It seems a folklore problem  whether smooth projective varieties having full exceptional collections are  rational. In dimension $2$ this is a conjecture attributed to Orlov. Proposition \ref{prop-open-2}  provides a way to find a non-rational smooth projective variety with a full exceptional collection. For example, by \cite{Kawa06}, a smooth projective toric variety has a full exceptional collection, so we ask following question. 
  \begin{question}
  Does  every deformation (with an irreducible base scheme) of a smooth  toric variety  remain rational? 
  \end{question}
  If the answer is negative, then by proposition \ref{prop-open-2} there exist non-rational smooth projective varieties that have  full exceptional collections. Of course we can also ask similar questions for all the varieties having full exceptional collections, but among them, toric varieties seem the most probable ones that have non-rational deformations.

  \item Fix a base field $\Bbbk$. Consider the following conditions on a set of isomorphism classes of smooth proper schemes over $\Bbbk$.
  \begin{enumerate}
    \item[(0)] $\Spec(\Bbbk)\in S$;
    \item[(i)] If $X,Y\in S$ then $X\times Y\in S$;
    \item[(i')] If $X\times Y\in S$ then $X,Y\in S$;
    \item[(ii)] If $X\in S$ and $E$ is a vector bundle on $X$, then the projective bundle $\mathbb{P}(E)\in S$;
    \item[(ii')] If  $E$ is a vector bundle on $X$ and the projective bundle $\mathbb{P}(E)\in S$, then $X\in S$;
    \item[(iii)] If $X,Y\in S$ and $Y$ is a smooth closed subscheme of $X$, then the blow-up $\mathrm{Bl}_Y X\in S$; 
    \item[(iii')] If $Y$ and $\mathrm{Bl}_Y X\in S$ where $Y$ is a smooth closed subscheme of $X$, then $X\in S$. 
  \end{enumerate}
  We know that the elements of the smallest set $S$ satisfying (0), (i), (ii) and (iii) have full exceptional collections (\cite[theorem 2.6 and 4.3]{Orl92}). Denote the smallest set $S$ satisfying (0), (i), (ii), (iii), and (i'), (ii'), (iii') by $\mathrm{PBB}_\Bbbk$ (PBB stands for products, bundles, and blow-ups), and we call an element of $\mathrm{PBB}_\Bbbk$ a \emph{PBB-variety} over $\Bbbk$.  Then does every PBB-variety has a full exceptional collection? Are the previously known examples of varieties having full exceptional collections all PBB-varieties? For example, quadrics (by blowing up lower dimensional quadrics on projective spaces) and  smooth toric varieties (\cite{AMR99}, \cite{Wlo97}) are PBB-varieties. We expect that  small deformations of  smooth toric varieties provide examples of non-PBB-varieites varieties having full exceptional collections.

  \item By theorem \ref{def-FEC0}, if there is a smooth projective surface $S$  over $\mathbb{C}$ having a full exceptional collection, there exists a smooth projective surface $S_0$  over a number field with this property such that $(S_0)_{\mathbb{C}}$ is a deformation of $S$. Is it possible to attack Orlov's conjecture by studying certain Diophantine properties of such surfaces of general type over number fields?

\end{enumerate}

\textsc{School of Mathematics, Sun Yat-sen University, Guangzhou 510275, P.R. China}

\emph{Email address}: huxw06@gmail.com

\end{document}